\documentclass{article}

\usepackage[english]{babel}

\usepackage[letterpaper,top=2cm,bottom=2cm,left=3cm,right=3cm,marginparwidth=1.75cm]{geometry}

\usepackage{bm}
\usepackage{amsmath}
\usepackage{amssymb}
\usepackage{amsthm}
\usepackage{graphicx}
\usepackage{parallel}
\usepackage{ stmaryrd }
\usepackage{tikz}
\usetikzlibrary{calc}
\usepackage{multicol}
\usepackage{ stmaryrd }
\usepackage{ latexsym }
\usepackage[shortlabels]{enumitem}
\usepackage{hyperref}
\hypersetup{colorlinks,allcolors=blue,breaklinks=true}
\usepackage{lineno}

\newcommand{\forkindep}[1][]{
  \mathrel{
    \mathop{
      \vcenter{
        \hbox{\oalign{\noalign{\kern-.3ex}\hfil$\vert$\hfil\cr
              \noalign{\kern-.7ex}
              $\smile$\cr\noalign{\kern-.3ex}}}
      }
    }\displaylimits_{#1}
  }
}

\newcommand{\mipo}{\operatorname{MiPo}}
\newcommand{\Char}{\operatorname{Char}}

\newcommand{\MM}{\mathbb{M}}
\newcommand{\NN}{\mathbb{N}}

\newcommand{\QQ}{\mathbb{Q}}

\newcommand{\FF}{\mathbb{F}}

\newcommand{\VV}{\mathbb{V}}

\newcommand{\Kp}[1]{K[X]_{\operatorname{irr}}^{#1}}
\newcommand{\Cc}{{\mathcal{C}}}
\newcommand{\Ccalg}{{\mathcal{C}^{\operatorname{alg}}}}
\newcommand{\Cctrans}{{\mathcal{C}^{\operatorname{tr}}}}

\newcommand{\mm}{\mathcal{M}}

\newcommand{\jj}{\mathcal{J}}
\newcommand{\kk}{\mathcal{K}}
\newcommand{\rr}{\mathcal{R}}

\newcommand{\set}[1]{{\{#1\}}}
\newcommand{\Set}[1]{{\left\{#1\right\}}}
\newcommand{\spanA}[2]{{{\langle#1\rangle_{#2}}}}

\newcommand{\Fac}{{\operatorname{Fac}}}
\newcommand{\Ker}{{\operatorname{Ker}}}
\newcommand{\ACF}{{\operatorname{ACF}}}
\newcommand{\Image}{{\operatorname{Im}}}

\newcommand{\rk}{{\operatorname{rk}}}

\newcommand{\Diag}{{\operatorname{Diag}}}
\newcommand{\Id}{{\operatorname{Id}}}

\newcommand{\Lex}{{\operatorname{Lex}}}
\newcommand{\acl}{{\operatorname{acl}}}

\newcommand{\ua}{{\underline{a}}{}}
\newcommand{\ub}{{\underline{b}}{}}

\newcommand{\ud}{{\underline{d}}{}}

\newcommand{\ur}{{\underline{r}}{}}

\newcommand{\ut}{{\underline{t}}{}}
\newcommand{\uu}{{\underline{u}}{}}
\newcommand{\uv}{{\underline{v}}{}}
\newcommand{\uw}{{\underline{w}}{}}
\newcommand{\ux}{{\underline{x}}{}}
\newcommand{\uy}{{\underline{y}}{}}

\newcommand{\uz}{{\underline{z}}{}}

\newcommand{\uzero}{{\underline{0}}}
\newcommand{\ulambda}{{\underline{\lambda}}}

\newcommand{\umu}{{\underline{\mu}}}

\newcommand{\utau}{{\underline{\tau}}}

\newcommand{\li}{{\scalebox{0.5}{{$\operatorname{li}$}}}}
\newcommand{\ld}{{\scalebox{0.5}{{$\operatorname{ld}$}}}}

\newcommand{\sh}{{\scalebox{0.5}{{$\operatorname{sh}$}}}}
\newcommand{\va}{{\scalebox{0.5}{{$\operatorname{va}$}}}}

\newcommand{\lii}{{\scalebox{0.5}{$\operatorname{li}$}}}
\newcommand{\ldd}{{\scalebox{0.5}{$\operatorname{ld}$}}}

\newcommand{\tiluw}{{\Tilde{\underline{w}}}}

\newcommand{\uxvec}{{\underline{\Vec{x}}}}

\newcommand{\xvec}{{\Vec{x}}}

\newcommand{\tiluy}{{\underline{\Tilde{y}}}{}}

\newcommand{\Hfour}{{$(\operatorname{H4})$}}

\newcommand{\TKvs}{{T_{K\operatorname{-vs}}}}
\newcommand{\TKvsThe}{{T_{K\operatorname{-vs},\theta}}}
\newcommand{\TKvsTheC}{{T^C_{K\operatorname{-vs},\theta}}}

\newcommand{\RCF}{{\operatorname{RCF}}}

\newcommand{\tp}{{\operatorname{tp}}}

\newcommand{\LK}{{L_K}}

\newcommand{\TPtwo}{$\operatorname{TP}_2$}

\newcommand{\NATP}{$\operatorname{NATP}$}
\newcommand{\ATP}{$\operatorname{ATP}$}

\newcommand{\SOPone}{$\operatorname{SOP}_1$}
\newcommand{\SOPtwo}{$\operatorname{SOP}_2$}
\newcommand{\NSOPone}{$\operatorname{NSOP}_1$}
\newcommand{\NSOPtwo}{$\operatorname{NSOP}_2$}

\newcommand{\LKThe}{{L_{K,\theta}}}
\newcommand{\LRC}{{L_{R_C}}}

\newcommand{\Lr}{L_{\operatorname{r}}}

\newcommand{\emptyseq}{{\scalebox{0.5}{$\langle\rangle$}}}
\newcommand{\seq}[1]{{\scalebox{0.5}{$\langle #1 \rangle$}}}
\newcommand{\bigemptyseq}{{\langle\rangle}}
\newcommand{\bigseq}[1]{{\langle #1 \rangle}}

\newcommand{\TrafoLemma}{\hyperref[lemma_trafoo]{Transformation Lemma}}

\newcommand{\placeholderNotation}{\hyperref[def_placeholder_notation]{Placeholder-Notation}}

\newcommand{\commonBaseTheorem}{\hyperref[theorem_r_c_element]{Common Base Theorem}}

\newtheorem*{notation}{Notation}

\newtheorem{theorem}{Theorem}[section]
\newtheorem*{theorem*}{Theorem}
\newtheorem{theoremi}{Theorem}

\newtheorem{definition}[theorem]{Definition}
\newtheorem{fact}[theorem]{Fact}
\newtheorem{remark}[theorem]{Remark}
\newtheorem{lemma}[theorem]{Lemma}
\newtheorem{corollary}[theorem]{Corollary}
\newtheorem{example}[theorem]{Example}
\newtheorem{observation}[theorem]{Observation}

\newtheorem{notationnumber}[theorem]{Notation}

\newtheorem{subclaim}{Claim}[theorem]
\newtheorem{subdefinition}[subclaim]{Definition}
\newtheorem{subobservation}[subclaim]{Observation}
\newtheorem*{subdefinition*}{Definition}
\newtheorem*{subclaim*}{Claim}

\newenvironment{innerproof}[1][Proof]{\begin{proof}[#1]}{\end{proof}}

\usepackage{titlesec}
\titleformat{\section}
  {\normalfont\Large\bfseries\boldmath}{\thesection}{1em}{}
\titleformat{\subsection}
  {\normalfont\large\bfseries\boldmath}{\thesubsection}{1em}{}
\titleformat{\subsubsection}
  {\normalfont\normalsize\bfseries\boldmath}{\thesubsubsection}{1em}{}

\title{Model Theory of Generic Vector Space Endomorphisms IV: Preservation of NATP}
\author{Leon Chini}
\date{July 21, 2026}

\newcommand{\Addresses}{{
  \bigskip
  \footnotesize

  \textsc{Mathematisches Institut, Universität Bonn, Endenicher Allee 60, D-53115 Bonn, Germany}\par\nopagebreak
  \textit{E-mail address}: 
  \href{mailto:lchini@uni-bonn.de}{\tt lchini@uni-bonn.de}
}}

\begin{document}
\maketitle

\begin{abstract}
\noindent \noindent This paper further studies the model companion of an endomorphism acting on a vector space, possibly with extra structure. Let $T$ be a model-complete theory that $\varnothing$-defines an infinite $K$-vector space $\mathbb{V}$. In previous work, we introduced a family $\{T^C_\theta : C \in \mathcal{C}\}$ of extensions of the theory $T_\theta := T \cup \{\text{``$\theta$ is an endomorphism of $\mathbb{V}$''}\}$ that parameterizes all consistent extensions of the form  
$$  
    T_\theta \cup \left\{\sum\nolimits_{k}\bigcap\nolimits_{l}\operatorname{Ker}(\rho_{j, k, l}[\theta]) = \sum\nolimits_{k}\bigcap\nolimits_{l} \operatorname{Ker}(\eta_{j, k, l}[\theta]) : j \in \mathcal{J}\right\},  
$$  
where all sums and intersections are finite, all the $\rho[\theta]$'s and $\eta[\theta]$'s are polynomials over $K$ with $\theta$ plugged in, and $\mathcal{J}$ is some possibly infinite index set. We also presented a sufficient condition that implies that every $T^C_\theta$ has a model companion $T\theta^C$. In this paper, we show that, under this sufficient condition, the model companion $T\theta^C$ has $\operatorname{NATP}$, a neostability property recently introduced by Ahn and Kim, whenever $T$ does.
\end{abstract}

\tableofcontents
\section{Introduction}
\noindent This paper is a continuation of \cite{Chi25}, \cite{Chi25b}, and \cite{Chi26}, which deal with the model companion of an endomorphism acting on a vector space, possibly with extra structure. For the relevance of this line of inquiry and a description of earlier work, see the introduction of \cite{Chi25}. The goal of this paper is to show that this construction preserves the neostability property \NATP{} under the same assumption that implied companionability in \cite{Chi25}.

Recently, there has been a lot of interest in mutual generalizations of \NSOPone{} and $\operatorname{NTP}_2$.
Perhaps the most well-studied one is \NATP{}.
Here, \ATP{} stands for \textit{antichain tree property}, so \NATP{} stands for not having the antichain tree property.
This notion was first introduced by Ahn and Kim in \cite{AK24}.
We say that a formula $\varphi(\ux; \uy)$ has the antichain tree property if there is a binary tree of parameters $(\ub_\nu : \nu \in 2^{<\omega})$ such that $\set{\varphi(\ux; \ub_\nu) : \nu \in X}$ is consistent if and only if $X \subseteq 2^{<\omega}$ is an antichain, i.e., if $X$ contains no path of length $2$.
A theory has \ATP{} if some formula has \ATP{}.
It was shown, for example, in \cite{AKL23}, that the antichain tree property is always witnessed by a formula in a single free variable.
It was also shown there that, in order to show that a theory has \ATP{}, it is enough to find a formula $\varphi(\ux; \uy)$ and a tree $(\ub_\nu : \nu \in 2^{<\omega})$ such that antichains are consistent and paths of some fixed length $k \geq 2$ are inconsistent.
In the same paper, it was shown that \NATP{} is preserved by Mekler's construction for groups, that both Skolem arithmetic and the theory of atomless Boolean algebras have the antichain tree property, and that there are \NATP{} theories with both \TPtwo{} and \SOPone{}.
In \cite{AKLL25}, it was shown that several constructions preserve \NATP{}, among them lovely pairs for geometric theories and certain dense-codense expansions of vector spaces.
It was also shown in \cite{Han25} that \NATP{} theories satisfy a ``strongly bi-invariant Kim's Lemma''.

Such generalizations of Kim's Lemma play an important role in the study of mutual generalizations of \NSOPone{} and $\operatorname{NTP}_2$.
The original version of Kim's Lemma states that, in a simple theory, a formula $\varphi(\ux; \ub)$ divides over a set $A$ if and only if it divides along every Morley sequence over $A$ in $\tp(\ub/A)$.
Kim showed that Kim's Lemma characterizes simple theories \cite{Kim01}.
Similarly, variants of Kim's Lemma characterize both \NSOPone{} and $\operatorname{NTP}_2$ theories; see \cite{KR20} and \cite{Che14}.
In \cite{KR24b}, Kruckman and Ramsey generalized these two versions of Kim's Lemma to what is now often called \textit{New Kim's Lemma}.
In an attempt to find a syntactic characterization of theories satisfying New Kim's Lemma, they introduced the property $\operatorname{NBTP}$, which stands for not the \textit{bizarre tree property}.
Another syntactic property, called $\operatorname{NCTP}$, stands for not the \textit{comb tree property}.
It was introduced by Mutchnik in \cite{Mut26} under a slightly different name; here, we say that a theory has $\operatorname{NCTP}$ if it does not have $k$-$\operatorname{DCTP}_2$, as in Definition 4.8 in \cite{Mut26}, for any $k \geq 2$.
This property was shown to be equivalent to yet another generalization of Kim's Lemma by Hanson in \cite{Han25}.
Both the bizarre tree property ($\operatorname{BTP}$) and the comb tree property ($\operatorname{CTP}$) are, similarly to \ATP{}, defined by the consistency or inconsistency of certain partial types $\set{\varphi(\ux; \ub_\nu) : \nu \in X}$, where the parameters are indexed by tree structures.
It was also shown that $\operatorname{NBTP}$ and $\operatorname{NCTP}$ are preserved by Mekler's construction \cite{AK26}.
Currently, the following implications are known:
$$
\begin{array}{ccccccccc}
    \operatorname{NTP}_2 & \rightarrow & \operatorname{NBTP} & \rightarrow& \text{New Kim's Lemma} & \rightarrow& \operatorname{NCTP}&\rightarrow& \operatorname{NATP}\\
     & & \uparrow \\
     & & \operatorname{NSOP}_1,
\end{array}
$$
see \cite{KR24b} and \cite{Han25}.
To our knowledge, there is currently no example of a theory with $\operatorname{BTP}$ that is \NATP{}.

We now discuss the results from two of our previous papers, namely \cite{Chi25} and \cite{Chi25b}. Let $L$ be a language and $T$ a model-complete $L$-theory with an infinite $\varnothing$-definable $K$-vector space $\VV$ in every model.
Given a consistent set $C$ of constraints on an endomorphism, which encodes conditions of the form  
$$
    \sum\nolimits_{k}\bigcap\nolimits_{l}\Ker(\rho_{k, l}[\theta]) = \sum\nolimits_{k}\bigcap\nolimits_{l} \Ker(\eta_{k, l}[\theta])
$$  
(where all sums and intersections are finite,  
and all the $\rho[\theta]$'s and $\eta[\theta]$'s are polynomials over $K$ with the endomorphism $\theta$ plugged in), we define the following theory in the language $L_\theta := L \cup \set{\theta}$:  
$$
T^C_\theta := T \cup \set{\text{``$\theta$ is an endomorphism of $\VV$''}} \cup \set{\text{``$\theta$ satisfies the constraints in $C$''}}.
$$
Note that these sets of constraints are a simplification for the sake of this introduction and that we will actually use so-called kernel configurations instead (see Definition \ref{def_kernel_conf}).
In \cite{Chi25}, we showed that $T^C_\theta$ has a model companion $T\theta^C$ if $T$ satisfies a certain condition \Hfour{}, which corresponds to Definition 1.11 in \cite{dEl21b} and basically states that ``$\psi(\ux; \uy)$ implies no finite disjunction of non-trivial linear dependencies in $\ux$ over $\VV$'' can be expressed as an $L$-formula $\sigma_\psi(\uy)$ for every $L$-formula $\psi(\ux; \uy)$. These results, as well as all others relevant to this paper, will be recalled in Section \ref{sec_prelim_res}. In \cite{Chi25b}, we studied definable sets in $T\theta^C$, the completions of this theory, as well as the algebraic closure in its models.

The goal of the entire paper is to prove the following theorem:
\begin{theoremi}[Theorem \ref{theorem_preserve_natp}] \label{theorem_AAA}
    Suppose that $T$ satisfies \Hfour{}. Then $T\theta^C$, the model companion of $T^C_\theta$, is \NATP{} if and only if $T$ is \NATP{}.
\end{theoremi}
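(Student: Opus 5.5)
One direction is immediate. $T$ is the reduct of $T\theta^C$ to $L$: every model of $T\theta^C$ is a model of $T$, and by model-completeness of $T$ every model of $T$ embeds into a model of $T\theta^C$. Consequently any $L$-formula with \ATP{} in $T$ keeps \ATP{} in $T\theta^C$, since the consistency or inconsistency of the relevant partial types transfers. Hence if $T\theta^C$ is \NATP{}, then so is $T$. All the work is in the converse.

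For the converse, assume $T$ is \NATP{} and suppose towards a contradiction that $T\theta^C$ has \ATP{}. By the result of \cite{AKL23} quoted above, we may take a witnessing formula $\varphi(x;\uy)$ with $x$ a single variable, together with a tree $(\ub_\nu : \nu \in 2^{<\omega})$ in a monster model of $T\theta^C$. We will also use the known indiscernibility and Ramsey tools for \ATP{} to assume the tree is strongly indiscernible.

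The plan is to translate $\varphi$ into an $L$-formula. By the description of definable sets in $T\theta^C$ from \cite{Chi25b}, recalled in Section~\ref{sec_prelim_res}, every $L_\theta$-formula is equivalent modulo $T\theta^C$ to an $L$-formula applied to iterates of $\theta$. After expanding the parameters to their closures under $\theta$ (and under the relevant kernel-configuration data), $\varphi(x;\ub_\nu)$ becomes equivalent to
\[
\exists \uz\, \psi\bigl(x,\theta x,\dots,\theta^n x,\uz;\tilde b_\nu\bigr),
\]
with $\psi$ an $L$-formula and the witnesses $\uz$ coming from finitely many further $\theta$-images. Consider the $L$-formula $\chi(\ux;\tilde{\uy}) := \psi(\ux;\tilde{\uy}) \wedge \sigma$, where $\sigma$ is the \Hfour{}-formula $\sigma_\psi$ stating that $\ux$ satisfies no nontrivial linear dependency over $\VV$ beyond those forced by $\theta$. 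We then aim to show that $\chi$, with the tree $(\tilde b_\nu)$, has \ATP{} in $T$.

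The \ATP{} for $\chi$ splits into two requirements.

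1. \emph{Paths remain inconsistent.} This is the easy part: if a path were $L$-consistent, the realizing tuple could be turned into an honest $\theta$-orbit inside an extension model. Doing so requires an amalgamation or extension lemma for $T^C_\theta$: a tuple in $\VV$ that is linearly independent over a $\theta$-closed set (under the \Hfour{} condition) can be declared to be $(a,\theta a,\dots)$ consistently with the constraints $C$. That would make the path $L_\theta$-consistent, contradicting the choice of the tree. At this point we invoke \cite{AKL23} to accept inconsistency of paths of some fixed length $k$.

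2. \emph{Antichains remain consistent.} This is the main obstacle. If $X$ is an antichain and $a$ realizes $\{\varphi(x;\ub_\nu):\nu\in X\}$, then $(a,\theta a,\dots)$ realizes the $L$-translations. However, $\sigma$ must also hold, and the generic independence of $\theta$-images of $a$ over the $\theta$-closure of all parameters in $X$ may fail, since $a$ could be algebraic over parts of the tree.

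To handle this I would first apply the description of $\acl$ in $T\theta^C$ from \cite{Chi25b}, reducing to the case where $x$ is not $\theta$-algebraic, so that $(\theta^i a)_i$ is linearly independent over the $\theta$-closure of the parameters. Where needed, I would use the \Hfour{} choice of $\sigma_\psi$ to weaken the demand to ``no dependency over the specific parameter set.'' Strong indiscernibility of the tree then lets us pick a single realization that works uniformly for all antichains. Once both requirements are verified, $\chi$ has \ATP{} in $T$, which contradicts the assumption that $T$ is \NATP{}.
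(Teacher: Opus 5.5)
Your reduct argument for the easy direction is fine, though the justification is slightly off. What is needed is that every model of $T$ has an elementary extension carrying a $C$-endomorphism, which then extends to an existentially closed model of $T^C_\theta$; model-completeness of $T$ only makes the resulting embedding elementary.

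The converse has a genuine gap, and it sits exactly where the paper does its work.

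By Fact~\ref{theorem_big_fml_preceise}, the formula becomes $\exists \ux \in \VV : \psi_\theta(\ux;\uz;\uw) \wedge S(\ux;\uy)$. The existentially quantified elements of $\VV$ carry their whole $\theta$-orbits and sequence-system constraints, and on an antichain the witnesses $\ux_\nu$ vary with $\nu$. Turning this into an $L$-formula by quantifying over placeholders loses the orbit constraint. Your repair via $\sigma_\psi$ does not work: $\sigma_\psi(\uw)$ is a condition on the parameters, not on $\ux$, and ``linearly independent over $\VV$'' is not a property of a tuple inside the model. The only first-order substitute (Fact~\ref{lemma_hfour_fml}) excludes finitely many dependencies definable over the parameters of one instance.

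Consequently both of your requirements fail:
\begin{itemize}
\item \textbf{Requirement 1.} $\chi(x;\tilde b_{\nu_1}) \wedge \chi(x;\tilde b_{\nu_2})$ can be $L$-consistent while forcing a linear dependency, among the placeholders of both instances over $\tilde b_{\nu_1}\tilde b_{\nu_2}$, that neither conjunct excludes. Then Theorem~\ref{theorem_big_characterization} cannot lift the realization to $\theta$-orbits.
\item \textbf{Requirement 2.} It fails from the other side. An antichain type in $T\theta^C$ may itself force linear relations between the witnesses at different nodes and the free variables. So any independence clause strong enough for requirement 1 can be violated by the actual realizations.
\end{itemize}
The $\acl$ reduction to a non-algebraic singleton does not touch this. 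The offending dependencies involve the existential witnesses, $x$ need not lie in $\VV$, and for algebraic $C$ no orbit is linearly independent anyway.

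What is missing are the paper's two mechanisms.

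\textbf{Step I} (Lemma~\ref{lemma_atp_step_11}). The existential variables are split into a shared part $\ux_\sh$, whose witness is uniform across an antichain, and a varying part $\ux_\va$, governed by a staged $C$-sequence-system. One then takes a special witness minimizing $(\rk_\va,\deg_\va,\rk_\sh,\deg_\sh)$. Suppose some antichain type implied a disjunction of dependencies. Ramsey theory for $\operatorname{age}(\omega^\omega)$ isolates a single dependency. After absorbing the other nodes' contribution into a new shared variable, it becomes an equation with a new parameter, and the Transformation Lemma for staged systems strictly lowers the complexity, contradicting minimality. Hence antichain $L$-types imply no dependency, while the type of $\set{\bigemptyseq, \bigseq{0}}$ must imply one.

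\textbf{Step II} (Lemma~\ref{lemma_has_li_atp_impl_atp}). This ``linearly dependent \ATP{}'' is converted into genuine \ATP{}:
\begin{itemize}
\item Lemma~\ref{lemma_make_intersection_definable} and packing $2^n$ nodes make the forced dependency involve only $\ux_\sh\ux_{\seq{0}}$.
\item A shifted tree together with Lemma~\ref{lemma_fint_set_eq_or} makes the forced finite sets definable from a single node's parameters.
\item Only then does excluding them node by node keep antichains consistent while making paths of length $N+1$ inconsistent.
\end{itemize}
This is where the fixed $k$ you want to quote from \cite{AKL23} actually comes from. ``Strong indiscernibility lets us pick a single realization'' is not a substitute for either step.
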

\noindent The idea behind the proof of Theorem \ref{theorem_AAA} consists of essentially two steps:
\begin{enumerate}[(I)]
    \item Show that if $T\theta^{C}$ has the antichain tree property and $T$ satisfies \Hfour{}, then there is an $L$-formula $\psi(\ux_\sh\ux_\va; \uz; \uw)$ and a binary tree $(\ud_\nu : \nu \in 2^{<\omega})$ of parameters in some $\mm \models T$ such that:
    \begin{enumerate}[(i)]
        \item The type $\set{\psi(\ux_\sh\ux_\nu; \uz; \ud_\nu) : \nu \in X}$ implies no finite disjunction of non-trivial linear dependencies in $\ux_\sh(\ux_\nu : \nu \in X)$ over $\VV$ whenever $X \subseteq 2^{<\omega}$ is an antichain.
        \item The type $\set{\psi(\ux_\sh\ux_\nu; \uz; \ud_\nu), \psi(\ux_\sh\ux_{\mu}; \uz; \ud_{\mu})}$ implies some finite disjunction of non-trivial linear dependencies in $\ux_\sh\ux_\nu\ux_\mu$ over $\VV$ whenever $\nu$ and $\mu$ form a path.
    \end{enumerate}
    This can essentially be seen as $T$ having some kind of ``linearly dependent antichain tree property'', especially if $\ux_\sh$ is empty.
    \item Show that this ``linearly dependent antichain tree property'' implies the actual antichain tree property.
\end{enumerate}
In Section \ref{sec_step_red_ez}, we show Step (I) for the ``easiest-to-work-with'' set of constraints $C_0$, which requires that every $\rho[\theta]$ is injective unless $\rho = 0$.
In Section \ref{sec_step_ii_always_ez}, we prove Step (II), which is completely independent of the constraints $C$ considered. Finally, we prove Step (I) for every (reasonable) set of constraints $C$ in Section \ref{sec_proof_general}. Note that the proof idea in Section \ref{sec_proof_general} is the same as in Section \ref{sec_step_red_ez}; however, working with general sets of constraints makes the proof significantly more technical.

Mutchnik showed in \cite{Mut26} that a theory is \NSOPone{} (not the \textit{$1$-strong order property}) if and only if it is \NSOPtwo{} (not the \textit{$2$-strong order property}). We say that a formula $\varphi(\ux; \uy)$ has the $2$-strong order property if there is a binary tree of parameters $(\ub_\nu : \nu \in 2^{<\omega})$ such that $\set{\varphi(\ux; \ub_\nu) : \nu \in X}$ is consistent if and only if $X \subseteq 2^{<\omega}$ is a path. Note that this is essentially the definition of the antichain tree property with paths and antichains swapped. One could now try to prove that our construction preserves \NSOPone{} by simply replacing antichains with paths and vice versa in the proof presented. Although some non-trivial problems arise when doing so, it seems extremely likely to the author that these can be dealt with and that our construction therefore preserves \NSOPone{}. Note that \NSOPone{} theories have been studied more extensively than \NATP{} theories.
For example, there are Kim-Pillay-style characterizations of \NSOPone{} theories, which state that a theory is \NSOPone{} if and only if there is a ternary relation on small subsets of a monster model satisfying certain axioms, and that Kim-independence over models is this relation in that case.
Several variants have been proved, beginning with the pioneering work of Chernikov and Ramsey (Proposition 5.3 in \cite{CR16}). A major subsequent development is due to Kaplan and Ramsey (Theorem 9.1 in \cite{KR20}), and an analogue in the setting of positive logic was later obtained by Dobrowolski and Kamsma (Theorem 9.1 in \cite{DK22}).
Such a Kim-Pillay-style characterization has been used by d'Elbée in \cite{dEl25} to show that the theory $\operatorname{ACFH}$ is \NSOPone{} and that Kim-independence over models is essentially forking-independence in $\ACF{}$ of the algebraic closures in $\operatorname{ACFH}$ of all sets involved. Hence, another conceivable approach to proving that our construction preserves \NSOPone{} is to show that a similarly defined ternary relation satisfies all axioms described by such a Kim-Pillay-style characterization. This would come with the advantage that we would also obtain a description of Kim-forking over models, but in full generality it might require additional assumptions besides \Hfour{}, such as some variant of condition (A) given in Theorem 4.1 in \cite{dEl21b}.

One might also try to generalize the proof strategy of Theorem \ref{theorem_AAA} to the neostability properties $\operatorname{NBTP}$ or $\operatorname{NCTP}$, but even if our construction actually preserves these properties, it seems likely to the author that other strategies are needed to prove this. \\

\noindent \textbf{Acknowledgment.} The author would like to thank Christian d'Elbée for reading an earlier draft of this paper and for providing valuable feedback and suggestions.
The author would also like to thank Philipp Hieronymi for general support.

\section{Preliminary Results} \label{sec_prelim_res}

In this section, we provide a recap of all relevant results from \cite{Chi25} and \cite{Chi25b}.

\subsection{The setting}  \label{sec_setting}

Let $L$ be a first-order language, let $T$ be a model-complete $L$-theory, and let $K$ be a field.
Furthermore, assume that the theory of $K$-vector spaces is definable in $T$.
By this we mean that there are $L$-formulas $\Omega_{\VV}(\ux)$, $\Omega_{0}(\ux)$, $\Omega_{+}(\ux_1, \ux_2, \uy)$, and $\Omega_{\lambda\cdot}(\ux, \uy)$ for each $\lambda \in K$ such that, in every model $\mm \models T$, they define an infinite $K$-vector space $(\VV, 0, +, (\lambda \cdot)_{\lambda \in K})^{\mm}$.
In the case $K = \QQ$, one may also view $(\VV, 0, +)^{\mm}$ as a torsion-free divisible abelian group, since these are precisely the $\QQ$-vector spaces.
If a model $\mm \models T$ is given, then $\VV$ denotes $\VV^\mm$.
If the model is denoted with $\mm'$ instead of $\mm$, then we will write $\VV'$ instead of $\VV$.
If no model of $T$ is clear from the context, then we will still use the letter $\VV$ to denote a $K$-vector space.
We may say ``vector space'' instead of ``$K$-vector space'', ``polynomial'' instead of ``$K$-polynomial'', ``linearly independent'' instead of ``$K$-linearly independent'', and so on. Definable will always mean $\varnothing$-definable.

\begin{example}
    \label{example_main}
    Two of our main examples are as follows:
    \begin{enumerate}[(i)]
        \item Let $\LK = \set{0, +, (\lambda \cdot)_{\lambda\in K}}$ and let $\TKvs$ be the theory of $K$-vector spaces, with the obvious formulas.
        Then, for any $\mm \models \TKvs$, we choose $(\VV, 0, +, (\lambda \cdot)_{\lambda\in K}) = \mm$.
        Similarly, we can also work with ordered divisible abelian groups, since these are precisely ordered $\QQ$-vector spaces.
        \item Let $K = \QQ$, $\Lr = \set{0, 1, +, \cdot, <}$, and $T = \RCF$, the theory of real closed fields.
        Then, for any $\rr \models \RCF{}$, we choose $(\VV, 0, +, (q \cdot)_{q\in \QQ})^{\rr}$ to be $(\rr_{>0}, 1, \cdot, (x \mapsto x^q)_{q\in \QQ})$.
    \end{enumerate}
\end{example}

\noindent Notationally, we will treat $\VV$ as a unary set, or even as a separate sort.
Any $\LK$-term or formula can then be viewed as an $L$-definable function or an $L$-formula, respectively.
Note that $0$, $+$, and $(\lambda\cdot)_{\lambda\in K}$ need not belong to $L$, so they are not necessarily $L$-terms.
For a given theory, there can be multiple definable vector spaces, as in the case of $\RCF{}$.
Thus, whenever a theory $T$ is given, we actually mean the tuple $(T, \Omega_{\VV}, \Omega_{0}, \Omega_{+}, (\Omega_{\lambda\cdot})_{\lambda \in K})$.

We now define $L_\theta := L(\theta) := L \cup \set{\theta}$, where $\theta$ is a function symbol not contained in $L$.
This has the drawback that, for example, if $\rr \models \RCF$ and the positive elements are viewed as a $\QQ$-vector space, then $\theta$ must also be defined outside $\VV = \rr_{>0}$.
In this case, one might try to set $\theta(0) = 0$ and $\theta(-1) \in \set{1, -1}$ so as to extend $\theta$ to an endomorphism of $(\rr, 1, \cdot)$, but then the ambient structure is no longer a vector space.
For the sake of uniformity, we instead define $\theta(x)$ to be the neutral element of $\VV$ for all $x \not\in \VV$ and set
$$
T_\theta := T \cup \set{\text{``$\theta_{\restriction \VV}$ is a $(\VV, 0, +, (\lambda \cdot)_{\lambda\in K})$-endomorphism''}} \cup \set{\forall x \not\in \VV: \theta(x) = 0}.
$$
In particular, $\theta^n(x) = 0$ for all $x \not\in \VV$ and all $n > 0$.
For consistency, we also define $\theta^0(x) = 0$ whenever $x \not\in \VV$, and $x + y = 0$ whenever either $x \not\in \VV$ or $y \not\in \VV$.
Practically, we will ignore the behavior of $\theta$ outside of $\VV$ and simply treat $\theta$ as a function defined only on $\VV$.
For example, we set $\Ker(\theta) := \set{v \in \VV : \theta(v) = 0}$, and similarly define the kernel for any function that is an endomorphism of $\VV$.

Note that if $\VV$ is an $n$-ary set, then we actually need $n$ different $n$-ary function symbols $\theta_1, \dots, \theta_n$ rather than a single unary function symbol $\theta$.
However, as mentioned above, we will treat elements of $\VV$ as singletons in order to simplify notation and will therefore pretend that $\theta$ is unary.

\begin{definition}
    Given a polynomial $\rho \in K[X]$ and any $d \geq \deg(\rho)$, we let $\rho[\theta]$ denote the endomorphism of $\VV$ defined by
    $
    \rho[\theta](v) := \sum\nolimits_{i=0}^{d} (\rho)_i \cdot \theta^i(v)
    $
    for all $v \in \VV$. Here each $(\rho)_i$ is the respective coefficient of $X^i$ in $\rho$.
\end{definition}

\begin{notation}
    If $\theta$ is clear from the context we may write $\Ker(\rho)$ and $\Image(\rho)$ instead of $\Ker(\rho[\theta])$ and $\Image(\rho[\theta])$.
\end{notation}

\noindent It is easy to check that $\rho[\theta] + \eta[\theta] = (\rho + \eta)[\theta]$ and $\rho[\theta] \circ \eta[\theta] = (\rho \cdot \eta)[\theta]$. Many classical facts for polynomials, such as Bézout's identity or Euclidean division, can be translated to this setting. The following can be shown with some Euclidean divisions:

\begin{fact}[Lemma 2.7 in \cite{Chi25}]\label{lemma_bound_term_light}
    Let $\ux = (x_1, \dots, x_n)$ be a tuple of variables, and assume that a formula
    $$
        E(\ux; \uy) := \bigwedge\nolimits_{k=1}^n \xi_k[\theta](x_k) = \sum\nolimits_{l=1}^{k-1} Q_{k, l}[\theta](x_l) + \mu_k(\uy)
    $$
    is given, where the $\xi_k$ are non-zero polynomials, the $Q_{k, l}$ are polynomials, and the $\mu_k(\uy)$ are $\LKThe$-terms.
    Then any $\LKThe$-term $\tau(\ux; \uy)$ is equivalent modulo $\TKvsThe \cup \set{E(\ux; \uy)}$ to a term of the form
    $$
        \sum\nolimits_{k=1}^n \rho_k[\theta](x_k) + \mu'(\uy)
    $$
    with $\deg(\rho_k) < \deg(\xi_k)$ for all $k$, where $\mu'(\uy)$ is an $\LKThe$-term.
\end{fact}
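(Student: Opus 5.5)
We reduce every term to a normal form by induction on the complexity of $\tau$, followed by a downward induction on the variable index that eliminates high degrees. First, every $\LKThe$-term $\tau(\ux;\uy)$ is, modulo $\TKvsThe$, equal to a term of the form $\sum_{k=1}^n \rho_k[\theta](x_k) + \mu(\uy)$ with arbitrary polynomials $\rho_k$ and some $\LKThe$-term $\mu(\uy)$. This follows by induction on $\tau$. Variables $x_k$ are $1[\theta](x_k)$, and terms in $\uy$ alone go into $\mu$. The cases of $0$, $+$ and $\lambda\cdot$ are handled by adding or scaling the coefficient polynomials. For the case $\theta(\sigma)$, we use that $\theta$ is additive and that $\theta\circ\rho[\theta] = (X\rho)[\theta]$, both of which are valid in $\TKvsThe$. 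Here we only use the identities $\rho[\theta]+\eta[\theta]=(\rho+\eta)[\theta]$ and $\rho[\theta]\circ\eta[\theta]=(\rho\eta)[\theta]$ noted above.

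Next, we lower the degrees, beginning with the largest index $k=n$ and proceeding downward. Suppose the current term is $\sum_{k} \rho_k[\theta](x_k)+\mu(\uy)$, with $\deg \rho_l<\deg\xi_l$ already achieved for all $l>k$. Since $\xi_k\neq 0$, Euclidean division in $K[X]$ gives $\rho_k = q_k\xi_k + r_k$ with $\deg r_k<\deg \xi_k$. Then
$$\rho_k[\theta](x_k) = q_k[\theta]\bigl(\xi_k[\theta](x_k)\bigr) + r_k[\theta](x_k),$$
and under $E(\ux;\uy)$ this equals
$$\sum_{l<k} (q_kQ_{k,l})[\theta](x_l) + q_k[\theta](\mu_k(\uy)) + r_k[\theta](x_k),$$
because $q_k[\theta]$ is an endomorphism of $\VV$. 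The term $q_k[\theta](\mu_k(\uy))$ is an $\LKThe$-term in $\uy$ and is absorbed into $\mu$. The substitution changes only the coefficients of $x_l$ with $l<k$, so the coefficients of $x_l$ with $l\ge k$ remain reduced. After treating $k=n,n-1,\dots,1$, every coefficient has degree below $\deg\xi_k$.

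The only point requiring care is the order of elimination. We must argue that the backward induction terminates, which holds because the $k$-th conjunct of $E$ only feeds into variables $x_l$ with $l<k$ (the system is triangular). If one instead reduced in increasing order of $k$, reducing a later variable would reintroduce high-degree coefficients on earlier ones. With the decreasing order this does not happen, and the rest is routine bookkeeping.
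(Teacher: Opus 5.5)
Your proof is correct and is essentially the argument the paper has in mind: the paper (citing Lemma 2.7 of \cite{Chi25}) only says the fact follows from some Euclidean divisions, and your normalization to $\sum_k \rho_k[\theta](x_k)+\mu(\uy)$, followed by dividing each $\rho_k$ by $\xi_k$ in decreasing order of $k$ using the triangular shape of $E(\ux;\uy)$, is exactly that. One small wording point: termination is trivial since there are only $n$ steps. What triangularity actually guarantees is that coefficients already reduced stay reduced, and you do state this correctly earlier in the proposal.
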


\subsection{Kernel configurations and extensions of $T_\theta$}

\noindent As stated in the introduction, we consider a family $\set{T^C_\theta : C \in \Cc}$ of extensions of $T_\theta$.

\begin{notation}
    We let $\Kp{}$ denote the set of all monic irreducible polynomials over $K$.
\end{notation}

\noindent We start by defining our index set $\Cc$:

\begin{definition} \label{def_kernel_conf}
    We call a pair $(c, d)$ a \textbf{kernel configuration} if
    \begin{enumerate}[(i)]
        \item $c \colon \Kp{} \to \NN \cup \set{\infty}$ is a function; and
        \item $d \in \NN_{> 0} \cup \set{\infty}$ is either $\infty$ or satisfies $d = \sum_{f \in \Kp{}} \deg(f) \cdot c(f)$.
    \end{enumerate}
    We let $\Cc$ denote the set of all kernel configurations.
    Given such a kernel configuration $C = (c, d) \in \Cc$, we set $C(f) := c(f)$ for all $f \in \Kp{}$.
    We define the \textbf{degree} of $C$ by $\deg(C) := d$.
    We say that $C$ is \textbf{algebraic} if $\deg(C) < \infty$.
    In this case, we define the \textbf{minimal polynomial} of $C$ by $\mipo(C) := \prod_{f \in \Kp{}} f^{C(f)}$
    (since $\deg(C) < \infty$, only finitely many factors are different from $1$, so this product is well defined).
    We say that $C$ is \textbf{transcendental} if $\deg(C) = \infty$.
    We let $\Ccalg$ and $\Cctrans$ denote the sets of all algebraic and all transcendental kernel configurations, respectively.
\end{definition}

\noindent Note that the set of kernel configurations depends on the field $K$.
We are now ready to define the family $\set{T^C_\theta : C \in \Cc}$:

\begin{definition} \label{def_T_C_theta}
    Given $C \in \Cc$ and an endomorphism $\theta \colon \VV \to \VV$, we say that $\theta$ is a \textbf{$\mathbf{C}$-endomorphism} if one of the following holds:
    \begin{enumerate}[(i)]
        \item $C$ is algebraic and $\Ker(\mipo(C)) = \VV$, that is, $\mipo(C)[\theta] = 0$.
        \item $C$ is transcendental and $\Ker(f^{C(f)}) = \Ker(f^{C(f)+1})$ for all $f \in \Kp{}$ with $C(f) < \infty$.
    \end{enumerate}
    We define $T^C_\theta := T_\theta \cup \set{\text{``$\theta$ is a $C$-endomorphism''}}$.
\end{definition}

\noindent The family $\set{T^C_\theta : C \in \Cc}$ might seem a bit arbitrary at first; however, notice that any consistent extension of the form
$$
    T_\theta \cup \Set{\sum\nolimits_{k}\bigcap\nolimits_{l}\Ker(\rho_{j, k, l}[\theta]) = \sum\nolimits_{k}\bigcap\nolimits_{l} \Ker(\eta_{j, k, l}[\theta]) : j \in \jj},
$$
where all sums and intersections are finite,  all the $\rho_{j, k, l}$'s and $\eta_{j, k, l}$'s are polynomials over $K$, and $\jj$ is a potentially infinite index set, is equivalent to some $T^C_\theta$ (see Corollary 2.13 in \cite{Chi25} - the proof heavily uses consequences of Bézout's identity). Also note that every $T^C_\theta$ is consistent, and that $T^C_\theta \not\equiv T^{C'}_\theta$ whenever $C \neq C'$ (see Lemma 2.19 in \cite{Chi25}). So the set $\Cc$ parametrizes all consistent extensions as described above, in some sense. Some concrete examples:
\begin{enumerate}[(i)]
    \item Let $C_\infty$ be transcendental with $C_\infty(f) = \infty$ for all $f \in \Kp{}$. One can check that $T_\theta^{C_\infty}$ is $T_\theta$. 
    \item Let $C_0$ be transcendental with $C_0(f) = 0$ for all $f \in \Kp{}$. One can check that $T_\theta^{C_0}$ is $T_\theta \cup \set{\text{``$\rho[\theta]$ is injective''} : \rho \in K[X] \setminus \set{0}}$. In an existentially closed model of $T_\theta^{C_0}$, the maps $\rho[\theta]$ are isomorphisms, so one can solve systems of equations of the form $\bigwedge_{k=1}^m \sum_{l=1}^n \rho_{k, l}[\theta](x_l) = y_k$ just like in a $K(X)$-vector space.
    \item Let $C_f$ be algebraic with $\mipo(C_f) = f \in \Kp{}$. One can, similarly to (ii), solve such systems of equations as in a $K[X]/(f)$-vector space. Here, however, one has the potential advantage that the sequence $\set{\theta^i(v) : i \in \omega}$ is already determined by $\set{\theta^i(v) : 0 \leq i < \deg(f)}$.
\end{enumerate}
These are, in some cases, the ``easiest'' examples to work with, and it can often be helpful to first work with one of these kernel configurations and then turn to the general case. By contrast, the ``hardest'' kernel configurations to work with are those for which $\set{f \in \Kp{} : 0 < C(f) < \infty}$ is infinite.

In \cite{Chi25}, we also showed that every $T^C_\theta$ is inductive (see Lemma 3.2 there). Hence, the model companion of each $T^C_\theta$ exists if and only if the class of existentially closed models of $T^C_\theta$ is elementary. In this case, the model companion is exactly the axiomatization.

One can easily check that if $C$ and $C'$ are algebraic kernel configurations, then $C = C'$ if and only if $\mipo(C) = \mipo(C')$. Furthermore, we have $\deg(C) = \deg(\mipo(C))$.

\begin{fact} \label{fact_trivvivi}
    If $C$ is \textbf{trivial}, that is, if $\deg(C) = 1$, then the models of $T_\theta^C$ and $T$ are interdefinable.
    Hence, the model companion of $T_\theta^C$ is $T_\theta^C$ itself.
\end{fact}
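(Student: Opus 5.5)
If $\deg(C) = 1$, then $\mipo(C)$ is a monic irreducible polynomial of degree $1$, that is, $\mipo(C) = X - \lambda$ for a unique $\lambda \in K$. This holds because $\deg(C) = \sum_f \deg(f)\cdot C(f) = 1$ forces exactly one $f$ with $C(f) = 1$, and this $f$ has degree $1$. By Definition \ref{def_T_C_theta}(i), a model of $T^C_\theta$ is thus a model of $T_\theta$ in which $(\theta - \lambda)[\theta] = 0$. In other words, $\theta(v) = \lambda \cdot v$ for all $v \in \VV$, and $\theta(x) = 0$ for $x \notin \VV$, by the convention fixed in Section \ref{sec_setting}.

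The plan is to exhibit mutually inverse translations of the classes of models. Given $\mm \models T$, define $\theta^\mm$ by $\theta(x) = \lambda\cdot x$ if $\Omega_\VV(x)$ and $\theta(x) = 0$ otherwise; here $0$ is the element defined by $\Omega_0$ and $\lambda\cdot$ is the function defined by $\Omega_{\lambda\cdot}$. This expansion is $\varnothing$-definable in $\mm$ by the $L$-formula
\[
(\Omega_\VV(x) \wedge \Omega_{\lambda\cdot}(x,y)) \vee (\neg\Omega_\VV(x) \wedge \Omega_0(y)).
\]
The resulting structure is a model of $T^C_\theta$, since $\lambda\cdot$ is an endomorphism and $\mipo(C)[\theta] = 0$ holds by construction. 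Conversely, every model of $T^C_\theta$ has its $\theta$ given by exactly this formula, so the reduct to $L$ is a model of $T$ whose unique admissible expansion is the original structure. Hence the two classes of models are in definitional bijection. By induction on $L_\theta$-formulas, replacing each occurrence of $\theta$ by the defining formula above, every $L_\theta$-formula is equivalent modulo $T^C_\theta$ to an $L$-formula.

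For the model companion statement, we show that $T^C_\theta$ is model complete. Let $\mm \subseteq \mathcal{N}$ be models of $T^C_\theta$. Their $L$-reducts are models of $T$ with $\mm \subseteq \mathcal{N}$, so $\mm \preceq_L \mathcal{N}$ by model completeness of $T$. Every $L_\theta$-formula is equivalent modulo $T^C_\theta$ to an $L$-formula, so $\mm \preceq_{L_\theta} \mathcal{N}$. Thus $T^C_\theta$ is model complete, and it is therefore its own model companion.

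There is no real obstacle here. The only points needing care are the convention that $\theta$ vanishes off $\VV$, which is needed to make $\theta$ $L$-definable everywhere, and the remark that the definitions $\Omega_\bullet$ are $\varnothing$-definable and uniform across all models.
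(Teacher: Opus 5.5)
Your proof is correct. It follows exactly the reasoning the statement itself encodes; the paper records this as a Fact from earlier work and does not spell out a proof. In your argument, $\deg(C)=1$ forces $\mipo(C)=X-\lambda$, so $\theta$ is the $\varnothing$-definable map $v\mapsto\lambda\cdot v$ on $\VV$ (and $0$ off $\VV$). This makes $T^C_\theta$ a definitional expansion of the model-complete theory $T$, hence model complete and therefore its own model companion.

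The only slip is notational: the axiom is $(X-\lambda)[\theta]=0$, not $(\theta-\lambda)[\theta]=0$.
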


\noindent We will often implicitly assume that $C$ is non-trivial.

\begin{notation}
    We introduce a few more notations for working with a kernel configuration $C \in \Cc$:
    \begin{enumerate}[(i)]
        \item Given $f \in \Kp{}$ with $C(f) < \infty$, we write $f^C$ instead of $f^{C(f)}$, $f^{C+k}$ instead of $f^{C(f) + k}$, and so on.
        \item Given a finite set $F \subseteq \Kp{}$ with $C(f) < \infty$ for all $f \in F$, we set \hbox{$F^C := \prod_{f \in F} f^C$}.
        \item We define the following subsets of $\Kp{}$:
        $$
        \Kp{C<\infty} := \set{f \in \Kp{} : C(f) < \infty}, \quad \Kp{0<C<\infty} := \set{f \in \Kp{} : 0 < C(f) < \infty},
        $$
        $$
        \Kp{C=0} := \set{f \in \Kp{} : C(f) = 0}, \quad \text{and} \quad \Kp{C=\infty} := \set{f \in \Kp{} : C(f) = \infty}.
        $$
    \end{enumerate}
\end{notation}

\noindent With the above, for any algebraic kernel configuration $C \in \Ccalg$, we have $\deg(C) = \deg(\mipo(C))$ and
    $$
    \mipo(C) = \prod\nolimits_{f \in \Kp{0<C<\infty}} f^C = (\Kp{0<C<\infty})^C.
    $$
\noindent One should note that any $\LKThe$-sentence that holds in $\TKvsTheC$ also holds in $T^C_\theta$ (here $\LKThe$ is the language of $K$-vector spaces with an endomorphism).
To be more precise, one has to modify the sentence accordingly, e.g., quantifiers of the form $\exists x$ must be replaced with $\exists x \in \VV$ and the formulas that define addition/scalar multiplication must be used instead of the function symbols in $\LKThe$.
In general, it also turns out that whenever a model $(\mm, \theta)$ of $T^C_\theta$ is existentially closed, $(\VV, \theta)$ is an existentially closed model of $\TKvsTheC$ (see Remark 3.4 in \cite{Chi25}).

\subsection{$C$-image-completeness}

\noindent Note that the condition of $\theta$ being a $C$-endomorphism does not (at least in the transcendental case) imply any kind of equations that involve the image of $\rho[\theta]$ for some $\rho \in K[X]$.
In Remark 2.21 in \cite{Chi25}, we discussed that it is very unlikely that considering expansions of $T_\theta$ that also impose equations on the images (or mixed equations with sums and intersections of both kernels and images) will lead to new model companions.
However, the following condition holds in any existentially closed model of $T^C_\theta$ and is fundamental to understanding the structure of these models:

\begin{definition} \label{def_C_image_comple}
    We say that an endomorphism $\theta \colon \VV \to \VV$ is \textbf{$\mathbf{C}$-image-complete} if it is a $C$-endomorphism and $\Image(f^{C+1}) = \Image(f^C)$  holds for all $f \in \Kp{C<\infty}$ (recall $\Image(f^C) := \Image(f^{C(f)}[\theta])$).
    We may call a model $(\mm, \theta) \models T_\theta$ \textbf{$\mathbf{C}$-image-complete} if the endomorphism $\theta$ is $C$-image-complete.
\end{definition}

\begin{fact} \label{fact_c_image_comple}
    The following statements hold:
    \begin{enumerate}[(i)]
        \item If $C$ is algebraic, then every $C$-endomorphism is $C$-image-complete (see Lemma 3.11 in \cite{Chi25}).
        \item If $C$ is any kernel configuration and $(\mm, \theta) \models T_\theta^C$ is existentially closed, then $(\mm, \theta)$, or equivalently $\theta$, is also $C$-image-complete (see Corollary 3.13 in \cite{Chi25}).
    \end{enumerate}
\end{fact}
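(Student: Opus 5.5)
The plan is to prove the two parts separately. Part (i) is pure algebra via Bézout's identity. Part (ii) uses the standard existential-closedness argument: realize the missing preimage in an extension and pull it back down.

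\emph{Part (i).} Assume $C$ is algebraic and $\theta$ is a $C$-endomorphism, so $\mipo(C)[\theta]=0$. Fix $f\in\Kp{C<\infty}$ and write $\mipo(C)=f^{C}\cdot g$ with $\gcd(f,g)=1$; this also covers $C(f)=0$, where $g=\mipo(C)$. By Bézout there are $a,b\in K[X]$ with $af+bg=1$. The inclusion $\Image(f^{C+1})\subseteq\Image(f^C)$ is trivial. Conversely, for $v=f^C[\theta](w)$ we get
$$v=f^C[\theta]\big((af+bg)[\theta](w)\big)=f^{C+1}[\theta](a[\theta](w))+b[\theta]\big(\mipo(C)[\theta](w)\big)=f^{C+1}[\theta](a[\theta](w)),$$
which gives equality.

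\emph{Part (ii).} By (i) we may assume $C$ is transcendental. Let $(\mm,\theta)\models T^C_\theta$ be existentially closed, fix $f\in\Kp{C<\infty}$ and $v=f^C[\theta](w)\in\VV$. Since the statement ``$\exists u\in\VV:\ f^{C+1}[\theta](u)=v$'' is existential with parameters in $\mm$, it suffices to find an extension $(\mm',\theta')\supseteq(\mm,\theta)$ that is a model of $T^C_\theta$ and contains such a $u$.

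The first step is purely module-theoretic. View $\VV$ as a $K[X]$-module and set
$$W:=(\VV\oplus K[X])/K[X]\cdot(v,-f^{C+1}).$$
Since $K[X]$ is a domain, $\VV$ embeds into $W$, and the class $u$ of $(0,1)$ satisfies $f^{C+1}u=v$. Next we check that $\Ker(g^{C})=\Ker(g^{C+1})$ in $W$ for every $g\in\Kp{C<\infty}$. Suppose $g^{C+1}$ kills the class of $(x,p)$, so $g^{C+1}x=rv$ and $g^{C+1}p=-rf^{C+1}$ for some $r\in K[X]$.
\begin{itemize}
\item If $g\neq f$, then coprimality gives $r=g^{C+1}s$ and $p=-sf^{C+1}$. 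Hence $(x,p)\equiv(x+sv,0)$ with $x+sv\in\Ker(g^{C+1})=\Ker(g^C)$ in $\VV$.
\item If $g=f$, then $p=-r$. The point is to show that $f^C(x,p)$ lies in the relation submodule, i.e.\ that $f^C r=sf^{C+1}$ and $f^Cx=sv$ for some $s$. Write $r=sf$ after checking $f\mid r$. This reduces to $x-sw\in\Ker(f^{C+1})=\Ker(f^C)$, which is exactly where both the $C$-endomorphism hypothesis on $\VV$ and the assumption $v\in\Image(f^C)$ are used.
\end{itemize}
I expect this second case, $g=f$, to be the main (if modest) obstacle: one must verify divisibility of $r$ by $f$ from $f^{C+1}x=rf^Cw$ together with the kernel stabilization. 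If a direct divisibility argument fails, I would instead reduce modulo the $f$-primary part using Bézout.

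The second step realizes $W$ inside a model of $T$. Take an elementary extension $\mm'\succeq\mm$ in which $\VV'$ has dimension $\kappa$ over $\VV$, with $\kappa$ infinite and large; $\mm'\models T$ automatically. Decompose $\VV'=W\oplus U$ as $K$-vector spaces, identifying $W$ with a subspace. Define $\theta'$ on $W$ by its module structure and on $U$ by making $U$ a free $K[X]$-module of rank $\kappa$; this is possible since such a module has $K$-dimension $\kappa$. Free modules are torsion-free, so the kernel conditions hold on $U$ as well, hence on all of $\VV'$. Finally extend $\theta'$ by $0$ outside $\VV'$. Then $(\mm',\theta')\models T^C_\theta$ extends $(\mm,\theta)$ and contains $u$, so existential closedness of $(\mm,\theta)$ yields a preimage in $\mm$. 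This gives $\Image(f^C)=\Image(f^{C+1})$.
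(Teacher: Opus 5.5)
You have the right overall strategy, and both part (i) and the second step of (ii) (embedding $W$ over $\VV$ into an elementary extension with a free, hence torsion-free, complement $U$) are fine. The gap is exactly where you expected it. In the case $g=f$, the divisibility $f\mid r$ does not follow from $f^{C+1}x=rf^{C}w$ together with $\Ker(f^{C+1})=\Ker(f^C)$. In fact your $W$ need not satisfy the kernel condition at $f$ at all. Suppose $v=f^{C+1}z$ for some $z\in\VV$, and consider the class $u-z=[(-z,1)]$. It satisfies $f^{C+1}(u-z)=-[(v,-f^{C+1})]=0$. On the other hand, $f^{C}(u-z)=[(-f^Cz,f^C)]\neq 0$, because $f^C$ is not a $K[X]$-multiple of $f^{C+1}$. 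For instance, if $C(f)=0$, then $f[\theta]$ is injective on $\VV$, yet $u-z$ is a non-zero element of $W$ killed by $f$. So, as stated, your $(\mm',\theta')$ need not be a model of $T^C_\theta$, and the existential-closedness step cannot be applied.

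The missing idea is to dispose of this case first. If $v\in\Image(f^{C+1})$ there is nothing to prove, so assume $v\notin\Image(f^{C+1})$. Write $r=sf+r_0$ with $\deg r_0<\deg f$; your equation then becomes $f^{C+1}(x-sw)=r_0v$. If $r_0\neq 0$, it is coprime to $f$, so $ar_0+bf^{C+1}=1$ for some $a,b\in K[X]$. This gives $v=ar_0v+bf^{C+1}v=f^{C+1}\big(a(x-sw)+bv\big)\in\Image(f^{C+1})$, a contradiction. Hence $f\mid r$, and your reduction goes through: $x-sw\in\Ker(f^{C+1})=\Ker(f^C)$, so $f^Cx=sv$ and $f^C(x,p)=s(v,-f^{C+1})$. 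So the hypothesis actually needed here is $v\notin\Image(f^{C+1})$, not merely $v\in\Image(f^C)$.

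There is also a minor sign slip in the case $g\neq f$. The correct representative is $(x,p)\equiv(x-sv,0)$, since $(x,p)-s(v,-f^{C+1})=(x-sv,0)$. With $x+sv$, the element need not lie in $\Ker(g^{C+1})$.
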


\noindent Note that the converse of (ii) in Fact \ref{fact_c_image_comple} above does not hold.
The most important consequence of $C$-image-completeness is that we can decompose $\VV$ into definable direct summands as follows.

\begin{fact}[Lemma 3.14 in \cite{Chi25}] \label{lemma_decomposition}
    If $\theta \colon \VV \to \VV$ is $C$-image-complete and $F \subseteq \Kp{0<C<\infty}$ is a finite set, then we have
    $$
    \VV = \Image(F^C) \oplus \Ker(F^C) = \Image(F^C) \oplus \bigoplus\nolimits_{f\in F} \Ker(f^C).
    $$
    If $C$ is algebraic and $F = \Kp{0<C<\infty}$, then the summand $\Image(F^C)$ is $\set{0}$ and can therefore be omitted.
\end{fact}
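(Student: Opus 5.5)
The plan is to reduce everything to a Fitting-type decomposition for the single polynomial $g := F^C = \prod_{f \in F} f^{C}$. The core claim is that $\Ker(g) = \Ker(g^2)$ and $\Image(g) = \Image(g^2)$. Granting this claim, the first equality $\VV = \Image(g) \oplus \Ker(g)$ is the standard argument:
\begin{itemize}
\item Spanning: for $v \in \VV$ we have $g[\theta](v) \in \Image(g) = \Image(g^2)$, so $g[\theta](v) = g^2[\theta](w)$ for some $w$. Then $v = g[\theta](w) + \bigl(v - g[\theta](w)\bigr)$, and the second summand lies in $\Ker(g)$.
\item Trivial intersection: if $g[\theta](w) \in \Ker(g)$, then $w \in \Ker(g^2) = \Ker(g)$, so $g[\theta](w) = 0$.
\end{itemize}
The second equality $\Ker(g) = \bigoplus_{f\in F}\Ker(f^C)$ is the usual primary decomposition. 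The polynomials $f^{C}$ for $f \in F$ are pairwise coprime, so for coprime $p, q$ Bézout's identity $ap + bq = 1$ gives $\Ker(pq) = \Ker(p) \oplus \Ker(q)$. Induction on $|F|$ then finishes it. Finally, if $C$ is algebraic and $F = \Kp{0<C<\infty}$, then $g = \mipo(C)$ and $g[\theta] = 0$, so $\Image(g) = \set{0}$.

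To prove $\Ker(g^2) = \Ker(g)$, first show $\Ker(f^{C+k}) = \Ker(f^C)$ for every $f \in F$ and $k \geq 0$.
\begin{itemize}
\item Transcendental $C$: this follows by induction from $\Ker(f^C) = \Ker(f^{C+1})$. If $f^{C+k+1}[\theta](v) = 0$, then $f[\theta](v) \in \Ker(f^{C+k}) = \Ker(f^C)$, so $v \in \Ker(f^{C+1}) = \Ker(f^C)$.
\item Algebraic $C$: write $\mipo(C) = f^C h$ with $h$ coprime to $f$, and take $a f^{C+k} + b h = 1$. For $v \in \Ker(f^{C+k})$ we get $v = b[\theta]h[\theta](v)$, and hence $f^C[\theta](v) = b[\theta]\,\mipo(C)[\theta](v) = 0$.
\end{itemize}
Combined with the primary decomposition, this gives $\Ker(g^2) = \bigoplus_f \Ker(f^{2C}) = \bigoplus_f \Ker(f^C) = \Ker(g)$.

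The image side is the step requiring a little care, since images do not split as directly as kernels. First obtain $\Image(f^{C+k}) = \Image(f^C)$ for all $k$. The case $k=1$ is $C$-image-completeness, which in the algebraic case is available by Fact \ref{fact_c_image_comple}(i). Induction uses
$$\Image(f^{C+k+1}) = f^k[\theta]\bigl(\Image(f^{C+1})\bigr) = f^k[\theta]\bigl(\Image(f^C)\bigr) = \Image(f^{C+k}).$$
Next, the key observation is that for any polynomial $q$, $\Image(pq) = q[\theta](\Image(p))$. So if $\Image(p^2) = \Image(p)$, then $\Image(p^2 q) = q[\theta](\Image(p^2)) = q[\theta](\Image(p)) = \Image(pq)$. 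Applying this with $p = f^C$ and $q = g/f^C$ gives $\Image(g \cdot f^C) = \Image(g)$. Multiplying in the factors $f^C$, $f \in F$, one at a time and repeating the argument with the current product then yields $\Image(g^2) = \Image(g)$. This completes the claim and hence the decomposition.
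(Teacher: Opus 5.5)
Your proof is correct. The reduction to $\Ker(g)=\Ker(g^2)$ and $\Image(g)=\Image(g^2)$ for $g=F^C$, the Bézout primary decomposition, and the factor-by-factor step $\Image(p^2q)=q[\theta](\Image(p^2))=q[\theta](\Image(p))=\Image(pq)$ all go through as you wrote them.

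The paper gives no proof of this statement; it imports it as Lemma 3.14 of \cite{Chi25}, and your argument is the standard route for such a decomposition. One small point: the appeal to Fact \ref{fact_c_image_comple}(i) is unnecessary, since $C$-image-completeness is already a hypothesis in both the algebraic and the transcendental case.
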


\noindent In \cite{Chi25}, we defined additional endomorphisms of $\VV$ using these decompositions.

\begin{notation}
    For any $\rho \in K[X] \setminus \set{0}$, we let $\Fac(\rho) := \set{f \in \Kp{} : f \mid \rho}$ denote the set of all irreducible factors of $\rho$.
    As a convention, we set $\Fac(0) = \varnothing$.
\end{notation}

\begin{fact}[Lemma 3.15 in \cite{Chi25}] \label{fact_endo_gen}
    In the theory \hbox{$\TKvsThe \cup \set{\text{``$\theta$ is $C$-image-complete''}}$}, the following endomorphisms are definable:
    \begin{enumerate}[(i)]
        \item For $F \subseteq \Kp{0<C<\infty}$ finite, we define the \textbf{projection to the image of $\bm{F^C[\theta]}$} by
        $$
        \pi_{\Image(F^C)}(x) := \text{``the unique $u \!\in\! \Image(F^C)$ for which there is $v \in \Ker(F^C)$ with $x \!=\! u \!+\! v$''.}
        $$
        \item For $F \subseteq \Kp{0<C<\infty}$ finite, we define the \textbf{projection to the kernel of $\bm{F^C[\theta]}$} by
        $$
        \pi_{\Ker(F^C)}(x) := \text{``the unique $v \!\in\! \Ker(F^C)$ for which there is $u \in \Image(F^C)$ with $x \!=\! u \!+\! v$''.}
        $$
        We clearly have $\pi_{\Ker(F^C)} = 1[\theta] - \pi_{\Image(F^C)}$.
        \item For every monic polynomial $\eta \in K[X]$ with $\Fac(\eta) \subseteq \Kp{C<\infty}$, we define the \textbf{pseudo-inverse of $\bm{\eta[\theta]}$} by
        $$
        \eta[\theta]^{-1}(x) := \text{``the unique $u \in \Image(\Fac(\eta)^C)$ with $\eta[\theta](u) = \pi_{\Image(\Fac(\eta)^C)}(x)$''.}
        $$
        Notice that $\Fac(\eta)^C = (\Fac(\eta) \cap \Kp{0<C<\infty})^C$.
        In practice, we will also use $\eta[\theta]^{-1}$ for (non-zero) non-monic polynomials by setting $\eta[\theta]^{-1} := \lambda^{-1} \cdot (\eta/\lambda)[\theta]^{-1}$ for the leading coefficient $\lambda$ of $\eta$.
    \end{enumerate}
\end{fact}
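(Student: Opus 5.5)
The approach is to reduce everything to Fact \ref{lemma_decomposition} together with two elementary consequences of $\theta$ being a $C$-endomorphism that is $C$-image-complete. First note that for any $\rho \in K[X]$, both $\Ker(\rho)$ and $\Image(\rho)$ are $\varnothing$-definable in $\LKThe$: the former by a quantifier-free formula, the latter by $\exists y\, \rho[\theta](y) = x$. So in each of (i)--(iii) the quoted description is a first-order formula $\phi(x,u)$. It remains to check, modulo the theory, that for every $x$ there is exactly one $u$ with $\phi(x,u)$, and that the resulting function is $K$-linear and commutes with $\theta$ where needed.

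For (i) and (ii), fix a finite $F \subseteq \Kp{0<C<\infty}$. Fact \ref{lemma_decomposition} gives $\VV = \Image(F^C) \oplus \Ker(F^C)$. This yields exactly the existence and uniqueness of the decomposition $x = u + v$, so both projections are well-defined definable functions. They are endomorphisms because projections along a direct sum of subspaces are linear. The identity $\pi_{\Ker(F^C)} = 1[\theta] - \pi_{\Image(F^C)}$ is immediate from $x = u+v$.

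For (iii), let $\eta$ be monic with $\Fac(\eta) \subseteq \Kp{C<\infty}$, and put $G := \Fac(\eta) \cap \Kp{0<C<\infty}$, so $\Fac(\eta)^C = G^C$. It suffices to show that $\eta[\theta]$ restricts to a bijection of $W := \Image(G^C)$ onto itself; then $\eta[\theta]^{-1}(x)$ is the preimage in $W$ of $\pi_{\Image(G^C)}(x) \in W$. I would establish three auxiliary facts.
\begin{enumerate}[(a)]
\item For $f$ with $C(f) = 0$, the $C$-endomorphism condition $\Ker(f^0) = \Ker(f^1)$ says $f[\theta]$ is injective. Image-completeness, $\Image(f^1) = \Image(f^0) = \VV$, says it is surjective.
\item For $f$ with $0 < C(f) < \infty$, we have $\Ker(f^{C+k}) = \Ker(f^C)$ and $\Image(f^{C+k}) = \Image(f^C)$ for all $k \geq 0$, by a straightforward induction from the one-step equalities. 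The image case uses $\Image(f^{C+k+1}) = f[\theta]^k(\Image(f^{C+1}))$.
\item For coprime $\rho, \sigma$, Bézout gives $\Ker(\rho\sigma) = \Ker(\rho) \oplus \Ker(\sigma)$.
\end{enumerate}

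\emph{Injectivity.} Write $\eta = \prod_{f} f^{e_f}$. Then $\Ker(\eta) = \bigoplus_f \Ker(f^{e_f})$. The summands with $C(f)=0$ vanish by (a), and the others lie in $\Ker(f^C) \subseteq \Ker(G^C)$ by (b). Hence $\Ker(\eta) \cap W \subseteq \Ker(G^C) \cap \Image(G^C) = 0$, using Fact \ref{lemma_decomposition}.

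\emph{Surjectivity.} Since all $\rho[\theta]$ commute, $\eta[\theta](W) = \Image(\eta \cdot G^C)$. I would peel off the factors of $\eta$ one at a time. For $f \in G$, write $\eta G^C = f^{C+e_f} h$ with $h$ coprime to $f$; then $\Image(f^{C+e_f}h) = h[\theta](\Image(f^{C+e_f})) = h[\theta](\Image(f^C))$ by (b). For $f$ with $C(f)=0$, we have $\Image(f^{e_f}h) = h[\theta](\VV) = \Image(h)$ by (a). Iterating gives $\Image(\eta G^C) = \Image(G^C) = W$.

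Linearity of $\eta[\theta]^{-1}$ then follows from linearity of $\pi_{\Image(G^C)}$ and of the inverse of a linear bijection. The non-monic convention is just a scalar rescaling.

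I expect the surjectivity step to be the main obstacle, since it is where $C$-image-completeness is genuinely used. Care is needed that the stabilisation $\Image(f^{C+k}) = \Image(f^C)$ propagates through products with coprime factors. Everything else is bookkeeping with Fact \ref{lemma_decomposition} and Bézout.
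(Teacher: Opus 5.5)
Your proof is correct. The paper gives no argument for this statement; it only cites Lemma 3.15 of \cite{Chi25}. Your route is the natural one: Fact \ref{lemma_decomposition} for (i)--(ii), and for (iii) showing that $\eta[\theta]$ restricts to a linear bijection of $\Image(G^C)$, via $\Ker(\eta)\subseteq\Ker(G^C)$ and $\Image(\eta\cdot G^C)=\Image(G^C)$.

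One small repair is needed. For algebraic $C$, the $C$-endomorphism condition is $\mipo(C)[\theta]=0$, not kernel stabilisation. So in (a) and (b) you must derive $\Ker(f)=\set{0}$ for $C(f)=0$ and $\Ker(f^{C+1})=\Ker(f^C)$ for $0<C(f)<\infty$ from $\mipo(C)[\theta]=0$ by B\'ezout. Use $\gcd(f,\mipo(C))=1$ in the first case and $\gcd(f^{C+1},\mipo(C))=f^{C}$ in the second. The image equalities come directly from image-completeness in both the algebraic and transcendental cases. Also, the surjectivity step does not need coprimality; commutativity of the $\rho[\theta]$'s suffices.
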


\noindent Notice that whenever $\Fac(\eta) \cap \Kp{0<C<\infty} \neq \varnothing$, we obtain $\eta[\theta] \circ \eta[\theta]^{-1} = \pi_{\Image(\Fac(\eta)^C)} \neq \Id = 1[\theta]$, so the notation might be a bit misleading. Here $\Id$ is the identity on $\VV$.
It is also easy to see that $\Id = 1[\theta] = \pi_{\Image(f^C)} + \pi_{\Ker(f^C)}$. We consider the ring generated by all $\LKThe$-definable endomorphisms we have collected so far:

\begin{fact}[Theorem 3.18 in \cite{Chi25}] \label{theorem_r_c_def}
    We let $R_C$ be the set of all endomorphisms definable in the theory $\TKvsThe \cup \set{\text{``$\theta$ is $C$-image-complete''}}$ that are $\set{+, \circ}$-generated by
    $$
    \set{\rho[\theta] : \rho \in K[X]} \cup \set{\pi_{\Image(F^C)} : F \subseteq \Kp{0<C<\infty} \text{ finite}} \cup \set{\eta[\theta]^{-1} : \eta \text{ monic with }\Fac(\eta) \subseteq \Kp{C<\infty}}.
    $$
    The structure $(R_C, 0[\theta], 1[\theta], +, \circ)$ is a unitary commutative ring with $\Char(R_C) = \Char(K)$. In fact, it can also be seen as a $K$-algebra, as $K \subseteq R_C$ (identifying $\lambda \in K$ with the endomorphism $x \mapsto \lambda \cdot x$ which is $\lambda[\theta]$).
\end{fact}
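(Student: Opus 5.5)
The claim to establish is Fact~\ref{theorem_r_c_def}: $R_C$ is a unitary commutative $K$-algebra of characteristic $\Char(K)$. Since $R_C$ is closed under $+$ and $\circ$ by definition and contains $0[\theta]$ and $1[\theta]$, the ring axioms other than commutativity are inherited from the endomorphism ring of $\VV$. Associativity and distributivity hold because composition of additive maps distributes over pointwise addition. Additive inverses exist because $-1 \in K$ gives $(-1)[\theta] \in R_C$, and $(-1)[\theta]\circ \varphi = -\varphi$. The substantive claims are therefore commutativity, the identification of $K$ inside $R_C$, and the characteristic.

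For commutativity it suffices to show that the generators pairwise commute, since commutativity then propagates to all $\{+,\circ\}$-combinations. I would show this by checking that each generator commutes with $\theta$ and preserves every summand in the decompositions of Fact~\ref{lemma_decomposition}.
\begin{enumerate}[(i)]
\item Polynomials $\rho[\theta]$ commute with each other, because $\rho[\theta]\circ\eta[\theta]=(\rho\eta)[\theta]$.
\item Each $\Image(F^C)$ and $\Ker(F^C)$ is $\theta$-invariant, so both components of $\rho[\theta](x)=\rho[\theta](u)+\rho[\theta](v)$ stay in their summands. Hence $\pi_{\Image(F^C)}$ commutes with every $\rho[\theta]$.
\item For two finite sets $F,G$, the decomposition $\VV=\Image((F\cup G)^C)\oplus\bigoplus_{f\in F\cup G}\Ker(f^C)$ refines both the $F$- and the $G$-decomposition. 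Here I would use $\Image(F^C)=\Image((F\cup G)^C)\oplus\bigoplus_{g\in G\setminus F}\Ker(g^C)$, obtained by applying Fact~\ref{lemma_decomposition} inside the image. Consequently $\pi_{\Image(F^C)}$ and $\pi_{\Image(G^C)}$ are both sums of coordinate projections with respect to one common decomposition, and therefore commute.
\item For the pseudo-inverses, set $F=\Fac(\eta)\cap\Kp{0<C<\infty}$. Since $\gcd(\eta,F^C)$ has no factor $f$ with $f\mid\eta$ beyond exponent considerations, $\eta[\theta]$ restricted to $\Image(F^C)$ is a bijection onto $\Image(F^C)$.
  \begin{itemize}
  \item Injectivity: for $f\in F$, $\Ker(f^k)\subseteq\Ker(f^C)$ because $\theta$ is a $C$-endomorphism, and $\Ker(f^C)\cap\Image(F^C)=0$. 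For factors $f$ with $C(f)=0$, $\Ker(f)=0$ outright.
  \item Surjectivity: this follows from image-completeness via $\Image(f^{C+k})=\Image(f^C)$, together with Bézout's identity for the coprime parts.
  \end{itemize}
  Thus $\eta[\theta]^{-1}$ is (inverse on $\Image(F^C)$)$\circ\pi_{\Image(F^C)}$. Any endomorphism $\varphi$ that commutes with $\theta$ and preserves $\Image(F^C)$ and $\Ker(F^C)$ then commutes with $\eta[\theta]^{-1}$: it commutes with the projection, and it commutes with $\eta[\theta]$ on the invariant summand, hence with that restriction's inverse.
\item To close the argument, every generator, including each $\eta[\theta]^{-1}$, commutes with $\theta$ and preserves all the summands $\Image(G^C)$ and $\Ker(g^C)$. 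For $\eta[\theta]^{-1}$ this holds because these summands are invariant under $\eta[\theta]$ and under $\pi_{\Image(F^C)}$, and hence under the inverse on the invariant piece. Feeding this back into (iv) shows that all generators pairwise commute.
\end{enumerate}

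For the $K$-algebra structure, the map $\lambda\mapsto\lambda[\theta]$ is a ring homomorphism $K\to R_C$ with central image, by commutativity. It is injective because $\VV\neq 0$ (it is infinite) and $\lambda\neq0$ acts invertibly. This gives $K\subseteq R_C$. The characteristic of $R_C$ equals that of the image of the prime field, which is $\Char(K)$, since $n\cdot 1[\theta]=n[\theta]$ vanishes exactly when $n=0$ in $K$. Definability of each element of $R_C$ is immediate, because the generators are definable by Fact~\ref{fact_endo_gen} and definable maps are closed under sum and composition.

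The main obstacle is step (iv): verifying that $\eta[\theta]$ restricts to an automorphism of $\Image(F^C)$, and that all generators preserve the relevant decompositions, with careful handling of factors of $\eta$ with $C(f)=0$ versus $0<C(f)<\infty$. For this I would first reduce, via Bézout's identity, to $\eta=f^k$ with a single irreducible $f$, treating the two cases $C(f)=0$ and $0<C(f)<\infty$ separately.
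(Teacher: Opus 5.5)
Your proof is correct. The paper does not reprove this statement; it is imported as Theorem 3.18 of \cite{Chi25}, so there is no in-paper argument to compare against. Your strategy is the natural one: every generator commutes with $\theta$ and with every endomorphism that commutes with $\theta$, so all generators lie in the centre of the commutant of $\theta$ and hence pairwise commute.

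You can streamline it in one place. Any endomorphism commuting with $\theta$ automatically preserves $\Image(F^C)$ and $\Ker(F^C)$. So the argument of (ii) already shows that $\pi_{\Image(F^C)}$ commutes with every such map, including $\pi_{\Image(G^C)}$, and the common refinement in (iii) is unnecessary.

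Two tidy-ups:
\begin{enumerate}
\item Delete the garbled $\gcd$ sentence at the start of (iv). Your two bullets carry the bijectivity argument on their own: kernel stabilization together with $\Ker(f^C)\cap\Image(F^C)=0$ gives injectivity, and $\Image(\eta\cdot F^C)=\Image(F^C)$ from image-completeness gives surjectivity.
\item Elements of $R_C$ are identified exactly when they agree in all models of $\TKvsThe\cup\{\text{``$\theta$ is $C$-image-complete''}\}$. So injectivity of $\lambda\mapsto\lambda[\theta]$, and $1\neq 0$, only need one model with $\VV\neq 0$. For example, take $K(X)$ (for transcendental $C$) or a direct sum of copies of $K[X]/(\mipo(C))$ (for algebraic $C$), with $\theta$ acting as multiplication by $X$.
\end{enumerate}
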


\noindent We may sometimes write $0$ instead of $0[\theta]$ and $\Id$ or $1$ instead of $1[\theta]$. In particular, if we regard $R_C$ purely as a ring, we may write $(R_C, 0, 1, +, \cdot)$. The ring $R_C$ may again look complicated at first, but for the ``easiest to work with'' kernel configurations, we obtain the following:
\begin{enumerate}[(i)]
    \item $(R_{C_\infty}, 0, 1, +, \cdot) \simeq (K[X], 0, 1, +, \cdot)$ for the unique kernel configuration $C_\infty \in \Cctrans$, which satisfies \hbox{$C_\infty(f) = \infty$} for all $f \in \Kp{}$.
    \item $(R_{C_0}, 0, 1, +, \cdot) \simeq (K(X), 0, 1, + , \cdot)$ for the unique kernel configuration $C_0 \in \Cctrans$ with $C_0(f) = 0$ for all $f \in \Kp{}$. 
    Notice that this is a field.
    \item $(R_{C}, 0, 1, +, \cdot) \simeq (K[X]/(\mipo(C)), 0, 1, +, \cdot)$ for all $C \in \Ccalg$.
    This also implies that our ring $(R_{C}, 0, 1, +, \cdot)$ is a field for all algebraic kernel configurations $C$ with $\mipo(C)$ being irreducible.
\end{enumerate}
Other examples of $R_C$ can be found in Corollary 3.25 in \cite{Chi25}. 
There, the case where $C$ is transcendental and $\Kp{0<C<\infty}$ is infinite again turns out to be the most complicated. Multiplication rules, such as $\rho[\theta] \circ \pi_{\Image(F^C)} = \rho[\theta]$ if $F^C \mid \rho$, can be found in Lemma 3.21 in \cite{Chi25}.  

\begin{fact}[see Remark 3.26 in \cite{Chi25}] \label{fact_when_field}
    $R_C$ is a field if and only if $C = C_0$, as in (ii) above, or if $C$ is algebraic with $\mipo(C)$ irreducible. 
\end{fact}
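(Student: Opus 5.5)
The plan is to prove the two implications separately. The backward direction follows from the identifications of $R_C$ listed after Fact \ref{theorem_r_c_def}. The forward direction goes by contraposition: if $C$ is neither $C_0$ nor algebraic with irreducible minimal polynomial, we exhibit a non-zero element of $R_C$ that is not invertible, typically a zero divisor. For the backward direction, $R_{C_0} \simeq K(X)$ is a field. If $C$ is algebraic with $\mipo(C) = f$ irreducible, then $R_C \simeq K[X]/(f)$ is a field because $(f)$ is maximal.

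Throughout, an element of $R_C$ is a definable endomorphism. Two elements are equal precisely when they agree in every model of $\TKvsThe \cup \set{\text{``$\theta$ is $C$-image-complete''}}$. To show an element is non-zero, or not invertible, it therefore suffices to find one suitable model. We always take an existentially closed model $(\mm,\theta) \models T^C_\theta$, which is $C$-image-complete by Fact \ref{fact_c_image_comple}. We build it by first constructing a model of $T^C_\theta$ with the desired witnesses, for example a direct sum of suitable $K[X]$-modules realised inside a model of $T$, and then passing to an e.c. extension. The witnesses we need are existential statements of the form ``some $v \neq 0$ lies in $\Ker(\rho)$'' or ``some $v \neq 0$ lies in $\Image(\rho)$''. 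These persist to extensions, so the e.c. extension still has them.

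Case 1: $C$ is algebraic and $\mipo(C)$ is reducible. Then $K[X]/(\mipo(C))$ has zero divisors. Either $\mipo(C) = gh$ with $g,h$ coprime and non-constant, or $\mipo(C) = f^k$ with $k \ge 2$, in which case $f \cdot f^{k-1} = 0$. Hence $R_C$ is not a domain.

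Case 2: $C$ is transcendental and $0 < C(f) < \infty$ for some $f$. By Fact \ref{lemma_decomposition}, $\pi_{\Image(f^C)}$ and $\pi_{\Ker(f^C)}$ are complementary idempotents with product $0$. It remains to check that both are non-zero.
\begin{itemize}
\item $\Ker(f^C) \neq 0$ in some model: realise $K[X]/(f^{C(f)})$ as a $\theta$-invariant summand.
\item $\Image(f^C) \neq 0$ in some model: add a summand on which $\theta$ acts like multiplication by $X$ on $K(X)$. There every $\rho[\theta]$ is bijective and all kernel conditions are respected.
\end{itemize}
So $R_C$ has zero divisors.

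Case 3: $C$ is transcendental and $C(f) = \infty$ for some $f$. We claim $f[\theta]$ is non-zero but not a unit. It is non-zero because of the $K(X)$-summand above. Suppose $r \in R_C$ satisfied $r \circ f[\theta] = 1$. Then $f[\theta]$ would be injective in every model. However, a model containing a copy of $K[X]/(f)$ satisfies all constraints, since $C(f) = \infty$ imposes nothing on $\Ker(f^n)$. In such a model $\Ker(f[\theta]) \neq 0$, a contradiction.

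Case 4: neither Case 2 nor Case 3 applies to a transcendental $C$. Then $C(f) = 0$ for all $f$, so $C = C_0$.

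The main obstacle is the model-construction step. We must check carefully that the direct sums used are genuinely $C$-endomorphisms; in Case 2 this means $\Ker(g^C) = \Ker(g^{C+1})$ must hold for every $g$ simultaneously. We must also check that they can be embedded into a model of $T$ via the infinite $\VV$ and compactness. Consistency of $T^C_\theta$, i.e.\ Lemma 2.19 in \cite{Chi25}, should already provide most of this.
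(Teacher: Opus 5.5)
Your argument is correct. The case split is exhaustive:
\begin{itemize}
\item algebraic $C$ with reducible minimal polynomial;
\item transcendental $C$ with some $0<C(f)<\infty$, which gives the nonzero orthogonal idempotents $\pi_{\Image(f^C)},\pi_{\Ker(f^C)}$;
\item transcendental $C$ with some $C(f)=\infty$, where $f[\theta]$ is a nonzero non-unit;
\item otherwise $C=C_0$.
\end{itemize}

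The paper proves nothing here. It only points to Remark 3.26 of \cite{Chi25}, which directly follows the explicit computations of $R_C$ in Corollary 3.25 there, so the intended route is presumably to read the answer off that structural description. You use the ring isomorphisms only for $C_0$ and for algebraic $C$, and you settle the transcendental cases by exhibiting small models. This avoids the Common Base Theorem machinery and does not care whether $\Kp{0<C<\infty}$ is infinite. The cost is that it does not tell you what $R_C$ actually is.

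The ``main obstacle'' you name is not one. Elements of $R_C$ are compared modulo $\TKvsThe \cup \set{\text{``$\theta$ is $C$-image-complete''}}$, so you never need models of $T$ or existentially closed extensions. A $K[X]$-module with $\theta$ acting as multiplication by $X$ already suffices.

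For instance, take $C$ transcendental with $0<C(f)<\infty$ and $W := K[X]/(f^{C(f)}) \oplus K(X)$. Every irreducible $g\neq f$ acts bijectively on both summands. Hence $\Ker(g^{C})=\Ker(g^{C+1})=0$ and $\Image(g^{C})=\Image(g^{C+1})=W$. For $f$ itself, $\Ker(f^C)=\Ker(f^{C+1})$ is the first summand and $\Image(f^C)=\Image(f^{C+1})$ is the second.

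The same check works for $K[X]/(f)\oplus K(X)$ when $C(f)=\infty$, and for $K(X)$ alone. Your detour through $T$ and e.c.\ models is not wrong: the witnesses are existential, $T^C_\theta$ is inductive, and e.c.\ models are $C$-image-complete. It is just unnecessary.
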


\noindent Also note that the elements of $R_C$ are, as defined in Fact \ref{theorem_r_c_def}, definable functions in the theory $\TKvsThe \cup \set{\text{``$\theta$ is $C$-image-complete''}}$.
By this, we mean that the elements of $R_C$ are equivalence classes of $\LKThe$-formulas modulo the theory \hbox{$\TKvsThe \cup \set{\text{``$\theta$ is $C$-image-complete''}}$} that define an endomorphism in every model of $\TKvsThe \cup \set{\text{``$\theta$ is $C$-image-complete''}}$.
So, in order to prove $r = r'$, we need to show
$$
    r^{(\VV, \theta)} = r'^{(\VV, \theta)} \quad \text{for all $(\VV, \theta) \models \TKvsThe \cup \set{\text{``$\theta$ is $C$-image-complete''}}$},
$$
and, in order to prove $r \neq r'$, we need to find $(\VV, \theta) \models \TKvsThe \cup \set{\text{``$\theta$ is $C$-image-complete''}}$ with
$
    r^{(\VV, \theta)} \neq r'^{(\VV, \theta)}.
$
In Remark 3.20 in \cite{Chi25}, we showed that $r \neq r'$ implies $r^{(\VV, \theta)} \neq r'^{(\VV, \theta)}$ if $(\VV, \theta)$ is an existentially closed model of $\TKvsTheC$.
In Theorem 4.10 in \cite{Chi26} we proved that in an existentially closed model of $T_\theta^C$, any $\LKThe$-definable endomorphism of $\VV$ is, in fact, in $R_C$.

\begin{definition}
    We define $\LRC$ as the language of (left) $R_C$-modules (with $R_C$ as in Fact \ref{theorem_r_c_def}), i.e., $\LRC = (0, +, (r)_{r \in R_C})$, where each $r \in R_C$ is treated as a unary function symbol.
\end{definition}

\noindent We will write $r(x)$ instead of $r \cdot x$, since $r$ will usually be a function such as $\pi_{\Image(F^C)}$.
Given a model $(\VV, \theta) \models \TKvsThe \cup \set{\text{``$\theta$ is $C$-image-complete''}}$, we define an $\LRC$-structure on $\VV$ using the definable functions of which $R_C$ consists. This structure is obviously interdefinable with $(\VV, \theta)$.

\begin{fact}[Common Base Theorem - Theorem 3.24 in \cite{Chi25}]\label{theorem_r_c_element}
    Given $r_1, \dots, r_q \in R_C$ and any finite subset $F_0 \subseteq \Kp{0<C<\infty}$, we can write
    $$
    r_i = \rho_{F,i}[\theta] \circ \eta[\theta]^{-1} \circ \pi_{\Image(F^C)} + \sum\nolimits_{f\in F} \rho_{f,i}[\theta] \circ \pi_{\Ker(f^C)}
    $$
    for all $i \in \set{1, \dots, q}$, where
    \begin{enumerate}[(i)]
        \item $F \subseteq \Kp{0<C<\infty}$ is finite with $F_0 \subseteq F$,
        \item $\eta$ is a monic polynomial with $\Fac(\eta) \subseteq F \cup \Kp{C=0}$,
        \item the $\rho_{F,i}$ satisfy both $\Fac(\rho_{F,i}) \subseteq F \cup \Kp{C=0}\cup\Kp{C=\infty}$ and $\gcd(\rho_{F,1}, \dots, \rho_{F,q}, \eta) = 1$,
        \item the $\rho_{f,i}$ are polynomials with $\deg(\rho_{f,i}) < \deg(f^C)$.
    \end{enumerate}
    If $C$ is algebraic, we can furthermore choose $F = \Kp{0<C<\infty}$, resulting in
    $
    r_i = \sum\nolimits_{f\in F} \rho_{f,i}[\theta] \circ \pi_{\Ker(f^C)}
    $.
\end{fact}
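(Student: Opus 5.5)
The plan is to fix a finite $F \subseteq \Kp{0<C<\infty}$ and study the set $S_F$ of endomorphisms of the form
$$
\rho[\theta] \circ \eta[\theta]^{-1} \circ \pi_{\Image(F^C)} + \sum\nolimits_{f\in F} \rho_{f}[\theta] \circ \pi_{\Ker(f^C)}
$$
with $\Fac(\eta) \subseteq F \cup \Kp{C=0}$, $\Fac(\rho) \subseteq F \cup \Kp{C=0} \cup \Kp{C=\infty}$ and $\deg(\rho_f) < \deg(f^C)$. I would show three things: every generator of $R_C$ lies in $S_F$ for $F$ large enough; $S_F \subseteq S_{F'}$ whenever $F \subseteq F'$; and $S_F$ is closed under $+$ and $\circ$. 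Since every $r \in R_C$ is a finite $\set{+,\circ}$-combination of generators, all of $r_1, \dots, r_q$ then lie in a common $S_F$ with $F \supseteq F_0$. Equalities of elements of $R_C$ are checked in an arbitrary model of $\TKvsThe \cup \set{\text{``$\theta$ is $C$-image-complete''}}$, summand by summand, using the decomposition $\VV = \Image(F^C) \oplus \bigoplus_{f \in F}\Ker(f^C)$ of Fact \ref{lemma_decomposition}.

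The key technical lemma is that if $\Fac(\eta) \subseteq F \cup \Kp{C=0}$, then $\eta[\theta]$ restricts to an automorphism of $\Image(F^C)$, so $\eta[\theta]^{-1}$ is a genuine inverse there. It suffices to check each irreducible factor $g$ of $\eta$. If $C(g)=0$, then $g[\theta]$ is injective. If $g \in F$, then $C$-image-completeness gives $g[\theta](\Image(g^C)) = \Image(g^{C+1}) = \Image(g^C)$, and $g[\theta]$ is injective on $\Image(g^C)$ because $\Ker(g) \subseteq \Ker(g^C)$ and $\Image(g^C) \cap \Ker(g^C) = 0$. Both properties pass to $\Image(F^C)$, since it is $\theta$-invariant. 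Second, on $\Ker(f^C)$ any polynomial acts modulo $f^C$. In particular, if $f \nmid \eta$, then $\eta$ is invertible modulo $f^C$ by Bézout, and $\eta[\theta]^{-1}$ acts on $\Ker(f^C)$ as a polynomial of degree $< \deg(f^C)$.

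With this, the generators are handled as follows. For $\rho[\theta]$, factor $\rho = \rho' \sigma$ with $\Fac(\rho') \subseteq F \cup \Kp{C=0} \cup \Kp{C=\infty}$. Put the remaining factors of $\sigma$ (in $\Kp{0<C<\infty}\setminus F$) into $F$; then the image component is $\rho[\theta]\circ\pi_{\Image(F^C)}$ with $\eta = 1$, and the kernel components are the reductions of $\rho$ modulo $f^C$. For $\pi_{\Image(G^C)}$ with $G \subseteq F$, write it as $\pi_{\Image(F^C)} + \sum_{f \in F\setminus G}\pi_{\Ker(f^C)}$. For $\eta[\theta]^{-1}$, include $\Fac(\eta)\cap\Kp{0<C<\infty}$ in $F$; its kernel components are $0$ for $f \mid \eta$ and the Bézout inverse otherwise. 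Monotonicity in $F$ uses the same splitting of $\pi_{\Image(F^C)}$ together with the Bézout observation on the new $\Ker(f^C)$ summands, where $f \nmid \eta$.

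Closure is a computation on the decomposition. Kernel parts multiply and add modulo $f^C$ (reduce by Euclidean division, as in Fact \ref{lemma_bound_term_light}). On the image part we use that $R_C$ is commutative and that $(\eta\eta')[\theta]^{-1} = \eta[\theta]^{-1}\circ\eta'[\theta]^{-1}$ on $\Image(F^C)$. Then $\frac{\rho}{\eta}\cdot\frac{\rho'}{\eta'} = \frac{\rho\rho'}{\eta\eta'}$ and $\frac{\rho}{\eta}+\frac{\rho'}{\eta'} = \frac{\rho\eta'+\rho'\eta}{\eta\eta'}$. If a sum numerator acquires a factor in $\Kp{0<C<\infty}\setminus F$, we enlarge $F$ once more, which is harmless by monotonicity.

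Finally, for $r_1,\dots,r_q$ we pass to a common $F$ and multiply to a common denominator $\eta$. We then divide all $\rho_{F,i}$ and $\eta$ by $g = \gcd(\rho_{F,1},\dots,\rho_{F,q},\eta)$, which is legitimate because $g[\theta]$ is invertible on $\Image(F^C)$ by the key lemma. This only removes factors, so conditions (i)–(iii) persist. In the algebraic case, taking $F = \Kp{0<C<\infty}$ makes $\Image(F^C)=0$ by Fact \ref{lemma_decomposition}, which kills the first summand. I expect the main obstacle to be the bookkeeping of the key lemma and of monotonicity: ensuring every identity holds in all $C$-image-complete models, not just existentially closed ones, and that $\eta[\theta]^{-1}$ interacts correctly with the projections on the $\Ker(f^C)$ summands.
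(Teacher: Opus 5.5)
Your argument is correct. This paper gives no proof of the statement: it imports it as Theorem 3.24 of \cite{Chi25}. Your route is the natural one: define a normal-form class $S_F$ that contains the generators, is monotone in $F$, and is closed under $+$ and $\circ$ (computed summand by summand on $\VV=\Image(F^C)\oplus\bigoplus_{f\in F}\Ker(f^C)$), then cancel $\gcd(\rho_{F,1},\dots,\rho_{F,q},\eta)$ on $\Image(F^C)$.

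One step in your key lemma needs a corrected justification.

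\emph{The case $C(g)=0$.} Injectivity of $g[\theta]$ is not enough; you also need $g[\theta](\VV)=\VV$. This is the $C$-image-completeness condition $\Image(g^{C+1})=\Image(g^C)$ applied to $g\in\Kp{C=0}\subseteq\Kp{C<\infty}$.

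\emph{Passing surjectivity to $\Image(F^C)$.} This does not follow from $\theta$-invariance alone. It follows from the fact that $g[\theta]$ commutes with $F^C[\theta]$, so that $g[\theta](\Image(F^C))=F^C[\theta](g[\theta](\VV))=\Image(F^C)$. For $g\in F$, argue in the same way via $\Image(F^C)=(F\setminus\set{g})^C[\theta](\Image(g^C))$.
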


\subsection{Existentially closed models and first-order axiomatization}

We now state the characterization of the existentially closed models of $T^C_\theta$ from \cite{Chi25}. We start with the remaining ingredients:

\begin{definition}
    \label{def_param_c_sequence_system} A \textbf{parametrized $\mathbf{C}$-sequence-system} is an $\LKThe$-formula of the form
    $$
    S(\ux; \uy) = \bigwedge\nolimits_{k=1}^n f_k^{q_k}[\theta](x_{\ldd, k}) = y_k
    $$
    with $\ux := \ux_\li\ux_\ld := (x_{\lii, k} : 1 \leq k \leq m)(x_{\ldd, k} : 1 \leq k \leq n)$ and $\uy = (y_1, \dots, y_n)$ that satisfies the following conditions:
    \begin{enumerate}[(i)]
        \item If $C$ is algebraic, then $m = 0$.
        \item $f_k \in \Kp{0<C}$ and $q_k \in \set{q \in \NN : 0 < q \leq C(f_k)}$ hold for all $k \in \set{1, \dots, n}$.
    \end{enumerate}
    
\end{definition}
\noindent We will always denote parametrized $C$-sequence-systems by the letter $S$.
Given such a parametrized $C$-sequence-system $S(\ux; \uy)$, we assume that everything is as above, that is, $m$, $n$, and the $f_k$'s and $q_k$'s are defined implicitly, and we set $\ux = \ux_\li\ux_\ld$ as above.
When we partition $\ux = \ux_\li\ux_\ld$ as above, we think of:
    \begin{enumerate}[(i)]
        \item $\ux_\li$ as the linearly independent part of $\ux$, since $S(\ux; \uy)$ does not imply any linear dependencies for the sequence $(\theta^i(x_{\lii, k}) : 1 \leq k \leq m, i \in \omega)$;
        \item $\ux_\ld$ as the linearly dependent part of $\ux$, since the formula $S(\ux; \uy)$ implies that the sequence $(\theta^i(x_{\ldd, k}) : i \in \omega)$ is linearly dependent over $\spanA{y_k}{K}$ for each $k \in \set{1, \dots, n}$.
    \end{enumerate}
The names $\ux_\li$ and $\ux_\ld$ for these tuples are abbreviations for linearly independent and linearly dependent. Notice that in the algebraic case, we require $\ux_\li$ to be empty, which makes sense, as we have $\sum_{i=0}^{\deg(\mipo(C))} (\mipo(C))_i \cdot \theta^i(v) = 0$ for any $v \in \VV$ in that case.

\begin{definition}
    \label{def_c_sequence_system} \label{def_compatible} Let $S(\ux; \uy) = \bigwedge_{k=1}^n f_k^{q_k}[\theta](x_{\ldd, k}) = y_k$ be a parametrized $C$-sequence-system as in Definition \ref{def_param_c_sequence_system}, and let $(\VV, \theta) \models \TKvsThe$ be given.
\begin{enumerate}[(i)]
    \item We say that a tuple $\uu = (u_1, \dots, u_n) \in \VV$ is \textbf{compatible} with $S$ if $u_k \in \Ker(f_k^{C-q_k})$ for every $k \in \set{1, \dots, n}$ with $f_k \in \Kp{0<C<\infty}$.
    \item A \textbf{$\mathbf{C}$-sequence-system} over $(\VV, \theta)$ is an $\LKThe(\VV)$-formula of the form
    \hbox{$
    S(\ux) = S'(\ux; \uu)
    $}
    where $S'$ is a parametrized $C$-sequence-system and $\uu \in \VV$ is compatible with $S'$.
\end{enumerate}
\end{definition}

\noindent We will also denote $C$-sequence-systems over some $(\VV, \theta) \models \TKvsTheC$ by the letter $S$.
Notice that a $C$-sequence-system over $(\VV, \theta)$ is also a $C$-sequence-system over any extension $(\VV', \theta')$ that is also a model of $\TKvsThe$.

\begin{definition}[Placeholder notation] \label{def_placeholder_notation}
    Let $\ux = (x_k : k \in \kk)$ be a tuple of variables.
    We define the \textbf{placeholder sequence} \hbox{$\uxvec := (x^i_k : k \in \kk, i \in \omega)$} to be a new tuple of variables.
    We call each $x^i_k$ a \textbf{placeholder variable} or a \textbf{placeholder} for $\theta^i(x_k)$.
    We furthermore define:
    \begin{enumerate}[(i)]
        \item $\ux^i := (x_k^i : k \in \kk)$ for each $i \in \omega$, and
        \item $\xvec_k := (x^i_k : i \in \omega)$ for each $k \in \kk$.
    \end{enumerate}
    We may sometimes write $(\ux^i : i \in \omega)$ or $(\xvec_k : k \in \kk)$ instead of $\uxvec$.
    For a singleton $x$, we similarly define $\xvec := (x^i : i \in \omega)$.
    If a formula $\psi(\uxvec; \uw)$ or a term $\lambda(\uxvec{}; \uw)$ is given, we define:
    \begin{enumerate}[(i)]
        \setcounter{enumi}{2}
        \item $\psi_\theta(\ux; \uw) := \psi((\theta^i(x_k) : k \in \kk, i \in \omega); \uw)$.
        \item $\lambda_\theta(\ux; \uw) := \lambda((\theta^i(x_k) : k \in \kk, i \in \omega); \uw)$.
    \end{enumerate}
\end{definition}

\begin{definition} \label{def_formual_bounded}
    Let $S(\ux; \uy)= \bigwedge\nolimits_{k=1}^n f_k^{q_k}[\theta](x_{\ldd, k}) = y_k$ be a parametrized $C$-sequence-system as in Definition \ref{def_param_c_sequence_system}.
    If $\psi(\uxvec; \uw)$ is a formula, then we say $\psi(\uxvec; \uw)$ is \textbf{bounded} by $S$ if one of the following equivalent conditions holds:
        \begin{enumerate}[(i)]
            \item For all $k \in \set{1, \dots, n}$, the variable $x^i_{\ldd, k}$ does not appear in $\psi(\uxvec; \uw)$ for \hbox{$i \geq \deg(f_k^{q_k})$}.
            \item For all $k \in \set{1, \dots, n}$, the term $\theta^i(x_{\ldd, k})$ does not appear in $\psi_\theta(\ux; \uw)$ for \hbox{$i \geq \deg(f_k^{q_k})$}.
        \end{enumerate}
    We also say that $\psi_\theta(\ux; \uw)$ is \textbf{bounded} by $S$, if $\psi(\uxvec; \uw)$ is bounded by $S$. Furthermore, we say that a formula is \textbf{bounded} by a $C$-sequence-system $S$ (i.e., a parametrized $C$-sequence-system with some compatible parameters plugged in) if it is bounded by the underlying parametrized $C$-sequence-system.
\end{definition}

\noindent In practice, for a formula $\psi(\uxvec)$ to be bounded by $S$ means that no subterm of the form $\theta^i(x_{\ldd, k})$ appearing in $\psi_\theta(\ux)$ can be replaced by applying a Euclidean division with the equation $f_k^{q_k}[\theta](x_{\ldd, k}) = y_k$ from $S(\ux; \uy)$. Indeed, if $i \geq \deg(f^{q_k}_k)$, then we could replace $\theta^i(x_{\ldd, k})$ with $\chi[\theta](y_k) + r[\theta](x_{\ldd, k})$, where $\chi, r \in K[X]$ are the unique polynomials  satisfying $\chi \cdot f^{q_k}_k + r = X^i$ and $\deg(r) < \deg(f^{q_k}_k) \leq i$.

Now that we have all ingredients, we can state the characterization of existentially closed models of $T^C_\theta$:

\begin{theorem}[Theorem 3.32 in \cite{Chi25}] \label{theorem_big_characterization}
    $(\mm, \theta) \models T^C_\theta$ is existentially closed if and only if it is $C$-image-complete and 
    $$
        (\mm, \theta) \models \exists \ux \in \VV : \psi_\theta(\ux) \wedge S(\ux)
    $$
    holds for any $C$-sequence-system $S(\ux)$ over $(\VV, \theta)$ and $L(M)$-formula $\psi(\uxvec)$ that is bounded by $S$ and does not imply any finite disjunction of non-trivial linear dependencies in $\uxvec$ over $\VV$.
\end{theorem}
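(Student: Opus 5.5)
The plan is to prove the two directions separately. The forward direction is a construction of an extension, followed by existential closedness. The converse normalises an arbitrary witness in an extension into the shape $\psi_\theta(\ux)\wedge S(\ux)$. Model-completeness of $T$ is used throughout, so that every $L$-extension of $\mm$ that is a model of $T$ is an elementary extension.

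\emph{($\Rightarrow$).} $C$-image-completeness is Fact \ref{fact_c_image_comple}(ii). Now let $S(\ux)=S'(\ux;\uu)$ be a $C$-sequence-system over $(\VV,\theta)$, and let $\psi(\uxvec)$ be bounded by $S$ and imply no finite disjunction of non-trivial linear dependencies over $\VV$.
\begin{enumerate}[(1)]
\item By compactness, there is an elementary extension $\mm'\succeq\mm$ containing a tuple $\ua$ (indexed like the placeholders occurring in $\psi$) with $\mm'\models\psi(\ua)$ and such that the entries of $\ua$ are $K$-linearly independent over $\VV$. For the $\li$-variables we may add infinitely many further placeholders $a^i_{\lii,k}$, also independent.
\item We define $\theta'$ on $\VV\oplus\spanA{\ua}{K}$ as follows. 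It extends $\theta$, sends $a^i\mapsto a^{i+1}$, and on the top $\ld$-placeholder $a^{d_k-1}_{\ldd,k}$ (where $d_k=\deg(f_k^{q_k})$) it is determined by the equation $f_k^{q_k}[\theta'](a^0_{\ldd,k})=u_k$. This is well defined exactly because $\psi$ is bounded by $S$.
\item We check that this partial map is still a $C$-endomorphism. Compatibility $u_k\in\Ker(f_k^{C-q_k})$ gives $f_k^{C}[\theta'](a^0_{\ldd,k})=f_k^{C-q_k}[\theta](u_k)=0$. Hence the new cyclic pieces do not enlarge $\Ker(f^{C+1})$ beyond $\Ker(f^C)$ (transcendental case) and are killed by $\mipo(C)$ (algebraic case).
\item We extend $\theta'$ to a complement $W$ of this subspace in $\VV'$. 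We put a suitable $C$-module structure on $W$, for instance a direct sum of copies of $K[X]/(\mipo(C))$, or of $K[X]/(f^{C(f)})$, or of $K[X]$-free pieces modulo the $C(f)=0$ constraints. If dimensions do not match, we first pass to a larger elementary extension of $\mm'$.
\end{enumerate}
Then $(\mm',\theta')\models T^C_\theta$ extends $(\mm,\theta)$ and satisfies $\exists\ux\in\VV:\psi_\theta(\ux)\wedge S(\ux)$. Existential closedness of $(\mm,\theta)$ transfers this to $(\mm,\theta)$.

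\emph{($\Leftarrow$).} Let $(\mathcal N,\theta')\supseteq(\mm,\theta)$ be a model of $T^C_\theta$, and let $\exists\uz\,\chi(\uz)$ be a quantifier-free $L_\theta(M)$-formula with a witness $\ub\in\mathcal N$. By replacing $\theta$-subterms with new variables, we may assume that every $\theta$-term has the form $\theta^i(b)$ with $b\in\VV_{\mathcal N}$. The key step is to choose new generators $\ux$ of $\spanA{\theta'^i(\ub)}{}+\VV$ (as a $K[\theta']$-module modulo $\VV$) that split into an $\li$-part and an $\ld$-part. For the $\ld$-part we want $f_k^{q_k}[\theta'](x_{\ldd,k})=u_k\in\VV$ with $\uu$ compatible.

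We do this by putting the relation module into triangular form, as in Fact \ref{lemma_bound_term_light}, and factoring the diagonal polynomials into irreducibles. The factors $f$ with $C(f)=0$ must disappear, since they are injective and image-complete on $\VV$; this lets us pull the corresponding right-hand sides back into $\VV$. For factors $f$ with $0<C(f)<\infty$ we use Fact \ref{lemma_decomposition} and the pseudo-inverses of Fact \ref{fact_endo_gen} to adjust $x_{\ldd,k}$ by elements of $\VV$. After this adjustment the right-hand side lands in $\Ker(f^{C-q})$, and $q\le C(f)$ holds because $\theta'$ is a $C$-endomorphism.

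By Fact \ref{lemma_bound_term_light}, every term in $\chi$ then rewrites as a combination of bounded placeholders plus $\VV$-parameters. This yields an $L(M)$-formula $\psi(\uxvec)$, bounded by $S$, such that $\psi_\theta(\ux)\wedge S(\ux)$ implies $\exists\uz\,\chi$. Moreover $\psi$ is realised in $\mathcal N\succeq\mm$ by placeholders that are linearly independent over $\VV$, so $\psi$ implies no finite disjunction of non-trivial linear dependencies over $\VV$. The hypothesis then gives a solution in $\mm$.

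I expect the main obstacle to be this normalisation of the $\ld$-part in the transcendental case with infinitely many $f$ with $0<C(f)<\infty$. There, compatibility of $\uu$ and the bound $q_k\le C(f_k)$ must be arranged simultaneously; the tools are the decomposition of Fact \ref{lemma_decomposition} and the \commonBaseTheorem.
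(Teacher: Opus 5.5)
Your proposal is correct in substance. The forward direction is in line with \cite{Chi25}, which proves the left-to-right implication for the more general triangular systems of Definition~\ref{def_param_gen_c_ss} (Theorem 4.18 there). As in your step (3), compatibility is what keeps the new cyclic pieces inside $\Ker(f^C)$. Two small points. First, existential closedness only transfers existential formulas, so you need model-completeness of $T$ to replace $\psi$ by an existential equivalent. Second, boundedness is not what makes $\theta'$ well defined; it is what makes $\psi_{\theta'}(\ua^0)$ coincide with $\psi(\ua)$.

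The converse is where your route genuinely differs. \cite{Chi25} obtains the normal form by induction on $(\rk(S),\deg(S))$ with the \TrafoLemma{}: a bounded linear dependency $E$ over $\VV$ satisfied by the witness lets one transform $S\wedge E$ into a smaller system with $\LRC$-term parameters. You instead decompose the finitely generated $K[X]$-module $Q=(\VV+K[\theta']\ub)/\VV$ in one go.

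For this, ``triangular form plus factoring the diagonal polynomials'' is not enough, because the off-diagonal couplings survive. For example, $f(x_1)=u_1$, $f(x_2)=x_1+u_2$ presents a cyclic $K[X]/(f^2)$ modulo $\VV$. You need the elementary divisor form $Q\cong K[X]^m\oplus\bigoplus_k K[X]/(f_k^{q_k})$.

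With it, one computation does all the normalising. Let $f^q(x)=u\in\VV$ and $c=C(f)<\infty$, and split $u=u_1+u_2$ with $u_1\in\Image(f^c)$, $u_2\in\Ker(f^c)$. By $C$-image-completeness of $\theta$, $f$ is bijective on $\Image(f^c)$, so there is $w\in\Image(f^c)$ with $f^q(w)=u_1$. Then $x-w\in\Ker_{\mathcal N}(f^{q+c})=\Ker_{\mathcal N}(f^c)$. This gives three things: no $f$-torsion in $Q$ when $c=0$, the bound $q_k\le C(f_k)$, and compatibility after replacing $x_{\ldd,k}$ by $x_{\ldd,k}-w$. So the bound on $q_k$ uses image-completeness of $\theta$, not only that $\theta'$ is a $C$-endomorphism.

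Only finitely many primes occur in $Q$, so the case of infinitely many $f$ with $0<C(f)<\infty$, which you flagged as the main obstacle, is harmless here.

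What each route buys: yours is shorter and never needs $(\mathcal N,\theta')$ to be $C$-image-complete, whereas transformability is only defined modulo $C$-image-completeness. The Transformation-Lemma route produces normalisations witnessed uniformly by $\LRC$-terms in the parameters. That uniformity is what the axiomatization (Theorem~\ref{theorem_first_oder}), the description of definable sets, and the staged variant in Section~\ref{sec_proof_general} rely on.
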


\noindent We give two examples where the characterization simplifies quite a lot:
\begin{enumerate}[(i)]
    \item Fix $f \in \Kp{}$ and let $C_f$ be the unique algebraic kernel configuration with $\mipo(C_f) = f$.
    A model $(\mm, \theta) \models T^{C_f}_\theta$ is existentially closed if and only if
    $$
        (\mm, \theta) \models \exists \ux \in \VV : \psi(\theta^0(\ux), \dots, \theta^{\deg(f)-1}(\ux))
    $$
    holds for every $L(M)$-formula $\psi(\ux^0, \dots, \ux^{\deg(f)-1})$ that does not imply any finite disjunction of non-trivial linear dependencies in $\ux^0, \dots, \ux^{\deg(f)-1}$ over $\VV$.
    This is Theorem 3.33 in \cite{Chi25}.
    
    \item Let $C_0$ be the unique transcendental kernel configuration with $C_0(f) = 0$ for all $f \in \Kp{}$.
    A model $(\mm, \theta) \models T^{C_0}_\theta$ is existentially closed if and only if $\rho[\theta]$ is invertible for every $\rho \in K[X] \setminus \set{0}$ and
    $$
        (\mm, \theta) \models \exists \ux \in \VV : \psi_\theta(\ux)
    $$
    holds for every $L(M)$-formula $\psi(\uxvec)$ that does not imply any finite disjunction of non-trivial linear dependencies in $\uxvec$ over $\VV$.
    This is Theorem 3.34 in \cite{Chi25}.
\end{enumerate}

\noindent The next step is to first-order axiomatize this characterization when possible.
For this, we need the following two families of formulas:

\begin{fact}[Lemma 3.36 in \cite{Chi25}]
    Given a parametrized $C$-sequence-system $S(\ux; \uy)$, there is a $\LKThe$-formula $\delta_S(\uy)$ such that $(\mm, \theta) \models \delta_S(\uu)$ holds if and only if $\uu$ is compatible with $S$.
\end{fact}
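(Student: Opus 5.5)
The plan is to write $\delta_S(\uy)$ down explicitly. Compatibility in Definition \ref{def_compatible} is a finite conjunction of kernel conditions, and each kernel condition is a single equation between $\LKThe$-terms.

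Write $S(\ux;\uy) = \bigwedge_{k=1}^n f_k^{q_k}[\theta](x_{\ldd,k}) = y_k$ as in Definition \ref{def_param_c_sequence_system}. By condition (ii) there, each $f_k$ lies in $\Kp{0<C}$ and $0 < q_k \leq C(f_k)$. Let $I := \set{k \in \set{1,\dots,n} : f_k \in \Kp{0<C<\infty}}$, which is determined by $S$ and $C$ alone. For $k \in I$ we have $C(f_k) \in \NN$, so $f_k^{C-q_k} = f_k^{C(f_k)-q_k}$ is an honest polynomial in $K[X]$ of non-negative degree. If $q_k = C(f_k)$ it is the constant polynomial $1$, and the condition below becomes $y_k = 0$. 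For $k \notin I$ we have $C(f_k) = \infty$, and Definition \ref{def_compatible} imposes no condition. Set
$$
\delta_S(\uy) := \bigwedge\nolimits_{k \in I} f_k^{C-q_k}[\theta](y_k) = 0 ,
$$
conjoined, if one wishes to be careful about elements outside $\VV$, with $\bigwedge_{k} \Omega_{\VV}(y_k)$. An empty conjunction is read as $\top$.

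Each $f_k^{C-q_k}[\theta](y_k)$ is, by the definition of $\rho[\theta]$, the $\LKThe$-term $\sum_{i} (f_k^{C-q_k})_i \cdot \theta^i(y_k)$. Hence $\delta_S$ is an $\LKThe$-formula, and it becomes an $L_\theta$-formula after the routine replacement of $0,+,\lambda\cdot$ by $\Omega_0,\Omega_+,\Omega_{\lambda\cdot}$. By definition, $(\mm,\theta) \models f_k^{C-q_k}[\theta](u_k) = 0$ holds exactly when $u_k \in \Ker(f_k^{C-q_k})$. So $(\mm,\theta) \models \delta_S(\uu)$ if and only if $u_k \in \Ker(f_k^{C-q_k})$ for all $k \in I$, which is precisely compatibility of $\uu$ with $S$.

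No step is a real obstacle. The only points needing care are the following.
\begin{enumerate}[(i)]
    \item The set $I$ has to be fixed in advance. This matters because whether $C(f_k)$ is finite decides whether a conjunct appears at all, and that choice is made at the level of $C$, not uniformly by one formula scheme.
    \item The exponent $C(f_k)-q_k$ is well defined and non-negative, which holds because of the restriction $q_k \leq C(f_k)$.
\end{enumerate}
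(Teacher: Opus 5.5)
Your proposal is correct and takes the same approach the paper relies on: compatibility is by definition just the kernel conditions $u_k \in \Ker(f_k^{C-q_k})$ for the indices with $C(f_k)<\infty$. The paper remarks that this is why a first-order formula is easy to find, and it uses the same conjunction of $\LKThe$-equations $f_k^{C-q_k}[\theta](y_k)=0$ in condition (a) of Lemma \ref{corollary_comP_fml_staged}. Your treatment of the index set and of the edge case $q_k = C(f_k)$ is fine, and the optional $\Omega_{\VV}$-conjunct does no harm, since compatibility is only defined for tuples in $\VV$.
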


\begin{definition}[see Definition 1.11 in \cite{dEl21b}] \label{def_hfour}
    We say that $T$ (with the specific choice of $\VV$) satisfies $(\operatorname{H4})$ if, for every $L$-formula $\psi(\ux; \uw)$, there is an $L$-formula $\sigma_\psi(\uw)$ such that, for all $\mm \models T$ and $\ud \in M$, we have $\mm \models \sigma_\psi(\ud)$ if and only if one of the following two equivalent conditions holds:
    \begin{enumerate}[(i)]
        \item The formula $\psi(\ux; \ud)$ implies no finite disjunction of non-trivial linear dependencies in $\ux$ over $\VV$.
        \item There are an elementary extension $\mm' \succ \mm$ and a tuple $\uv' \in \VV'$ linearly independent over $\VV$ such that $\mm' \models \psi(\uv'; \ud)$.
    \end{enumerate}
\end{definition}

\begin{theorem}[Theorem 3.39 in \cite{Chi25}] \label{theorem_first_oder}
If $T$ satisfies \Hfour{}, then $T_\theta^C$ has a model companion $T\theta^C$, i.e., a first-order axiomatization of the class of existentially closed models.
It is axiomatized by the theory \hbox{$T_\theta \cup \set{\text{``$\theta$ is $C$-image-complete''}}$} together with the sentence
$$
\forall \uw:\forall \uy \in \VV : (\sigma_\psi(\uw) \wedge \delta_S(\uy)) \rightarrow \exists \ux \in \VV : \psi_\theta(\ux; \uw) \wedge S(\ux; \uy)
$$
for every parametrized $C$-sequence-system $S(\ux; \uy)$ and every $L$-formula $\psi(\uxvec; \uw)$ that is boun\-ded by $S$.
\end{theorem}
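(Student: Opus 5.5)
The plan is to reduce the statement to the characterization of existentially closed models in Theorem \ref{theorem_big_characterization}. Since $T^C_\theta$ is inductive (Lemma 3.2 in \cite{Chi25}), a model companion exists as soon as the class of existentially closed models of $T^C_\theta$ is elementary, and it is then the theory of that class. So it suffices to let $T^*$ denote the proposed theory and show, for $(\mm, \theta) \models T^C_\theta$, that $(\mm,\theta)$ is existentially closed if and only if $(\mm,\theta) \models T^*$.

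First, I will check that $T^*$ is genuinely first-order. ``$\theta$ is a $C$-endomorphism'' is a set of sentences: either $\mipo(C)[\theta]=0$, or $\Ker(f^C)=\Ker(f^{C+1})$ for each $f \in \Kp{C<\infty}$. Image-completeness adds $\Image(f^{C+1}) = \Image(f^C)$ for each such $f$. The formulas $\delta_S$ come from Lemma 3.36 in \cite{Chi25}, and the $\sigma_\psi$ exist by \Hfour{}. Since $\psi(\uxvec;\uw)$ is a formula, it mentions only finitely many placeholders, so $\psi_\theta(\ux;\uw)$ is an honest $L_\theta$-formula.

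For ``existentially closed $\Rightarrow$ model of $T^*$'', Fact \ref{fact_c_image_comple}(ii) gives $C$-image-completeness. Now fix a parametrized $C$-sequence-system $S(\ux;\uy)$ and an $L$-formula $\psi(\uxvec;\uw)$ bounded by $S$, and take $\ud \in M$ and $\uu \in \VV$ with $(\mm,\theta) \models \sigma_\psi(\ud) \wedge \delta_S(\uu)$.
\begin{enumerate}[(i)]
    \item By the choice of $\delta_S$, the tuple $\uu$ is compatible with $S$, so $S(\ux;\uu)$ is a $C$-sequence-system over $(\VV,\theta)$.
    \item The $L$-reduct $\mm$ is a model of $T$, so \Hfour{} applies in $\mm$. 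Hence $\psi(\uxvec;\ud)$ implies no finite disjunction of non-trivial linear dependencies in $\uxvec$ over $\VV$.
    \item Boundedness of $\psi(\uxvec;\ud)$ by $S(\ux;\uu)$ is inherited from the parametrized version, by Definition \ref{def_formual_bounded}.
\end{enumerate}
Theorem \ref{theorem_big_characterization} then yields $\ux \in \VV$ with $\psi_\theta(\ux;\ud)\wedge S(\ux;\uu)$, which is exactly the required instance of the axiom.

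For the converse, let $(\mm,\theta) \models T^*$. It is $C$-image-complete by the axioms. Take a $C$-sequence-system $S(\ux) = S'(\ux;\uu)$ over $(\VV,\theta)$ and an $L(M)$-formula bounded by $S$ that implies no non-trivial linear dependencies. I will write this formula as $\psi(\uxvec;\ud)$ with $\psi(\uxvec;\uw)$ an $L$-formula and $\ud \in M$. Boundedness concerns only which placeholder variables occur, so $\psi(\uxvec;\uw)$ is still bounded by $S'$. Then \Hfour{} gives $\mm \models \sigma_\psi(\ud)$, and compatibility of $\uu$ gives $\delta_{S'}(\uu)$. The corresponding axiom produces a witness $\ux \in \VV$, and Theorem \ref{theorem_big_characterization} shows that $(\mm,\theta)$ is existentially closed.

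I do not expect a deep obstacle, since the heavy lifting sits in Theorem \ref{theorem_big_characterization}. The step needing the most care is the bookkeeping with parameters. One point is that \Hfour{} is stated for models of $T$ and linear dependence over $\VV^\mm$, so we must apply it in the $L$-reduct and note that it matches the condition in the characterization. The other is ensuring that splitting off the parameters $\ud$ preserves boundedness and the ``no linear dependencies'' hypothesis, so that the uniform axiom schema indexed by parameter-free $\psi$ and $S$ covers every instance the characterization requires.
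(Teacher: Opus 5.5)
Your proposal is correct and follows the route the paper indicates for this result, which it only cites from \cite{Chi25}: inductiveness of $T^C_\theta$ reduces the claim to showing that the existentially closed models are exactly the models of the proposed theory. That equivalence follows from Theorem \ref{theorem_big_characterization}, with $\delta_S$ expressing compatibility and $\sigma_\psi$ from \Hfour{} expressing the absence of linear dependencies, and your bookkeeping (writing an $L(M)$-formula as $\psi(\uxvec;\uw)$ with $\ud$ plugged in, keeping boundedness, applying \Hfour{} in the $L$-reduct) is exactly what shows the uniform axiom scheme covers every instance of that characterization.
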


\begin{example} \label{examples_hfour}
    \Hfour{} holds in the following settings:
    \begin{enumerate}[(i)]
        \item The theory $\TKvs$ with the vector space $(\VV, +, 0, (\lambda \cdot)_{\lambda \in K})$ being the entire structure satisfies \Hfour{}.
        This follows easily from quantifier elimination.
        \item Any complete and model-complete o-minimal theory $T$ extending the theory of divisible ordered abelian groups, with $(\VV, +, 0, (q \cdot)_{q \in \QQ})$ being a subinterval of the line (but not necessarily a subgroup) and continuous operations, satisfies \Hfour{} if and only if there is no infinite definable family of germs of $(\VV, +, 0, (q \cdot)_{q \in \QQ})$-endomorphisms at $0_\VV$; combine Theorem 2.4 and Lemma 2.9 in \cite{Blo23}.
        \item Any complete and model-complete o-minimal expansion of $\RCF{}$ with $(\VV, +, 0, (q \cdot)_{q \in \QQ})$ given by $(R_{>0}, \cdot, 1, (x \mapsto x^q)_{q\in \QQ})$ satisfies \Hfour{} if and only if no partial exponential function is definable.
        This is a special case of (ii); see the proof of Theorem A in \cite{Blo23}.
        \item Let $\FF_q$ be a finite field with $q = p^r$.
        If $T$ expands the theory of an $\FF_q$-vector space and $\VV$ is that vector space, then $T$ satisfies \Hfour{} if and only if it eliminates $\exists^\infty$; see the proof of Theorem 5.2 in \cite{dEl21b}.

        For expansions of the theories $\ACF{}_p$, $\operatorname{SCF}_{p, e}$ ($e$ either finite or infinite), $\operatorname{Psf}_p$, $\operatorname{ACFA}_p$, and $\operatorname{DCF}_p$, this implies that whenever $\FF_q$ is contained in every model as constants, \Hfour{} holds with the $\FF_q$-vector space given by addition.
        For more details, see Example 5.10 in \cite{dEl21b}.
        \item If $T$ eliminates $\exists^\infty x \in \VV$ (see Definition 3.3 in \cite{Chi26}; recall that $\VV$ might not be the entire domain, or even an $n$-ary set), and the following condition holds:
        \begin{itemize}
            \item[(V)] For any $\mm' \succ \mm \models T$ and any tuple $\uv'$ in $\VV'$, $\uv'$ is $\acl_L$-independent over $M$ if and only if it is linearly independent over $\VV$.
        \end{itemize}
        then $T$ satisfies \Hfour{} (see Lemma 3.4 in \cite{Chi26}).
    \end{enumerate}
    In Theorem 3.6 in \cite{Chi26}, we proved that the elimination of $\exists^\infty x \in \VV$ is a necessary condition for the existence of the model companion of $T^C_\theta$ (for non-trivial $C$), and consequently a necessary condition for \Hfour{}.
\end{example}

\noindent One important observation is that the formulas $\sigma_\psi(\uw)$ obtained from \Hfour{} are always equivalent to formulas stating that $\psi(\ux; \uw)$ has a realization in $\VV$ that satisfies a fixed finite set of non-trivial linear inequations, which are parametrized by $\uw$:

\begin{fact}[Lemma 3.7 in \cite{Chi25b}]
    \label{lemma_hfour_fml}
    If $T$ satisfies \Hfour{}, then one can choose the formula $\sigma_\psi(\uw)$ (see Definition \ref{def_hfour}) for each $L$-formula $\psi(\ux; \uw)$ with $\ux = (x_1, \dots, x_n)$ to be of the form
    $$
    \exists \ux \in \VV : \psi(\ux; \uw) \wedge \bigwedge\nolimits_{k=1}^m \neg\varphi_k\Big(\sum\nolimits_{l=1}^n \lambda_{k, l} \cdot x_l; \uw\Big),
    $$
    with $(\lambda_{k, 1}, \dots, \lambda_{k, n}) \in K^n \setminus \set{\uzero}$ and $\varphi_k(y; \uw)$ algebraic in $y$ for each $k \in \set{1, \dots, m}$.
\end{fact}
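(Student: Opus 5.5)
The plan is to show, for every $\ud$, that $\ud$ satisfies the displayed formula exactly when $\ud$ satisfies $\sigma_\psi$; the formula can then serve as $\sigma_\psi$. One direction works for any choice of the $\varphi_k$, as long as each is algebraic in $y$. The real work is choosing the finitely many coefficient vectors $(\lambda_{k,1},\dots,\lambda_{k,n})$ and the algebraic formulas $\varphi_k$ uniformly in $\uw$.

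\emph{Easy direction.} Suppose $\mm\models\sigma_\psi(\ud)$. By Definition \ref{def_hfour}(ii) there are $\mm'\succ\mm$ and $\uv'\in\VV'$, linearly independent over $\VV$, with $\mm'\models\psi(\uv';\ud)$. Then for each non-zero coefficient vector, $\sum_l\lambda_{k,l}v'_l\notin\VV$. Since $\varphi_k(y;\ud)$ is algebraic and $\mm\preceq\mm'$, all its solutions in $\mm'$ already lie in $M$; moreover we may build $\varphi_k$ to imply $y\in\VV$, so its solutions lie in $\VV$. Hence $\mm'\models\neg\varphi_k(\sum_l\lambda_{k,l}v'_l;\ud)$ for all $k$. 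So the existential formula holds in $\mm'$, and therefore in $\mm$.

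\emph{Hard direction, uniformity.} Suppose $\mm\models\neg\sigma_\psi(\ud)$. Then $\psi(\ux;\ud)$ implies $\bigvee_{j}\sum_l\lambda_{j,l}x_l=v_j$ for finitely many non-zero $\lambda_j$ and $v_j\in\VV$. For each finite set $\Lambda$ of coefficient vectors and each $N$, consider the $L$-formula $\chi_{\Lambda,N}(\uw)$ saying: there exist $v_1,\dots,v_N\in\VV$ such that $\forall\ux\,(\psi(\ux;\uw)\to\bigvee_{\lambda\in\Lambda}\bigvee_{i\le N}\lambda\cdot\ux=v_i)$. These formulas increase with $\Lambda$ and $N$, and every model of $T$ satisfies $\neg\sigma_\psi(\uw)\to\bigvee_{\Lambda,N}\chi_{\Lambda,N}(\uw)$. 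By compactness, a single pair $(\Lambda,N)$ with $\Lambda=\{\lambda_1,\dots,\lambda_m\}$ works for all $\uw$ satisfying $\neg\sigma_\psi$. These $\lambda_k$ are the coefficient vectors of the final formula.

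\emph{Hard direction, choosing the $\varphi_k$ (main obstacle).} The cover $v_1,\dots,v_N$ is not canonical, so the $\varphi_k$ must pick out a definable finite set containing enough of the cover values. A naive choice, for instance $\{\lambda_k\cdot\ux:\psi(\ux;\uw)\}$ cut down by a cardinality bound, fails: $\lambda_k\cdot\ux$ can take infinitely many values on realizations already covered by other $\lambda_j$. I would first try induction on the number $n$ of variables, proving the lemma for all formulas in fewer variables (the base case $n=0$ is trivial). For each $k$, choose new coordinates in which $\lambda_k\cdot\ux$ is one of the variables. Then the fibre formula $\psi_{k}(\ux';\uw,y)$, namely $\psi(\ux;\uw)\wedge\lambda_k\cdot\ux=y$, lives in $n-1$ variables $\ux'$ with parameters $(\uw,y)$. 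Let $\sigma_{\psi_k}(\uw,y)$ be given by \Hfour{}, and set
$$
\varphi_k(y;\uw):=y\in\VV\wedge\neg\sigma_\psi(\uw)\wedge\sigma_{\psi_k}(\uw,y)\wedge\text{``at most $N$ such $y$ exist''}.
$$
This is algebraic in $y$ by construction. It remains to check that when $\neg\sigma_\psi(\ud)$ holds, every realization $\ua$ of $\psi(\ux;\ud)$ satisfies some $\varphi_k(\lambda_k\cdot\ua;\ud)$. A fibre value $y=\lambda_k\cdot\ua$ whose fibre is non-degenerate in the sense of $\sigma_{\psi_k}$ must be one of the cover values $v_i$: a generic realization of that fibre avoids all dependencies other than $\lambda_k\cdot\ux=y$. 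Such values are also controlled by $N$. For degenerate fibres, apply the induction hypothesis and enlarge $\Lambda$ by the finitely many coefficient vectors it produces, pulled back to $\ux$. Finally, the cardinality bound must be checked to be compatible with the choice of $N$, re-running the compactness step after enlarging $\Lambda$ if necessary. I expect this bookkeeping to be where the argument needs the most care.
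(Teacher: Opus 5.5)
Your easy direction, the compactness step giving a uniform pair $(\Lambda,N)$, and your argument that a value $b$ with $\sigma_{\psi_k}(\ud,b)$ must be a cover value are all correct. (That last argument needs $\Lambda$ normalized so that no two of its vectors are proportional.) The gap is the step for degenerate fibres, and it is more than bookkeeping. The induction hypothesis applied to $\psi_k(\ux';\uw,y)$ gives vectors $\mu_{k,j}$ and formulas $\varphi'_{k,j}(z;\uw,y)$ that are algebraic in $z$ \emph{with $y$ as a parameter}. When you pull back to $\ux$, the parameter $y$ becomes the term $\lambda_k\cdot\ux$. What you get is therefore $\varphi'_{k,j}(\lambda'_{k,j}\cdot\ux;\uw,\lambda_k\cdot\ux)$. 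This is not of the form $\varphi(\lambda\cdot\ux;\uw)$ with $\varphi$ algebraic in its first argument: the finite set $\varphi'_{k,j}(\mm;\ud,b)$ moves with $b$, so $\exists y\,\varphi'_{k,j}(z;\uw,y)$ need not be algebraic in $z$. Enlarging $\Lambda$ or re-running compactness does not touch this. The induction hypothesis would only help if the degenerate fibres still containing uncovered points had their values $b$ in a finite set definable from $\ud$, and they need not. Take $\psi:=x_1=0\wedge x_2=0$ in $\VV^3$ with $e_3\in\Lambda$. Every fibre in every direction is degenerate, so all your $\varphi_k$ are empty and nothing is covered yet. Yet in direction $e_3$ every $b\in\VV$ gives a non-empty degenerate fibre. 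The finite family you would really want, the cover hyperplanes $\lambda_k\cdot\ux=v_{k,i}$, is not definable from $\ud$, as you observed yourself.

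The missing idea is to induct on codimension rather than on $n$, applying \Hfour{} to fibres over flats of higher codimension. Put $S_1:=\psi(\mm;\ud)\cap\VV^n$. Suppose $S_r$ is defined by an $L$-formula uniformly in $\ud$. Suppose also that it is covered by finitely many flats $\Delta\cdot\ux=\ub$ of codimension $r$, where $\Delta=(\delta_1,\dots,\delta_r)$ ranges over a fixed finite set of tuples of independent vectors spanning pairwise distinct subspaces. Apply \Hfour{} to the fibre formula of $S_r$ over $\Delta\cdot\ux=\uy$, which has $n-r$ free variables. Your genericity argument shows that every $\ub$ with non-degenerate fibre gives one of the covering flats. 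So these $\ub$ form a finite $\ud$-definable set. Projecting it to the first coordinate and conjoining with $\neg\sigma_\psi(\uw)$ yields an algebraic $\varphi_\Delta(y;\uw)$, to be used with coefficient vector $\delta_1$. Points of $S_r$ lying in no non-degenerate fibre lie in degenerate covering flats. Hence they form a uniformly definable set $S_{r+1}$ covered by finitely many flats of codimension $r+1$, whose directions come from a finite set by compactness. At $r=n$ the fibres are single points, and non-empty ones are non-degenerate, so $S_{n+1}=\varnothing$. The formulas $\varphi_\Delta$ collected along the way are then as required.
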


\noindent In \cite{Chi25b}, we used the fact above to prove that all formulas are  equivalent to a finite disjunction of certain nice formulas modulo $T\theta^C$ (the model companion of $T^C_\theta$). In the following, we present a partial result tailored to our needs in this paper:

\begin{definition}[Definition 3.1 and 3.4 in \cite{Chi25b}] \label{def_alg_pattern}\label{def_alg_pat_comp}
    An \textbf{algebraic pattern} in $\uw$ is an $L_\theta$-formula of the form
    $$
    \psi(y_1, \dots, y_m; \uw) := \bigwedge\nolimits_{k=1}^m \exists x \!\in\! \VV : y_k = r_k(x) \wedge \varphi_k(x; y_1, \dots, y_{k-1}; \uw),
    $$
    where each $r_k$ belongs to the ring $R_C$ (see Fact \ref{theorem_r_c_def}), and each $\varphi_k(x; y_1, \dots, y_{k-1}; \uw)$ is an $L$-formula algebraic in $x$.
    In practice, we identify an algebraic pattern with the set $Y$ it defines.
    In this case, we let $\psi_Y(y_1, \dots, y_m; \uw)$ denote the corresponding formula.
    Given a $|\uw|$-sized tuple $\ud$ in a model $(\mm, \theta)$ of $T_\theta \cup \set{\text{``$\theta$ is $C$-image-complete''}}$, we let $Y_\ud$ denote the set defined by $\psi_Y(y_1, \dots, y_m; \ud)$.

    Let $Y$ be an algebraic pattern in $\uw$ defined by $\psi_Y(\uy; \uw)$, let $\pi$ be a coordinate projection, and let $S(\ux; \tiluy)$ be a parametrized $C$-sequence-system.
    We say that the pair $(Y, \pi)$ is \textbf{compatible} with $S$ if
    $$
    T_\theta \cup \set{\text{``$\theta$ is $C$-image-complete''}} \models \forall\uw : \forall \uy \in Y_\uw : \text{``$\pi(\uy)$ is compatible with $S$''}.
    $$
\end{definition}

\begin{fact}[Theorem 3.5 and Lemma 3.8 in \cite{Chi25b}] \label{theorem_big_fml_preceise}
    Suppose that $T$ satisfies \Hfour{}. Any existential $L_\theta$-formula $\phi(\uz;\uw)$ is, modulo $T_\theta \cup \set{\text{``$\theta$ is $C$-image-complete''}}$, equivalent to a finite disjunction of formulas of the form
    $$
    \exists \uy \in Y_\uw : \exists \ux \in \VV : \psi_\theta(\ux; \uz; \uy\uw) \wedge S(\ux; \pi(\uy)), \quad \text{where}
    $$
    \begin{enumerate}[(i)]
        \item $S(\ux; \tiluy)$ is a parametrized $C$-sequence-system;
        \item $\psi(\uxvec; \uz; \uy\uw)$ is an $L$-formula bounded by $S$;
        \item $Y$ is an algebraic pattern in $\uw$, $\pi$ is a projection, and $(Y, \pi)$ is compatible with $S$.
    \end{enumerate}
\end{fact}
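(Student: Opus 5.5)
The plan is to reduce the existential $L_\theta$-formula to a ``placeholder'' normal form, and then to peel off linear dependencies one at a time using \Hfour{} in the explicit form of Fact \ref{lemma_hfour_fml}. Each peeled-off dependency is recorded either as a new coordinate of an algebraic pattern or as a new equation of a $C$-sequence-system. The induction is on a complexity measure: the tuple (number of variables in $\ux$ not yet in $\ux_\ld$, total number of placeholders occurring), ordered lexicographically.

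\emph{Step 1 (flattening).} Write $\phi(\uz;\uw)$ as $\exists \uu : \chi(\uu;\uz;\uw)$ with $\chi$ quantifier-free. Every subterm of the form $\theta(t)$ with $t$ an $L$-term is replaced by a fresh existentially quantified variable $x\in\VV$, and $x = t$ is added. This uses $\theta(t)=0$ for $t\notin\VV$ and a case split on $t\in\VV$. Repeating this, $\theta$ is applied only to variables, so we obtain a disjunction of formulas $\exists \ux\in\VV : \psi_\theta(\ux;\uz;\uw)$ with $\psi(\uxvec;\uz;\uw)$ an $L$-formula (the other existential variables being absorbed into $\psi$). Since $\psi$ mentions only finitely many placeholders, it is of the required form with $S$ empty, all of $\ux$ in $\ux_\li$ and $Y$ trivial. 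The exception is the algebraic case: there we first use $\mipo(C)[\theta]=0$, i.e.\ the equations $\mipo(C)[\theta](x_k)=0$, to move each $x_k$ into $\ux_\ld$. We then pass to the decomposition of Fact \ref{lemma_decomposition}, so that $x_k$ is split into components in the various $\Ker(f^C)$, and the equations become $f^{C}[\theta](x)=0$. The zero parameter is trivially compatible.

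\emph{Step 2 (the inductive step).} Given a formula $\exists \uy\in Y_\uw\,\exists\ux\in\VV:\psi_\theta(\ux;\uz;\uy\uw)\wedge S(\ux;\pi(\uy))$ with $\psi$ bounded by $S$, apply Fact \ref{lemma_hfour_fml} to $\psi(\uxvec;\uz\uy\uw)$, viewed as a formula in its finitely many placeholder variables. Realizations of $\psi$ then split into two kinds. In the first, the placeholder tuple is linearly independent outside the finitely many algebraic sets $\varphi_k$; this disjunct is the desired normal form as it stands. In the second, for some $k$ some non-trivial combination $v:=\sum\lambda_{k,i,l}\theta^i(x_l)$ satisfies $\varphi_k(v;\uz\uy\uw)$. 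In that disjunct we add a new pattern coordinate $y' = r(x')\wedge\varphi_k(x';\dots)$ and the equation $\rho[\theta]\text{-combination of }\ux = y'$. The parameters $\uz$ must not enter the pattern; this is handled by keeping $\uz$ inside $\psi$ and re-applying \Hfour{} with $\uz$ treated as part of $\uw$, then re-existentializing.

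\emph{Step 3 (normalising the linear equation).} This is the step I expect to be the main obstacle. The new relation $\sum_l\rho_l[\theta](x_l)=y'$ has to be turned into a $C$-sequence-system, and boundedness and compatibility have to be preserved. I would proceed as follows. Use the \commonBaseTheorem{} to bring the $\rho_l$ to a common base. Decompose each $x_l$ into its components in $\Image(F^C)$ and in the $\Ker(f^C)$, $f\in F$, which are new $\VV$-variables. Then perform a triangular change of variables: pick the $x_l$ of maximal index with non-zero coefficient and solve for it, i.e.\ rewrite it as an $\eta[\theta]^{-1}$-image, or as $f^q[\theta](x_l)=y''$ with $y''$ an $R_C$-image of the remaining variables and $y'$. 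The remaining variables are then substituted by their $\theta$-expressions. Fact \ref{lemma_bound_term_light} is used to reduce all higher powers $\theta^i(x_{\ld,k})$ modulo the system, which restores boundedness. The term $\mu'$ that it produces is absorbed into the pattern via new coordinates $y=r(x)$.

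Compatibility, i.e.\ $y''\in\Ker(f^{C-q})$, is enforced by applying $\pi_{\Ker(f^C)}$ before dividing. By $C$-image-completeness, elements of $\Image(f^C)$ are already solvable by the pseudo-inverse and contribute to the pattern rather than to $S$. The bookkeeping I expect to need is to check that the new $r$'s stay in $R_C$ and that the complexity measure strictly drops: one variable moves from $\ux_\li$ to $\ux_\ld$, or the degree bound shrinks. Termination then gives the finite disjunction.
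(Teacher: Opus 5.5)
Your argument is correct, but it is already complete after Step~1; Steps~2--3 are not needed for the statement as formulated. Conditions (i)--(iii) impose no linear-independence requirement on $\psi$, and boundedness only restricts placeholders of variables in $\ux_\ld$.

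\emph{Transcendental $C$.} The flattened formula $\exists \ux \in \VV : \psi_\theta(\ux; \uz; \uw)$ is already of the required form, with $\ux = \ux_\li$, $S = \top$, and an empty (or dummy) pattern.

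\emph{Algebraic $C$.} Your decomposition $x_k = \sum_{f \in F} x_{k,f}$ with $F = \Kp{0<C<\infty}$ does the job. The intermediate ``$\mipo(C)[\theta](x_k)=0$'' is not itself an admissible system unless $\mipo(C)$ is a prime power. You should say explicitly two things. First, boundedness comes from reducing each $\theta^i(x_{k,f})$ modulo $f^C$ via Fact \ref{lemma_bound_term_light}. Second, the zero right-hand sides of $\bigwedge f^C[\theta](x_{k,f}) = y_{k,f}$ have to be supplied by pattern coordinates such as $\exists x \in \VV : y = x \wedge x = 0$. These are compatible because $\Ker(f^{C-C}) = \set{0}$.

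In particular, \Hfour{} plays no role in proving (i)--(iii). By contrast, the paper attributes the proof in \cite{Chi25b} to an induction on $(\rk(S), \deg(S))$, driven by Fact \ref{lemma_hfour_fml} and the \TrafoLemma{}. That is essentially what your Steps~2--3 sketch. It is the machinery needed for the stronger normal form proved there; the paper explicitly quotes only a partial result ``tailored to our needs''. Your Step~1 buys a two-line proof of exactly what is stated and used. The inductive route buys the additional control over linear dependencies.

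If you do want that stronger form, Steps~2--3 need repair in two places.
\begin{itemize}
\item[] \emph{Step~2.} The algebraic formulas $\varphi_k$ produced by Fact \ref{lemma_hfour_fml} have $\uz$ among their parameters. So the new coordinate $y'$ ranges over a $\uz$-dependent finite set and cannot be a coordinate of a pattern $Y_\uw$. ``Re-existentializing'' does not resolve this.
\item[] \emph{Step~3.} Compatibility is not produced by applying $\pi_{\Ker(f^C)}$. A solution $x \in \Ker(f^C)$ of $f^q[\theta](x) = y''$ forces $f^{C-q}[\theta](y'') = 0$. This is an extra condition, possibly involving the remaining variables, that must be fed back into the system or the pattern. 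That bookkeeping is exactly what the \TrafoLemma{} handles, via its compatible pair $(\varphi', \umu{}')$ and the accompanying $(\rk,\deg)$-induction.
\end{itemize}
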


\subsection{The antichain tree property}

We now introduce the notation, definitions, and facts around the antichain tree property that are needed for the proof of our main result, i.e., that, assuming \Hfour{}, $T\theta^C$ has \NATP{} if and only if $T$ has \NATP{}.
Most of these results, as well as most of the notation, are taken from \cite{AKL23}.
Throughout this section, $T$ can be any first-order theory, not necessarily one in our usual setting from Section \ref{sec_setting}; i.e., we do not assume that there is a definable vector space.

\begin{notation}\label{language_of_trees}
    Let $\lambda$ be an ordinal and $\kappa$ a cardinal.
    \begin{enumerate}[(i)]
        \item By $\kappa^\lambda$, we mean the set of all functions from $\lambda$ to $\kappa$, i.e., the set of sequences in $\kappa$ indexed by $\lambda$.
        \item By $\kappa^{<\lambda}$, we mean the tree $\bigcup_{\alpha\in \lambda}{\kappa^\alpha}$.
        \item By $\bigemptyseq$, we mean the empty string in $\kappa^{<\lambda}$, i.e., the empty function.
        We may write elements $\nu \in \kappa^n$ as strings of elements in $\kappa$, such as $\nu := \bigseq{011 \dots 001}$ (in this example, $\nu(0) = 0$ and $\nu(n-1) = 1$).
        We may also write $\bigseq{0}^n$ to denote the string $\bigseq{0 \dots 0}$ of length $n$.
    \end{enumerate}
    Let $\nu, \mu \in \kappa^{<\lambda}$ be given.
    \begin{enumerate}[(i)]
        \setcounter{enumi}{3}
        \item By $\nu \triangleleft \mu$, we mean $\nu \subsetneq \mu$.
        If $\nu \trianglelefteq \mu$ or $\mu \trianglelefteq \nu$, then we say $\nu$ and $\mu$ are \textbf{comparable}.
        \item By $\nu \perp \mu$, we mean that $\nu \not\trianglelefteq \mu$ and $\mu \not\trianglelefteq \nu$.
        We say $\nu$ and $\mu$ are \textbf{incomparable} if $\nu \perp \mu$.
        \item By $\nu \wedge \mu$, we mean the maximal $\eta \in \kappa^{<\lambda}$ such that $\eta \trianglelefteq \nu$ and $\eta \trianglelefteq \mu$.
        \item By $l(\nu)$, we mean the domain of $\nu$.
        \item By $\nu <_\Lex \mu$, we mean that either $\nu \triangleleft \mu$, or $\nu \perp \mu$ and $\nu(l(\nu \wedge \mu)) < \mu(l(\nu \wedge \mu))$.
        \item By $\nu^\frown \mu$, we mean $\nu \cup \set{(l(\nu) + i, \mu(i)) : i < l(\mu)}$, i.e., the concatenation of the two strings.
    \end{enumerate}
    Let $X \subseteq \kappa^{<\lambda}$.
    \begin{enumerate}[(i)]
        \setcounter{enumi}{9}
        \item By $\nu^\frown X$, we mean $\set{\nu^\frown \eta : \eta \in X}$.
        \item We say that $X$ is a \textbf{path} if the elements of $X$ are pairwise comparable.
        \item We say that $X$ is an \textbf{antichain} if the elements of $X$ are pairwise incomparable, i.e., $\nu \perp \mu$ for all distinct $\nu, \mu \in X$.
    \end{enumerate}
\end{notation}

\noindent
We provide a slightly more general definition of the antichain tree property than given in the introduction.

\begin{definition}[see Definition 3.19 in \cite{AKL23}] \label{def_kATP}
    We say that a formula $\varphi(\ux; \uy)$ has the $\mathbf{k}$\textbf{-antichain tree property} ($k$-\ATP{}) in $T$ if there is a model of $T$ containing a tree-indexed set of parameters $(\ub_\nu : \nu \in \kappa^{<\omega})$ (where $\kappa \geq 2$) such that for any $X \subseteq \kappa^{<\omega}$:
    \begin{enumerate}[(i)]
        \item The type $\set{\varphi(\ux; \ub_\nu) : \nu \in X}$ is consistent if $X$ is an antichain.
        \item The type $\set{\varphi(\ux; \ub_\nu) : \nu \in X}$ is $k$-inconsistent if $X$ is a path.
    \end{enumerate}
    We say that $T$ has the $\mathbf{k}$\textbf{-antichain tree property} if there is a formula $\varphi(\ux; \uy)$ that has $k$-\ATP{} in $T$.
    In this case, we say that $\varphi(\ux; \uy)$ witnesses $k$-\ATP{} in $\ux$ together with $(\ub_\nu : \nu \in \kappa^{<\omega})$.

    We say that $\varphi(\ux; \uy)$ or $T$ has the \textbf{antichain tree property} (\ATP{}) if it has $2$-\ATP{}.
    We say that $T$ is \NATP{} if it does not have \ATP{}.
\end{definition}

\noindent We will almost always set $\lambda = \omega$ and either $\kappa = 2$ or $\kappa = \omega$ in the above.
Also, we may refer to tree-indexed sets of parameters $(\ub_\nu : \nu \in \kappa^{<\omega})$ simply as trees.

\begin{figure}[tbp]
    \centering
    \begin{tikzpicture}[
        node/.style={circle, draw, fill=white, inner sep=1pt, minimum size=6mm, font=\scriptsize},
        tree edge/.style={line width=0.35pt},
        path edge/.style={draw=blue!60!black, very thick},
        path node/.style={node, draw=blue!60!black, fill=blue!8, very thick},
        antichain node/.style={node, draw=red!65!black, fill=red!8, very thick}
    ]
        \coordinate (root) at (0,0);
        \coordinate (c0) at (-2,1);
        \coordinate (c1) at (2,1);
        \coordinate (c00) at (-3,2);
        \coordinate (c01) at (-1,2);
        \coordinate (c10) at (1,2);
        \coordinate (c11) at (3,2);
        \coordinate (c000) at (-3.5,3);
        \coordinate (c001) at (-2.5,3);
        \coordinate (c010) at (-1.5,3);
        \coordinate (c011) at (-0.5,3);
        \coordinate (c100) at (0.5,3);
        \coordinate (c101) at (1.5,3);
        \coordinate (c110) at (2.5,3);
        \coordinate (c111) at (3.5,3);

        \draw[tree edge] (root) -- (c0) -- (c00) -- (c000);
        \draw[tree edge] (c00) -- (c001);
        \draw[tree edge] (c0) -- (c01) -- (c010);
        \draw[tree edge] (c01) -- (c011);
        \draw[tree edge] (root) -- (c1) -- (c10) -- (c100);
        \draw[tree edge] (c10) -- (c101);
        \draw[tree edge] (c1) -- (c11) -- (c110);
        \draw[tree edge] (c11) -- (c111);
        \draw[path edge] (root) -- (c1) -- (c10) -- (c101);

        \node[path node] at (root) {$\bigemptyseq$};
        \node[node] at (c0) {$\bigseq{0}$};
        \node[path node] at (c1) {$\bigseq{1}$};
        \node[node] at (c00) {$\bigseq{00}$};
        \node[antichain node] at (c01) {$\bigseq{01}$};
        \node[node] at (c10) {$\bigseq{10}$};
        \node[antichain node] at (c11) {$\bigseq{11}$};
        \node[antichain node] at (c000) {\scalebox{0.72}{$\bigseq{000}$}};
        \node[node] at (c001) {\scalebox{0.72}{$\bigseq{001}$}};
        \node[node] at (c010) {\scalebox{0.72}{$\bigseq{010}$}};
        \node[node] at (c011) {\scalebox{0.72}{$\bigseq{011}$}};
        \node[node] at (c100) {\scalebox{0.72}{$\bigseq{100}$}};
        \node[path node] at (c101) {\scalebox{0.72}{$\bigseq{101}$}};
        \node[node] at (c110) {\scalebox{0.72}{$\bigseq{110}$}};
        \node[node] at (c111) {\scalebox{0.72}{$\bigseq{111}$}};
    \end{tikzpicture}
    \caption{A binary tree of depth $3$ with the root at the bottom. The set $\set{\bigseq{000}, \bigseq{01}, \bigseq{11}}$ in red forms an antichain, while the set $\set{\bigemptyseq, \bigseq{1}, \bigseq{101}}$ in blue is a path. The lexicographic order $<_\Lex$ can be read recursively: each node comes before all nodes in its upper-left subtree, and these come before all nodes in its upper-right subtree. For example, all nodes extending $\bigseq{0}$ come before all nodes extending $\bigseq{1}$.}
    \label{figure_binary_tree_path_antichain}
\end{figure}
\begin{fact}[Lemma 3.20 in \cite{AKL23}] \label{fact_k_atp_equiv_atp}
    For any $k \geq 2$, a theory $T$ has $k$-\ATP{} if and only if it has \ATP{}.
\end{fact}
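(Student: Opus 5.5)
The direction from \ATP{} to $k$-\ATP{} is immediate, so the work is in the converse. I would reduce $k$ step by step, using indiscernibility and a case split on whether short paths are already inconsistent.

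\emph{From \ATP{} to $k$-\ATP{}.} Suppose $\varphi(\ux;\uy)$ witnesses \ATP{}, i.e.\ $2$-\ATP{}, together with $(\ub_\nu : \nu \in \kappa^{<\omega})$. Let $k \geq 2$ and let $X$ be a path. Any $k$-element subset of $X$ contains two comparable nodes, so it contains an inconsistent pair. Hence every $k$-element subset of $\set{\varphi(\ux;\ub_\nu) : \nu \in X}$ is inconsistent, and the same tree witnesses $k$-\ATP{}.

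\emph{From $k$-\ATP{} to \ATP{}.} I would argue by induction on $k$, showing that $k$-\ATP{} with $k>2$ yields $(k-1)$-\ATP{} for some (possibly different) formula.

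\emph{Step 1: indiscernibility.} Start from $\varphi$ with $k$-\ATP{}. By compactness I may assume the tree is indexed by $\omega^{<\omega}$. Using the modelling property for strongly indiscernible trees (Kim--Kim--Scow, as used in \cite{AKL23}), I would replace the tree by a strongly indiscernible one. This keeps the conditions ``antichains consistent'' and ``paths $k$-inconsistent'', because both are expressed by finitely many formulas on finite configurations of nodes, and those formulas are preserved under the EM-type. After this step, whether $\set{\varphi(\ux;\ub_\nu) : \nu\in X}$ is consistent depends only on the shape of the finite configuration $X$ (its isomorphism type in the language $\triangleleft, \wedge, <_\Lex$).

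\emph{Step 2: case split.}

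\emph{Case A.} Suppose some (equivalently, by indiscernibility, every) path of length $k-1$ gives an inconsistent set. Then every $(k-1)$-element subset of a path is itself a path of length $k-1$ and so is inconsistent. Antichains remain consistent, so $\varphi$ already has $(k-1)$-\ATP{}.

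\emph{Case B.} Suppose paths of length $k-1$ are consistent. Set
\[
\psi(\ux;\uy_1\dots\uy_{k-1}) := \bigwedge\nolimits_{i=1}^{k-1}\varphi(\ux;\uy_i).
\]
Fix a level-spreading embedding $\iota\colon \omega^{<\omega}\to\omega^{<\omega}$ with the following two properties:
\begin{itemize}
\item[(a)] incomparable nodes are sent into disjoint cones;
\item[(b)] each node $\eta$ is sent to the top of a short chain $\nu_{\eta,1}\triangleleft\dots\triangleleft\nu_{\eta,k-1}=\iota(\eta)$ lying strictly above $\iota(\eta')$ for all $\eta' \triangleleft \eta$.
\end{itemize}
For example, let $\iota$ send $\eta$ to a node in which each letter is followed by a block of $k-2$ zeros, and take the chain to be that block. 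Put $\uc_\eta := (\ub_{\nu_{\eta,1}},\dots,\ub_{\nu_{\eta,k-1}})$.

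If $\eta\triangleleft\eta'$, then the chains for $\eta$ and $\eta'$ together form a path of length $2(k-1)\geq k$. So $\set{\psi(\ux;\uc_\eta),\psi(\ux;\uc_{\eta'})}$ is inconsistent, and paths in the new tree are $2$-inconsistent.

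\emph{Step 3: consistency on antichains (the main obstacle).} For an antichain $X$ in the new tree, the set $\set{\psi(\ux;\uc_\eta):\eta\in X}$ concerns a union of $(k-1)$-chains sitting in pairwise disjoint cones. This union is not an antichain of the old tree, so $k$-\ATP{} does not directly give its consistency. My plan is to use strong indiscernibility to compare this configuration with the single consistent $(k-1)$-path. I would first try an inner induction on $|X|$ over the chain-union configurations, and at each stage split into two subcases:
\begin{itemize}
\item[(i)] the configuration is consistent, and the induction continues;
\item[(ii)] the configuration is inconsistent, and one extracts from it a new formula (a conjunction of $\varphi$ over part of a chain) for which antichains are consistent and paths of length $k-1$ are inconsistent. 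That is again $(k-1)$-\ATP{} via Case A.
\end{itemize}
Making this dichotomy precise, so that the failure of consistency always produces a lower-$k$ witness, is the delicate combinatorial heart of the argument. I would model that step on the corresponding arguments in \cite{AKL23}.
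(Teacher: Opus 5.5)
The paper does not prove this statement; it only cites Lemma 3.20 of \cite{AKL23}. Judged on its own, your proposal has a genuine gap, at exactly the point you flag. The easy direction, the reduction to a strongly indiscernible $\omega^{<\omega}$-tree, Case A, and the inconsistency of comparable pairs in Case B are all fine.

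The problem is Step 3 of Case B. For an antichain $X$, consistency of $\{\psi(\ux;\underline{c}_\eta):\eta\in X\}$ means consistency of a \emph{forest} in the old tree, i.e.\ a finite union of $(k-1)$-chains lying in pairwise incomparable cones. Nothing you have controls such sets. $k$-\ATP{} only speaks about antichains and paths. Strong indiscernibility only identifies configurations of the same quantifier-free $L_0$-type, and a union of two or more incomparable chains has neither the type of a chain nor that of an antichain. So consistency of one $(k-1)$-chain does not propagate. Subcase (ii) carries all the weight (the inner induction on $|X|$ only moves the problem there), and it is not an argument. As phrased it also aims at the wrong target: an inconsistent forest is a ``horizontal'' inconsistency spread over incomparable cones. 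It gives no reason for paths of length $k-1$ to become inconsistent for a conjunction of $\varphi$ along one chain.

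The missing idea is to take a \emph{minimal} inconsistent forest, freeze all but one of its chains as extra parameters, and let the remaining chain move inside a cone. This is cleanest with chains of length at most $2$.

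\emph{Case: every forest whose chains have at most two elements is consistent.} Your Case B construction with $2$-chains instead of $(k-1)$-chains shows that $\varphi(\ux;\uy_1)\wedge\varphi(\ux;\uy_2)$ has $\lceil k/2\rceil$-\ATP{}. You then conclude by induction on $k$.

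\emph{Otherwise.} Choose an inconsistent such forest $X$ with the least number $r\ge 1$ of two-element chains, and write $X=X'\cup\{c\triangleleft c'\}$. Let $m$ be the $\trianglelefteq$-largest of the meets $c\wedge x$ with $x\in X'$, and put $p:=m^\frown\langle c(l(m))\rangle\trianglelefteq c$. (If $X'=\varnothing$, then $\varphi$ itself already has \ATP{}.) Then:
\begin{enumerate}[(1)]
\item No element of $X'$ is comparable with a node extending $p$.
\item For all $d\triangleleft d'$ extending $p$, one has $d\wedge x=c\wedge x$ for every $x\in X'$, with the same branching. So $X'\cup\{d,d'\}$ is strongly isomorphic to $X$, hence inconsistent.
\item For every antichain $Y$, the set $X'\cup p^\frown Y$ is a forest with only $r-1$ two-element chains, hence consistent by minimality.
\end{enumerate}
Thus $\varphi(\ux;\uy)\wedge\bigwedge_{\nu\in X'}\varphi(\ux;\uz_\nu)$, together with the tree $\big(\ub_{p^\frown\eta}\,(\ub_\nu)_{\nu\in X'}:\eta\in\omega^{<\omega}\big)$, witnesses \ATP{} directly.
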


\begin{fact}[Lemma 3.18 in \cite{AKL23}] \label{fact_atp_disj}
    If a disjunction $\bigvee_{i=1}^q \varphi_i(\ux; \uy)$ has \ATP{} in $T$, then there is $i \in \set{1, \dots, q}$ such that $\varphi_i(\ux; \uy)$ has \ATP{} in $T$.
\end{fact}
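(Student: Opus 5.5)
The plan is to show that if $\varphi(\ux;\uy) := \bigvee_{i=1}^q \varphi_i(\ux;\uy)$ witnesses \ATP{} with a tree $(\ub_\nu : \nu \in 2^{<\omega})$, then some $\varphi_i$ witnesses \ATP{} with a suitably re-indexed subtree. Path-inconsistency transfers for free. Each $\varphi_i$ implies $\varphi$, so if $\set{\varphi(\ux;\ub_\nu),\varphi(\ux;\ub_\mu)}$ is inconsistent for comparable $\nu,\mu$, then so is $\set{\varphi_i(\ux;\ub_\nu),\varphi_i(\ux;\ub_\mu)}$. This inconsistency survives any re-indexing that sends comparable nodes to comparable nodes. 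Hence everything reduces to finding one $i$ for which antichains stay consistent on such a re-indexed tree. By compactness, and since $i$ ranges over a finite set, it is enough to find, for each $m$, an $i$ and an embedding of $2^{\leq m}$ into $2^{<\omega}$ that preserves comparability and incomparability and under which all antichains are $\varphi_i$-consistent. Pigeonhole over $m$ then gives one fixed $i$ that works for infinitely many, hence all, $m$.

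First, I would normalise the tree. Using the modeling property for strongly indiscernible trees from \cite{AKL23}, I would replace $(\ub_\nu)$ by a tree that is strongly indiscernible in the language $\set{\trianglelefteq,\wedge,<_\Lex}$ and still witnesses \ATP{} for $\varphi$. Then, for each $i$ and each finite antichain $X$, whether $\set{\varphi_i(\ux;\ub_\nu):\nu\in X}$ is consistent depends only on the quantifier-free type of $X$ in that language. For contradiction, suppose that for each $i$ there is a finite antichain $X_i$ with $\set{\varphi_i(\ux;\ub_\nu) : \nu \in X_i}$ inconsistent. By indiscernibility, every antichain of the same shape as $X_i$ is then $\varphi_i$-inconsistent.

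Next, I would pick $m$ so large that each shape $X_i$ occurs among the leaves of any strongly embedded copy of $2^{\leq m}$, meaning an embedding preserving $\trianglelefteq$, $\wedge$ and $<_\Lex$; this is possible because every finite antichain shape is realised by leaves of a finite binary tree. Then take $M \gg m$ and the antichain $Y = 2^M$ of leaves. Since $Y$ is an antichain, there is $\ua$ realising $\set{\varphi(\ux;\ub_\nu):\nu\in Y}$. Colour each $\nu\in Y$ by some $i$ with $\models\varphi_i(\ua;\ub_\nu)$.

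The key combinatorial step, which I expect to be the main obstacle, is a tree pigeonhole principle. Any $q$-colouring of the leaves of $2^M$, for $M$ large relative to $m$ and $q$, admits a strong embedding of $2^{\leq m}$ whose image leaves are monochromatic, say of colour $i$. I would prove this by induction on $q$ and $m$ with a density argument. Either some colour, for instance colour $1$, is ``dense'', meaning that below both children of many nodes there are subtrees carrying colour $1$ in a way that allows recursion on $m$. Or some node has a subtree of height $M'$ avoiding colour $1$, and then I apply the induction hypothesis with $q-1$ colours. This mirrors the standard proof of the finite Halpern--Läuchli-type statement for leaves, and the work lies in choosing the bounds $M(m,q)$ correctly. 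Granting this, the monochromatic copy contains an antichain of the shape of $X_i$, all of whose nodes satisfy $\varphi_i(\ua;\ub_\nu)$. That contradicts the $\varphi_i$-inconsistency of antichains of that shape. Hence some $\varphi_i$ has all antichains consistent on the normalised tree, and by the first paragraph $\varphi_i$ has \ATP{}.
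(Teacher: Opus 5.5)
Your proof is correct. The paper gives no proof of this Fact and simply quotes it from \cite{AKL23}, so the useful comparison is with the paper's handling of the analogous step in Claim~\ref{lemma_remove_disj_ez}. There the paper realizes a large antichain of a strongly indiscernible $\omega^{<\omega}$-indexed tree and colours copies of an antichain shape by the least disjunct that holds. It then extracts a monochromatic copy using the Ramsey property of $\operatorname{age}(\omega^\omega)$ in $L_\delta$ (Fact~\ref{fact_is_ramsey}), translating back to $L_0$ via Fact~\ref{L_zero_type_eq_L_delta_type}. Your argument has the same skeleton: normalize to a strongly indiscernible tree, realize all leaves of $2^M$ by one $\ua$, colour, and find a monochromatic copy containing each bad shape $X_i$. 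Because only single nodes are coloured here, you can replace the Ramsey-class theorem by a direct pigeonhole principle on binary trees. That is more elementary and gives explicit bounds. The Ramsey-class route is what is needed when one colours copies of multi-element antichains, as in Claim~\ref{lemma_remove_disj_ez}.

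The step you flag as the main obstacle is easy, and you need no Halpern--L\"auchli-type density argument, because $L_0$-embeddings need not preserve levels. For $\xi \in 2^{\le M}$, let $h_c(\xi)$ be the largest $k$ such that some strong embedding of $2^{\le k}$ above $\xi$ has all its leaves in $2^M$ with colour $c$; set $h_c(\xi) = -\infty$ if there is none. Putting a new root at $\xi$ gives $h_c(\xi) \ge \min(h_c(\xi^\frown\bigseq{0}), h_c(\xi^\frown\bigseq{1})) + 1$. So if $h_1(\bigemptyseq) < m$, repeatedly passing to a child with smaller $h_1$ reaches, after $m$ steps, a node at depth $m$ above which colour $1$ never occurs. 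This is exactly your dichotomy. It yields $M(m,q) = m + M(m,q-1)$, so $M = qm$ suffices.

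Two small remarks:
\begin{itemize}
\item Your first paragraph's compactness and re-indexing reduction is never used. The contradiction argument shows directly that some $\varphi_i$ witnesses \ATP{} with the normalized tree itself.
\item That each shape $X_i$ occurs among the leaves of any strong copy of $2^{\le m}$ follows by padding the elements of $X_i$ with zeros to length $m$. For an antichain this changes neither the meets nor $<_\Lex$, so the padded set is strongly isomorphic to $X_i$.
\end{itemize}
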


\begin{definition} \label{def_language_for_trees}
    We consider $\kappa^{<\lambda}$ as a structure in the following languages:
    \begin{enumerate}[(i)]
        \item $L_0 := \set{\trianglelefteq, <_\Lex, \wedge}$, with everything interpreted as outlined above.
        \item $L_\delta := \set{\Delta, <_\Lex}$, where $\Delta$ is a $4$-ary relation symbol, which we interpret as follows:
        $$
        \Delta(\nu_1, \nu_2, \nu_3, \nu_4) \quad :\Leftrightarrow \quad (\nu_1 \wedge \nu_2) \trianglelefteq (\nu_3 \wedge \nu_4).
        $$
    \end{enumerate}
    We also consider $\kappa^\lambda$ as an $L_\delta$-structure (interpreted as an $L_\delta$-substructure of $\kappa^{<\lambda+1}$).
\end{definition}

\begin{definition}
    Let $\underline{\nu} = (\nu_1, \dots, \nu_n)$ and $\umu{} = (\mu_1, \dots, \mu_n)$ be finite tuples from $\kappa^{<\lambda}$.
    \begin{enumerate}[(i)]
        \item By $\operatorname{qftp}_0(\underline{\nu})$, we mean the set of quantifier-free $L_0$-formulas $\varphi(\ux)$ such that $\kappa^{<\lambda} \models \varphi(\underline{\nu})$.
        We say that $\underline{\nu}$ and $\umu{}$ are \textbf{strongly isomorphic} if $\operatorname{qftp}_0(\underline{\nu}) = \operatorname{qftp}_0(\umu{})$.
        We say that $X, Y \subseteq \kappa^{<\lambda}$ are \textbf{strongly isomorphic} if the tuples $\underline{\nu}$ and $\umu{}$ that consist of the elements of $X$ and $Y$ ordered by $<_\Lex$ are strongly isomorphic.
        \item Similarly, we say that $\underline{\nu} \simeq_\delta \umu{}$ if $\underline{\nu}$ and $\umu{}$ have the same quantifier-free $L_\delta$-type.
        We also define $X \simeq_\delta Y$ as $\underline{\nu} \simeq_\delta \umu{}$ for the tuples as in (i).
    \end{enumerate}
\end{definition}

\begin{fact}[Lemma 3.5 in \cite{AKL23}] \label{L_zero_type_eq_L_delta_type}
    Let $\underline{\nu}$ and $\umu{}$ be two tuples of the same length in $\kappa^{<\lambda}$.
    Then $\underline{\nu}$ and $\umu{}$ have the same quantifier-free $L_\delta$-type if and only if $\underline{\nu}$ and $\umu{}$ have the same quantifier-free $L_0$-type.
    Given two subsets $X, Y \subseteq \kappa^{<\lambda}$, $X$ is strongly isomorphic to $Y$ if and only if $X \simeq_\delta Y$.
\end{fact}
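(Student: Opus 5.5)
The plan is to show that each of the two quantifier-free languages interprets the other uniformly on finite tuples of $\kappa^{<\lambda}$. Concretely, I will show that every atomic formula of one language, evaluated on a tuple $\underline{\nu}=(\nu_1,\dots,\nu_n)$, is equivalent to a Boolean combination of atomic formulas of the other. The combination may depend on the relevant type of $\underline{\nu}$, since types are complete and fix all the data used. The statement about sets $X,Y$ then follows immediately, by applying the tuple statement to their $<_\Lex$-increasing enumerations.

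\emph{From $L_0$ to $L_\delta$.} This direction is immediate. $\Delta(\nu_1,\nu_2,\nu_3,\nu_4)$ is by definition the atomic $L_0$-formula $(\nu_1\wedge\nu_2)\trianglelefteq(\nu_3\wedge\nu_4)$, and $<_\Lex$ belongs to both languages. Hence equal $\operatorname{qftp}_0$ implies equal quantifier-free $L_\delta$-types.

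\emph{From $L_\delta$ to $L_0$.} First, I reduce terms. By induction on $L_0$-terms $t(\underline{\nu})$, I claim that $t(\underline{\nu})=\nu_i\wedge\nu_j$ for some indices $i,j$ (allowing $i=j$, since $\nu_i=\nu_i\wedge\nu_i$). I also claim that which pair $(i,j)$ works is determined by the quantifier-free $L_\delta$-type of $\underline{\nu}$.

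The inductive step is $(\nu_i\wedge\nu_j)\wedge(\nu_k\wedge\nu_l)$, and it uses the tree structure. The initial segments of $\nu_i$ form a chain, so the meet of finitely many elements of the tuple equals the $\trianglelefteq$-least of their pairwise meets. In particular, the meet of $\{\nu_i,\nu_j,\nu_k,\nu_l\}$ is one of the six pairwise meets. Which one is least is read off from the $\Delta$-relations among these pairs, and those relations are part of the $L_\delta$-type.

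Next, I treat the atomic formulas between reduced terms. For $\eta=\nu_i\wedge\nu_j$ and $\eta'=\nu_k\wedge\nu_l$:
\begin{enumerate}[(i)]
\item $\eta\trianglelefteq\eta'$ is exactly $\Delta(\nu_i,\nu_j,\nu_k,\nu_l)$.
\item $\eta=\eta'$ is $\Delta(\nu_i,\nu_j,\nu_k,\nu_l)\wedge\Delta(\nu_k,\nu_l,\nu_i,\nu_j)$.
\item For $\eta<_\Lex\eta'$, there are two cases. If $\eta$ and $\eta'$ are comparable, which is detectable via $\Delta$, then $\eta<_\Lex\eta'$ holds iff $\eta\triangleleft\eta'$, and that is expressible by $\Delta$ and $\neg\Delta$. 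If $\eta\perp\eta'$, then pick the extensions $\nu_i\trianglerighteq\eta$ and $\nu_k\trianglerighteq\eta'$. These satisfy $\nu_i\perp\nu_k$ and $\nu_i\wedge\nu_k=\eta\wedge\eta'$, and the two strings first differ at the same position as $\eta$ and $\eta'$. Hence $\eta<_\Lex\eta'$ iff $\nu_i<_\Lex\nu_k$, which is an atomic $L_\delta$-formula.
\end{enumerate}

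Putting these together, equal quantifier-free $L_\delta$-types give the same reduced pairs for all terms and the same truth values for all atomic $L_0$-formulas, so $\operatorname{qftp}_0(\underline{\nu})=\operatorname{qftp}_0(\umu)$.

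The main obstacle is the term-reduction bookkeeping. One must make sure the chosen pair $(i,j)$ is the same for $\underline{\nu}$ and $\umu$, so that the translation of an atomic $L_0$-formula is uniform. I will handle this by making the inductive claim explicitly type-relative: for each term $t$ and each quantifier-free $L_\delta$-type $p$, fix a pair $(i_{t,p},j_{t,p})$ that works for every realization of $p$. The minimum-of-pairwise-meets argument shows such a pair exists.
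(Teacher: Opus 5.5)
Your argument is correct and complete. The paper itself gives no proof of this fact; it only cites Lemma 3.5 of \cite{AKL23}. Your route is the natural self-contained proof: every $L_0$-term evaluates to a pairwise meet $\nu_i\wedge\nu_j$ that is singled out by $\Delta$-relations, and $<_\Lex$ between incomparable meets transfers to $<_\Lex$ between the representatives $\nu_i,\nu_k$.

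One simplification: the type-relative bookkeeping is not needed. The condition that $\nu_i\wedge\nu_j$ is $\trianglelefteq$-least among the pairwise meets of the variables of $t$ is itself the quantifier-free $L_\delta$-formula $\bigwedge_{k,l}\Delta(x_i,x_j,x_k,x_l)$. Hence the same pair automatically works for any two tuples with equal $L_\delta$-types.
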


\noindent For our proofs, it is crucial to work with trees that are indiscernible in the sense we now define.

\begin{definition}
    Let $L$ be a language, let $\mm$ be an $L$-structure, and let $(\ua_\nu : \nu \in \kappa^{<\lambda})$ and $(\ub_\nu : \nu \in \kappa^{<\lambda})$ be parameters from $\mm$.
    Write $\ua_{\underline{\nu}} := (\ua_{\nu_1}, \dots, \ua_{\nu_n})$.
    \begin{enumerate}[(i)]
        \item We say that $(\ua_\nu : \nu \in \kappa^{<\lambda})$ is \textbf{strongly ($\mathbf{L}$-)indiscernible} if $\tp(\ua_{\underline{\nu}})=\tp(\ua_\umu{})$ holds for all $\underline{\nu}$ and $\umu{}$ with $\operatorname{qftp}_0(\underline{\nu})=\operatorname{qftp}_0(\umu{})$.
        \item We say that $(\ub_\nu : \nu \in \kappa^{<\lambda})$ is \textbf{strongly locally based} on $(\ua_\nu : \nu \in \kappa^{<\lambda})$ if for every tuple $\underline{\nu}$ and every finite set $\Gamma$ of $L$-formulas in $|\underline{\nu}|$ variables, there is a tuple $\umu{}$ such that $\operatorname{qftp}_0(\underline{\nu})=\operatorname{qftp}_0(\umu{})$ and $\mm \models \varphi(\ub_{\underline{\nu}}) \leftrightarrow \varphi(\ua_{\umu{}})$ holds for all $\varphi(\ux) \in \Gamma$.
    \end{enumerate}
\end{definition}

\noindent Next, we introduce the analog of ``Ramsey and compactness'' for $\omega^{<\omega}$-indexed trees.

\begin{fact}[Modeling property of strong indiscernibility]\label{fact_modeling_property} \label{fact_model_prop}
    Let a tree $(\ua_\nu : \nu \in \omega^{<\omega})$ be given in some $L$-structure.
    Then there is a strongly indiscernible tree $(\ub_\nu : \nu \in \omega^{<\omega})$ in some elementary extension that is strongly locally based on $(\ua_\nu : \nu \in \omega^{<\omega})$.
\begin{proof}
    Use Theorem 16 in \cite{TT12} with $\Gamma((\ux_\nu : \nu \in \omega^{<\omega}))$ being the $\operatorname{EM}$-type of $(\ua_\nu : \nu \in \omega^{<\omega})$, as defined in Definition 2.6 in \cite{Sco15}.
\end{proof}
\end{fact}

\noindent Note that the above is for trees indexed by $\omega^{<\omega}$.
We now give a weak variant for trees indexed by $2^{<\omega}$ that at least preserves properties such as ``$\set{\phi(\uz; \ua_\nu) : \nu \in X}$ is consistent for all antichains $X$'' or ``$\set{\phi(\uz; \ua_\nu) : \nu \in X}$ is $k$-inconsistent for all paths $X$''.

\begin{lemma} \label{lemma_weak_modeling}
    Fix $\kappa \geq 2$.
    Let $(\ua_\nu : \nu \in 2^{<\omega})$ be a tree of parameters in some $L$-structure $\mm$.
    Then there is a strongly indiscernible tree $(\ub_\nu : \nu \in \kappa^{<\omega})$, in some elementary extension $\mm' \succ \mm$, with the following property:
    \begin{itemize}
        \item[] Let $\varphi(\ux_1, \dots, \ux_n)$ be an $L$-formula and let $\Sigma_0$ be a partial quantifier-free $\set{\triangleleft, <_\Lex}$-type in $n$ variables such that $\mm \models \varphi(\ua_{\underline{\nu}})$ for all $\underline{\nu}$ with $\operatorname{qftp}_0(\underline{\nu}) \supseteq \Sigma_0$.
        Then $\mm' \models \varphi(\ub_{\underline{\nu}})$ for all $\underline{\nu} \in \kappa^{<\omega}$ with $\operatorname{qftp}_0(\underline{\nu}) \supseteq \Sigma_0$.
    \end{itemize}
\begin{proof}
    Fix $m \geq 1$. Let $f$ be the bijection that maps elements of $\set{0, \dots, 2^m-1}$ to their binary representations of length $m$, i.e., $f(0) = \bigseq{0}^m$, $f(1) = \bigseq{0}^{m-1}{}^\frown\bigseq{1}$, $f(2) = \bigseq{0}^{m-2}{}^\frown\bigseq{10}$, and so on.
    We define $\iota \colon (2^m)^{\leq\omega} \to 2^{\leq\omega}$ as follows: Set $\iota(\bigemptyseq) = \bigemptyseq$.
    Assuming that $\iota(\nu)$ is already defined, set $\iota(\nu^\frown \bigseq{k}) := \iota(\nu)^\frown f(k)$ for $k \in \set{0, \dots, 2^m-1}$ (see Figure \ref{figure_iota_recursive_construction} for the case $m = 2$).
    Finally, set $\iota(p) := \bigcup_{k \in \omega} \iota(p_{\restriction k})$ for any $p \in (2^m)^\omega$.
    It is clear that $\iota$ is an $\set{\triangleleft, <_\Lex}$-embedding.

    Let $\iota$ be as above.
    Define the tree $(\ub'_{\nu} : \nu \in (2^m)^{<\omega})$ by setting $\ub'_\nu := \ua_{\iota(\nu)}$.
    One can easily check that $(\ub'_{\nu} : \nu \in (2^m)^{<\omega})$ satisfies the property from the statement, since $\iota$ is an $\set{\triangleleft, <_\Lex}$-embedding.
    By compactness, letting $m$ go to infinity, we find a tree $(\ub_{\nu} : \nu \in \omega^{<\omega})$ that satisfies the property from the statement.
    Using the modeling property of strong indiscernibility, Fact \ref{fact_modeling_property}, we can assume that $(\ub_{\nu} : \nu \in \omega^{<\omega})$ is strongly indiscernible and still satisfies the property from the statement.
    If $\kappa \leq \omega$, then the subtree $(\ub_{\nu} : \nu \in \kappa^{<\omega})$ is as desired.
    If $\kappa > \omega$, then we can extend the tree to be indexed by $\kappa^{<\omega}$.
\end{proof}
\end{lemma}

\begin{figure}[tbp]
    \centering
    \begin{tikzpicture}[
        label node/.style={circle, draw, fill=white, inner sep=1pt, minimum size=6mm, font=\scriptsize},
        long label node/.style={circle, draw, fill=white, inner sep=1pt, minimum size=6mm, font=\scriptsize},
        small node/.style={circle, draw, fill=white, inner sep=1pt, minimum size=4mm},
        tree edge/.style={line width=0.35pt},
        continuation edge/.style={densely dotted, line width=0.65pt},
        map arrow/.style={->, line width=0.45pt}
    ]
        \coordinate (leftroot) at (0,0);
        \coordinate (left0) at (-1.8,1.8);
        \coordinate (left1) at (-0.6,1.8);
        \coordinate (left2) at (0.6,1.8);
        \coordinate (left3) at (1.8,1.8);

        \draw[continuation edge] (leftroot) -- ++(0,-0.7);
        \draw[tree edge] (leftroot) -- (left0);
        \draw[tree edge] (leftroot) -- (left1);
        \draw[tree edge] (leftroot) -- (left2);
        \draw[tree edge] (leftroot) -- (left3);
        \foreach \leaf in {left0,left1,left2,left3} {
            \draw[continuation edge] (\leaf) -- ++(-0.42,0.68);
            \draw[continuation edge] (\leaf) -- ++(-0.14,0.78);
            \draw[continuation edge] (\leaf) -- ++(0.14,0.78);
            \draw[continuation edge] (\leaf) -- ++(0.42,0.68);
        }

        \node[label node] at (leftroot) {$\nu$};
        \node[long label node] at (left0) {\scalebox{0.72}{$\nu^\frown\bigseq{0}$}};
        \node[long label node] at (left1) {\scalebox{0.72}{$\nu^\frown\bigseq{1}$}};
        \node[long label node] at (left2) {\scalebox{0.72}{$\nu^\frown\bigseq{2}$}};
        \node[long label node] at (left3) {\scalebox{0.72}{$\nu^\frown\bigseq{3}$}};

        \draw[map arrow] (2.45,0.9) -- (3.65,0.9) node[midway, above, font=\scriptsize] {$\iota$};

        \coordinate (rightroot) at (6.1,0);
        \coordinate (right0) at (4.9,0.9);
        \coordinate (right1) at (7.3,0.9);
        \coordinate (right00) at (4.3,1.8);
        \coordinate (right01) at (5.5,1.8);
        \coordinate (right10) at (6.7,1.8);
        \coordinate (right11) at (7.9,1.8);

        \draw[continuation edge] (rightroot) -- ++(0,-0.7);
        \draw[tree edge] (rightroot) -- (right0) -- (right00);
        \draw[tree edge] (right0) -- (right01);
        \draw[tree edge] (rightroot) -- (right1) -- (right10);
        \draw[tree edge] (right1) -- (right11);
        \foreach \leaf in {right00,right01,right10,right11} {
            \draw[continuation edge] (\leaf) -- ++(-0.25,0.72);
            \draw[continuation edge] (\leaf) -- ++(0.25,0.72);
        }

        \node[long label node] at (rightroot) {\scalebox{0.76}{$\iota(\nu)$}};
        \node[small node] at (right0) {};
        \node[small node] at (right1) {};
        \node[long label node] at (right00) {\scalebox{0.54}{$\iota(\nu^\frown\bigseq{0})$}};
        \node[long label node] at (right01) {\scalebox{0.54}{$\iota(\nu^\frown\bigseq{1})$}};
        \node[long label node] at (right10) {\scalebox{0.54}{$\iota(\nu^\frown\bigseq{2})$}};
        \node[long label node] at (right11) {\scalebox{0.54}{$\iota(\nu^\frown\bigseq{3})$}};
    \end{tikzpicture}
    \caption{Visualization of the recursive construction of $\iota$ in the proof of Lemma \ref{lemma_weak_modeling} in the case $m=2$. One step in a $2^2$-branching tree is replaced by two binary steps.}
    \label{figure_iota_recursive_construction}
\end{figure}

\noindent The above shows the following.

\begin{fact}[see also Remark 2.8 in \cite{AKLL25}] \label{fact_infinite_branching_tree} \label{fact_ic_witness}
    If $\varphi(\ux; \uy)$ has ($k$-)\ATP{} in $T$, then for any $\kappa \geq 2$, we can find a strongly indiscernible tree $(\ub_\nu : \nu \in \kappa^{<\omega})$ in some model of $T$ such that $\varphi(\ux; \uy)$ witnesses ($k$-)\ATP{} with this tree.
\end{fact}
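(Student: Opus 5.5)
The statement follows directly from Lemma \ref{lemma_weak_modeling} applied to the tree that already witnesses (\(k\)-)\ATP{}. So I will verify that the two defining conditions of \(k\)-\ATP{} can be written in the form of that lemma's transfer property, namely an \(L\)-formula required to hold for all tuples satisfying a partial quantifier-free \(\set{\triangleleft, <_\Lex}\)-type.

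First I reduce to a binary tree. Suppose \(\varphi(\ux;\uy)\) has \(k\)-\ATP{} with a tree \((\ua_\nu : \nu \in \kappa_0^{<\omega})\) for some \(\kappa_0 \geq 2\). I restrict to the subtree indexed by \(2^{<\omega} \subseteq \kappa_0^{<\omega}\). Antichains and paths of \(2^{<\omega}\) are antichains and paths of \(\kappa_0^{<\omega}\), so the restricted tree \((\ua_\nu : \nu \in 2^{<\omega})\) still witnesses \(k\)-\ATP{}. I then apply Lemma \ref{lemma_weak_modeling} with the target \(\kappa\) to this binary tree. This yields a strongly indiscernible tree \((\ub_\nu : \nu \in \kappa^{<\omega})\) in some \(\mm' \succ \mm\) satisfying the transfer property.

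Next I check the antichain condition. It suffices to handle finite antichains, by compactness. For \(n \geq 1\), let \(\Sigma_0^{n}\) be the quantifier-free type in \(n\) variables saying \(\neg(x_i \trianglelefteq x_j)\) for all \(i \neq j\), together with \(x_1 <_\Lex \dots <_\Lex x_n\). Every finite antichain, enumerated lexicographically, realizes \(\Sigma_0^n\). Consistency of \(\set{\varphi(\ux;\ua_{\nu_i}) : i \leq n}\) is expressed by the \(L\)-formula \(\exists \ux \bigwedge_i \varphi(\ux; \uy_i)\). This formula holds in \(\mm\) for every tuple realizing \(\Sigma_0^n\), so by the lemma it holds for the \(\ub\)'s.

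Then I check the path condition. For each \(k\)-element path \(\nu_1 \triangleleft \dots \triangleleft \nu_k\), I use the type \(\Sigma_0 = \set{x_i \triangleleft x_{i+1} : i<k}\). The \(L\)-formula \(\neg\exists \ux \bigwedge_{i} \varphi(\ux;\uy_i)\) holds in \(\mm\) for every tuple realizing \(\Sigma_0\), by \(k\)-inconsistency of paths. By the lemma it transfers to \(\mm'\). Any \(k\) distinct elements of a path, ordered by \(\triangleleft\), realize \(\Sigma_0\), so every path in \(\kappa^{<\omega}\) is \(k\)-inconsistent for \(\varphi\) with the \(\ub\)'s.

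There is no real obstacle. The only care needed is that the transfer in Lemma \ref{lemma_weak_modeling} uses partial \(\set{\triangleleft,<_\Lex}\)-types, which is exactly how antichains and paths are described. Finally, \(\mm' \succ \mm\) is a model of \(T\), which completes the argument.
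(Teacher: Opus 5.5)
Your proposal is correct and follows the paper's approach: the paper states this Fact as an immediate consequence of Lemma \ref{lemma_weak_modeling}, since antichain-consistency and $k$-inconsistency of paths are both of the form ``an $L$-formula holds on all tuples realizing a partial quantifier-free $\set{\triangleleft, <_\Lex}$-type.'' Your restriction of a $\kappa_0^{<\omega}$-indexed witness to $2^{<\omega}$, your explicit choice of the types $\Sigma_0$, and the final compactness step for infinite antichains fill in details the paper leaves implicit.
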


\noindent This also shows that our original definition of the antichain tree property, given in the introduction, coincides with the one given in this section, Definition \ref{def_kATP}.

\begin{lemma} \label{lemma_remove_alg_set}
    Fix $\kappa \geq 2$.
    Assume that $\exists \uy : \varphi(\uy; \uw) \wedge \psi(\uz; \uy\uw)$ witnesses \ATP{} and that $|\varphi(\mm; \ud)| \leq q$ holds for all $\ud$ from any model $\mm \models T$.
    Then the formula $\psi(\uz;\uy\uw)$ witnesses \ATP{} together with a strongly indiscernible tree $(\ub_\nu\ud_\nu : \nu \in \kappa^{<\omega})$ in some model $\mm \models T$ where $\mm \models \varphi(\ub_\nu; \ud_\nu)$ holds for all $\nu \in \kappa^{<\omega}$.
\begin{proof}
    Let $\exists \uy : \varphi(\uy; \uw) \wedge \psi(\uz; \uy\uw)$ witness \ATP{} together with $(\ud_\nu : \nu \in 2^{<\omega})$.
    For each $\nu \in 2^{<\omega}$, let $\ub_{\nu, 1}, \dots, \ub_{\nu, q}$ be the realizations of $\varphi(\uy; \ud_\nu)$.
    If there are fewer than $q$ realizations, then use any realization to fill the remaining entries $\ub_{\nu, i}$; there must be at least one realization for each $\nu$, since the formula $\exists \uy : \varphi(\uy; \uw) \wedge \psi(\uz; \uy\uw)$ witnesses \ATP{}.
    Then
    $$
    \bigvee\nolimits_{i=1}^q \varphi(\uy_i; \uw) \wedge \psi(\uz; \uy_i\uw) \quad \text{together with}\quad (\ub_{\nu, 1}, \dots, \ub_{\nu, q}, \ud_\nu : \nu \in 2^{<\omega})
    $$
    witnesses \ATP{}.
    Now use Fact \ref{fact_atp_disj} and Fact \ref{fact_ic_witness} to see that $\varphi(\uy; \uw) \wedge \psi(\uz; \uy\uw)$ witnesses \ATP{} together with some strongly indiscernible tree $(\ub'_\nu\ud'_\nu : \nu \in \kappa^{<\omega})$ in some model $\mm \models T$.
    Clearly $\mm \models \varphi(\ub'_\nu; \ud'_\nu)$ holds for all $\nu \in \kappa^{<\omega}$, so $\psi(\uz; \uy\uw)$ also witnesses \ATP{} together with $(\ub'_\nu\ud'_\nu : \nu \in \kappa^{<\omega})$.
\end{proof}
\end{lemma}

\noindent Notice that paths are $L_0$-substructures of $\kappa^{<\lambda}$; however, antichains with more than one element are not $L_0$-substructures, as they are not closed under $\wedge$.
On the other hand, $L_\delta$ is a purely relational language, so any subset of an $L_\delta$-structure is an $L_\delta$-substructure.
The language $L_\delta$ was introduced in \cite{AKL23} to generalize the so-called ``Path-Collapse Lemma'' (Lemma 4.3 of \cite{CR16}) to antichains.
This lemma is used to show that if a theory has \SOPtwo{}, then this is witnessed by a formula $\varphi(x; \uy)$ with $x$ being a singleton (Corollary 4.1 in \cite{CR16}).
Using the language $L_\delta$, it was proved that the antichain tree property is also always witnessed by a formula $\varphi(x; \uy)$ with $x$ being a singleton (Theorem 3.17 in \cite{AKL23}).
The language $L_\delta$ can also be used to generalize Fact \ref{fact_modeling_property} to antichains; see Lemma 3.7 in \cite{AKL23}.
We only need Fact \ref{L_zero_type_eq_L_delta_type}, i.e., that the languages $L_0$ and $L_\delta$ express essentially the same information on trees, and the fact that the class of all finite substructures of $\omega^\omega$ is a Ramsey class; see Fact \ref{fact_is_ramsey} below.
We recall the definition of a Ramsey class.

\begin{definition}
    Let $L$ be a language.
    \begin{enumerate}[(i)]
        \item Let $M$ be an $L$-structure.
        By $\operatorname{age}(M)$, we mean the class of all finitely generated $L$-structures that are isomorphic to a substructure of $M$.
        \item Let $A$ and $C$ be $L$-structures.
        We write $\binom{C}{A}$ to denote the class of $L$-substructures $A' \subseteq C$ isomorphic to $A$.
        \item A \textbf{$\mathbf{k}$-coloring} of a set $A$ is a map from $A$ to $k$.
        \item Let $\kk$ be a class of $L$-structures, let $A, B, C \in \kk$, and let $k \in \omega\setminus\set{0}$.
        By $C \to (B)^A_k$, we mean that for all $k$-colorings $f$ of $\binom{C}{A}$, there exists $B' \subseteq C$ such that $B'$ is $L$-isomorphic to $B$ and $f{\restriction\!\binom{B'}{A}}$ is constant.
        \item A class $\kk$ of $L$-structures is called a \textbf{Ramsey class} if, for any $A, B \in \kk$ and $k \in \omega\setminus\set{0}$, there exists $C \in \kk$ such that $C \to (B)^A_k$.
    \end{enumerate}
\end{definition}

\begin{fact}[\cite{AKL23}] \label{fact_is_ramsey}
    The class of $L_\delta$-structures $\operatorname{age}(\omega^\omega)$ is a Ramsey class.
\begin{proof}
    Use Lemma 3.7 in \cite{AKL23} in combination with Theorem 3.12 in \cite{Sco15}.
    One can easily check that $\omega^\omega$ is, as an $L_\delta$-structure, locally finite, ordered by $<_\Lex$, and \texttt{qfi}.
    Indeed, local finiteness means that finitely generated substructures are finite; also, for each $n$, there are only finitely many quantifier-free $L_\delta$-types, so we can use Observation 2.1 and (1) of Proposition 1 in \cite{Sco15} to show that $\omega^\omega$ is \texttt{qfi}.
\end{proof}
\end{fact}

\noindent We end this section by presenting some lemmas that help us simplify intersections of algebraic sets definable using parameters in a strongly indiscernible tree.

\begin{observation}
\label{observation_terms_all_equal}
    Let $(\ub_\nu : \nu \in \kappa^{<\omega})$ be strongly indiscernible and let $t(\uy)$ be an $L$-term.
    If there are distinct $\mu, \eta \in \kappa^{<\omega}$ with $t(\ub_\mu) = t(\ub_\eta)$, then $t(\ub_\nu)$ does not depend on $\nu \in \kappa^{<\omega}$.
\begin{proof}
    If $\mu$ and $\eta$ are comparable, then, after swapping them, we may assume that $\mu \triangleleft \eta$.
    By strong indiscernibility, we then have $t(\ub_\nu) = t(\ub_{\emptyseq})$ for any $\nu \in \kappa^{<\omega}$.
    Thus, suppose that $\mu$ and $\eta$ form an antichain.
    Strong indiscernibility gives $t(\ub_\seq{0}) = t(\ub_\seq{1}) = t(\ub_\seq{00})$, since both $\set{\bigseq{0}, \bigseq{1}}$ and $\set{\bigseq{00}, \bigseq{1}}$ form antichains strongly isomorphic to $\set{\mu, \eta}$.
    Now conclude since $\bigseq{0} \triangleleft \bigseq{00}$.
\end{proof}
\end{observation}

\begin{lemma} \label{lemma_make_intersection_definable}
    Let $\varphi(\uy; \uw_1\uw_2)$ be an $L$-formula algebraic in $\uy$, and let $(\ud_\nu : \nu \in 2^{<\omega})$ be a strongly indiscernible tree in some model $\mm \models T$.
    Then we can find a strongly indiscernible tree $(\ud'_\nu U'_\nu : \nu \in 2^{<\omega})$ in some elementary extension $\mm' \succ \mm$, where each $U'_\nu$ is a finite set, such that:
    \begin{enumerate}[(i)]
        \item $\varphi(\mm'; \ud'_\emptyseq\ud'_\mu) \cap \varphi(\mm'; \ud'_\emptyseq\ud'_\eta)$ is contained in $U'_\emptyseq{}^{|\uy|}$ for any $\mu, \eta \in 2^{<\omega}$ with $\mu \perp \eta$.
        \item For every $L$-formula $\chi(\ux_1, \dots, \ux_n)$ and every quantifier-free $\set{\triangleleft, <_\Lex}$-type $\Sigma_0$ in $n$ variables, if $\mm \models \chi(\ud_{\underline{\nu}})$ for all $\underline{\nu}$ with $\operatorname{qftp}_0(\underline{\nu}) \supseteq \Sigma_0$, then $\mm' \models \chi(\ud'_{\underline{\nu}})$ for all $\underline{\nu}$ with $\operatorname{qftp}_0(\underline{\nu}) \supseteq \Sigma_0$.
    \end{enumerate}
\begin{proof}
    Let $\mu, \eta \in 2^{<\omega}$ be such that $\mu \perp \eta$ and $\mu <_\Lex \eta$.
    Set $\xi := \mu \wedge \eta$ and choose paths $p_0, p_1 \in 2^\omega$ such that $\mu = \xi^\frown p_{0\restriction n_0}$ and $\eta = \xi^\frown p_{1\restriction n_1}$ for some $n_0, n_1 > 0$.
    Note that $p_0(0) = 0$ and $p_1(0) = 1$.

    By our assumption, there is some $q \geq 0$ such that $|\varphi(\mm; \ud_0\ud_1)| \leq q$ for all parameter tuples $\ud_0\ud_1$ from $M$.
    For all $i > 0$, the set $\varphi(\mm; \ud_\emptyseq\ud_{\mu}) \cap \varphi(\mm; \ud_\emptyseq\ud_{\xi^\frown p_{1\restriction i}})$ is a subset of $\varphi(\mm; \ud_\emptyseq\ud_{\mu})$.
    Since $|\varphi(\mm; \ud_\emptyseq\ud_{\mu})| \leq q$, we can find $i, j$ with $0 < i < j$ such that
    $$
    \varphi(\mm; \ud_\emptyseq\ud_{\mu}) \cap \varphi(\mm; \ud_\emptyseq\ud_{\xi^\frown p_{1\restriction i}}) = \varphi(\mm; \ud_\emptyseq\ud_{\mu}) \cap \varphi(\mm; \ud_\emptyseq\ud_{\xi^\frown p_{1\restriction j}}).
    $$
    By strong indiscernibility of $(\ud_\nu : \nu \in 2^{<\omega})$, this gives
    $$
    \varphi(\mm; \ud_\emptyseq\ud_{\mu}) \cap \varphi(\mm; \ud_\emptyseq\ud_{\eta}) = \varphi(\mm; \ud_\emptyseq\ud_{\mu}) \cap \varphi(\mm; \ud_\emptyseq\ud_{\xi^\frown \seq{1}}).
    $$
    With the same argument and $\xi = \mu \wedge \eta$, we obtain
    \begin{align}
        \varphi(\mm; \ud_\emptyseq\ud_{\mu}) \cap \varphi(\mm; \ud_\emptyseq\ud_{\eta}) = \varphi(\mm; \ud_\emptyseq\ud_{(\mu \wedge \eta)^\frown \seq{0}}) \cap \varphi(\mm; \ud_\emptyseq\ud_{(\mu \wedge \eta)^\frown \seq{1}}). \label{tag_idk_what_this_is}
    \end{align}
    Now fix any $\xi \neq \bigemptyseq, \bigseq{0}, \bigseq{1}$ and set $N := 2^q + 1$.
    For now, assume that $\xi(0) = \xi(1) = 0$.
    For all $i$ with $0 \leq i < N$, we obtain
    \begin{align*}
        \varphi(\mm; \ud_\emptyseq\ud_{\xi^\frown \seq{0}^i{}^\frown \seq{0}}) \cap \varphi(\mm; \ud_\emptyseq\ud_{\xi^\frown \seq{0}^i{}^\frown \seq{1}}) = \varphi(\mm; \ud_\emptyseq\ud_{\xi^\frown \seq{0}^{N}}) \cap \varphi(\mm; \ud_\emptyseq\ud_{\xi^\frown \seq{0}^i{}^\frown \seq{1}})
    \end{align*}
    by (\ref{tag_idk_what_this_is}).
    Since $\varphi(\mm; \ud_\emptyseq\ud_{\xi^\frown \seq{0}^{N}})$ has at most $q$ elements, the family of its intersections with the sets $\varphi(\mm; \ud_\emptyseq\ud_{\xi^\frown \seq{0}^i{}^\frown \seq{1}})$ has at most $2^q$ members.
    Hence, there are $i, j$ with $0 \leq i < j < N$ such that
    \begin{align}
        \varphi(\mm; \ud_\emptyseq\ud_{\xi^\frown \seq{0}^{N}}) \cap \varphi(\mm; \ud_\emptyseq\ud_{\xi^\frown \seq{0}^i{}^\frown \seq{1}}) = \varphi(\mm; \ud_\emptyseq\ud_{\xi^\frown \seq{0}^{N}}) \cap \varphi(\mm; \ud_\emptyseq\ud_{\xi^\frown \seq{0}^j{}^\frown \seq{1}}). \label{tag_idk_2}
    \end{align}
    By strong indiscernibility, this equality holds for any $i, j$ with $0 \leq i < j < N$.
    Since we have $\xi(0) = \xi(1) = 0$, the nodes $\bigseq{01}$ and $\xi^\frown\bigseq{1}$ have the same quantifier-free $L_0$-type over $(\bigemptyseq, \xi^\frown\bigseq{0}^N, \xi^\frown\bigseq{0}^\frown\bigseq{1})$ (see Figure \ref{figure_same_qftp_over_base}). Together with the second equality below and strong indiscernibility of our tree, this gives the third equality below:
    \begin{align*}
        \varphi(\mm; \ud_\emptyseq\ud_{\xi^\frown \seq{0}}) \cap \varphi(\mm; \ud_\emptyseq\ud_{\xi^\frown \seq{1}}) &\underset{(\ref{tag_idk_what_this_is})}{=} \varphi(\mm; \ud_\emptyseq\ud_{\xi^\frown \seq{0}^N}) \cap \varphi(\mm; \ud_\emptyseq\ud_{\xi^\frown \seq{1}}) \\
        &\underset{(\ref{tag_idk_2})}{=} \varphi(\mm; \ud_\emptyseq\ud_{\xi^\frown \seq{0}^N}) \cap \varphi(\mm; \ud_\emptyseq\ud_{\xi^\frown\seq{0}^\frown \seq{1}}) \\
        &\underset{\hphantom{(\ref{tag_idk_what_this_is})}}{=} \varphi(\mm; \ud_\emptyseq\ud_{\xi^\frown \seq{0}^N}) \cap \varphi(\mm; \ud_\emptyseq\ud_{\seq{01}}) \\
        &\underset{(\ref{tag_idk_what_this_is})}{=} \varphi(\mm; \ud_\emptyseq\ud_{\seq{00}}) \cap \varphi(\mm; \ud_\emptyseq\ud_{\seq{01}})
    \end{align*}
    Generalizing this to the cases in which $\xi(0) \neq 0$ or $\xi(1) \neq 0$, we obtain:
    \begin{enumerate}[(a)]
        \item $\varphi(\mm; \ud_\emptyseq\ud_{\xi^\frown \seq{0}}) \cap \varphi(\mm; \ud_\emptyseq\ud_{\xi^\frown \seq{1}}) = \varphi(\mm; \ud_\emptyseq\ud_{\seq{00}}) \cap \varphi(\mm; \ud_\emptyseq\ud_{\seq{01}})$ if $\xi(0) = 0$.
        \item $\varphi(\mm; \ud_\emptyseq\ud_{\xi^\frown \seq{0}}) \cap \varphi(\mm; \ud_\emptyseq\ud_{\xi^\frown \seq{1}}) = \varphi(\mm; \ud_\emptyseq\ud_{\seq{10}}) \cap \varphi(\mm; \ud_\emptyseq\ud_{\seq{11}})$ if $\xi(0) = 1$.
    \end{enumerate}
    Note that the equations in (a) and (b) also follow from (\ref{tag_idk_what_this_is}) if $\xi = \bigseq{0}$ or $\xi = \bigseq{1}$, respectively.
    If $\xi = \bigemptyseq$, then we trivially have
    $$
    \varphi(\mm; \ud_\emptyseq\ud_{\xi^\frown \seq{0}}) \cap \varphi(\mm; \ud_\emptyseq\ud_{\xi^\frown \seq{1}}) = \varphi(\mm; \ud_\emptyseq\ud_{\seq{0}}) \cap \varphi(\mm; \ud_\emptyseq\ud_{\seq{1}}).
    $$
    For any $\nu \in 2^{<\omega}$, define $U_{\nu}$ as the set of entries of the tuples contained in the set
    \begin{align*}
        \big(\varphi(\mm; \ud_\nu\ud_{\nu^\frown\seq{00}}) \cap \varphi(\mm; \ud_\nu\ud_{\nu^\frown\seq{01}})\big) &\cup \big(\varphi(\mm; \ud_\nu\ud_{\nu^\frown\seq{10}}) \cap \varphi(\mm; \ud_\nu\ud_{\nu^\frown\seq{11}})\big) \\& \hspace{60pt} \cup \big(\varphi(\mm; \ud_\nu\ud_{\nu^\frown\seq{0}}) \cap \varphi(\mm; \ud_\nu\ud_{\nu^\frown\seq{1}})\big)
    \end{align*}
    By the above, (\ref{tag_idk_what_this_is}), and strong indiscernibility, we obtain
    $$
    U_\nu^{|\uy|} \supseteq \varphi(\mm; \ud_\nu\ud_{\mu}) \cap \varphi(\mm; \ud_\nu\ud_{\eta})
    $$
    for any $\nu, \mu, \eta \in 2^{<\omega}$ with $\nu \triangleleft \mu, \eta$ and $\mu \perp \eta$.
    Now apply Lemma \ref{lemma_weak_modeling}, the weak variant of the modeling property for strongly indiscernible trees indexed by $2^{<\omega}$, to the tree $(\ud_\nu U_\nu : \nu \in 2^{<\omega})$ to conclude.
\end{proof}
\end{lemma}

        \begin{figure}[tbp]
        \centering
        \begin{tikzpicture}[
            x=0.75cm,
            y=0.75cm,
            node/.style={circle, draw, fill=white, inner sep=1pt, minimum size=6mm, font=\scriptsize},
            long label node/.style={circle, draw, fill=white, inner sep=1pt, minimum size=6mm, font=\scriptsize},
            tree edge/.style={line width=0.35pt},
            continuation edge/.style={densely dotted, line width=0.65pt}
        ]
            \coordinate (root) at (0,0);
            \coordinate (n0) at (-1,1);
            \coordinate (n1) at (1,1);
            \coordinate (n00) at (-2,2);
            \coordinate (n01) at (0,2);
            \coordinate (xi) at (-3.5,3.5);
            \coordinate (xi0) at (-4.5,4.5);
            \coordinate (xi1) at (-2.5,4.5);
            \coordinate (xi0N) at (-6,6);
            \coordinate (xi01) at (-3.5,5.5);

            \draw[tree edge] (root) -- (n0) -- (n00);
            \draw[tree edge] (root) -- (n1);
            \draw[tree edge] (n0) -- (n01);
            \draw[continuation edge] (n00) -- (xi);
            \draw[continuation edge] (n00) -- ++(0.55,0.55);
            \draw[tree edge] (xi) -- (xi0);
            \draw[tree edge] (xi) -- (xi1);
            \draw[continuation edge] (xi0) -- (xi0N);
            \draw[tree edge] (xi0) -- (xi01);
            \foreach \leaf in {n1,n01,xi1,xi0N,xi01} {
                \draw[continuation edge] (\leaf) -- ++(-0.22,0.62);
                \draw[continuation edge] (\leaf) -- ++(0.22,0.62);
            }

            \node[node, draw=red!65!black, fill=red!8, very thick] at (root) {$\bigemptyseq$};
            \node[node] at (n0) {$\bigseq{0}$};
            \node[node] at (n1) {$\bigseq{1}$};
            \node[node] at (n00) {$\bigseq{00}$};
            \node[node, draw=blue!60!black, fill=blue!8, very thick] at (n01) {$\bigseq{01}$};
            \node[node] at (xi) {$\xi$};
            \node[long label node] at (xi0) {\scalebox{0.72}{$\xi^\frown\bigseq{0}$}};
            \node[long label node, draw=blue!60!black, fill=blue!8, very thick] at (xi1) {\scalebox{0.65}{$\xi^\frown\bigseq{1}$}};
            \node[long label node, draw=red!65!black, fill=red!8, very thick] at (xi0N) {\scalebox{0.58}{$\xi^\frown\bigseq{0}^N$}};
            \node[long label node, draw=red!65!black, fill=red!8, very thick] at (xi01) {\scalebox{0.43}{$\xi^\frown\bigseq{0}^\frown\bigseq{1}$}};
        \end{tikzpicture}
        \caption{The blue nodes $\bigseq{01}$ and $\xi^\frown\bigseq{1}$ have the same quantifier-free $L_0$-type over the set consisting of all red nodes.}
        \label{figure_same_qftp_over_base}
    \end{figure}

\begin{lemma} \label{lemma_fint_set_eq_or}
    Let $\varphi(\uy; \uw_1\uw_2)$ be an $L$-formula algebraic in $\uy$, and let $(\ud_\nu : \nu \in 2^{<\omega})$ be a strongly indiscernible tree.
    Then we have
    $$
    \varphi(\mm; \ud_{\seq{0}^{i}}\ud_{\seq{0}^{k}}) \cap \varphi(\mm; \ud_{\seq{0}^{j}}\ud_{\seq{0}^{k}}) =  \varphi(\mm; \ud_{\emptyseq}\ud_{\seq{0}^{k}}) \cap \varphi(\mm; \ud_{\seq{0}}\ud_{\seq{0}^{k}})
    $$
    for all $i, j, k$ with $0 \leq i < j < k$.
\begin{proof}
    By strong indiscernibility of the tree, there is some $q \in \omega$ such that $\varphi(\mm; \ud_{\seq{0}^{i}}\ud_{\seq{0}^{j}})$ contains exactly $q$ elements for all $i, j$ with $i < j$.
    Define $N := 2^{q} + 2$.
    By the pigeonhole principle, there must be $j, k$ with $0 < j < k < N$ such that
    $$
    \varphi(\mm; \ud_{\seq{0}^{0}}\ud_{\seq{0}^{N}}) \cap \varphi(\mm; \ud_{\seq{0}^{j}}\ud_{\seq{0}^{N}}) = \varphi(\mm; \ud_{\seq{0}^{0}}\ud_{\seq{0}^{N}}) \cap \varphi(\mm; \ud_{\seq{0}^{k}}\ud_{\seq{0}^{N}}).
    $$
    By strong indiscernibility of $(\ud_\nu : \nu \in 2^{<\omega})$, it follows that, for all $i, j, k, l$ with $0 \leq i < j < k < l$,
    \begin{align}
        \varphi(\mm; \ud_{\seq{0}^{i}}\ud_{\seq{0}^{l}}) \cap \varphi(\mm; \ud_{\seq{0}^{j}}\ud_{\seq{0}^{l}}) = \varphi(\mm; \ud_{\seq{0}^{i}}\ud_{\seq{0}^{l}}) \cap \varphi(\mm; \ud_{\seq{0}^{k}}\ud_{\seq{0}^{l}}). \label{tag_eq_sets_lol}
    \end{align}
    Similarly, we can find $i, j$ with $0 \leq i < j < N - 1$ such that
    $$
    \varphi(\mm; \ud_{\seq{0}^{i}}\ud_{\seq{0}^{N}}) \cap \varphi(\mm; \ud_{\seq{0}^{N-1}}\ud_{\seq{0}^{N}}) = \varphi(\mm; \ud_{\seq{0}^{j}}\ud_{\seq{0}^{N}}) \cap \varphi(\mm; \ud_{\seq{0}^{N-1}}\ud_{\seq{0}^{N}}).
    $$
    Together with strong indiscernibility of $(\ud_\nu : \nu \in 2^{<\omega})$, this implies that, for all $i, j, k, l$ with $0 \leq i < j < k < l$,
    \begin{align}
        \varphi(\mm; \ud_{\seq{0}^{i}}\ud_{\seq{0}^{l}}) \cap \varphi(\mm; \ud_{\seq{0}^{k}}\ud_{\seq{0}^{l}}) = \varphi(\mm; \ud_{\seq{0}^{j}}\ud_{\seq{0}^{l}}) \cap \varphi(\mm; \ud_{\seq{0}^{k}}\ud_{\seq{0}^{l}}). \label{tag_eq_sets_lol2}
    \end{align}
    Now fix $i, j, k$ with $0 \leq i < j < k$.
    With the above, we immediately obtain
    \begin{align*}
        \varphi(\mm; \ud_{\seq{0}^{i}}\ud_{\seq{0}^{k}}) \cap \varphi(\mm; \ud_{\seq{0}^{j}}\ud_{\seq{0}^{k}}) &\underset{(\ref{tag_eq_sets_lol2})}{=} \varphi(\mm; \ud_{\seq{0}^{0}}\ud_{\seq{0}^{k}}) \cap \varphi(\mm; \ud_{\seq{0}^{j}}\ud_{\seq{0}^{k}}) \\
        &\underset{(\ref{tag_eq_sets_lol})}{=} \varphi(\mm; \ud_{\seq{0}^{0}}\ud_{\seq{0}^{k}}) \cap \varphi(\mm; \ud_{\seq{0}^{1}}\ud_{\seq{0}^{k}})
    \end{align*}
    Since $\bigseq{0}^0 = \bigemptyseq$, this completes the proof.
\end{proof}
\end{lemma}

\section{Preservation of NATP in the case $C = C_0$} \label{sec_natp_ez}

Recall that $C_0$ is the unique transcendental kernel configuration that asks for all $\rho[\theta]$'s to be injective (unless $\rho = 0$). In an existentially closed model of $T\theta^{C_0}$, every $\rho[\theta]$ is invertible (unless $\rho = 0$), and all $\LKThe$-definable endomorphisms of $\VV$ are of the form $\rho[\theta] \circ \eta[\theta]^{-1}$ with $\eta \neq 0$. This means that the ring $R_{C_0}$ from Fact \ref{theorem_r_c_def} is isomorphic to $K(X)$. Additionally, any $C_0$-sequence-system $S(\ux)$ is simply $\top$, which significantly simplifies our characterizations of existentially closed models and definable sets (see Theorem \ref{theorem_big_characterization} and Fact \ref{theorem_big_fml_preceise}). The goal of this section is to prove the following theorem:

\begin{theorem}\label{theorem_preserve_natp_ez}
    Suppose that $T$ satisfies \Hfour{}. If $T$ has \NATP{}, then $T\theta^{C_0}$ also has \NATP{}.
\end{theorem}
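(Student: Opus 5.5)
We argue by contraposition, following the two-step strategy from the introduction. Suppose $T\theta^{C_0}$ has \ATP{}. By the result of \cite{AKL23} cited in the introduction, \ATP{} is witnessed by a formula $\phi(z;\uw)$ in a single variable. We may take $z \in \VV$: if $z$ ranges outside $\VV$, then $\theta$ is trivial there, and by model-completeness the induced structure is essentially that of $T$, so \ATP{} transfers directly to $T$. Since $T\theta^{C_0}$ is model complete, $\phi$ is equivalent to an existential formula. By Fact \ref{theorem_big_fml_preceise}, and using that every $C_0$-sequence-system is $\top$, $\phi$ is then a finite disjunction of formulas
$$\exists \uy \in Y_\uw : \exists \ux \in \VV : \psi_\theta(\ux; z; \uy\uw).$$
By Fact \ref{fact_atp_disj}, one disjunct has \ATP{}. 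An algebraic pattern has a uniformly bounded finite number of realizations, so Lemma \ref{lemma_remove_alg_set} lets us absorb $\uy$ into the parameters. We obtain a formula $\exists \ux\in\VV : \psi_\theta(\ux;z;\ud)$ witnessing \ATP{} with a strongly indiscernible tree $(\ud_\nu : \nu\in\kappa^{<\omega})$, with $\kappa$ chosen as needed via Fact \ref{fact_infinite_branching_tree}. We then merge $z$ into $\ux$ and relabel it as the shared part $\ux_\sh$ of the introduction, writing the private witnesses for $\nu$ as $\ux_\nu$.

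Step (I) is to translate consistency in $T\theta^{C_0}$ into the linear-independence conditions (i) and (ii) in $T$. For an antichain $X$ we have a common $z$ and witnesses $\ux_\nu$. Passing to a suitable extension and using indiscernibility, we choose these witnesses so that the whole family of $\theta$-orbits $(\theta^i(x) : x\in \ux_\sh\ux_\nu,\ \nu\in X,\ i<d)$ is linearly independent over $\VV$ of the base model. The witnesses for distinct $\nu$ should be "generic over" each other, which we can arrange with the modeling property and an independence-style argument. This yields (i) for the $L$-formula $\psi(\uxvec_\sh\uxvec_\nu;\ud_\nu)$. Conversely, if for a path $\{\nu,\mu\}$ the $L$-type in (ii) implied no disjunction of linear dependencies, then Theorem \ref{theorem_big_characterization} (with $S=\top$) would give a realization in the existentially closed model, and hence a common $z$, contradicting $2$-inconsistency. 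Here the realization must be taken over the elementary extension containing the tree, and boundedness is automatic because $S=\top$. The delicate point in (i) is arranging independence of the placeholders modulo the images of $\theta$. I would handle this by replacing each witness $x$ by coordinates relative to $R_{C_0}\simeq K(X)$, using that $\rho[\theta]$ is invertible: any $K(X)$-linear relation among the orbit variables becomes a $K$-linear relation among finitely many placeholders.

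Step (II) shows that this "linearly dependent \ATP{}" in $T$ yields genuine \ATP{} in $T$, and I expect it to be the main obstacle. By (H4) and Fact \ref{lemma_hfour_fml}, condition (i) is expressed by $\sigma$-formulas. These formulas state that $\psi$ has a realization avoiding finitely many algebraic sets of the form $\varphi_k(\sum\lambda_{k,l}x_l;\uw)$. Condition (ii) says that every realization for a path pair lands in such an algebraic set. My plan is to use compactness and indiscernibility to fix, for the path pair $(\bigemptyseq,\bigseq{0})$, a single linear form $\lambda$ and an algebraic formula $\varphi$ such that every common realization satisfies $\varphi(\lambda\cdot \ux_\sh\ux_\emptyseq\ux_\seq{0};\ud_\emptyseq\ud_\seq{0})$. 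Lemmas \ref{lemma_make_intersection_definable} and \ref{lemma_fint_set_eq_or} then shrink the resulting intersections of algebraic sets along paths to finite sets $U_\nu$ definable from the tree. From this we build a formula in $\ux_\sh$ together with the private variables, with the conjunct "$\lambda\cdot(\dots)\notin U$" added. This formula is consistent along antichains by (i), where the relevant linear form is nontrivial over $\VV$. Along paths of some fixed length $k$ it is inconsistent, because a long path forces repetition among boundedly many algebraic values. We then conclude with Fact \ref{fact_k_atp_equiv_atp}. The hardest case is when $\lambda$ involves only $\ux_\sh$, or when the dependency mixes shared and private coordinates; there I would first try to eliminate $\ux_\sh$ by quantifying it existentially and arguing that its values must lie in a finite set along paths.
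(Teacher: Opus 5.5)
There is a genuine gap in your Step (I), at condition (i). Your plan is to choose the witnesses so that all the $\theta$-orbits of the shared part and of the $\ux_\nu$ are linearly independent over $\VV$, with witnesses for distinct $\nu$ ``generic over each other''. But linear dependencies can be \emph{forced by the formula}. When that happens, no choice of witnesses, no passage to an extension, and no modeling-property or independence argument can remove them. For instance, suppose $\psi(\uxvec{};\uz;\uw)$ contains the conjunct $x^0_1 = z_1$. Then every realization of $\set{\psi(\uxvec{}_\nu;\uz;\ud_\nu):\nu\in X}$ satisfies $x^0_{\nu_1,1}-x^0_{\nu_2,1}=0$ for distinct $\nu_1,\nu_2\in X$. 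After you merge $z$ into the shared part, this becomes a forced shared--private dependency instead. Likewise, a conjunct $x^1_1=w_1$ forces $x^1_{\nu,1}\in\set{d_{\nu,1}}$. Rewriting the witnesses in coordinates over $R_{C_0}\simeq K(X)$ only translates such relations into other $K$-linear relations among placeholders; it does not make them disappear. So for the formula you extracted, (i) is in general false, and Step (II) has nothing to start from.

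The missing idea is to change the formula, not the witnesses, by a well-founded reduction.
\begin{itemize}
\item The paper splits the witnessing tuple into a shared part $\ux_\sh$, kept \emph{free} together with $\uz$, and a private part $\ux_\va$. Among all formulas $\ux_\sh\in\VV\wedge\exists\ux_\va\in\VV:\psi_\theta(\ux_\sh\ux_\va;\uz;\uw)$ witnessing \ATP{} in $\ux_\sh\uz$, it takes one with $(|\ux_\va|,|\ux_\sh|)$ lexicographically minimal.
\item Suppose a finite antichain $X$ forces a disjunction of dependencies. The Ramsey property of $\operatorname{age}(\omega^\omega)$ in $L_\delta$ (Fact \ref{fact_is_ramsey}) selects one disjunct that holds simultaneously for all copies of $X$ inside realizations for larger antichains. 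This is why the tree is indexed by $\omega^{<\omega}$.
\item If that disjunct involves a private variable of the last node $\mu_r$, one solves for it using $\eta[\theta]^{-1}$. The contributions of $\ux_\sh$ and of the other nodes are collected into one new shared variable. One then passes to the tree $\ud_{\mu_1}\cdots\ud_{\mu_{r-1}}\ud_{\mu_r{}^\frown\nu}$ and absorbs the algebraic value by Lemma \ref{lemma_remove_alg_set}. This lowers $|\ux_\va|$.
\item If the disjunct involves only shared variables, the same substitution lowers $|\ux_\sh|$. Minimality then yields (i).
\end{itemize}

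Three smaller points.
\begin{itemize}
\item Your reduction to $z\in\VV$ is unjustified. A formula in $z\notin\VV$ can still quantify over $\VV$ and use $\theta$ there, so the induced structure is not that of $T$. It is also unnecessary, since the paper never puts $\uz$ into the part required to be independent.
\item In Step (II), compactness gives a finite disjunction for the pair $(\emptyseq,\seq{0})$, not a single disjunct. The paper removes disjuncts involving $\ux_\emptyseq$ via Lemma \ref{lemma_make_intersection_definable} and a pigeonhole over $2^n$ successors with a repacked tree. It then removes the dependence on the lower parameter via Lemma \ref{lemma_fint_set_eq_or}.
\item $\ux_\sh$ must stay free rather than be quantified, because path-inconsistency of the final formula relies on all instances sharing the same $\ux_\sh$.
\end{itemize}
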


\noindent In Section \ref{sec_proof_general}, we show that the theorem above also holds for other kernel configurations $C$, but the proof is significantly more technical. Hence, we present a considerably less technical proof for the case $C = C_0$, which still captures the underlying idea. This idea consists of essentially two steps:
\begin{enumerate}[(I)]
    \item Show that if $T\theta^{C_0}$ has the antichain tree property, then there is an $L$-formula $\psi(\ux_\sh\ux_\va; \uz; \uw)$ and a strongly $L$-indiscernible tree $(\ud_\nu : \nu \in 2^{<\omega})$ in some $\mm \models T$ such that:
    \begin{enumerate}[(i)]
        \item The type $\set{\psi(\ux_\sh\ux_\nu; \uz; \ud_\nu) : \nu \in X}$ implies no finite disjunction of non-trivial linear dependencies in $\ux_\sh(\ux_\nu : \nu \in X)$ over $\VV$ whenever $X \subseteq 2^{<\omega}$ is an antichain.
        \item The type $\set{\psi(\ux_\sh\ux_\emptyseq; \uz; \ud_\emptyseq), \psi(\ux_\sh\ux_{\seq{0}}; \uz; \ud_{\seq{0}})}$ implies some finite disjunction of non-trivial linear dependencies in $\ux_\sh\ux_\emptyseq\ux_{\seq{0}}$ over $\VV$.
    \end{enumerate}
    This can essentially be seen as $T$ having some kind of ``linearly dependent antichain tree property''.
    \item Show that this ``linearly dependent antichain tree property'' implies the actual antichain tree property.
\end{enumerate}
Step (II) works entirely within the theory $T$ and is hence completely independent of the kernel configuration $C$; this means that we can reuse it in Section \ref{sec_proof_general}.

\subsection{Step (I): Reduction to a problem in $T$ in the case $C = C_0$} \label{sec_step_red_ez}

\noindent The goal of this section is to prove the following lemma.
It essentially states that if $T\theta^{C_0}$ has \ATP{}, then $T$ satisfies a property similar to \ATP{}, but with consistency in Definition \ref{def_kATP} replaced by ``implies no finite disjunction of non-trivial linear dependencies''.

\begin{lemma} \label{lemma_atp_step_11_ez}
    If the theory $T\theta^{C_0}$ has \ATP{}, then there is an $L$-formula $\psi(\ux_\sh\ux_\va; \uz; \uw)$ and a strongly $L$-indiscernible tree $(\ud_\nu : \nu \in 2^{<\omega})$ in some $\mm \models T$ such that:
    \begin{enumerate}[(i)]
        \item The partial type $\set{\psi(\ux_\sh\ux_\nu; \uz; \ud_\nu) : \nu \in X}$ implies no finite disjunction of non-trivial linear dependencies in $\ux_\sh(\ux_\nu : \nu \in X)$ over $\VV$ whenever $X \subset 2^{<\omega}$ is an antichain.
        \item The partial type $\set{\psi(\ux_\sh\ux_\emptyseq; \uz; \ud_\emptyseq), \psi(\ux_\sh\ux_{\seq{0}}; \uz; \ud_{\seq{0}})}$ implies some finite disjunction of non-trivial linear dependencies in $\ux_\sh\ux_\emptyseq\ux_{\seq{0}}$ over $\VV$.
    \end{enumerate}
\end{lemma}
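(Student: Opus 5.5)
We start from an $L_\theta$-formula $\phi(\uz;\uw)$ witnessing \ATP{} in $T\theta^{C_0}$. Since $T\theta^{C_0}$ is model complete, we may assume $\phi$ is existential. By Fact \ref{theorem_big_fml_preceise}, $\phi$ is equivalent to a finite disjunction of formulas of the form $\exists \uy \in Y_\uw : \exists \ux \in \VV : \psi_\theta(\ux;\uz;\uy\uw)$; for $C_0$ every sequence-system is $\top$, so no $S$ appears. By Fact \ref{fact_atp_disj}, one disjunct witnesses \ATP{}. The algebraic pattern $Y_\uw$ is finite and uniformly bounded. Hence Lemma \ref{lemma_remove_alg_set} lets us absorb $\uy$ into the parameters, obtaining a strongly indiscernible tree $(\ub_\nu\ud_\nu)$ in a model $(\mm,\theta)$ of $T\theta^{C_0}$ with $\ub_\nu\in Y_{\ud_\nu}$. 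Along this tree, $\exists\ux\in\VV:\psi_\theta(\ux;\uz;\ub_\nu\ud_\nu)$ witnesses \ATP{}. We replace the placeholder tuple $\uxvec$ by finitely many variables $\ux^0,\dots,\ux^{N}$ actually occurring in $\psi$.

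The idea is to build an $L$-formula in $\uz$ together with the placeholder variables. Set $\ux_\va := \uxvec$ (finitely many coordinates) and take $\ux_\sh$ to be new variables encoding the $L_\theta$-data of $\uz$: the tuple $(\theta^i(\uz))_{i\le N'}$ for a suitable $N'$, or alternatively its $\VV$-part. Let $\uw' := \ub\ud$ together with enough $\theta$-iterates so that everything becomes an $L$-formula. The resulting $L$-tree is the $L$-reduct of the tree of these parameter tuples. After applying Fact \ref{fact_modeling_property} (or Lemma \ref{lemma_weak_modeling}), it is strongly $L$-indiscernible.

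Condition (i) comes from consistency along antichains. Let $X$ be an antichain and take a realization $\uz$ with witnesses $\ux_\nu$ for $\nu\in X$. Pass to an elementary extension of $(\mm,\theta)$ in which, by Theorem \ref{theorem_big_characterization} and saturation, the $\ux_\nu$ can be chosen generic. Concretely, we want the sequences $(\theta^i(x_{\nu,k}))$ to be linearly independent over the $\VV$-span of everything else; the plan is to redo the choice of witnesses inductively using the existential closedness axiom in Theorem \ref{theorem_first_oder}. The claim would then be that the $L$-type obtained by forgetting $\theta$ implies no linear dependency. Here the shared variables $\ux_\sh$ must be treated carefully: the dependencies of $\uz$ itself are allowed in the type only through parameters. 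Because of this, I would choose $\ux_\sh$ as a $K(X)$-basis, i.e., $R_{C_0}$-basis, of the span of $\uz$ over the parameters, and encode the remaining coordinates via $\uz$ and the algebraic patterns.

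Condition (ii) comes from the path inconsistency, and this is the step I expect to be the main obstacle. Suppose $\{\psi(\ux_\sh\ux_\emptyseq;\uz;\ud_\emptyseq),\psi(\ux_\sh\ux_{\seq 0};\uz;\ud_{\seq0})\}$ implied no linear dependency. Then, by \Hfour{} (Definition \ref{def_hfour}(ii)), there is a realization in an elementary extension of the $L$-reduct with $\ux_\sh\ux_\emptyseq\ux_{\seq0}$ linearly independent over $\VV$. We would then define $\theta$ on the new independent elements, extending the placeholder assignments freely as $\theta$ acts on a $K(X)$-free module. This is consistent precisely because of linear independence, and it gives a model of $T_\theta^{C_0}$, which we embed into an existentially closed one. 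The comparable pair $\emptyseq\triangleleft\seq0$ would thus become consistent for $\phi$, contradicting $2$-inconsistency on paths. The delicate part is ensuring that the encoding via $\ux_\sh$ and the bounded placeholders is faithful, so that the freely extended $\theta$ agrees with $\theta$ on the old model and recovers $\psi_\theta$. The same free-extension construction, applied to arbitrary finite antichains, also justifies (i).
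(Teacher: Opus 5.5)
Your reduction to an \ATP{}-witness $\exists \ux \in \VV : \psi_\theta(\ux;\uz;\uw)$ along a strongly indiscernible tree is fine, and your free-extension argument for (ii) is essentially the content of Theorem \ref{theorem_big_characterization}. The gap is in (i). With $\ux_\va$ taken to be all of $\uxvec$ and $\ux_\sh$ an encoding of $\uz$, statement (i) is false in general, and no choice of realization can repair it.

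The dependencies in (i) are implied by the type itself, so they hold in every realization. Existential closedness, and likewise your free extension of $\theta$, only realizes formulas that already imply no dependency; it cannot show that a type implies none. For example, $\psi$ may force an existential witness $x$ into a finite $\ud_\nu$-definable set (say $x^0 = w_1$). That is already a dependency over $\VV$ at a single node. Or $\psi$ may force $x$ to be an $L$-definable, non-linear function of $\uz$. Then every antichain type implies $x^0_\nu - x^0_\mu = 0$, and $x$ does not lie in the $R_{C_0}$-span of $\uz$, so your $\ux_\sh$ does not absorb it. By Fact \ref{theorem_big_fml_preceise}, $\uz$ occurs in $\psi$ only as plain $L$-variables, so encoding $\theta^i(\uz)$ is beside the point. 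What has to become shared are existential witnesses, and which ones must be found.

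The missing idea is a well-founded re-presentation of the witness. The paper considers all formulas $\ux_\sh \in \VV \wedge \exists \ux_\va \in \VV : \psi_\theta(\ux_\sh\ux_\va;\uz;\uw)$ witnessing \ATP{} in $\ux_\sh\uz$, with $\ux_\sh$ initially empty. It fixes one with $(|\ux_\va|,|\ux_\sh|)$ lexicographically minimal.

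Suppose the type over some finite antichain $X$ implied a disjunction of dependencies. The Ramsey property of $\operatorname{age}(\omega^\omega)$ (Fact \ref{fact_is_ramsey}) picks out one disjunct $\varphi(\lambda(\dots);\ud)$ that holds simultaneously on all copies of $X$ inside any antichain.

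\begin{itemize}
\item If $\lambda$ involves some $x_{\va,n}$, one substitutes for it an $R_{C_0}$-combination of the other variables, a new shared variable, and $\eta[\theta]^{-1}(y)$ with $y$ in the finite set defined by $\varphi$. The value $y$ is absorbed into the parameters along the reindexed tree $\ud_{\mu_1}\dots\ud_{\mu_{r-1}}\ud_{\mu_r{}^\frown\nu}$ via Lemma \ref{lemma_remove_alg_set}. This shrinks $|\ux_\va|$.
\item If $\lambda$ involves only $\ux_\sh$, the same substitution removes one shared variable and shrinks $|\ux_\sh|$.
\end{itemize}

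Either case contradicts minimality, so (i) holds for the minimal witness. Only then does your free-extension argument yield (ii).
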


\noindent The following notation is already used in the lemma above.

\begin{notation}
    If a tuple of variables $\ux_\va$ is written with ``\hspace{0.9pt}$\operatorname{va}$\hspace{-1pt}'' as a subscript, then, for any $\nu \in \omega^{<\omega}$, we let $\ux_\nu$ be a tuple of the same length.
\end{notation}

\begin{proof}[Proof of Lemma \ref{lemma_atp_step_11_ez}]
    Assume that $T\theta^{C_0}$ has \ATP{}.
    Using our characterization of definable sets, namely Fact \ref{theorem_big_fml_preceise}, and Lemma \ref{lemma_remove_alg_set}, we obtain a formula of the form
    $$
     \exists \ux \in \VV : \psi_\theta(\ux; \uz; \uw),
    $$
    where $\psi(\uxvec{}; \uz; \uw)$ is an $L$-formula, which witnesses \ATP{} in $\uz$ together with a strongly indiscernible tree $(\ud_\nu : \nu \in \omega^{<\omega})$.
    Here, since $C = C_0$, every parametrized $C$-sequence-system is just $\top$.
    Also recall our \placeholderNotation{}: The formula $\psi_\theta(\ux; \uz; \uw)$ is the $L_\theta$-formula obtained by replacing all placeholder variables $x^i_k$, which make up $\uxvec{}$, in $\psi(\uxvec{}; \uz; \uw)$ with $\theta^i(x_k)$.
    Let $\ux_\sh$ be the empty tuple and set $\ux_\va := \ux$.
    Now the formula
    $$
    \ux_\sh \in \VV \wedge \exists \ux_\va \in \VV : \psi_\theta(\ux_\sh\ux_\va; \uz; \uw)
    $$
    witnesses \ATP{} in $\ux_\sh\uz$ together with the tree $(\ud_\nu : \nu \in \omega^{<\omega})$. Throughout this entire proof, we work in a sufficiently large monster model $(\MM, \theta) \models T\theta^{C_0}$ that contains this tree. Note that we will still write $M$ to denote the underlying set of $\MM$.

    Since $(\NN^2, <_\Lex)$ is a well-ordered set, we can assume that there is no other $L$-formula $\psi'(\uxvec{}'_\sh\uxvec{}'_\va; \uz; \uw')$ such that the formula $\ux'_\sh \in \VV \wedge \exists \ux'_\va \in \VV : \psi'_\theta(\ux'_\sh\ux'_\va; \uz; \uw')$ witnesses \ATP{} and $(|\ux'_\va|, |\ux'_\sh|) <_\Lex (|\ux_\va|, |\ux_\sh|)$, where $<_\Lex$ treats the first entry as more significant.
    Note that $\ux_\sh$ may now be non-empty.

    Assume, toward a contradiction, that there is some antichain $X \subset 2^{<\omega}$ such that the partial type $\set{\psi(\uxvec{}_\sh\uxvec{}_\nu; \uz; \ud_\nu) : \nu \in X}$ implies some finite disjunction of non-trivial linear dependencies in $\uxvec{}_\sh(\uxvec{}_\nu : \nu \in X)$ over $\VV$.
    By compactness, we may assume that $X$ is finite.
    Notice that, even though $X \subset 2^{<\omega}$, we need $(\ud_\nu : \nu \in \omega^{<\omega})$ to be indexed by $\omega^{<\omega}$ in order to apply Ramsey theory in the following claim.

\begin{subclaim} \label{lemma_remove_disj_ez}
    There is an $L$-formula $\varphi(y; (\uw_\nu : \nu \in X))$ algebraic in $y$ and a non-trivial $\LKThe$-term $\lambda(\ux_\sh(\ux_{\nu} : \nu \in X))$ such that the following holds:
\begin{itemize}
    \item[$(\star)$] Given any antichain $Y \subset 2^{<\omega}$, we can find a realization $\uv_\sh(\uv_\nu : \nu \in Y)\ua$ of the partial type $\set{\psi_\theta(\ux_\sh\ux_\nu; \uz; \ud_\nu) : \nu \in Y}$ such that $\uv_\sh(\uv_{\nu} : \nu \in Y) \in \VV$, and for any antichain $X' \subseteq Y$ that is strongly isomorphic to $X$, we have
$$
(\MM, \theta) \models \varphi(\lambda(\uv_\sh(\uv_{\nu} : \nu \in X')); (\ud_\nu : \nu \in X')).
$$
\end{itemize}
\begin{innerproof}
    Using Fact \ref{lemma_hfour_fml}, and recalling that the theory $T$ satisfies \Hfour{}, we see that the partial type
    $
    \set{\psi(\uxvec{}_{\sh}\uxvec{}_{\nu}; \uz; \ud_{\nu}) : \nu \in X}
    $
    must imply a disjunction of the form
    $$
    \bigvee\nolimits_{i=1}^q \varphi_i\big( \lambda_i(\uxvec{}_{\sh}(\uxvec{}_{\nu} : \nu \in X)) ; (\ud_{\nu} : \nu \in X)\big),
    $$
    where each $\lambda_i(\uxvec{}_{\sh}(\uxvec{}_{\nu} : \nu \in X))$ is a non-zero $\LK$-term and each $\varphi_i(y; (\uw_{\nu} : \nu \in X))$ is algebraic in $y$.
    By the indiscernibility of $(\ud_{\nu} : \nu \in \omega^{<\omega})$, we see that
    $
    \set{\psi_{\theta}(\ux_{\sh}\ux_{\nu}; \uz; \ud_{\nu}) : \nu \in X'}
    $
    implies
    $$
    \bigvee\nolimits_{i=1}^q \varphi_i\big( \lambda_{i, \theta}(\ux_{\sh}(\ux_{\nu} : \nu \in X')) ; (\ud_{\nu} : \nu \in X')\big)
    $$
    for any $X' \subset \omega^{<\omega}$ strongly isomorphic to $X$.

    We now show that there is some $i_0 \in \set{1, \dots, q}$ such that
    $$
    \varphi(y; (\uw_\nu : \nu \in X)) := \varphi_{i_0}(y; (\uw_\nu : \nu \in X))\quad \text{and}\quad\lambda(\ux_\sh(\ux_\nu : \nu \in X)) := \lambda_{i_0, \theta}(\ux_\sh(\ux_\nu : \nu \in X))
    $$
    satisfy the conclusion of this claim.
    Since the partial type $\set{\psi(\uxvec{}_{\sh}\uxvec{}_{\nu}; \uz; \ud_{\nu}) : \nu \in X}$ is consistent, we have $q > 0$.
    First, note that it is enough to show the existence of such an $i_0$ for any fixed finite antichain $Y \subset \omega^{<\omega}$.
    To see this, use compactness, the strong indiscernibility of $(\ud_{\nu} : \nu \in \omega^{<\omega})$, and the fact that given two finite antichains $Y_1, Y_2 \subset \omega^{<\omega}$, there is another finite antichain $Y \subset \omega^{<\omega}$ such that both $Y_1$ and $Y_2$ can be strongly embedded into $Y$.

    From now on, we treat all antichains and subsets of trees as $L_\delta$-structures, where $L_\delta$ is the language introduced in (ii) of Definition \ref{def_language_for_trees}.
    Recall that we write $A \simeq_\delta B$ if $A \subseteq \omega^{\leq \omega}$ and $B \subseteq \omega^{\leq \omega}$, written as lexicographically ordered tuples, have the same quantifier-free $L_\delta$-type, i.e., if they are isomorphic as $L_\delta$-structures.
    Recall Fact \ref{L_zero_type_eq_L_delta_type}, which states that if $A, B \subseteq \kappa^{<\omega}$, then $A \simeq_\delta B$ if and only if $A$ and $B$ are strongly isomorphic.

    Now let $Y \subset \omega^{<\omega}$ be a finite antichain.
    Since $\operatorname{age}(\omega^{\omega})$ is a Ramsey class, by Fact \ref{fact_is_ramsey}, there is a finite substructure $Z \subset \omega^{\omega}$ such that for any $\set{1, \dots, q}$-coloring of $\set{X' \subseteq Z : X' \simeq_\delta X}$ there is $Y' \subseteq Z$ with $Y' \simeq_\delta Y$ such that $\set{X' \subseteq Y' : X' \simeq_\delta X}$ is monochrome.
    Identify $Z$ via some $L_\delta$-embedding as a substructure of $\omega^{<\omega}$, and let $\uv'_\sh(\uv'_{\nu} : \nu \in Z)\ua'$ with \hbox{$\uv'_\sh(\uv'_{\nu} : \nu \in Z) \!\in\! \VV$} realize
    $
        \set{\psi_{\theta}(\ux_{\sh}\ux_{\nu}; \uz; \ud_{\nu}) : \nu \in Z}.
    $
    Such a realization exists, since $Z$ is an antichain.
    Color $\set{X' \subseteq Z : X' \simeq_\delta X}$ by
    $$
        X' \mapsto \text{``the least $i$ with $\MM \models \varphi_i\big( \lambda_{i, \theta}(\uv'_\sh(\uv'_\nu : \nu \in X')) ; (\ud_{\nu} : \nu \in X')\big)$''}.
    $$
    Then there is $Y' \subseteq Z$ with $Y' \simeq_\delta Y$ such that $\set{X' \subseteq Y' : X' \simeq_\delta X}$ is monochrome of color $i_0$.
    As stated above, for subsets of $\omega^{<\omega}$, being $L_\delta$-isomorphic is the same as being strongly isomorphic.
    Thus, since $(\ud_\nu : \nu \in \omega^{<\omega})$ is strongly indiscernible, we can find a realization $\uv_\sh(\uv_{\nu} : \nu \in Y)\ua$ of
    $
        \set{\psi_{\theta}(\ux_{\sh}\ux_{\nu}; \uz; \ud_{\nu}) : \nu \in Y}
    $
    such that $\uv_\sh(\uv_{\nu} : \nu \in Y) \in \VV$, and
    $$
    \MM \models \varphi_{i_0}\big( \lambda_{i_0, \theta}(\uv_\sh(\uv_\nu : \nu \in X')) ; (\ud_{\nu} : \nu \in X')\big)
    $$
    holds for all $X' \subseteq Y$ with $X'$ strongly isomorphic to $X$.
    Therefore, $i_0$ is as desired for $Y$.
\end{innerproof}
\end{subclaim}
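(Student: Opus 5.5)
The plan is a Ramsey-colouring argument on finite antichains, reducing the finite disjunction of linear dependencies to a single disjunct. By the minimality assumption, the formula $\exists \ux_\va \in \VV : \psi_\theta(\ux_\sh\ux_\va; \uz; \uw)$ witnesses \ATP{}. Hence for every antichain $Y$, the type $\set{\psi_\theta(\ux_\sh\ux_\nu; \uz; \ud_\nu) : \nu \in Y}$ has a realization with the $\ux$-part in $\VV$.

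The first step is to turn the assumed linear dependency into a finite disjunction of algebraic conditions. By assumption, $\set{\psi(\uxvec{}_\sh\uxvec{}_\nu; \uz; \ud_\nu) : \nu \in X}$ implies some finite disjunction of non-trivial linear dependencies over $\VV$. Since $T$ satisfies \Hfour{}, Fact \ref{lemma_hfour_fml} lets us rewrite this implication as
$$\textstyle\bigvee_{i=1}^q \varphi_i\big(\lambda_i(\uxvec{}_\sh(\uxvec{}_\nu:\nu\in X)); (\ud_\nu:\nu\in X)\big).$$
Here each $\lambda_i$ is a non-zero $\LK$-term in the placeholder variables, and each $\varphi_i$ is algebraic in its first variable. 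Substituting $\theta^i(x_k)$ for the placeholders turns each $\lambda_i$ into a non-trivial $\LKThe$-term $\lambda_{i,\theta}$. Strong indiscernibility of $(\ud_\nu : \nu \in \omega^{<\omega})$ then transfers the implication to every $X'$ strongly isomorphic to $X$. Consistency of the type gives $q \geq 1$.

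The main step is to pick one index $i_0$ uniformly. I would first reduce to finite $Y$. Any two finite antichains strongly embed into a common finite antichain, so by compactness and strong indiscernibility it suffices to find, for each finite antichain $Y \subset \omega^{<\omega}$, a single $i_0$ that works for $Y$. Finitely many candidate values and directedness then give one $i_0$ that works for all $Y$.

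For a fixed finite $Y$, I would use that $\operatorname{age}(\omega^\omega)$ is a Ramsey class in $L_\delta$ (Fact \ref{fact_is_ramsey}). This gives a finite $Z$ with $Z \to (Y)^X_q$, which we embed into $\omega^{<\omega}$ as an antichain. Choose a realization $\uv'_\sh(\uv'_\nu : \nu \in Z)\ua'$ in $\VV$ of the type indexed by $Z$. Colour each $X' \subseteq Z$ with $X' \simeq_\delta X$ by the least $i$ for which $\varphi_i(\lambda_{i,\theta}(\dots);\dots)$ holds. The Ramsey property yields $Y' \simeq_\delta Y$ on which this colouring is constant, say with value $i_0$.

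By Fact \ref{L_zero_type_eq_L_delta_type}, $\simeq_\delta$ coincides with strong isomorphism. Strong indiscernibility then moves the realization from $Y'$ to $Y$ while preserving the property for all copies of $X$. This is why the tree has to be indexed by $\omega^{<\omega}$ rather than $2^{<\omega}$: the Ramsey class lives there.

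The step I expect to be the main obstacle is making the choice of $i_0$ uniform over all antichains $Y$ via compactness, rather than just obtaining some $i_0$ for each $Y$. The other delicate point is checking that the transfer from $Y'$ to $Y$ really respects the lexicographic orderings used in $L_\delta$.
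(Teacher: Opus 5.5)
Your proposal is correct and follows the paper's proof essentially step by step: you use the normal form from Fact \ref{lemma_hfour_fml} under (H4), transfer it to all strongly isomorphic copies of $X$, reduce to finite antichains $Y \subset \omega^{<\omega}$, colour by the least satisfied index on a realization over the Ramsey witness $Z$, and move the monochromatic $Y'$ back to $Y$ by strong indiscernibility. Your explicit pigeonhole over the $q$ candidate indices is exactly what the paper's appeal to compactness and common finite antichains amounts to: if each $i$ failed on some $Y_i$, a finite antichain containing copies of all the $Y_i$ would give a contradiction. So the step you flag as the main obstacle is already resolved by your own sketch.
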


\noindent Write $X = \set{\mu_1, \dots, \mu_r}$, where the $\mu_k$'s are distinct.
Define the $L$-formula $\varphi(y; \uw_1, \dots, \uw_r)$ and the $\LKThe$-term $\lambda(\ux_\sh\ux_{1}\dots \ux_{r})$ such that $\varphi(y; \uw_{\mu_1}, \dots, \uw_{\mu_r})$ and $\lambda(\ux_\sh\ux_{\mu_1}\dots \ux_{\mu_r})$ are the formula and term obtained from Claim \ref{lemma_remove_disj_ez}.
Assume that the tuple $\ux_\sh = (x_{\sh, 1}, \dots, x_{\sh, m})$ has length $m$, and that $\ux_\va = (x_{\va, 1}, \dots, x_{\va, n})$ has length $n$.
Since we do not assume that the $\mu_k$'s are ordered in any specific way, we can permute them.
Using this and the fact that $\lambda(\ux_\sh\ux_{1}\dots \ux_{r})$ is non-trivial, we can assume that $\lambda(\ux_\sh\ux_{1}\dots \ux_{r})$ can be written in one of the following forms:
\begin{enumerate}[(i)]
    \item $-\eta[\theta](x_{r, n}) + \sum_{l=1}^{n-1} \rho_{r, l}[\theta](x_{r, l}) + \sum_{k = 1}^{r-1}\sum_{l=1}^{n} \rho_{k, l}[\theta](x_{k, l}) + \sum_{l=1}^{m} \rho_{\sh,l}[\theta](x_{\sh, l})$, or
    \item $-\eta[\theta](x_{\sh, m}) + \sum_{l=1}^{m-1} \rho_{\sh,l}[\theta](x_{\sh, l})$,
\end{enumerate}
where $\eta \in K[X] \setminus \set{0}$ and all $\rho$'s are in $K[X]$.
We show that both cases lead to a contradiction.

\begin{subclaim} \label{claim_remove_eq_lower_va}
    In case (i), there is an $L$-formula $\psi(\uxvec{}'_\sh\uxvec{}'_\va; \uz; \uw')$ with $|\ux'_\va| < |\ux_\va|$ such that
    $$
    \ux'_\sh \in \VV \wedge \exists \ux'_\va \in \VV : \psi_\theta(\ux'_\sh\ux'_\va; \uz; \uw')
    $$
    witnesses \ATP{} in $\ux'_\sh\uz$.
\begin{innerproof}
    Set $\uw'' := \uw_1\dots\uw_r$, and consider the tree $(\ud''_\nu : \nu \in 2^{<\omega})$ where $\ud''_\nu := \ud_{\mu_1}\dots\ud_{\mu_{r-1}}\ud_{\mu_r{}^\frown \nu}$ (see Figure \ref{figure_reindexed_tree_at_mu_r}).
    Set $\ux'_\sh := (x_{\sh, 1}, \dots, x_{\sh, m+1})$ and $\ux'_\va := (x_{\va, 1}, \dots, x_{\va, n-1})$.
    Now define a tuple of $\LRC$-terms $\ut(y)$ and $\psi'(\uxvec{}'_\sh\uxvec{}'_\va; \uz; \uw''\tiluw)$ such that $\psi'_\theta(\ux'_\sh\ux'_\va; \uz; \uw''\ut(y))$ is $\psi_\theta(\ux_\sh\ux_\va; \uz; \uw_{r})$ but with:
\begin{enumerate}[(a)]
    \item every occurrence of $\theta^i(x_{\sh, l})$ replaced by $\theta^i(\eta[\theta](x_{\sh, l}))$;
    \item every occurrence of $\theta^i(x_{\va, l})$ replaced by $\theta^i(\eta[\theta](x_{\va, l}))$ for $l < n$;
    \item every occurrence of $\theta^i(x_{\va, n})$ replaced by $\theta^i\big(\sum_{l=1}^{n-1}\rho_{r, l}[\theta](x_{\va, l}) + x_{\sh, m+1}\big) - \theta^i(\eta[\theta]^{-1}(y))$.
\end{enumerate}
For details on this see, e.g., the proof of Lemma 4.6 in \cite{Chi25}.
Let $Z \subset 2^{<\omega}$ be an arbitrary antichain.
By applying (\hyperref[lemma_remove_disj_ez]{$\star$}) from Claim \ref{lemma_remove_disj_ez} to the antichain (see Figure \ref{figure_reindexed_tree_at_mu_r})
$$
Y = (X \setminus \set{\mu_r}) \cup \mu_r{\!}^\frown Z,
$$
we obtain a realization $\uv_\sh(\uv_\nu : \nu \in Y)\ua$ of $\set{\psi_\theta(\ux_\sh\ux_\nu; \uz; \ud_\nu) : \nu \in Y}$ such that $\uv_\sh(\uv_{\nu} : \nu \in Y)$ lies in $\VV$, and for every $\nu \in Z$, there is some $u_\nu \in \varphi(\MM; \ud''_\nu)$ with $\lambda(\uv_\sh \uv_{\mu_1}\dots \uv_{\mu_{r-1}} \uv_{\mu_r{\!}^\frown \nu}) = u_\nu$ (note that the antichain $\set{\mu_1, \dots, \mu_{r-1}, \mu_r{\!}^\frown \nu}$ is strongly isomorphic to $X$ for any $\nu \in 2^{<\omega}$; see Figure \ref{figure_reindexed_tree_at_mu_r}).
Define the $(m+1)$-tuple
$$
\uv'_\sh := \eta[\theta]^{-1}\big(v_{\sh, 1}, \dots, v_{\sh, m}, \lambda(\uv_\sh\uv_{\mu_1}\dots\uv_{\mu_{r-1}}\uzero)\big)
$$
and, for every $\nu \in Z$, define the $(n-1)$-tuple $\uv'_\nu := \eta[\theta]^{-1}(v_{\mu_r{\!}^\frown\nu, 1}, \dots, v_{\mu_r{\!}^\frown\nu, n-1})$.
Now the $L_\theta(M)$-sentence $\psi'_\theta(\uv'_\sh \uv'_\nu;\ua; \ud''_\nu\ut(u_\nu))$ is, by definition, $\psi_\theta(\ux_\sh\ux_\va; \ua; \ud_{\mu_r{\!}^\frown \nu})$ with:
\begin{enumerate}[(a)]
    \item every occurrence of $\theta^i(x_{\sh, l})$ replaced by $\theta^i(\eta[\theta](\eta[\theta]^{-1}(v_{\sh, l}))) = \theta^i(v_{\sh, l})$;
    \item every occurrence of $\theta^i(x_{\va, l})$ replaced by $\theta^i( \eta[\theta](\eta[\theta]^{-1}(v_{\mu_r{\!}^\frown\nu, l}))) = \theta^i(v_{\mu_r{\!}^\frown\nu, l})$ for $l < n$;
    \item every occurrence of $\theta^i(x_{\va, n})$ replaced by
    \begin{align*}
        &\theta^i\Big(\sum\nolimits_{l=1}^{n-1}\rho_{r, l}[\theta](\eta[\theta]^{-1}(v_{\mu_r{\!}^\frown\nu, l})) + \eta[\theta]^{-1}(\lambda(\uv_\sh\uv_{\mu_1}\dots\uv_{\mu_{r-1}}\uzero))\Big) - \theta^i(\eta[\theta]^{-1}(u_{\nu})) \\
        =\ &\theta^i\Big(\eta[\theta]^{-1}\Big(\sum\nolimits_{l=1}^{n-1}\rho_{r, l}[\theta](v_{\mu_r{\!}^\frown\nu, l}) + \lambda(\uv_\sh\uv_{\mu_1}\dots\uv_{\mu_{r-1}}\uzero) - u_{\nu}\Big)\Big) \\
        =\ &\theta^i\Big(\eta[\theta]^{-1}\Big(\sum\nolimits_{l=1}^{n-1}\rho_{r, l}[\theta](v_{\mu_r{\!}^\frown\nu, l}) + \lambda(\uv_\sh\uv_{\mu_1}\dots\uv_{\mu_{r-1}}\uzero) - \lambda(\uv_\sh \uv_{\mu_1}\dots \uv_{\mu_{r-1}} \uv_{\mu_r{\!}^\frown \nu})\Big)\Big) \\
        =\ &\theta^i\Big(\eta[\theta]^{-1}\Big(\sum\nolimits_{l=1}^{n-1}\rho_{r, l}[\theta](v_{\mu_r{\!}^\frown\nu, l}) - \lambda(\uzero\dots\uzero\ \!\uv_{\mu_r{\!}^\frown \nu})\Big)\Big) \\
        =\ &\theta^i\Big(\eta[\theta]^{-1}\Big(\sum\nolimits_{l=1}^{n-1}\rho_{r, l}[\theta](v_{\mu_r{\!}^\frown\nu, l}) - \Big( -\eta[\theta](v_{\mu_r{\!}^\frown\nu, n}) + \sum\nolimits_{l=1}^{n-1}\rho_{r, l}[\theta](v_{\mu_r{\!}^\frown\nu, l}) \Big)\Big)\Big) \\
        =\ & \theta^i(\eta[\theta]^{-1}(\eta[\theta](v_{\mu_r{\!}^\frown\nu, n}))) \\
        =\ & \theta^i(v_{\mu_r{\!}^\frown\nu, n}).
    \end{align*}
\end{enumerate}
Therefore, $\psi'_\theta(\uv'_\sh \uv'_\nu;\ua; \ud''_\nu\ut(u_\nu))$ is just $\psi_\theta(\uv_\sh\uv_{\mu_r{\!}^\frown\nu};\ua; \ud_{\mu_r{\!}^\frown \nu})$.
Since this $L_\theta(M)$-sentence holds for every $\nu \in Z$, we can easily see that the formula
$$
\exists y\tiluw : \big( \varphi(y; \uw'') \wedge \tiluw = \ut(y)\big) \wedge \big(\ux'_\sh \in  \VV \wedge\exists \ux'_\va \in \VV : \psi'_\theta(\ux'_\sh\ux'_\va; \uz; \uw'' \tiluw)\big)
$$
witnesses \ATP{} in $\ux'_\sh\uz$ together with the tree $(\ud''_\nu : \nu \in 2^{<\omega})$.
Indeed, if a $2$-path were consistent, then the same $2$-path for $\ux_\sh \in \VV \wedge \exists \ux_\va \in \VV : \psi_\theta(\ux_\sh\ux_\va; \uz; \uw)$ with $(\ud_\nu : \nu \in 2^{<\omega})$ would also be consistent by the way we defined $\psi'(\uxvec{}'_\sh\uxvec{}'_\va; \uz; \uw''\tiluw)$.
Set $\uw' := \uw''\tiluw$ and use Lemma \ref{lemma_remove_alg_set} to show that the formula $\ux'_\sh \in \VV \wedge\exists \ux'_\va \in \VV : \psi'_\theta(\ux'_\sh\ux'_\va; \uz; \uw')$ must witness \ATP{}.
By construction, we have $|\ux'_\va| < |\ux_\va|$.
\end{innerproof}
\end{subclaim}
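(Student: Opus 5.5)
The plan is to exploit the linear relation $\lambda = u_\nu$ from Claim \ref{lemma_remove_disj_ez} to eliminate the last variable $x_{\va,n}$ of the node $\mu_r$. Since $\eta[\theta]$ is invertible in models of $T\theta^{C_0}$, $x_{\va,n}$ can then be expressed through the other variables, a new shared variable, and an element of a finite algebraic set. First I reindex the tree above $\mu_r$: put $\ud''_\nu := \ud_{\mu_1}\dots\ud_{\mu_{r-1}}\ud_{\mu_r{}^\frown\nu}$ and $\uw'' := \uw_1\dots\uw_r$. For every $\nu$, the set $\set{\mu_1,\dots,\mu_{r-1},\mu_r{}^\frown\nu}$ is an antichain strongly isomorphic to $X$. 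Moreover, an antichain $Z$ above $\mu_r$, together with $X\setminus\set{\mu_r}$, forms an antichain $Y$ to which $(\star)$ applies.

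Next I define the new tuples. I enlarge the shared tuple to $\ux'_\sh := (x_{\sh,1},\dots,x_{\sh,m+1})$, where $x_{\sh,m+1}$ is meant to carry $\eta[\theta]^{-1}$ of the part of $\lambda$ that does not depend on $\mu_r$. I shrink the varying tuple to $\ux'_\va := (x_{\va,1},\dots,x_{\va,n-1})$. I then build $\psi'$ from $\psi$ by substitutions. Each $x_{\sh,l}$ and each $x_{\va,l}$ with $l<n$ is replaced by $\eta[\theta]$ of itself. The variable $x_{\va,n}$ is replaced by $\sum_{l<n}\rho_{r,l}[\theta](x_{\va,l}) + x_{\sh,m+1} - \eta[\theta]^{-1}(y)$, where $y$ ranges over $\varphi(\MM;\uw'')$. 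Because the resulting expression involves $\theta$ and $\eta[\theta]^{-1}$, I have to rewrite it in placeholder form with auxiliary parameters $\tiluw = \ut(y)$, given by $\LRC$-terms, so that $\psi'$ is again an $L$-formula. This is the routine rewriting from the proof of Lemma 4.6 in \cite{Chi25}.

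Then I verify the ATP properties for the formula
$$\exists y\tiluw : \big(\varphi(y;\uw'')\wedge \tiluw=\ut(y)\big)\wedge \ux'_\sh\in\VV\wedge\exists\ux'_\va\in\VV:\psi'_\theta(\ux'_\sh\ux'_\va;\uz;\uw''\tiluw)$$
with the tree $(\ud''_\nu)$.
\begin{enumerate}[(a)]
\item \emph{Antichains are consistent.} Given an antichain $Z$, apply $(\star)$ to $Y=(X\setminus\set{\mu_r})\cup\mu_r{}^\frown Z$, and plug in $\eta[\theta]^{-1}$ of the obtained witnesses together with $u_\nu := \lambda(\dots)$. Linearity of $\lambda$ and the normal form (i) make the substituted term collapse to $v_{\mu_r{}^\frown\nu,n}$. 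Hence $\psi'$ holds exactly when $\psi$ held for the original realization.
\item \emph{Paths are inconsistent.} A realization of a $2$-path for the new formula pulls back through the substitution to a realization of the corresponding $2$-path for the old formula. That is impossible.
\end{enumerate}
Finally, Lemma \ref{lemma_remove_alg_set} removes the algebraic prefix $\exists y\tiluw$, leaving $\ux'_\sh\in\VV\wedge\exists\ux'_\va\in\VV:\psi'_\theta$ as an ATP witness, and $|\ux'_\va|=n-1<n$.

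I expect the main obstacle to be the bookkeeping that makes the substituted formula a genuine $L$-formula in placeholder variables. The point is that $\theta^i(\eta[\theta]^{-1}(y))$ has to be turned into fresh parameters depending only on $y$, and the shared variable $x_{\sh,m+1}$ has to be chosen so that it does not depend on $\nu$. This is exactly why $(\star)$ must give one realization that works uniformly for all $X'\subseteq Y$, rather than separate realizations for each $X'$.
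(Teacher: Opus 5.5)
Your proposal is correct and follows essentially the same route as the paper's proof. It uses the same reindexed tree $(\ud''_\nu)$, the same extra shared variable carrying $\eta[\theta]^{-1}$ of the $\mu_r$-independent part of $\lambda$, and the same three substitutions; it then applies $(\star)$ to $Y=(X\setminus\set{\mu_r})\cup\mu_r{}^\frown Z$, pulls $2$-paths back to the original tree, and finishes with Lemma \ref{lemma_remove_alg_set}.
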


\begin{figure}[tbp]
    \centering
    \begin{tikzpicture}[
        x=0.78cm,
        y=0.78cm,
        node/.style={circle, draw, fill=white, inner sep=1pt, minimum size=6mm, font=\scriptsize},
        long label node/.style={circle, draw, fill=white, inner sep=1pt, minimum size=6mm, font=\scriptsize},
        tree edge/.style={line width=0.35pt},
        continuation edge/.style={densely dotted, line width=0.65pt},
        fixed node/.style={node, draw=red!65!black, fill=red!8, very thick},
        moving node/.style={node, draw=blue!60!black, fill=blue!8, very thick},
        map arrow/.style={->, line width=0.45pt}
    ]
        \coordinate (fixedone) at (-3,1.6);
        \coordinate (fixedlast) at (1,1.6);
        \coordinate (movingroot) at (3,1.6);
        \coordinate (movingzero) at (2.25,3.05);
        \coordinate (movingone) at (3.75,3.05);

        \draw[continuation edge] (fixedone) -- ++(0.55,-0.55);
        \draw[continuation edge] (fixedlast) -- ++(0.55,-0.55);
        \draw[continuation edge] (fixedone) -- ++(-0.22,0.62);
        \draw[continuation edge] (fixedone) -- ++(0.22,0.62);
        \draw[continuation edge] (fixedlast) -- ++(-0.22,0.62);
        \draw[continuation edge] (fixedlast) -- ++(0.22,0.62);
        \draw[continuation edge] (movingroot) -- ++(-0.55,-0.55);
        \draw[tree edge] (movingroot) -- (movingzero);
        \draw[tree edge] (movingroot) -- (movingone);
        \draw[continuation edge] (movingzero) -- ++(-0.22,0.62);
        \draw[continuation edge] (movingzero) -- ++(0.22,0.62);
        \draw[continuation edge] (movingone) -- ++(-0.22,0.62);
        \draw[continuation edge] (movingone) -- ++(0.22,0.62);

        \node[fixed node] at (fixedone) {\scalebox{0.72}{$\ud_{\mu_1}$}};
        \node[font=\scriptsize] at (-1,1.6) {$\cdots$};
        \node[fixed node] at (fixedlast) {\scalebox{0.58}{$\ud_{\mu_{r-1}}$}};
        \node[moving node] at (movingroot) {\scalebox{0.72}{$\ud_{\mu_r}$}};
        \node[long label node, draw=blue!60!black, fill=blue!8, very thick] at (movingzero) {\scalebox{0.46}{$\ud_{\mu_r{}^\frown\bigseq{0}}$}};
        \node[long label node, draw=blue!60!black, fill=blue!8, very thick] at (movingone) {\scalebox{0.46}{$\ud_{\mu_r{}^\frown\bigseq{1}}$}};

        \draw[map arrow] (4.6,2.25) -- (6.65,2.25);

        \coordinate (newroot) at (8.25,1.6);
        \coordinate (newzero) at (7.5,3.05);
        \coordinate (newone) at (9,3.05);

        \draw[tree edge] (newroot) -- (newzero);
        \draw[tree edge] (newroot) -- (newone);
        \draw[continuation edge] (newzero) -- ++(-0.25,0.62);
        \draw[continuation edge] (newzero) -- ++(0.25,0.62);
        \draw[continuation edge] (newone) -- ++(-0.25,0.62);
        \draw[continuation edge] (newone) -- ++(0.25,0.62);

        \node[long label node] at (newroot) {\scalebox{0.62}{$\ud''_{\bigemptyseq}$}};
        \node[long label node] at (newzero) {\scalebox{0.66}{$\ud''_{\bigseq{0}}$}};
        \node[long label node] at (newone) {\scalebox{0.66}{$\ud''_{\bigseq{1}}$}};
    \end{tikzpicture}
    \caption{Visualization of the tree $(\ud''_\nu : \nu \in 2^{<\omega})$. The red parameters $\ud_{\mu_1},\dots,\ud_{\mu_{r-1}}$ form the fixed initial tuple of every $\ud''_\nu$, while the blue subtree with root $\ud_{\mu_r}$ supplies the varying final subtuple. At the level of indices, for every antichain $Z \subset 2^{<\omega}$, the set $\set{\mu_1,\dots,\mu_{r-1}}\cup \mu_r{\!}^\frown Z$ is an antichain; moreover, for each blue element $\mu_r{}^\frown\nu$, the set $\set{\mu_1,\dots,\mu_{r-1},\mu_r{}^\frown\nu}$ is strongly isomorphic to $X$.}
    \label{figure_reindexed_tree_at_mu_r}
\end{figure}
\begin{subclaim} \label{subclaim_only_shared_reduce}
    If we are in case (ii), i.e., $\lambda(\ux_\sh\ux_{1}\dots \ux_{r}) = -\eta[\theta](x_{\sh, m}) + \sum_{l=1}^{m-1} \rho_{\sh,l}[\theta](x_{\sh, l})$, then there is an $L$-formula $\psi(\uxvec{}'_\sh\uxvec{}'_\va; \uz; \uw')$ with $|\ux'_\va| = |\ux_\va|$ and $|\ux'_\sh| < |\ux_\sh|$ such that
    $$
    \ux'_\sh \in \VV \wedge \exists \ux'_\va \in \VV : \psi_\theta(\ux'_\sh\ux'_\va; \uz; \uw')
    $$
    witnesses \ATP{} in $\ux'_\sh\uz$.
\begin{innerproof}
    As in case (i), set $\uw'' := \uw_1\dots\uw_r$, and consider the same tree $(\ud''_\nu : \nu \in 2^{<\omega})$ where $\ud''_\nu := \ud_{\mu_1}\dots\ud_{\mu_{r-1}}\ud_{\mu_r{}^\frown \nu}$.
    Set $\ux'_\sh := (x_{\sh, 1}, \dots, x_{\sh, m-1})$ and $\ux'_\va := \ux_\va$.
    Again, define a tuple of $\LRC$-terms $\ut(y)$ and $\psi'(\uxvec{}'_\sh\uxvec{}'_\va; \uz; \uw''\tiluw)$ such that $\psi'_\theta(\ux'_\sh\ux'_\va; \uz; \uw''\ut(y))$ is $\psi_\theta(\ux_\sh\ux_\va; \uz; \uw_{r})$ but with:
\begin{enumerate}[(a)]
    \item every occurrence of $\theta^i(x_{\sh, l})$ replaced by $\theta^i(\eta[\theta](x_{\sh, l}))$ for $l < m$;
    \item every occurrence of $\theta^i(x_{\sh, m})$ replaced by $\theta^i\big(\sum_{l=1}^{m-1}\rho_{\sh, l}[\theta](x_{\sh, l})\big) - \theta^i(\eta[\theta]^{-1}(y))$.
\end{enumerate}
By arguments similar to those in case (i), with $\uw' := \uw''\tiluw$, we easily verify that the formula $\ux'_\sh \in \VV \wedge\exists \ux'_\va \in \VV : \psi'_\theta(\ux'_\sh\ux'_\va; \uz; \uw')$ must witness \ATP{}.
\end{innerproof}
\end{subclaim}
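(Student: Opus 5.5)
We mirror the proof of Claim \ref{claim_remove_eq_lower_va}, with the difference that the eliminated variable is now the last shared variable $x_{\sh, m}$. The tree stays the same: $\ud''_\nu := \ud_{\mu_1}\dots\ud_{\mu_{r-1}}\ud_{\mu_r{}^\frown\nu}$. The modified formula $\psi'$ is the substitution (a), (b) given in the statement.

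\textbf{Antichains are consistent.} Let $Z \subset 2^{<\omega}$ be an arbitrary antichain.
\begin{enumerate}[(1)]
    \item Apply ($\star$) from Claim \ref{lemma_remove_disj_ez} to $Y = (X\setminus\set{\mu_r}) \cup \mu_r{}^\frown Z$. This yields a realization $\uv_\sh(\uv_\nu : \nu\in Y)\ua$ with entries in $\VV$.
    \item Since $\lambda$ depends only on $\ux_\sh$, the value $u := \lambda(\uv_\sh)$ is a single element of $\VV$. It lies in $\varphi(\MM; \ud''_\nu)$ for every $\nu \in Z$, because each $\set{\mu_1,\dots,\mu_{r-1},\mu_r{}^\frown\nu}$ is strongly isomorphic to $X$.
    \item Set $\uv'_\sh := \eta[\theta]^{-1}(v_{\sh,1},\dots,v_{\sh,m-1})$ and $\uv'_\nu := \uv_{\mu_r{}^\frown\nu}$. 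Since $\eta[\theta]$ is bijective in $T\theta^{C_0}$, item (a) restores $\theta^i(v_{\sh,l})$ for $l<m$.
    \item For item (b), compute $\sum_{l<m}\rho_{\sh,l}[\theta](\eta[\theta]^{-1}(v_{\sh,l})) - \eta[\theta]^{-1}(u)$. Substituting $u = -\eta[\theta](v_{\sh,m}) + \sum_{l<m}\rho_{\sh,l}[\theta](v_{\sh,l})$ gives exactly $v_{\sh,m}$. Here we use that $R_{C_0}$ is commutative, so $\rho_{\sh,l}[\theta]$ commutes with $\eta[\theta]^{-1}$.
    \item Hence $\psi'_\theta(\uv'_\sh\uv'_\nu; \ua; \ud''_\nu \ut(u))$ is literally $\psi_\theta(\uv_\sh\uv_{\mu_r{}^\frown\nu}; \ua; \ud_{\mu_r{}^\frown\nu})$, which holds.
\end{enumerate}
Since the same $u$ serves every $\nu \in Z$, the formula
$$
\exists y\tiluw : \big(\varphi(y;\uw'') \wedge \tiluw = \ut(y)\big) \wedge \big(\ux'_\sh\in\VV \wedge \exists\ux'_\va\in\VV : \psi'_\theta(\ux'_\sh\ux'_\va;\uz;\uw''\tiluw)\big)
$$
has consistent antichains along $(\ud''_\nu)$.

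\textbf{Paths are inconsistent.} This follows as in case (i). Take a realization $\ux'_\sh\uz$ of a $2$-path $\set{\nu \triangleleft \nu'}$, with witnesses $y_\nu, y_{\nu'}$. Map $\ux'_\sh$ back via $x_{\sh,l} := \eta[\theta](x'_{\sh,l})$ for $l<m$. Set $x_{\sh,m} := \sum_{l<m}\rho_{\sh,l}[\theta](x'_{\sh,l}) - \eta[\theta]^{-1}(y)$.

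The obstacle is that $y$ may differ between the two nodes of the path, so $x_{\sh,m}$ would not be a single shared element. To fix this, I would first replace the tree by one given by Lemma \ref{lemma_remove_alg_set}, which pins a witness $y_\nu$ into the parameters. Then I would argue, in the style of Observation \ref{observation_terms_all_equal}, that the antichain witness $u$ is constant along the tree. Concretely, it suffices to put $y$ into the shared part: use the formula with $y$ existentially quantified alongside $\ux'_\sh$, i.e.\ treat $y$ as an extra shared variable bounded by the algebraic formula $\varphi$.

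With $y$ shared, a consistent $2$-path for the new formula gives one tuple $\ux_\sh$ and one $\uz$ realizing the corresponding $2$-path for the original formula along $(\ud_{\mu_r{}^\frown\nu})$. That path is inconsistent by strong indiscernibility, a contradiction. Lemma \ref{lemma_remove_alg_set} then removes the algebraic conjunct and gives the formula $\ux'_\sh\in\VV\wedge\exists\ux'_\va\in\VV:\psi'_\theta$ with $\uw' := \uw''\tiluw$. It witnesses \ATP{} with $|\ux'_\sh| = m-1 < m$ and $|\ux'_\va| = |\ux_\va|$. Handling this shared-$y$ bookkeeping is the main point requiring care; the algebraic identity in step (4) is routine.
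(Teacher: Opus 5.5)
Your antichain argument is correct, and you have identified the real issue. Unlike in case (i), the per-node witness $y$ now enters the \emph{shared} coordinate $x_{\sh,m}$. So the case (i) argument, with $y$ existentially quantified inside each node, does not give path inconsistency; the paper's proof, which only refers to case (i), does not spell this out. However, your repair does not work as stated.

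Making $y$ a free shared variable does give a formula witnessing \ATP{} in $\ux'_\sh y\uz$. But Lemma \ref{lemma_remove_alg_set} cannot then ``remove the algebraic conjunct''. That lemma only turns an \emph{existentially quantified} algebraic tuple into per-node parameters, so applying it means re-quantifying $y$ inside each node, which brings back exactly your obstacle. If instead $y$ stays free, it must be counted as a shared $\VV$-variable: through $\ut(y)$, $\theta$ is applied to $\eta[\theta]^{-1}(y)$, so $y$ cannot be absorbed into $\uz$. Then $|\ux'_\sh|$ is $m$ again and minimality is not contradicted.

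Your intermediate suggestion has the same defect. Applying Lemma \ref{lemma_remove_alg_set} to the $\psi'$-formula presupposes its path inconsistency. Moreover, in that formula nothing forces $y_{\nu_1}=y_{\nu_2}$ for incomparable $\nu_1,\nu_2$, because $x_{\sh,m}$ has already been eliminated.

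The missing idea is to pin $y$ down \emph{before} eliminating $x_{\sh,m}$.
\begin{enumerate}[(1)]
    \item Consider $\exists y:\varphi(y;\uw'')\wedge\big(\ux_\sh\in\VV\wedge\exists\ux_\va\in\VV:\psi_\theta(\ux_\sh\ux_\va;\uz;\uw_r)\wedge\lambda(\ux_\sh)=y\big)$ with the tree $(\ud''_\nu)$. Antichains are consistent by $(\star)$ of Claim \ref{lemma_remove_disj_ez}, using the single value $\lambda(\uv_\sh)$. Paths are inconsistent for free, since they imply paths of the original formula along $\mu_r{}^\frown\nu_1\triangleleft\mu_r{}^\frown\nu_2$.
    \item Lemma \ref{lemma_remove_alg_set} now yields a strongly indiscernible tree $(u_\nu\ud'_\nu)$ with $y$ as a parameter. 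A common realization $\uv_\sh$ for $\nu_1\perp\nu_2$ gives $u_{\nu_1}=\lambda(\uv_\sh)=u_{\nu_2}$. By Observation \ref{observation_terms_all_equal}, $u_\nu=u^*$ is constant.
    \item Only now perform substitution (a), (b), with the fixed parameter $\ut(u^*)$. Antichains go through as in your steps (3)--(5).
    \item For paths, a realization $\uv'_\sh\ua$ of a $2$-path yields a single shared tuple, with entries $\eta[\theta](v'_{\sh,l})$ for $l<m$ and last entry $\sum_{l<m}\rho_{\sh,l}[\theta](v'_{\sh,l})-\eta[\theta]^{-1}(u^*)$. This tuple satisfies $\lambda(\ux_\sh)=u^*$, so it realizes a forbidden path.
\end{enumerate}
This is how the paper handles the analogous situation for general $C$: case (a) of Claim \ref{lemma_new_remove_disj} together with Observation \ref{obsv_term_coonnst}.
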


\noindent In both cases, we obtain an $L$-formula $\psi(\uxvec{}'_\sh\uxvec{}'_\va; \uz; \uw')$ with $(|\ux'_\va|, |\ux'_\sh|) <_\Lex (|\ux_\va|, |\ux_\sh|)$ such that
$$
    \ux'_\sh \in \VV \wedge \exists \ux'_\va \in \VV : \psi_\theta(\ux'_\sh\ux'_\va; \uz; \uw')
$$
witnesses \ATP{} in $\ux'_\sh\uz$.
However, no such formula exists by the minimality of $\psi(\uxvec{}_\sh\uxvec{}_\va; \uz; \uw)$.
Hence, our assumption that $\set{\psi(\uxvec{}_\sh\uxvec{}_\nu; \uz; \ud_\nu) : \nu \in X}$ implies some finite disjunction of non-trivial linear dependencies in $\uxvec{}_\sh(\uxvec{}_\nu : \nu \in X)$ over $\VV$ must be wrong.
Finally, note that the partial type $$\set{\psi(\uxvec{}_\sh\uxvec{}_\emptyseq; \uz; \ud_\emptyseq), \psi(\uxvec{}_\sh\uxvec{}_{\seq{0}}; \uz; \ud_{\seq{0}})}$$ must imply some finite disjunction of non-trivial linear dependencies.
Otherwise, our characterization of existentially closed models of $T_\theta{\!\!}^{C_0}$, Theorem \ref{theorem_big_characterization}, would immediately yield a realization of
$$
\set{\ux_\sh \in \VV \wedge \exists \ux_\va \in \VV : \psi_\theta(\ux_\sh\ux_\va; \uz; \ud_\nu) : \nu \in \set{\bigemptyseq, \bigseq{0}}},
$$
contradicting the fact that $\ux_\sh \in \VV \wedge \exists \ux_\va \in \VV : \psi_\theta(\ux_\sh\ux_\va; \uz; \uw)$ witnesses \ATP{} with $(\ud_\nu : \nu \in 2^{<\omega})$.
We conclude that $\psi(\uxvec{}_\sh\uxvec{}_\va; \uz; \uw)$ and $(\ud_\nu : \nu \in 2^{<\omega})$ are as described in the statement of Lemma \ref{lemma_atp_step_11_ez}.
\end{proof}

\subsection{Step (II): Proving ATP}
\label{sec_step_ii_always_ez}
We now show that the existence of an $L$-formula and a tree, as described in the conclusion of Lemma \ref{lemma_atp_step_11_ez}, implies that $T$ has \ATP{}.
Together, Lemma \ref{lemma_atp_step_11_ez} and the lemma below immediately yield a proof of Theorem \ref{theorem_preserve_natp_ez}.

\begin{lemma} \label{lemma_has_li_atp_impl_atp}
Let $\psi(\ux_\sh\ux_\va; \uz; \uw)$ be an $L$-formula, and let $(\ud_\nu : \nu \in 2^{<\omega})$ be a strongly $L$-indiscernible tree in $\mm \models T$ such that the following holds:
\begin{enumerate}[(i)]
    \item The type $\set{\psi(\ux_\sh\ux_\nu; \uz; \ud_\nu) : \nu \in X}$ implies no finite disjunction of non-trivial linear dependencies in $\ux_\sh(\ux_\nu : \nu \in X)$ over $\VV$ whenever $X \subset 2^{<\omega}$ is an antichain.
    \item The type $\set{\psi(\ux_\sh\ux_\emptyseq; \uz; \ud_\emptyseq), \psi(\ux_\sh\ux_{\seq{0}}; \uz; \ud_{\seq{0}})}$ implies some finite disjunction of non-trivial linear dependencies in $\ux_\sh\ux_\emptyseq\ux_{\seq{0}}$ over $\VV$.
\end{enumerate}
Then $T$ has \ATP{}.
\end{lemma}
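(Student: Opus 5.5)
We will produce an explicit $L$-formula witnessing $k$-\ATP{} for some $k \geq 2$ and then apply Fact \ref{fact_k_atp_equiv_atp}. The first task is to make the dependency in (ii) concrete. By (ii), \Hfour{} and Fact \ref{lemma_hfour_fml}, the type $\set{\psi(\ux_\sh\ux_\emptyseq; \uz; \ud_\emptyseq), \psi(\ux_\sh\ux_{\seq{0}}; \uz; \ud_{\seq{0}})}$ implies a finite disjunction $\bigvee_{i=1}^q \varphi_i(\lambda_i(\ux_\sh\ux_\emptyseq\ux_{\seq{0}}); \ud_\emptyseq\ud_{\seq{0}})$. Here each $\lambda_i$ is a non-zero $\LK$-linear form and each $\varphi_i(y;\uw_1\uw_2)$ is algebraic in $y$. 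By strong indiscernibility, the same implication holds for every pair $\mu \triangleleft \nu$ in place of $\bigemptyseq, \bigseq{0}$.

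Next we build the candidate formula. By (i) and \Hfour{}, each finite antichain $X$ admits a realization of $\set{\psi(\ux_\sh\ux_\nu;\uz;\ud_\nu) : \nu \in X}$ with $\ux_\sh(\ux_\nu : \nu\in X)$ linearly independent over $\VV$ (in an elementary extension). We use the formula
$$
\chi(\ux_\sh\uz; \uw) := \exists \ux_\va \in \VV : \psi(\ux_\sh\ux_\va;\uz;\uw) \wedge \bigwedge\nolimits_{i=1}^q \neg\varphi_i'(\ux_\sh \ux_\va; \uw),
$$
where $\varphi_i'$ rules out the problematic values. The difficulty is that $\varphi_i$ depends on two tree parameters and on the variables of two nodes, so it cannot be negated by a formula in a single $\uw$. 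To handle this, we will first reduce to the case where $\lambda_i$ involves only $\ux_\sh$ and one node. Concretely, we group the disjuncts according to whether $\lambda_i$ mentions only $\ux_\sh$, only $\ux_\sh\ux_\emptyseq$, only $\ux_\sh\ux_{\seq{0}}$, or genuinely both node tuples.

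In the genuinely mixed case, the idea is to use a Ramsey/pigeonhole argument along a path $\bigseq{0}^0, \bigseq{0}^1, \dots$. Fix the realization on the node $\bigseq{0}^j$. The values $\lambda_i$ evaluated with the other nodes then lie in finite sets $\varphi_i(\MM;\ud_{\seq{0}^j}\ud_{\seq{0}^l})$. Lemma \ref{lemma_fint_set_eq_or} and Lemma \ref{lemma_make_intersection_definable} let us replace such intersections by finite sets $U_\nu$ definable from a single (enlarged) node parameter. We then pass to a new strongly indiscernible tree $(\ud_\nu U_\nu)$ via Lemma \ref{lemma_weak_modeling}, which preserves the properties (i) and (ii).

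After this reduction, each obstruction has the form ``some fixed non-zero linear combination of $\ux_\sh$ and the variables at a single node, possibly shifted by a difference of the $\ux_\sh$-part, lands in a finite set definable from that node's parameters''. We define $\chi$ to be $\psi$ together with the negation of all these single-node obstructions. The two properties are then checked separately:
\begin{enumerate}[(a)]
\item Antichains remain consistent by (i). Indeed, a linearly independent realization over $\VV$ avoids every non-trivial linear combination lying in a finite (hence $\VV$-internal) set.
\item Any path of length $k$ is inconsistent, where $k$ is chosen larger than the number of disjuncts times the bound on the finite sets. By pigeonhole, two pairs on the path must use the same disjunct with the same value, and subtracting yields a single-node obstruction that $\chi$ forbids.
\end{enumerate}

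The main obstacle is the mixed case, where the linear dependence genuinely couples $\ux_{\mu}$ and $\ux_\nu$ for $\mu \triangleleft \nu$. There we need the intersection lemmas, Lemma \ref{lemma_fint_set_eq_or} and Lemma \ref{lemma_make_intersection_definable}, to turn two-parameter algebraic sets into one-parameter finite sets. We also need to ensure that the subtraction trick along a path produces a non-trivial combination. If that fails, we would fall back to the minimality trick from Lemma \ref{lemma_atp_step_11_ez}, substituting away one variable.
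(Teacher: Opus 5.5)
Your endgame is the right one: the formula $\chi$ obtained by adding the negations of one-node obstructions to $\psi$, with antichains consistent by (i), is also where the paper ends. The reduction that is supposed to get you there, however, has a genuine gap exactly at the obstacle you flag.

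\textbf{Why the path argument fails.} In the mixed case, where $\lambda_i$ involves the variables of both nodes, nothing done along a path can decouple them. Suppose, say, $\mu \triangleleft \nu$ forces $x_\mu - x_\nu = 0$. Then a realization taking the same value at every node of a path satisfies a disjunct for every pair, and subtracting two pairs gives only $0 = 0$. Lemmas \ref{lemma_fint_set_eq_or} and \ref{lemma_make_intersection_definable} only make the \emph{parameter sets} one-node definable; they do not change which variables occur in $\lambda_i$.

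Your pigeonhole in (b) also fails as stated. The finite sets $\varphi_i(\MM; \ud_{\nu_j}\ud_{\nu_l})$ change with the pair, so there is no common pool of values to pigeonhole into. You can only pigeonhole on the index $i$, and that yields a common value only if $\lambda_i$ does not involve the node being varied.

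The fallback to the minimality trick of Lemma \ref{lemma_atp_step_11_ez} does not transfer either. Here there is no minimal \ATP{}-witness to contradict. Moreover, substituting a variable by a combination of another node's variables destroys the node-wise structure on which (i) depends.

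\textbf{The missing idea: branch instead of going up a path.}

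First, eliminate the disjuncts $\varphi_{3,i}(\lambda_{3,i}(\ux_\sh\ux_\emptyseq); \ud_\emptyseq\ud_{\seq{0}})$ that involve only the lower node's variables. Apply Lemma \ref{lemma_make_intersection_definable}, enlarge each $\ud_\nu$ by the finite set $U_\nu$, and add $\lambda_{3,i}(\ux_\sh\ux_\va) \notin U_\nu$ to $\psi$. Afterwards $\varphi_{3,i}(\MM;\ud_\emptyseq\ud_\mu) \cap \varphi_{3,i}(\MM;\ud_\emptyseq\ud_\eta) = \varnothing$ for $\mu \perp \eta$.

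Second, realize $\psi$ at $\emptyseq$ together with all $\eta \in 2^n$. Choose $2^n$ larger than the number of disjuncts that are non-trivial in the upper variables plus the number of lower-only disjuncts. Colour each $\eta$ by a disjunct holding for the pair $(\emptyseq, \eta)$.
\begin{itemize}
\item Each lower-only colour occurs at most once, because its value $\lambda_{3,i}(\uv_\sh\uv_\emptyseq)$ does not depend on $\eta$.
\item Hence either a disjunct in $\ux_\sh$ alone occurs, or two siblings $\eta_1 \neq \eta_2$ share a disjunct that is non-trivial in the upper variables.
\item In the latter case, subtracting cancels $\uv_\sh$ and $\uv_\emptyseq$. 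What remains is a non-trivial dependency in $\uv_{\eta_1} - \uv_{\eta_2}$, with value in a finite difference set.
\end{itemize}

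Third, repack the tree with $\ud'_\nu := (\ud_{(\nu\uparrow n)^\frown\eta} : \eta \in 2^n)$ and $\psi' := \bigwedge_{\eta \in 2^n}\psi(\ux_\sh\ux_{\va,\eta};\uz;\uw_\eta)$. This preserves (i), since packed antichains are antichains. It also leaves only obstructions in $\ux_\sh$ and the \emph{upper} node's variables.

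Only now does a path argument work. Fix the top node $\seq{0}^N$ with $N = q+1$ and vary the lower node. The value $\lambda_i(\uv_\sh\uv_{\seq{0}^N})$ is then literally the same for every lower node. So two lower nodes using the same $i$ put it into an intersection, which by Lemma \ref{lemma_fint_set_eq_or} is definable from $\ud_\emptyseq\ud_{\seq{0}}\ud_{\seq{0}^N}$. Shifting the tree to $\ud_{\seq{00}{}^\frown\nu}\ud_\emptyseq\ud_{\seq{0}}$ turns this into a one-node obstruction. Your $\chi$ then witnesses $(N+1)$-\ATP{}, and hence \ATP{}.
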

\begin{proof}
    We may assume without loss that $\psi(\ux_\sh\ux_\va; \uz; \uw)$ implies $\ux_\sh\ux_\va \in \VV$. We also work in a sufficiently large monster model $\MM \models T$ containing $\mm$.
    By Fact \ref{lemma_hfour_fml}, the formula $\psi(\ux_\sh\ux_\emptyseq; \uz; \ud_\emptyseq) \wedge \psi(\ux_\sh\ux_{\seq{0}}; \uz; \ud_{\seq{0}})$ must imply a disjunction of the form
    \begin{align}
        \bigvee\nolimits_{i=1}^r \varphi_{1, i}(\lambda_{1, i}(\ux_\sh\ux_\emptyseq\ux_{\seq{0}}); \ud_\emptyseq\ud_{\seq{0}})
        \vee\bigvee\nolimits_{i=1}^s \varphi_{2, i}(\lambda_{2, i}(\ux_\sh); \ud_\emptyseq\ud_{\seq{0}})
        \vee\bigvee\nolimits_{i=1}^t \varphi_{3, i}(\lambda_{3, i}(\ux_\sh\ux_\emptyseq); \ud_\emptyseq\ud_{\seq{0}}). \label{tag_original_disju}
    \end{align}
Here each $\lambda_{k, i}$ is an $\LK$-term, and each $\varphi_{k, i}(y; \uw_1\uw_2)$ is an $L$-formula such that $\varphi_{k, i}(y; \ud_1\ud_2)$ defines a finite subset of $\VV$ for every $\ud_1\ud_2 \in M$.
We can also assume that every $\lambda_{1, i}(\ux_\sh\ux_\emptyseq\ux_{\seq{0}})$ is non-trivial in $\ux_{\seq{0}}$, every $\lambda_{2, i}(\ux_\sh)$ is non-trivial in $\ux_\sh$, and every $\lambda_{3, i}(\ux_\sh\ux_\emptyseq)$ is non-trivial in $\ux_\emptyseq$.
If not, move the corresponding disjunct into the correct disjunction or remove it.
We now simplify the disjunction (\ref{tag_original_disju}) in a series of claims.

\begin{subclaim} \label{lemma_intersection_inconst}
 We may assume without loss that
    $
    \varphi_{3, i}(x; \ud_\emptyseq\ud_\mu) \wedge \varphi_{3, i}(x; \ud_\emptyseq\ud_\eta)
    $
    is inconsistent for any $i$ and any $\mu, \eta \in 2^{<\omega}$ with $\mu \perp \eta$.
\begin{innerproof}
    By Lemma \ref{lemma_make_intersection_definable}, applied to $\varphi(y; \uw_1\uw_2) := \bigvee_{i=1}^t \varphi_{3, i}(y; \uw_1\uw_2)$, we can assume (that each $\ud_\nu$ is a finite tuple and) that, for any $\nu, \mu, \eta \in 2^{<\omega}$ with $\mu \perp \eta$,
    \begin{align}
        \varphi(\MM; \ud_\nu\ud_{\nu^\frown\mu}) \cap \varphi(\MM; \ud_\nu\ud_{\nu^\frown\eta}) \subseteq \ud_\nu \label{tag_intersection_is_subset}
    \end{align}
    holds.
    We can now replace $\psi(\ux_\sh\ux_\va; \uz; \uw)$ with
    $
    \psi(\ux_\sh\ux_\va; \uz; \uw) \wedge \bigwedge\nolimits_{i=1}^t \lambda_{3, i}(\ux_\sh\ux_\va) \not\in \uw
    $.
    By (ii) of Lemma \ref{lemma_make_intersection_definable}, the partial type $\set{\psi(\ux_\sh\ux_\nu; \uz; \ud_\nu) : \nu \in X}$ still implies no finite disjunction of non-trivial linear dependencies in $\ux_\sh(\ux_\nu : \nu \in X)$ over $\VV$ for any antichain $X \subset 2^{<\omega}$.
    Similarly, the formula $\psi(\ux_\sh\ux_\emptyseq; \uz; \ud_\emptyseq) \wedge \psi(\ux_\sh\ux_{\seq{0}}; \uz; \ud_{\seq{0}})$ now implies the disjunction
    $$
    \bigvee\nolimits_{i=1}^r \varphi_{1, i}(\lambda_{1, i}(\ux_\sh\ux_\emptyseq\ux_{\seq{0}}); \ud_\emptyseq\ud_{\seq{0}})
    \vee\bigvee\nolimits_{i=1}^s \varphi_{2, i}(\lambda_{2, i}(\ux_\sh); \ud_\emptyseq\ud_{\seq{0}})
    \vee\bigvee\nolimits_{i=1}^t \varphi'_{3, i}(\lambda_{3, i}(\ux_\sh\ux_\emptyseq); \ud_\emptyseq\ud_{\seq{0}})
    $$
    where $\varphi'_{3, i}(y; \uw_1\uw_2) := \varphi_{3, i}(y; \uw_1\uw_2) \wedge y \not\in \uw_1$.
    Now a realization of $\varphi'_{3, i}(y; \ud_\emptyseq\ud_\mu) \wedge \varphi'_{3, i}(y; \ud_\emptyseq\ud_\eta)$ lies in
    $
    \big(\varphi(\MM; \ud_\emptyseq\ud_{\mu}) \cap \varphi(\MM; \ud_\emptyseq\ud_{\eta})\big) \setminus \ud_\emptyseq,
    $
    which is empty by (\ref{tag_intersection_is_subset}).
\end{innerproof}
\end{subclaim}

\begin{notation}
    Given an element $\nu \colon m \to 2$ of $2^{<\omega}$ and $n > 0$, we write $\nu \uparrow n$ for the element of $2^{<\omega}$ with domain $m \cdot n$ defined by $(\nu \uparrow n)(i) := \nu(\lfloor i/n \rfloor)$.
\end{notation}

\begin{subclaim} \label{lemma_remove_x_lower}
    We may assume that $\set{\psi(\ux_\sh\ux_\emptyseq; \uz; \ud_\emptyseq), \psi(\ux_\sh\ux_{\seq{0}}; \uz; \ud_{\seq{0}})}$ implies a disjunction of the form
    $$
    \bigvee\nolimits_{i=1}^q \varphi_i(\lambda_i(\ux_\sh\ux_{\seq{0}}); \ud_\emptyseq\ud_{\seq{0}})
    $$
    where each $\lambda_i(\ux_\sh\ux_\va)$ is a non-trivial $\LK$-term and each $L$-formula $\varphi_i(y; \uw_0\uw_1)$ defines a finite subset of $\VV$ for any parameters plugged into $\uw_0\uw_1$.
\begin{innerproof}
    Choose $n$ such that $2^n > r + t$ for the $r$ and $t$ from (\ref{tag_original_disju}).
    Let $\uv_\sh\uv_\emptyseq(\uv_\eta : \eta \in 2^n)\ua$ be a realization of the formula
    $$
    \psi(\ux_\sh\ux_\emptyseq; \uz; \ud_\emptyseq) \wedge \bigwedge\nolimits_{\eta\in 2^n} \psi(\ux_\sh\ux_\eta; \uz; \ud_\eta).
    $$
    Notice that $\uv_\sh\uv_\emptyseq(\uv_\eta : \eta \in 2^n) \in \VV$, since $\psi(\ux_\sh\ux_\va; \uz; \uw)$ implies $\ux_\sh\ux_\va \in \VV$.
    By the strong indiscernibility of $(\ud_\nu : \nu \in 2^{< \omega})$, we obtain
    \begin{align}
        \bigvee\nolimits_{i=1}^r \varphi_{1, i}(\lambda_{1, i}(\uv_\sh\uv_\emptyseq\uv_\eta); \ud_\emptyseq\ud_\eta)
        \vee\bigvee\nolimits_{i=1}^s \varphi_{2, i}(\lambda_{2, i}(\uv_\sh); \ud_\emptyseq\ud_\eta)
        \vee\bigvee\nolimits_{i=1}^t \varphi_{3, i}(\lambda_{3, i}(\uv_\sh\uv_\emptyseq); \ud_\emptyseq\ud_\eta) \label{tag_not_original_disju}
    \end{align}
    for every $\eta \in 2^n$.
    Color every $\eta \in 2^n$ using the colors $(1, 1), \dots, (1, r)$, $(2, 1), \dots, (2, s)$, $(3, 1), \dots, (3, t)$, according to the first disjunct in (\ref{tag_not_original_disju}) that holds.
    By Claim \ref{lemma_intersection_inconst}, there can be at most $t$ elements with a color of the form $(3, i)$.
    Since $2^n > r + t$, there must either be two distinct elements $\eta_1, \eta_2 \in 2^n$ with the same color of the form $(1, i)$ or some element $\eta \in 2^n$ with a color of the form $(2, i)$.
    Since $\lambda_{1, i}$ is linear, in the first case we obtain
    $$
    \lambda_{1, i}(\uzero\;\uzero\;(\uv_{\eta_1}-\uv_{\eta_2})) \in \varphi_{1, i}(\MM; \ud_\emptyseq\ud_{\eta_1}) - \varphi_{1, i}(\MM; \ud_\emptyseq\ud_{\eta_2}),
    $$
    and in the second case we obtain $\lambda_{2, i}(\uv_\sh) \in \varphi_{2, i}(\MM; \ud_\emptyseq\ud_\eta)$.
    Since the $\LK$-terms $\lambda_{k, i}$ are non-trivial in the relevant entries and each $\varphi_{k, i}(\MM; \ud_\emptyseq\ud_\eta)$ defines a finite subset of $\VV$, we see that
    $$
    \psi(\ux_\sh\ux_\emptyseq; \uz; \ud_\emptyseq) \wedge \bigwedge\nolimits_{\eta\in 2^n} \psi(\ux_\sh\ux_\eta; \uz; \ud_\eta)
    $$
    indeed implies some finite disjunction $\bigvee\nolimits_{i=1}^q \varphi_{i}(\lambda_{i}(\ux_\sh(\ux_\eta : \eta \in 2^n)); \ud_\emptyseq(\ud_\eta : \eta \in 2^n))$ of non-trivial linear dependencies in $\ux_\sh(\ux_\eta : \eta \in 2^n)$ over $\VV$.
    The exact same argument also yields that
    $$
    \psi(\ux_\sh\ux_\nu; \uz; \ud_\nu) \wedge \bigwedge\nolimits_{\eta \in 2^n} \psi(\ux_\sh\ux_{\mu^\frown\eta}; \uz; \ud_{\mu^\frown\eta})
    $$
    implies $\bigvee\nolimits_{i=1}^q \varphi_{i}(\lambda_{i}(\ux_\sh(\ux_{\mu^\frown\eta} : \eta \in 2^n)); \ud_\nu(\ud_{\mu^\frown\eta} : \eta \in 2^n))$ for any $\nu, \mu \in 2^{<\omega}$ with $\nu \trianglelefteq \mu$. In the case where no realization $\uv_\sh\uv_\emptyseq(\uv_\eta : \eta \in 2^n)\ua$ as in the beginning of the proof exists, we obtain the same conclusion with $q = 0$.

    Now set $\ux'_\sh := \ux_\sh$, set $\ux'_\va := (\ux_{\va, \eta} : \eta \in 2^n)$ with $|\ux_{\va, \eta}| = |\ux_\va|$ for all $\eta \in 2^n$, define the tuple $\uw' := (\uw_\eta : \eta \in 2^n)$, and define
    $$
    \psi'(\ux'_\sh\ux'_\va; \uz; \uw') := \bigwedge\nolimits_{\eta \in 2^n} \psi(\ux_\sh\ux_{\va, \eta}; \uz; \uw_\eta).
    $$
    Also set $\ud'_\nu := (\ud_{(\nu \uparrow n)^\frown\eta} : \eta \in 2^n)$; see the notation above this claim and Figure \ref{figure_stretched_packed_tree_n_two} for the case $n=2$.
    One can easily check that the partial type $\set{\psi'(\ux'_\sh\ux'_\nu; \uz; \ud'_\nu) : \nu \in X}$ implies no finite disjunction of non-trivial linear dependencies in $\ux'_\sh(\ux'_\nu : \nu \in X)$ over $\VV$ for any antichain $X \subset 2^{<\omega}$.
    Indeed, by the definition of $\psi'(\ux'_\sh\ux'_\va; \uz; \uw')$ and of each $\ud'_\nu$, this type is equivalent to a type of the form $\set{\psi(\ux_\sh\ux_\nu; \uz; \ud_\nu) : \nu \in Y}$ for some antichain $Y \subset 2^{<\omega}$ (see Figure \ref{figure_stretched_packed_tree_n_two}).
    One can also check that for any $\nu \triangleleft \mu$, the formula
    $
    \psi'(\ux'_\sh\ux'_\nu; \uz; \ud'_\nu) \wedge \psi'(\ux'_\sh\ux'_\mu; \uz; \ud'_\mu)
    $
    implies the following finite disjunction of non-trivial linear dependencies in $\ux'_\sh\ux'_\mu$ over $\VV$:
    $$
    \bigvee\nolimits_{i=1}^q \varphi_{i}(\lambda_{i}(\ux'_\sh\ux'_\mu); \ud_{(\nu \uparrow n)^\frown \seq{0}^n}\ud'_{\mu}) \vee \bigvee\nolimits_{i=1}^q \varphi_{i}(\lambda_{i}(\ux'_\sh\ux'_\mu); \ud_{(\nu \uparrow n)^\frown \seq{1}^n}\ud'_{\mu}).
    $$
    Notice that $\ud_{(\nu \uparrow n)^\frown \seq{0}^n}$ and $\ud_{(\nu \uparrow n)^\frown \seq{1}^n}$ are subtuples of $\ud'_{\nu}$.
    By Lemma \ref{lemma_weak_modeling}, we can assume that the tree $(\ud'_{\nu} : \nu \in 2^{<\omega})$ is strongly indiscernible.
    Now replace $\psi(\ux_\sh\ux_\va; \uz; \uw)$ with $\psi'(\ux'_\sh\ux'_\va; \uz; \uw')$ and $(\ud_\nu : \nu \in 2^{<\omega})$ with $(\ud'_\nu : \nu \in 2^{<\omega})$ to conclude.
\end{innerproof}
\end{subclaim}
\begin{figure}[tbp]
    \centering
    \begin{tikzpicture}[
        x=0.78cm,
        y=0.78cm,
        node/.style={circle, draw, fill=white, inner sep=1pt, minimum size=6mm, font=\scriptsize},
        small node/.style={circle, draw, fill=white, inner sep=0.5pt, minimum size=4mm},
        tree edge/.style={line width=0.35pt},
        continuation edge/.style={densely dotted, line width=0.65pt},
        package outline/.style={draw=black!65, densely dashed, line width=0.45pt, line cap=round, line join=round},
        blue node/.style={node, draw=blue!60!black, fill=blue!8, very thick},
        red node/.style={node, draw=red!65!black, fill=red!8, very thick},
        map arrow/.style={->, line width=0.45pt}
    ]
        \def\drawblock#1#2#3#4{\path let \p1=(#1), \p2=(#2) in
                \pgfextra{
                    \pgfmathsetmacro{\ax}{\x1/1pt}
                    \pgfmathsetmacro{\ay}{\y1/1pt}
                    \pgfmathsetmacro{\bx}{\x2/1pt}
                    \pgfmathsetmacro{\by}{\y2/1pt}
                    \pgfmathsetmacro{\rad}{#3/1pt}
                    \pgfmathsetmacro{\len}{sqrt((\bx-\ax)^2+(\by-\ay)^2)}
                    \pgfmathsetmacro{\nx}{-(\by-\ay)/\len}
                    \pgfmathsetmacro{\ny}{(\bx-\ax)/\len}
                    \pgfmathsetmacro{\angup}{atan2(\ny,\nx)}
                    \pgfmathsetmacro{\angdown}{\angup-180}
                    \pgfmathsetmacro{\angfinish}{\angup-360}
                    \xdef\blockaup{(\ax+\rad*\nx pt,\ay+\rad*\ny pt)}
                    \xdef\blockbup{(\bx+\rad*\nx pt,\by+\rad*\ny pt)}
                    \xdef\blockbdown{(\bx-\rad*\nx pt,\by-\rad*\ny pt)}
                    \xdef\blockadown{(\ax-\rad*\nx pt,\ay-\rad*\ny pt)}
                    \xdef\blockangup{\angup}
                    \xdef\blockangdown{\angdown}
                    \xdef\blockangfinish{\angfinish}
                };
            \draw[package outline,#4]
                \blockaup
                -- \blockbup
                arc[start angle=\blockangup, end angle=\blockangdown, radius=#3]
                -- \blockadown
                arc[start angle=\blockangdown, end angle=\blockangfinish, radius=#3]
                -- cycle;
        }

        \coordinate (o) at (-1.8,-1.6);
        \coordinate (o0) at (-3,-0.8);
        \coordinate (o1) at (-0.6,-0.8);
        \coordinate (o00) at (-3.6,0);
        \coordinate (o01) at (-2.4,0);
        \coordinate (o10) at (-1.2,0);
        \coordinate (o11) at (0,0);

        \coordinate (o000) at (-4.8,1.25);
        \coordinate (o001) at (-3,1.25);
        \coordinate (o0000) at (-5.25,2.5);
        \coordinate (o0001) at (-4.35,2.5);
        \coordinate (o0010) at (-3.45,2.5);
        \coordinate (o0011) at (-2.55,2.5);

        \coordinate (o110) at (-0.6,1.25);
        \coordinate (o111) at (1.2,1.25);
        \coordinate (o1100) at (-1.05,2.5);
        \coordinate (o1101) at (-0.15,2.5);
        \coordinate (o1110) at (0.75,2.5);
        \coordinate (o1111) at (1.65,2.5);

        \draw[tree edge] (o) -- (o0) -- (o00);
        \draw[tree edge] (o0) -- (o01);
        \draw[tree edge] (o) -- (o1) -- (o10);
        \draw[tree edge] (o1) -- (o11);
        \draw[tree edge] (o00) -- (o000) -- (o0000);
        \draw[tree edge] (o000) -- (o0001);
        \draw[tree edge] (o00) -- (o001) -- (o0010);
        \draw[tree edge] (o001) -- (o0011);
        \draw[tree edge] (o11) -- (o110) -- (o1100);
        \draw[tree edge] (o110) -- (o1101);
        \draw[tree edge] (o11) -- (o111) -- (o1110);
        \draw[tree edge] (o111) -- (o1111);
        \draw[continuation edge] (o01) -- ++(-0.28,0.76);
        \draw[continuation edge] (o01) -- ++(0.28,0.76);
        \draw[continuation edge] (o10) -- ++(-0.28,0.76);
        \draw[continuation edge] (o10) -- ++(0.28,0.76);
        \foreach \leaf in {o0000,o0001,o0010,o0011,o1100,o1101,o1110,o1111} {
            \draw[continuation edge] (\leaf) -- ++(-0.2,0.72);
            \draw[continuation edge] (\leaf) -- ++(0.2,0.72);
        }

        \drawblock{o00}{o11}{3.8mm}{}
        \drawblock{o0000}{o0011}{3.8mm}{draw=red!65!black}
        \drawblock{o1100}{o1111}{3.8mm}{draw=red!65!black}

        \node[small node] at (o) {};
        \node[small node] at (o0) {};
        \node[small node] at (o1) {};
        \node[node] at (o00) {\scalebox{0.62}{$\ud_{\bigseq{00}}$}};
        \node[node] at (o01) {\scalebox{0.62}{$\ud_{\bigseq{01}}$}};
        \node[node] at (o10) {\scalebox{0.62}{$\ud_{\bigseq{10}}$}};
        \node[node] at (o11) {\scalebox{0.62}{$\ud_{\bigseq{11}}$}};
        \node[small node] at (o000) {};
        \node[small node] at (o001) {};
        \node[blue node] at (o0000) {\scalebox{0.56}{$\ud_{\bigseq{0000}}$}};
        \node[blue node] at (o0001) {\scalebox{0.56}{$\ud_{\bigseq{0001}}$}};
        \node[blue node] at (o0010) {\scalebox{0.56}{$\ud_{\bigseq{0010}}$}};
        \node[blue node] at (o0011) {\scalebox{0.56}{$\ud_{\bigseq{0011}}$}};
        \node[small node] at (o110) {};
        \node[small node] at (o111) {};
        \node[blue node] at (o1100) {\scalebox{0.56}{$\ud_{\bigseq{1100}}$}};
        \node[blue node] at (o1101) {\scalebox{0.56}{$\ud_{\bigseq{1101}}$}};
        \node[blue node] at (o1110) {\scalebox{0.56}{$\ud_{\bigseq{1110}}$}};
        \node[blue node] at (o1111) {\scalebox{0.56}{$\ud_{\bigseq{1111}}$}};

        \draw[map arrow] (2.55,1.25) -- (3.65,1.25);

        \coordinate (pempty) at (5,0);
        \coordinate (p0) at (4.25,2.5);
        \coordinate (p1) at (5.75,2.5);

        \draw[tree edge] (pempty) -- (p0);
        \draw[tree edge] (pempty) -- (p1);
        \foreach \leaf in {p0,p1} {
            \draw[continuation edge] (\leaf) -- ++(-0.28,0.76);
            \draw[continuation edge] (\leaf) -- ++(0.28,0.76);
        }

        \node[node] at (pempty) {\scalebox{0.72}{$\ud'_{\bigemptyseq}$}};
        \node[red node] at (p0) {\scalebox{0.72}{$\ud'_{\bigseq{0}}$}};
        \node[red node] at (p1) {\scalebox{0.72}{$\ud'_{\bigseq{1}}$}};
    \end{tikzpicture}
    \caption{Visualization of the tree $(\ud'_\nu : \nu \in 2^{<\omega})$ for $n=2$. The bottom dashed block is $\ud'_\emptyseq=\ud_{\seq{00}}\ud_{\seq{01}}\ud_{\seq{10}}\ud_{\seq{11}}$. Since $\bigseq{0}\uparrow 2=\bigseq{00}$ and $\bigseq{1}\uparrow 2=\bigseq{11}$, the two upper dashed blocks are $\ud'_{\seq{0}}$ and $\ud'_{\seq{1}}$, respectively. Any antichain (red) in the packed tree $(\ud'_\nu : \nu \in 2^{<\omega})$ corresponds to an antichain (blue) in the original tree $(\ud_\nu : \nu \in 2^{<\omega})$.}
    \label{figure_stretched_packed_tree_n_two}
\end{figure}

\noindent From here on, the argument is essentially the same as in Proposition 4.11 of \cite{AKLL25}, but with linear (in)dependence instead of algebraicity.

\begin{subclaim} \label{lemma_remove_param_lower}
    We may assume that there is some natural number $N > 0$ such that the partial type $\set{\psi(\ux_\sh\ux_{\seq{0}^k}; \uz; \ud_{\seq{0}^k}) : 0 \leq k \leq N}$ implies a disjunction of the form
    $$
    \bigvee\nolimits_{i=1}^q \varphi_i(\lambda_i(\ux_\sh\ux_{\seq{0}^N}); \ud_{\seq{0}^N})
    $$
    where each $\lambda_i(\ux_\sh\ux_\va)$ is a non-trivial $\LK$-term and each $L$-formula $\varphi_i(y; \uw)$ defines a finite subset of $\VV$ for any parameters plugged into $\uw$.
\begin{innerproof}
    Set $\uw' := \uw\uw_1\uw_2$, set $\psi'(\ux_\sh\ux_\va; \uz; \uw') := \psi(\ux_\sh\ux_\va; \uz; \uw)$, and set $\ud'_\nu := \ud_{\seq{00}{}^\frown \nu}\ud_{\emptyseq}\ud_{\seq{0}}$ (see Figure \ref{figure_shifted_tree_with_fixed_parameters}).
    Clearly, the tree $(\ud'_\nu : \nu \in 2^{<\omega})$ is strongly indiscernible, and, for any antichain $X \subset 2^{<\omega}$, the partial type $\set{\psi'(\ux_\sh\ux_\nu; \uz; \ud'_\nu) : \nu \in X}$ implies no finite disjunction of non-trivial linear dependencies in $\ux_\sh(\ux_\nu : \nu \in X)$ over $\VV$.

    Let the disjunction $\bigvee\nolimits_{i=1}^q \varphi_i(\lambda_i(\ux_\sh\ux_{\seq{0}}); \uw_0\uw_1)$ be as in Claim \ref{lemma_remove_x_lower}, and set $N := q+1$.
    Using the strong indiscernibility of the tree $(\ud_\nu : \nu \in 2^{<\omega})$ and the pigeonhole principle, we see that the partial type $\set{\psi'(\ux_\sh\ux_{\seq{0}^k}; \uz; \ud'_{\seq{0}^k}) : 0 \leq k \leq N}$ implies the finite disjunction
        \begin{align*}
            \bigvee\nolimits_{0 \leq j<k< N} \bigvee\nolimits_{i=1}^q \lambda_i(\ux_\sh\ux_{\seq{0}^{N}}) \in \big(\varphi_i(\MM; \ud_{\seq{0}^{2+j}}\ud_{\seq{0}^{2+N}}) \cap \varphi_i(\MM; \ud_{\seq{0}^{2+k}}\ud_{\seq{0}^{2+N}})\big)
        \end{align*}
        of non-trivial linear dependencies in $\ux_\sh\ux_{\seq{0}^N}$ over $\VV$.
    By Lemma \ref{lemma_fint_set_eq_or}, for $j < k < N$ we have
        $$
        \varphi_i(\MM; \ud_{\seq{0}^{2+j}}\ud_{\seq{0}^{2+N}}) \cap \varphi_i(\MM; \ud_{\seq{0}^{2+k}}\ud_{\seq{0}^{2+N}}) = \varphi_i(\MM; \ud_{\emptyseq}\ud_{\seq{0}^{2+N}}) \cap \varphi_i(\MM; \ud_{\seq{0}}\ud_{\seq{0}^{2+N}}),
        $$
        which is a finite $\ud'_{\seq{0}^N}$-definable set, as $\ud'_{\seq{0}^N} := \ud_{\seq{0}^{2+N}}\ud_{\emptyseq}\ud_{\seq{0}}$.
    Replace $\psi(\ux_\sh\ux_\va; \uz; \uw)$ with $\psi'(\ux_\sh\ux_\va; \uz; \uw')$ and $(\ud_\nu : \nu \in 2^{<\omega})$ with $(\ud'_\nu : \nu \in 2^{<\omega})$ to conclude.
\end{innerproof}
\end{subclaim}
\begin{figure}[tbp]
    \centering
    \begin{tikzpicture}[
        x=0.78cm,
        y=0.78cm,
        node/.style={circle, draw, fill=white, inner sep=1pt, minimum size=6mm, font=\scriptsize},
        long label node/.style={circle, draw, fill=white, inner sep=1pt, minimum size=6mm, font=\scriptsize},
        tree edge/.style={line width=0.35pt},
        continuation edge/.style={densely dotted, line width=0.65pt},
        fixed node/.style={node, draw=red!65!black, fill=red!8, very thick},
        moving node/.style={node, draw=blue!60!black, fill=blue!8, very thick},
        map arrow/.style={->, line width=0.45pt}
    ]
        \coordinate (oldroot) at (-2,0);
        \coordinate (oldzero) at ($(oldroot)+(-0.75,1.35)$);
        \coordinate (oldzerozero) at ($(oldzero)+(-0.75,1.35)$);
        \coordinate (oldzerozerozero) at ($(oldzerozero)+(-0.75,1.35)$);
        \coordinate (oldzerozeroone) at ($(oldzerozero)+(0.75,1.35)$);

        \draw[tree edge] (oldroot) -- (oldzero) -- (oldzerozero);
        \draw[continuation edge] (oldroot) -- ++(0.45,0.81);
        \draw[continuation edge] (oldzero) -- ++(0.45,0.81);
        \draw[tree edge] (oldzerozero) -- (oldzerozerozero);
        \draw[tree edge] (oldzerozero) -- (oldzerozeroone);
        \foreach \leaf in {oldzerozerozero,oldzerozeroone} {
            \draw[continuation edge] (\leaf) -- ++(-0.22,0.72);
            \draw[continuation edge] (\leaf) -- ++(0.22,0.72);
        }

        \node[fixed node] at (oldroot) {\scalebox{0.66}{$\ud_{\bigemptyseq}$}};
        \node[fixed node] at (oldzero) {\scalebox{0.72}{$\ud_{\bigseq{0}}$}};
        \node[moving node] at (oldzerozero) {\scalebox{0.62}{$\ud_{\bigseq{00}}$}};
        \node[moving node] at (oldzerozerozero) {\scalebox{0.56}{$\ud_{\bigseq{000}}$}};
        \node[moving node] at (oldzerozeroone) {\scalebox{0.56}{$\ud_{\bigseq{001}}$}};

        \draw[map arrow] (-1.55,2.7) -- (0.15,2.7);

        \coordinate (newroot) at (2.1,2.7);
        \coordinate (newzerozero) at (1.35,4.05);
        \coordinate (newzeroone) at (2.85,4.05);

        \draw[tree edge] (newroot) -- (newzerozero);
        \draw[tree edge] (newroot) -- (newzeroone);
        \foreach \leaf in {newzerozero,newzeroone} {
            \draw[continuation edge] (\leaf) -- ++(-0.28,0.76);
            \draw[continuation edge] (\leaf) -- ++(0.28,0.76);
        }

        \node[long label node] at (newroot) {\scalebox{0.66}{$\ud'_{\bigemptyseq}$}};
        \node[long label node] at (newzerozero) {\scalebox{0.72}{$\ud'_{\bigseq{0}}$}};
        \node[long label node] at (newzeroone) {\scalebox{0.72}{$\ud'_{\bigseq{1}}$}};
    \end{tikzpicture}
    \caption{Visualization of the tree $(\ud'_\nu : \nu \in 2^{<\omega})$ with $\ud'_\nu=\ud_{\seq{00}{}^\frown\nu}\ud_\emptyseq\ud_{\seq{0}}$. The blue subtree with root $\ud_{\seq{00}}$ supplies the moving first component, while the red parameters $\ud_\emptyseq$ and $\ud_{\seq{0}}$ are appended to every tuple. Every path in the tree $(\ud'_\nu : \nu \in 2^{<\omega})$ corresponds to a path in the original tree $(\ud_\nu : \nu \in 2^{<\omega})$ that starts with $\ud_{\emptyseq{}}$ and $\ud_{\seq{0}}$.}
    \label{figure_shifted_tree_with_fixed_parameters}
\end{figure}

\begin{subclaim}
    The theory $T$ has $(N+1)$-\ATP{} for the $N > 0$ from Claim \ref{lemma_remove_param_lower}.
\begin{innerproof}
    Define the formula
    $
    \psi'(\ux_\sh; \uz; \uw) := \exists \ux_\va : \psi(\ux_\sh\ux_\va; \uz; \uw) \wedge \bigwedge\nolimits_{i=1}^q \neg \varphi_i(\lambda_i(\ux_\sh\ux_\va); \uw),
    $
    where all the $\varphi_i$'s and $\lambda_i$'s are as in Claim \ref{lemma_remove_param_lower}.
    Let $X$ be a finite antichain.
    Since $\set{\psi(\ux_\sh\ux_\nu; \uz; \ud_\nu) : \nu \in X}$ implies no finite disjunction of non-trivial linear dependencies in $\ux_\sh(\ux_\nu : \nu \in X)$ over $\VV$, and $\bigwedge_{\nu \in X}\bigwedge_{i=1}^q \neg \varphi_i(\lambda_i(\ux_\sh\ux_\nu); \ud_\nu)$ only excludes finitely many linear dependencies over $\VV$, the partial type $\set{\psi'(\ux_\sh; \uz; \ud_\nu) : \nu \in X}$ is consistent.

    Now let $\uv_\sh\ua$ be a realization of $\set{\psi'(\ux_\sh; \uz; \ud_{\seq{0}^k}) : 0 \leq k \leq N}$.
    By the definition of $\psi'(\ux_\sh; \uz; \uw)$, there are tuples $(\uv_{\seq{0}^k} : 0 \leq k \leq N)$ such that:
    \begin{enumerate}[(i)]
        \item $\uv_\sh(\uv_{\seq{0}^k} : 0 \leq k \leq N)\ua$ realizes $\set{\psi(\ux_\sh\ux_{\seq{0}^k}; \uz; \ud_{\seq{0}^k}) : 0 \leq k \leq N}$; and
        \item $\MM \models \bigwedge_{i=1}^q \neg \varphi_i(\lambda_i(\uv_\sh\uv_{\seq{0}^N}); \ud_{\seq{0}^N})$.
    \end{enumerate}
    However, by Claim \ref{lemma_remove_param_lower}, point (i) implies $\MM \models \bigvee_{i=1}^q \varphi_i(\lambda_i(\uv_\sh\uv_{\seq{0}^N}); \ud_{\seq{0}^N})$, contradicting (ii).
    Therefore, $\set{\psi'(\ux_\sh; \uz; \ud_{\seq{0}^k}) : 0 \leq k \leq N}$ must be inconsistent.
    By strong indiscernibility, this translates to all paths of length $N+1$.
\end{innerproof}
\end{subclaim}

\noindent Since $(N+1)$-\ATP{} implies $2$-\ATP{}, which is \ATP{}, we obtain that $T$ has \ATP{}.
\end{proof}

\section{Preservation of NATP for general kernel configurations} \label{sec_proof_general}
The goal of this section is to prove the same result as in Section \ref{sec_natp_ez}, but for arbitrary kernel configurations $C$. In other words we prove the following theorem:

\begin{theorem}\label{theorem_preserve_natp}
    Suppose that $T$ satisfies \Hfour{}. If $T$ has \NATP{}, then $T\theta^C$ also has \NATP{}.
\end{theorem}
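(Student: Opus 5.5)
The proof will follow the two-step scheme of Section \ref{sec_natp_ez}. Step (II), Lemma \ref{lemma_has_li_atp_impl_atp}, works entirely inside $T$ and does not depend on $C$, so it can be reused verbatim. The whole task is therefore to prove an analogue of Lemma \ref{lemma_atp_step_11_ez} for arbitrary $C$. Concretely, assuming $T\theta^C$ has \ATP{}, I will produce an $L$-formula $\psi(\ux_\sh\ux_\va;\uz;\uw)$ and a strongly indiscernible tree $(\ud_\nu)$ in a model of $T$ satisfying (i) and (ii) of Lemma \ref{lemma_has_li_atp_impl_atp}. If $C$ is trivial, Fact \ref{fact_trivvivi} gives the result immediately, so I assume $C$ is non-trivial.

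First I normalize the \ATP{}-witness using Fact \ref{theorem_big_fml_preceise}. By Fact \ref{fact_atp_disj}, one disjunct
$$\exists \uy \in Y_\uw : \exists \ux\in\VV : \psi_\theta(\ux;\uz;\uy\uw)\wedge S(\ux;\pi(\uy))$$
already witnesses \ATP{}. Since $Y$ is an algebraic pattern, its realizations $\uy$ are given successively by values $r_k(x)$ for $x$ ranging over finite $L$-definable sets. Applying Lemma \ref{lemma_remove_alg_set} coordinate by coordinate, I absorb $\uy$ into the parameters, so the tree becomes $(\ud_\nu\ub_\nu)$ with $\ub_\nu\in Y_{\ud_\nu}$. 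By compatibility, $\pi(\ub_\nu)$ is then compatible with $S$. I then write $\ux=\ux_\sh\ux_\va$, with the $\ux_\sh$ coordinates kept outside the existential quantifier as in the $C_0$ case; each such coordinate carries its own $C$-sequence-system equation. Among all such witnesses I choose one minimizing a lexicographic complexity measure. The measure starts with $(|\ux_\va|,|\ux_\sh|)$ and is refined by the total degree $\sum_k\deg(f_k^{q_k})$ over the $\ld$-variables, so that trading an $\li$-variable for an $\ld$-variable, or lowering a $q_k$, counts as a decrease.

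Next, suppose some finite antichain $X$ makes $\{\psi(\uxvec_\sh\uxvec_\nu;\uz;\ud_\nu):\nu\in X\}$ imply a disjunction of non-trivial linear dependencies. As in Claim \ref{lemma_remove_disj_ez}, the Ramsey property (Fact \ref{fact_is_ramsey}) selects a single dependency $\lambda$ that holds uniformly with values in a finite $L$-definable set $\varphi$. The dependency is now a linear relation among the placeholder variables, namely the bounded powers $\theta^i(x)$. Using Fact \ref{theorem_r_c_element} and Fact \ref{lemma_decomposition}, I rewrite it as $\eta[\theta](x)=\text{(rest)}+u$, where $x$ is a maximal variable under a suitable order. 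This splits into three cases according to the variable $x$.

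If $x$ is an $\li$-variable, I decompose it along $\Image(F^C)\oplus\bigoplus_{f\in F}\Ker(f^C)$. On the image component I divide by $\eta$ via the pseudo-inverse, exactly as in Claims \ref{claim_remove_eq_lower_va} and \ref{subclaim_only_shared_reduce}. On each kernel component the relation becomes a new sequence-system equation $g^{q}[\theta](x')=y$, so $x$ is replaced by fewer $\li$-variables plus $\ld$-variables. I must also check that the new parameter $y$, which is an $R_C$-term of $u\in\varphi$ and old parameters, is compatible with the new equation.

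If $x$ is an $\ld$-variable with equation $f_k^{q_k}[\theta](x)=y_k$, then boundedness gives $\deg\eta<\deg f_k^{q_k}$. Combining the two relations with Bézout's identity, I obtain either an explicit expression for $x$ or a lower-degree equation $\gcd(\eta,f_k^{q_k})[\theta](x)=y'$. In the first subcase $x$ is eliminated; in the second the total degree drops.

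In every case I move to the reindexed tree $\ud''_\nu=\ud_{\mu_1}\cdots\ud_{\mu_{r-1}}\ud_{\mu_r{}^\frown\nu}$ and apply Lemma \ref{lemma_remove_alg_set} to the finite set $\varphi$. This contradicts minimality, which establishes (i).

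Property (ii) then follows as at the end of Lemma \ref{lemma_atp_step_11_ez}, now using Theorem \ref{theorem_big_characterization} with the combined sequence-system for $\nu=\bigemptyseq,\bigseq{0}$. Here the $\sh$-variables must appear in only one copy of the system, and compatibility has to be verified. Before applying Lemma \ref{lemma_has_li_atp_impl_atp}, I conjoin $\psi$ with the $L$-definable condition that the parameters are compatible.

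The main obstacle is the case analysis in the second step. Dividing by $\eta$ is only possible on $\Image(\Fac(\eta)^C)$, so $\eta$ may have factors $f$ with $0<C(f)<\infty$, or with $C(f)=\infty$. These factors force the splitting into kernel components, and each must be shown to decrease the complexity measure while preserving compatibility of the new parameters. I would first treat $\eta$ coprime to $\Kp{0<C<\infty}$, then handle each $f\in\Fac(\eta)$ in turn through the $\Ker(f^C)$-projection.
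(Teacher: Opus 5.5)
Your outline is the paper's: normalize via Fact \ref{theorem_big_fml_preceise} and Lemma \ref{lemma_remove_alg_set}, take a minimal witness, isolate one dependency by Ramsey, reindex, contradict minimality, then reuse Lemma \ref{lemma_has_li_atp_impl_atp}. However, the induction as you set it up does not close, because your complexity measure is wrong.

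Whenever the dependency involves varying variables of two antichain nodes, the contribution of $\mu_1,\dots,\mu_{r-1}$ must be absorbed into \emph{new shared} variables. If $C$ is transcendental this is a shared $\li$-variable. If $C$ is algebraic, where $\li$-variables are forbidden, these are shared $\ld$-variables $x_f$ with $f^C[\theta](x_f)=y_f$ for all $f\in\Kp{0<C<\infty}$. Now take your $\ld$-subcase with $\gcd(\eta,f_k^{q_k})\neq 1$. The varying variable survives with lower degree, so $|\ux_\va|$ is unchanged while $|\ux_\sh|$ grows, and $(|\ux_\va|,|\ux_\sh|,\text{total degree})$ goes \emph{up}.

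Concretely, let $C$ be transcendental with $C(X)=3$, take one varying variable $x$ with $\theta^2(x)=y$ and no shared variables, and suppose the dependency is $\theta(x_{\mu_2})-\theta(x_{\mu_1})\in\varphi$. The variable of $\mu_2$ survives, now with $\theta(x)=\theta(x_{\mu_1})+u$, while $x_{\mu_1}$ (or a substitute) must become shared. So $(|\ux_\va|,|\ux_\sh|)$ moves from $(1,0)$ to $(1,1)$. Similarly, splitting a varying $\li$-variable into several $\ld$-components can raise $|\ux_\va|$. In the algebraic case the new shared variables also add $\deg(C)$ to your total degree.

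The paper orders by $(\rk_\va,\deg_\va,\rk_\sh,\deg_\sh)$. The whole varying complexity, rank \emph{and} degree, must dominate the whole shared complexity, because the shared part may grow arbitrarily whenever the varying part shrinks. In the example, $\deg_\va$ drops from $2$ to $1$.

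The second missing ingredient is a normal form that can record what your reduction produces. After you solve for or lower a varying variable, its new equation reads $g[\theta](x)=s(\ux_\sh,\dots)+y'$, whose right-hand side depends on shared and other variables. So your claim that the new parameter is an $R_C$-term of $u$ and old parameters is false. This dependence cannot in general be shifted away: the $\Ker(f^C)$-component of a shared $\li$-variable need not lie in $\Image(g[\theta])$.

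Your hand-made case analysis also has two further holes:
\begin{itemize}
\item Lowering $f_k^{q_k}$ to the gcd leaves a new, unbounded linear constraint on the remaining variables, which you do not process.
\item For factors $f$ of $\eta$ with $C(f)=\infty$ there is no projection $\pi_{\Ker(f^C)}$ to split along.
\end{itemize}

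The paper handles all of this with staged systems $S_\sh(\ux_\sh;\uy_\sh)\wedge S_\va(\ux_\va;\ut(\ux_\sh)+\uy_\va)$. Their compatibility reduces to equations on parameters (Lemmas \ref{lemma_staged_c_ss_iff} and \ref{corollary_comP_fml_staged}). The staged Transformation Lemma \ref{lemma_trafo_for_staged_c_ss} applies the \TrafoLemma{} to the varying part with $\ux_\sh\uy$ as parameters, then Corollary \ref{corollary_trafo_any_lrc} to the shared part. Its clause (i) says the back-substitution $\utau_\sh$ for the shared variables depends only on $\ux'_\sh$ and parameters. Combined with Observation \ref{obsv_term_coonnst}, this is what makes path-inconsistency transfer to the new witness, and your sketch has nothing playing this role.

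Likewise, (ii) needs Lemma \ref{theorem_stronger_left_to_right_new} (staged systems viewed as generalized ones), not Theorem \ref{theorem_big_characterization}, together with the fact that the shared parameters are constant along the tree. Finally, conjoining compatibility to $\psi$ is unnecessary, and compatibility is $\LKThe$-definable, not $L$-definable.
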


\noindent We can use the same proof idea discussed at the beginning of Section \ref{sec_natp_ez}. Since Step (II) (i.e., Lemma \ref{lemma_has_li_atp_impl_atp}) does not depend in any way on the kernel configuration, it remains to prove Step (I) for arbitrary kernel configurations in order to prove Theorem \ref{theorem_preserve_natp}.
Working with $C = C_0$ in Section \ref{sec_step_red_ez} allowed us to completely neglect the parametrized $C$-sequence-systems that come with our characterization of definable sets (see Fact \ref{theorem_big_fml_preceise}).
This made it possible to obtain a formula of the form $\exists \ux \in \VV : \psi_\theta(\ux; \uz; \uw)$ that witnesses \ATP{} together with some strongly indiscernible tree $(\ud_\nu : \nu \in 2^{<\omega})$.
We then partitioned $\ux$ into two subtuples $\ux_\sh$ and $\ux_\va$ such that $\exists \ux_\va \in \VV : \psi_\theta(\ux_\sh\ux_\va; \uz; \uw)$ still witnessed \ATP{}.
Thus, one can think of $\ux_\sh$ as a subtuple $\ux' \subseteq \ux$ for which the witness of $\exists \ux'$ in $\set{\exists \ux \in \VV : \psi_\theta(\ux; \uz; \ud_\nu) : \nu \in X}$ can be chosen uniformly in $\nu$ for any antichain $X \subset 2^{<\omega}$.
With this splitting of $\ux$, we were able to show that whenever $\set{\psi(\uxvec{}_\sh\uxvec{}_\nu; \uz; \ud_\nu) : \nu \in X}$ implies a finite disjunction of non-trivial linear dependencies in $\uxvec{}_\sh(\uxvec{}_\nu : \nu \in X)$ over $\VV$ for some antichain $X \subset 2^{<\omega}$, there is another formula $\psi'(\uxvec{}_\sh'\uxvec{}'_\va; \uz; \uw')$ with $(|\ux'_\va|, |\ux'_\sh|) <_\Lex (|\ux_\va|, |\ux_\sh|)$ such that $\exists \ux'_\va \in \VV : \psi'_\theta(\ux'_\sh\ux'_\va; \uz; \uw')$ witnesses \ATP{}.
Under the assumption that $(|\ux_\va|, |\ux_\sh|)$ is minimal with respect to lexicographic order, this allowed us to show that $\set{\psi(\uxvec{}_\sh\uxvec{}_\nu; \uz; \ud_\nu) : \nu \in X}$ implies no finite disjunction of non-trivial linear dependencies in $\uxvec{}_\sh(\uxvec{}_\nu : \nu \in X)$ over $\VV$ for any antichain $X \subset 2^{<\omega}$.

We will generalize this proof idea to arbitrary kernel configurations $C$, but this comes with a few technical challenges.
Note that our characterization of definable sets, Fact \ref{theorem_big_fml_preceise}, will, in combination with some additional arguments, yield that a formula of the form
$$
\exists \ux \in \VV : \psi_\theta(\ux; \uz; \uw) \wedge S(\ux; \uy),
$$
where $\psi(\uxvec{}; \uz; \uw)$ is an $L$-formula and $S(\ux; \uy)$ is a parametrized $C$-sequence-system,
witnesses \ATP{} together with a strongly indiscernible tree $(\uu_\nu\ud_\nu : \nu \in 2^{<\omega})$ where each $\uu_\nu$ is compatible with $S$.
This raises the question of how we can partition $\ux$ into $\ux_\sh$ and $\ux_\va$ as above while being able
\begin{itemize}
    \item to measure the complexity of $\ux_\sh$ and $\ux_\va$ (as $(|\ux_\va|, |\ux_\sh|)$ did above together with the lexicographic order), and
    \item to reduce that complexity if $\set{\psi(\uxvec{}_\sh\uxvec{}_\nu; \uz; \ud_\nu) : \nu \in X}$ implies some finite disjunction of non-trivial linear dependencies in $\uxvec{}_\sh(\uxvec{}_\nu : \nu \in X)$ over $\VV$ for some antichain $X$.
\end{itemize}
Our answer to this question is not just to split the tuple $\ux$ into two, but to split the parametrized $C$-sequence-system $S(\ux; \uy)$ into a kind of ``pair of parametrized $C$-sequence-systems''.
In \cite{Chi25}, we have already defined the rank $\rk(S)$ and the degree $\deg(S)$ of a single parametrized $C$-sequence-system $S(\ux; \uy)$, which measure its complexity in a certain sense.
We have also shown that the conjunction of $S(\ux; \uy)$ and some $\LKThe$-equations in $\ux$ can, under some assumptions, essentially be reduced, or rather transformed, into another parametrized $C$-sequence-system $S'(\ux'; \uy')$ that satisfies $(\rk(S'), \deg(S')) <_\Lex (\rk(S), \deg(S))$; see our \TrafoLemma{} (Lemma~\ref{lemma_trafoo}).

Note that in the case $C = C_0$, a parametrized $C$-sequence-system $S(\ux; \uy)$ is just the formula $\top$; its rank is $|\ux|$ and its degree is $0$.
Thus, in a sense, we were already working with ``pairs of parametrized $C$-sequence-systems'' in the case $C = C_0$.

\subsection{A recap on transformability} \label{sec_trafo_recap}

\noindent Before we introduce the previously mentioned ``pairs of parametrized $C$-sequence-systems'', we recall the notion of transformability from \cite{Chi25}:

\begin{definition} \label{def_transfo}
    Let $E(\ux; \uy)$ and $E'(\ux'; \uy)$ be two conjunctions of $\LRC$-equations.
    We say that $E(\ux; \uy)$ is \textbf{transformable} into $E'(\ux'; \uy)$ if both
    \begin{enumerate}[(i)]
        \item $\TKvsThe \cup \set{\text{``$\theta$ is $C$-image-complete''}} \models \forall \ux\uy: E(\ux; \uy) \rightarrow E'(\underline{\nu}(\ux); \uy) \wedge \ux = \utau(\underline{\nu}(\ux); \uy)$,
        \item $\TKvsThe \cup \set{\text{``$\theta$ is $C$-image-complete''}} \models \forall \ux'\uy: E'(\ux'; \uy) \rightarrow E(\utau(\ux'; \uy); \uy)$
    \end{enumerate}
    hold for some tuple $\underline{\nu}(\ux)$ of $\LRC$-terms and some tuple $\utau(\ux'; \uy)$ with each entry $\tau_k(\ux'; \uy)$ being the sum of an $\LKThe$-term in $\ux'$ and an $\LRC$-term in $\uy$.
    We say that the tuples $\underline{\nu}(\ux)$ and $\utau(\ux'; \uy)$ \textbf{witness} the transformability.
\end{definition}

\noindent When $\utau(\ux'; \uy)$ does not depend on $\uy$, we may simply write $\utau(\ux')$. Transformability can be thought of as a stronger form of ``equivalence up to existence''.
This is because $E(\ux; \uy)$ being transformable into $E'(\ux'; \uy)$ implies 
$$
\exists \ux \in \VV : \phi(\ux; \uy) \wedge E(\ux; \uy) \quad \equiv\quad \exists \ux' \in \VV : \phi(\utau(\ux'; \uy); \uy) \wedge E'(\ux'; \uy)
$$ for every formula $\phi(\ux; \uy)$. 
This allows us to apply the results of this section to $L_\theta$-formulas. We used this notion first in \cite{Chi25} to prove our characterization of existentially closed models of $T_\theta^C$ (Theorem \ref{theorem_big_characterization}) and then in \cite{Chi25b} to prove our characterization of definable sets (Fact \ref{theorem_big_fml_preceise}). In those applications, $E'(\ux'; \uy)$ was always essentially a parametrized $C$-sequence-system.

\begin{observation} \label{obser_trafo_nice_2}
    Looking at the definition of transformability, one can easily see the following:
    \begin{enumerate}[(i)]
        \item Let $E_1(\ux_1; \uy)$ and $E_2(\ux_2; \tiluy)$ be two conjunctions of $\LRC$-equations, let $\ut(\ux_1; \uy)$ be a tuple of $\LRC$-terms, and let $\ux'_2$ be another tuple with $|\ux'_2| = |\ux_2|$.
        Assume that $E_1(\ux_1; \uy)$ is transformable into $E'_1(\ux'_1; \uy)$, witnessed by $\underline{\nu}_1(\ux_1)$ and $\utau{}_1(\ux'_1; \uy)$.

        Then $E_1(\ux_1; \uy) \wedge E_2(\ux_2; \ut(\ux_1; \uy))$ is transformable into $E'_1(\ux'_1; \uy) \wedge E_2(\ux'_2; \ut(\utau{}_1(\ux'_1; \uy); \uy))$, witnessed by the tuples $\underline{\nu}_1(\ux_1)\ux_2$ and $\utau{}_1(\ux'_1; \uy)\ux'_2$.
        \item Let $E_1(\ux_1; \uy)$, $E_*(\ux_1; \uy)$, and $E_2(\ux_2; \ux_1\uy)$ be three conjunctions of $\LRC$-equations.
        Assume that $E_1(\ux_1; \uy) \wedge E_*(\ux_1; \uy)$ is transformable into $E'_1(\ux'_1; \uy)$, witnessed by $\underline{\nu}_1(\ux_1)$ and $\utau{}_1(\ux'_1; \uy)$, and assume that $E_2(\ux_2; \ux_1\uy)$ is transformable into $E'_2(\ux'_2; \ux_1\uy) \wedge E_*(\ux_1; \uy)$, witnessed by $\underline{\nu}_2(\ux_2)$ and $\utau{}_2(\ux'_2; \ux_1\uy)$.

        Then the conjunction $E(\ux_1\ux_2; \uy) := E_1(\ux_1; \uy) \wedge E_2(\ux_2; \ux_1\uy)$ is transformable into the conjunction $E'(\ux'_1\ux'_2; \uy) := E'_1(\ux'_1; \uy) \wedge E'_2(\ux'_2; \utau{}_1(\ux'_1; \uy)\uy)$, witnessed by the tuples $\underline{\nu}_1(\ux_1)\underline{\nu}_2(\ux_2)$ and $\utau{}_1(\ux'_1; \uy)\utau{}_2(\ux'_2; \utau{}_1(\ux'_1; \uy)\uy)$.
    \end{enumerate}
\end{observation}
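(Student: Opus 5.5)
The plan is to prove both parts by unwinding Definition~\ref{def_transfo}. Each part needs two implications modulo $\TKvsThe \cup \set{\text{``$\theta$ is $C$-image-complete''}}$, plus a check that the proposed witnessing tuples have the required syntactic shape. Throughout, I will use the fact that if an implication is valid modulo this theory for all values of the variables, then it remains valid after substituting terms for the variables.

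For (i), take $\underline{\nu}_1(\ux_1)\ux_2$ and $\utau{}_1(\ux'_1;\uy)\ux'_2$ as witnesses.
\begin{enumerate}[(a)]
\item Forward direction. Assume $E_1(\ux_1;\uy)\wedge E_2(\ux_2;\ut(\ux_1;\uy))$. Condition (i) for $E_1$ yields $E'_1(\underline{\nu}_1(\ux_1);\uy)$ and $\ux_1=\utau{}_1(\underline{\nu}_1(\ux_1);\uy)$. Rewriting $\ux_1$ inside $\ut$ by this equality turns $E_2(\ux_2;\ut(\ux_1;\uy))$ into $E_2(\ux_2;\ut(\utau{}_1(\underline{\nu}_1(\ux_1);\uy);\uy))$. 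This is exactly the second conjunct of the target, evaluated at $\underline{\nu}_1(\ux_1)\ux_2$. The identity $\ux_1\ux_2=\utau{}_1(\underline{\nu}_1(\ux_1);\uy)\ux_2$ is immediate.
\item Backward direction. Assume $E'_1(\ux'_1;\uy)\wedge E_2(\ux'_2;\ut(\utau{}_1(\ux'_1;\uy);\uy))$. Condition (ii) for $E_1$ gives $E_1(\utau{}_1(\ux'_1;\uy);\uy)$. The $E_2$-conjunct is then literally $E_2(\ux'_2;\ut(\ux_1;\uy))$ evaluated at $\ux_1=\utau{}_1(\ux'_1;\uy)$, $\ux_2=\ux'_2$.
\item Shape. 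Each entry of the new $\utau$ is either an entry of $\utau{}_1$ or a single variable, so it has the required form.
\end{enumerate}

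For (ii), take $\underline{\nu}_1(\ux_1)\underline{\nu}_2(\ux_2)$ and $\utau{}_1(\ux'_1;\uy)\,\utau{}_2(\ux'_2;\utau{}_1(\ux'_1;\uy)\uy)$ as witnesses.
\begin{enumerate}[(a)]
\item Forward direction, first block. From $E_2(\ux_2;\ux_1\uy)$ and the transformability of $E_2$, obtain $E'_2(\underline{\nu}_2(\ux_2);\ux_1\uy)$, the side condition $E_*(\ux_1;\uy)$, and $\ux_2=\utau{}_2(\underline{\nu}_2(\ux_2);\ux_1\uy)$. Now $E_1\wedge E_*$ holds, so the transformability of $E_1\wedge E_*$ gives $E'_1(\underline{\nu}_1(\ux_1);\uy)$ and $\ux_1=\utau{}_1(\underline{\nu}_1(\ux_1);\uy)$.
\item Forward direction, substitution. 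Substitute this expression for $\ux_1$ into both $E'_2(\underline{\nu}_2(\ux_2);\ux_1\uy)$ and $\ux_2=\utau{}_2(\underline{\nu}_2(\ux_2);\ux_1\uy)$. This produces exactly $E'(\underline{\nu}_1(\ux_1)\underline{\nu}_2(\ux_2);\uy)$ and the required identity for $\ux_1\ux_2$.
\item Backward direction. Assume $E'_1(\ux'_1;\uy)\wedge E'_2(\ux'_2;\utau{}_1(\ux'_1;\uy)\uy)$ and put $\ux_1:=\utau{}_1(\ux'_1;\uy)$. Condition (ii) for $E_1\wedge E_*$ gives $E_1(\ux_1;\uy)\wedge E_*(\ux_1;\uy)$. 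Now $E'_2(\ux'_2;\ux_1\uy)\wedge E_*(\ux_1;\uy)$ holds, so condition (ii) for $E_2$ gives $E_2(\utau{}_2(\ux'_2;\ux_1\uy);\ux_1\uy)$. Together these are $E$ evaluated at the proposed $\utau$. The key point is that $E_*$ was placed on the correct side in the hypotheses, which is precisely what makes this step go through.
\end{enumerate}

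The main obstacle is the syntactic requirement on the witnessing terms in (ii). The entry $\utau{}_2(\ux'_2;\ux_1\uy)$ is an $\LKThe$-term in $\ux'_2$ plus an $\LRC$-term in $\ux_1\uy$. Plugging in $\ux_1=\utau{}_1(\ux'_1;\uy)$ applies an $\LRC$-term to an $\LKThe$-term in $\ux'_1$. This need not be an $\LKThe$-term in $\ux'_1$, since $R_C$ contains pseudo-inverses and projections. I would first check how $\utau{}_2$ depends on $\ux_1$. Here $\ux_1$ plays the role of a parameter for $E_2$, so I would aim to read the hypothesis as saying that this dependence is through $\LRC$-terms that are to be treated on the parameter side. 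If that is not possible, I would make the composition compatible with Definition~\ref{def_transfo} by slightly weakening it: allow $\utau$ to be an $\LRC$-term in $\ux'\uy$ whose $\ux'$-part is handled as in the applications in \cite{Chi25}. In all uses in this paper, this substitution only ever occurs in the parameter slot. All remaining verifications are direct substitutions.
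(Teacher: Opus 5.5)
Your verification of the two clauses of Definition~\ref{def_transfo} is correct in both parts. For (i), where the new $\utau$ is $\utau_1(\ux'_1;\uy)\ux'_2$, it is the complete argument; the paper itself offers nothing beyond ``unwind the definition''. The gap is the shape condition on $\utau$ in (ii). You located it correctly but did not close it, and it cannot be closed as stated, because the named witness can genuinely violate it. Take $C=C_0$ and $E_1=E_*=E'_1=\top$, with $\underline{\nu}_1(x_1)=x_1$ and $\tau_1(x'_1)=x'_1$. Let $E_2(x_2;x_1\uy):=(\theta(x_2)=x_1)$. This is transformable into $\top$ (with $\ux'_2$ empty) via the empty tuple and $\tau_2(x_1\uy):=\theta^{-1}(x_1)$, the pseudo-inverse $X[\theta]^{-1}\in R_{C_0}$; it is a true inverse because $\theta$ is bijective in every $C_0$-image-complete model. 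The composite witness is then $(x'_1,\theta^{-1}(x'_1))$. But $\theta^{-1}(x'_1)$ is not, even up to equivalence, of the form $\lambda(x'_1)+r(\uy)$ with $\lambda$ an $\LKThe$-term. Setting $x'_1=0$ forces $r=0$, and then $\theta^{-1}=\rho[\theta]$ would mean $X\rho=1$ in $R_{C_0}\cong K(X)$. The composite is still transformable, e.g.\ via $x_2$ and $(\theta(x'_1),x'_1)$, just not with the witnesses the statement names.

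None of your remedies repairs this.
\begin{itemize}
\item The $\ux_1$-dependence of $\utau_2$ is already on the parameter side by hypothesis. The trouble starts only after $\ux_1$ is replaced by $\utau_1(\ux'_1;\uy)$, when $\ux'_1$ is a non-parameter variable.
\item Weakening Definition~\ref{def_transfo} is not harmless. Lemma~\ref{lemma_main_trafo_for_staged} relies on $\utau$ being $\LKThe$ in $\ux'$ in order to rewrite $\theta^i(\tau_k)$ via Fact~\ref{lemma_bound_term_light} into a formula bounded by the system.
\item It is not true that the substitution only hits the parameter slot in this paper. In the second claim of the proof of Lemma~\ref{lemma_trafo_for_staged_c_ss}, $\ux_\sh$ plays the role of $\ux_1$ and becomes $\utau_\sh(\ux'_\sh;\uy)$, with $\ux'_\sh$ a variable of the result.
\end{itemize}
The missing idea is the one the paper uses at that point. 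If $\utau^\dagger(\ux';\uy)$ agrees with the composite $\utau(\ux';\uy)$ modulo $E'(\ux';\uy)$, then $\underline{\nu}$ and $\utau^\dagger$ still satisfy both clauses: clause (i) evaluates $\utau$ at $\underline{\nu}(\ux)$, where $E'$ holds, and clause (ii) assumes $E'$. So (ii) is correct once you add the hypothesis that, modulo $E'$, the composite equals a tuple of the required shape, and use that tuple as the witness. The paper obtains this from the furthermore part of Corollary~\ref{corollary_trafo_any_lrc}, rewriting $\utau_*(\ux'_\va;\utau_\sh(\ux'_\sh;\uy)\uy)$ modulo $\varphi'(\uy)\wedge S'_\sh(\ux'_\sh;\umu{}'_\sh(\uy))$.
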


\begin{remark}
    Transformability depends on which variables are treated as parameters.
    For example, if $E(\ux; \uy)$ is transformable into $E'(\ux'; \uy)$, then \hbox{$E_*(x_*\ux; \uy) := E(\ux; \uy)$} (which is logically equivalent to $E(\ux; \uy)$) is not transformable into $E'(\ux'; \uy)$ (but it is transformable into $E'_*(x_*\ux'; \uy) := E'(\ux'; \uy)$).
    Also, transformability is not symmetric in general, as we require each entry of $\utau(\ux';\uy)$ to be the sum of an $\LKThe$-term in $\ux'$ and an $\LRC$-term in $\uy$, whereas $\underline{\nu}(\ux)$ can be any tuple of $\LRC$-terms in $\ux$.
\end{remark}

\noindent In \cite{Chi25} and \cite{Chi25b}, we mostly used transformability in the setting where $E(\ux; \uy)$ was essentially a conjunction of some parametrized $C$-sequence-system and some $\LKThe$-equations. In \cite{Chi25}, we showed that such a conjunction was (under some assumptions) transformable into a ``smaller'' parametrized $C$-sequence-system. We will now make this precise:

\begin{definition} \label{def_rk_deg_def}
    Let $S(\ux; \uy) = \bigwedge_{k=1}^n f_k^{q_k}[\theta](x_{\ldd, k}) = y_k$ be a parametrized $C$-sequence-system, as in Definition \ref{def_param_c_sequence_system} (i.e., $\ux = \ux_\li\ux_\ld$ and so on).
    We define:
    \begin{enumerate}[(i)]
        \item The \textbf{rank} of $S(\ux; \uy)$ as $\rk(S) := m = |\ux_\li|$.
        \item The \textbf{degree} of $S(\ux; \uy)$ as $\deg(S) := \sum\nolimits_{k=1}^n \deg(f_k^{q_k})$.
    \end{enumerate}
    We define the rank and degree of a $C$-sequence-system (i.e., a parametrized $C$-sequence-system with compatible parameters plugged in) to be the rank and degree of the underlying parametrized $C$-sequence-system.
\end{definition}

\begin{definition}
    \label{def_compatible_pair} Let $S(\ux; \tiluy) = \bigwedge_{k=1}^n f_k^{q_k}[\theta](x_{\ldd, k}) = \Tilde{y}{}_k$ be a parametrized $C$-sequence-system as in Definition \ref{def_param_c_sequence_system}.
    Let $\varphi(\uy)$ be a conjunction of $\LRC$-equations, and let $\umu{}(\uy)$ be a tuple of $\LRC$-terms, where $\uy$ is another tuple of variables.
    If
    $$
    \TKvsThe \cup \set{\text{``$\theta$ is $C$-image-complete''}} \models \forall \uy \in \VV : \varphi(\uy) \rightarrow \text{``$\umu{}(\uy)$ is compatible with $S$''}
    $$
    holds, we say that the pair $(\varphi, \umu{})$ is \textbf{compatible} with the parametrized $C$-sequence-system $S$.
\end{definition}

\begin{lemma}[Transformation Lemma, Lemma 4.11 in \cite{Chi25}]\label{lemma_trafoo}
    Let $S(\ux; \uy{}_1)$ be a parametrized $C$-sequence-system, let $E(\ux; \uy{}_2)$ be a conjunction of $\LKThe$-equations, and let $\umu{}_1(\uy)$ and $\umu{}_2(\uy)$ be tuples of $\LRC$-terms.
    Then there is another parametrized $C$-sequence-system $S'(\ux'; \uy')$ and a pair $(\varphi', \umu{}')$ compatible with $S'$ such that
    $$
    \text{$S(\ux; \umu{}_1(\uy)) \wedge E(\ux; \umu{}_2(\uy))\quad$ is transformable into $\quad\varphi'(\uy) \wedge S'(\ux'; \umu{}'(\uy))$}.
    $$
    Furthermore, $(\rk(S'), \deg(S')) \leq_\Lex (\rk(S), \deg(S))$ holds, and if $E(\ux; \uy{}_2)$ contains at least one equation that is non-trivial in $\ux$ and bounded by $S$, then this inequality is strict.
\end{lemma}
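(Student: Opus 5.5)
The plan is to induct on the number of equations in $E(\ux; \uy{}_2)$, processing one equation at a time and showing that each step is a transformation that never increases $(\rk(S), \deg(S))$ in the lexicographic order, and strictly decreases it whenever the processed equation is non-trivial in $\ux$ after reduction. Observation \ref{obser_trafo_nice_2} lets me compose transformations, so it suffices to treat a single equation $e(\ux; \umu{}_2(\uy))$ conjoined with $S(\ux; \umu{}_1(\uy))$. Parameter conditions produced along the way are collected into $\varphi'(\uy)$. The $C$-image-completeness ensures that all the auxiliary terms (projections, pseudo-inverses) lie in $R_C$, so these conditions are $\LRC$-equations.

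First, by Fact \ref{lemma_bound_term_light} I reduce $e$ modulo the equations $f_k^{q_k}[\theta](x_{\ldd,k}) = \mu_{1,k}(\uy)$, so that it becomes bounded by $S$. If the reduced equation is trivial in $\ux$, it is an $\LRC$-equation in $\uy$ alone, which I move into $\varphi'$; the system $S$ is unchanged, so $(\rk, \deg)$ stays the same. Otherwise there are two cases.

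\emph{Case A: some $x_{\lii,k}$ occurs.} Write the equation as $\eta[\theta](x_{\lii,k}) = (\text{rest})$. Factor $\eta = \eta_0 \cdot \eta_1$, where $\Fac(\eta_1) \subseteq \Kp{0<C<\infty}$ collects the finitely many relevant factors $F$. Using Fact \ref{lemma_decomposition}, decompose $x_{\lii,k}$ into its $\Image(F^C)$-component and its $\Ker(f^C)$-components for $f \in F$. On the image component, $\eta$ is invertible via $\eta[\theta]^{-1}$ (Fact \ref{fact_endo_gen}), so that component is expressed by an $\LRC$-term in the remaining variables and $\uy$. Each kernel component becomes a new linearly dependent variable constrained by an equation $f^{q}[\theta](x') = \dots$. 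Factors $f$ with $C(f)=0$ are already injective, and factors with $C(f)=\infty$ give sequence equations with arbitrary $q$. The net effect is that one $\li$-variable is removed and replaced by $\ld$-variables, so the rank drops by one.

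\emph{Case B: only $\ld$-variables occur.} Boundedness gives $\deg(\rho_k) < \deg(f_k^{q_k})$ for the coefficient $\rho_k$ of each occurring $x_{\ldd,k}$. Pick an occurring $x_{\ldd,k}$ and use Bézout with $\gcd(\rho_k, f_k^{q_k}) = f_k^{j}$, where $j < q_k$. Combining the new equation with $f_k^{q_k}[\theta](x_{\ldd,k}) = y_k$, I can replace the latter by an equation $f_k^{j}[\theta](x'_{\ldd,k}) = \dots$ after a triangular change of variables expressible by an $\LKThe$-term. If $j=0$, the variable is eliminated. Either way the degree strictly drops while the rank is unchanged.

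In both cases I must also produce the new compatibility data $(\varphi', \umu{}')$. This means verifying that the new parameter terms land in $\Ker(f^{C-q})$, and adding the conditions needed for solvability (such as membership of the right-hand side in $\Image(f^{q_k - j})$) as $\LRC$-equations in $\uy$.

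I expect this bookkeeping to be the main obstacle. It has two parts: checking clause (ii) of transformability, that every solution of the new system pulls back via $\utau$, which requires the $\tau$'s to be $\LKThe$-terms in $\ux'$ plus $\LRC$-terms in $\uy$; and ensuring compatibility holds uniformly modulo $\TKvsThe \cup \set{\text{``$\theta$ is $C$-image-complete''}}$. The monotonicity of $(\rk, \deg)$ then follows directly from the case analysis.
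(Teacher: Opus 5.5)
Your overall shape (process one equation, reduce it so it is bounded by $S$, let the rank drop when an $\operatorname{li}$-variable occurs and the degree drop otherwise) is reasonable. The gap is that the elimination step, as you describe it, does not land in a parametrized $C$-sequence-system, and this is the heart of the lemma rather than bookkeeping.

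In Case A you write the $\Image(F^C)$-component of $x_{\lii,k}$ as $\eta[\theta]^{-1}$ and projections applied to the \emph{remaining variables}. This fails in three ways. It is forbidden in $\utau$, whose entries must be an $\LKThe$-term in $\ux'$ plus an $\LRC$-term in $\uy$. The map $\eta[\theta]^{-1}$ does not exist if $\eta$ has a factor in $\Kp{C=\infty}$. And the resulting equations $f^q[\theta](x')=\dots$ have right-hand sides containing other variables, whereas in $S'(\ux';\umu{}'(\uy))$ they must be $\LRC$-terms in $\uy$ alone. The pivot choice also matters. Take $C=C_\infty$, $S=\top$ in $x_{\lii,1}x_{\lii,2}$, and $E$ the equation $\theta(x_{\lii,1})=x_{\lii,2}$. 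Pivoting on $x_{\lii,1}$ yields $X[\theta](x_{\lii,1})=x_{\lii,2}$, which is not a sequence equation, while pivoting on $x_{\lii,2}$ simply eliminates it.

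In Case B the leftover condition is in general not an equation in $\uy$. Take $S$ to be $f[\theta](x_{\ldd,1})=y_1\wedge f^3[\theta](x_{\ldd,2})=y_2$ with $C(f)\geq 3$, and the bounded equation $x_{\ldd,1}+f[\theta](x_{\ldd,2})=\mu(\uy)$.
Pivoting on $x_{\ldd,2}$ gives $f[\theta](x_{\ldd,2})=\mu(\uy)-x_{\ldd,1}$, which no triangular $\LKThe$-substitution turns into a sequence equation. Pivoting on $x_{\ldd,1}$ eliminates it, but turns $f[\theta](x_{\ldd,1})=y_1$ into the new relation $f^2[\theta](x_{\ldd,2})=f[\theta](\mu(\uy))-y_1$, which must be processed again. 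So a step creates new equations in the surviving variables, and induction on the number of equations in $E$ does not close the argument.

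Three things are missing.
(a) Pivot rules. Run Euclid's algorithm, via unimodular $\LKThe$-substitutions, on the $\operatorname{li}$-coefficients until a single $\operatorname{li}$-variable with coefficient $g$ remains. Among the $\operatorname{ld}$-variables of one irreducible $f$, pivot on one whose coefficient has minimal $f$-adic valuation, and absorb $\operatorname{ld}$-variables of other irreducibles by coprimality. This way projections and pseudo-inverses are only ever applied to parameter terms.
(b) An outer strong induction on $(\rk(S),\deg(S))$ rather than on $|E|$. Each step should produce a strictly smaller \emph{genuine} parametrized $C$-sequence-system together with leftover equations, and you apply the induction hypothesis to that conjunction. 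The leftovers must include the original equation, since multiplying by Bézout coefficients is not an equivalence. In Case A, for instance, multiply by $N=\prod_i f_i^{q_i}$ to get $(Ng)[\theta](x_{\lii})$ equal to a parameter term. Then split $Ng$ into prime powers by the Chinese remainder theorem and treat the cases $C=0$, $0<C<\infty$ (via the $\Image$/$\Ker$ decomposition and capping at $C(f)$), and $C=\infty$ separately.
(c) An initial normalization for compatibility. The lemma does not assume $\umu{}_1(\uy)$ is compatible with $S$, and $S(\ux;\umu{}_1(\uy))$ does not imply it: take $C(f)=1$ and $f[\theta](x)=y$ with $x\notin\Ker(f)$. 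So in the base case and in your ``trivial'' case, $S'=S$ and $\umu{}'=\umu{}_1$ is not allowed. You must replace each $x_{\ldd,k}$ with $C(f_k)<\infty$ by its $\Ker(f_k^C)$-part, with parameter $\pi_{\Ker(f_k^C)}(\mu_{1,k}(\uy))$ and image part $f_k^{q_k}[\theta]^{-1}(\mu_{1,k}(\uy))$. Then put $f_k^{C-q_k}[\theta](\pi_{\Ker(f_k^C)}(\mu_{1,k}(\uy)))=0$ into $\varphi'$.
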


\noindent So, for any (parametrized) $C$-sequence-system, $(\rk(S), \deg(S))$ lies in $\NN^2$.  
Notice that $(\NN^2, <_\Lex)$ is a well-ordered set, where $<_\Lex$ is the lexicographical order, which defines the first entry to be more significant than the second entry.
This together with the \TrafoLemma{} made it possible to prove both our axiomatization of existentially closed models (Theorem \ref{theorem_big_characterization}) and the characterization of definable sets in them (Fact  \ref{theorem_big_fml_preceise}) via some induction on a (parametrized) $C$-sequence-system. The \TrafoLemma{} above also has the following consequence:

\begin{corollary} \label{corollary_trafo_any_lrc}
    Any conjunction $E(\ux; \uy)$ of $\LRC$-equations is transformable into a formula of the form
    $$
    \varphi(\uy) \wedge S(\ux'; \umu{}(\uy)),
    $$
    where $S(\ux'; \uy')$ is a parametrized $C$-sequence-system, and $(\varphi, \umu{})$ is a pair compatible with $S$.

    Furthermore, if $\LRC$-terms $r_1(\ux; \uy), \dots, r_q(\ux; \uy)$ are given, then we can choose $S(\ux'; \uy')$ and $(\varphi, \umu{})$ such that transformability is witnessed by tuples $\underline{\nu}{}(\ux)$ and $\utau(\ux'; \uy)$ with the following property: For each $k \in \set{1, \dots, q}$, there is an $\LKThe$-term $s_k(\ux')$ and an $\LRC$-term $r'_k(\uy)$ such that $\varphi(\uy) \wedge S(\ux'; \umu{}(\uy))$ implies
    $$
    r_k(\utau(\ux'; \uy); \uy) = s_k(\ux') + r'_k(\uy).
    $$
\begin{proof}
    Assume first that $C$ is transcendental, and let $\ux = (x_1, \dots, x_n)$.
    By the \commonBaseTheorem{} (Fact \ref{theorem_r_c_element}), we can assume that $E(\ux; \uy)$ is of the form
    $$
    \bigwedge\nolimits_{k=1}^m \Big(\sum\nolimits_{l=1}^n \Big( \rho_{k, l}[\theta] (\eta[\theta]^{-1} \circ \pi_{\Image(F^C)}(x_l)) + \sum\nolimits_{f\in F} \rho_{f, k, l}[\theta] (\pi_{\Ker(f^C)}(x_l))\Big) + t_k(\uy) = 0\Big),
    $$
    where each $t_k(\uy)$ is an $\LRC$-term in $\uy$, $F \subseteq \Kp{0<C<\infty}$ is finite, and $\Fac(\eta) \subseteq F \cup \Kp{C=0}$.
    Let $\ux_F := (x_{F, 1}, \dots, x_{F, n})$ and $\ux_f := (x_{f, 1}, \dots, x_{f, n})$ for all $f \in F$ be new tuples of variables with the same length as $\ux$.
    With
    \begin{align*}
        \underline{\nu}(\ux) &:= (\eta \cdot F^C)[\theta]^{-1} \circ \pi_{\Image(F^C)}(\ux) (\pi_{\Ker(f^C)}(\ux) : f \in F), \\
        \utau(\ux_F(\ux_f : f \in F); \uy) &:= (\eta \cdot F^C)[\theta](\ux_F) + \sum\nolimits_{f\in F} \ux_f,
    \end{align*}
    and $F^C[\theta] \circ F^C[\theta]^{-1} = \pi_{\Image(F^C)}$, one can show that $E(\ux; \uy)$ is transformable into
    \begin{align}
        &\bigwedge\nolimits_{k=1}^m \Big( \sum\nolimits_{l=1}^n \Big( (\rho_{k, l} \cdot F^C)[\theta](x_{F, l}) + \sum\nolimits_{f\in F} \rho_{f, k, l}[\theta](x_{f, l}) \Big) + t_k(\uy) = 0 \Big) \notag \\
        &\hspace{190pt}\wedge \Big( \bigwedge\nolimits_{f\in F}\bigwedge\nolimits_{l=1}^n f^C[\theta](x_{f, l}) = 0 \Big). \label{tag_big_conjuctionn}
    \end{align}
    The conjunction on the second line is a parametrized $C$-sequence-system with $\ux_\li := \ux_F$ and $\ux_\ld := (\ux_f : f \in F)$ and the trivial tuple $\uzero$, treated as a tuple of $\LKThe$-terms in $\uy$, plugged in.
    Now apply the \TrafoLemma{} and the transitivity of transformability.
    The case where $C$ is algebraic follows similarly by choosing $F = \Kp{0<C<\infty}$, which gives $\Image(F^C) = \set{0}$.

    We now show the furthermore part.
    We only show it in the transcendental case, since the algebraic case is analogous.
    Notice that we can apply the \commonBaseTheorem{} so that, for all $k \in \set{1, \dots, q}$, we also have
    $$
    r_k(\ux; \uy) = \sum\nolimits_{l=1}^n \Big( \xi_{k, l}[\theta] \circ \eta[\theta]^{-1} \circ \pi_{\Image(F^C)}(x_l) + \sum\nolimits_{f\in F} \xi_{f, k, l}[\theta] \circ \pi_{\Ker(f^C)}(x_l)\Big) + r^*_{k}(\uy),
    $$
    where each $\xi$ is a polynomial, and each $r^*_{k}(\uy)$ is an $\LRC$-term.
    Clearly, $\bigwedge\nolimits_{f\in F}\bigwedge\nolimits_{l=1}^n f^C[\theta](x_{f, l}) = 0$ implies $x_{f, l} \in \Ker(f^C)$ for all $f \in F$ and $l \in \set{1, \dots, n}$.
    Hence, this conjunction implies
    \begin{align}
        r_k(\utau(\ux_F(\ux_f : f \in F)); \uy) = \sum\nolimits_{l=1}^n \Big( (\xi_{k, l} \cdot F^C)[\theta](x_{F, l}) + \sum\nolimits_{f\in F} \xi_{f, k, l}[\theta](x_{f, l})\Big) + r^*_k(\uy). \label{tag_eq_for_tern}
    \end{align}
    Now let $\underline{\nu}{}'(\ux_F(\ux_f : f \in F))$ and $\utau'(\ux'; \uy)$ witness the transformability of (\ref{tag_big_conjuctionn}) into the conjunction $\varphi(\uy) \wedge S(\ux'; \umu{}(\uy))$, where (\ref{tag_big_conjuctionn}) refers to both lines.
    By (ii) of Definition \ref{def_transfo}, $\varphi(\uy) \wedge S(\ux'; \umu{}(\uy))$ implies $\bigwedge\nolimits_{f\in F}\bigwedge\nolimits_{l=1}^n f^C[\theta](\tau'_{f, l}(\ux'; \uy)) = 0$, where each $\tau'_{f, l}(\ux'; \uy)$ is the entry of $\utau'(\ux'; \uy)$ that corresponds to $x_{f, l}$.
    Therefore, $\varphi(\uy) \wedge S(\ux'; \umu{}(\uy))$ implies that (\ref{tag_eq_for_tern}) holds with $\utau'(\ux'; \uy)$ plugged in for $\ux_F(\ux_f : f \in F)$.
    Since any entry of $\utau'(\ux'; \uy)$ can, by the definition of transformability, be written as the sum of an $\LKThe$-term in $\ux'$ and an $\LRC$-term in $\uy$, we now see that $\varphi(\uy) \wedge S(\ux'; \umu{}(\uy))$, together with (\ref{tag_eq_for_tern}), implies
    $$
    r_k(\utau(\utau'(\ux'; \uy)); \uy) = s_k(\ux') + r'_k(\uy)
    $$
    for some $\LKThe$-term $s_k(\ux')$ and some $\LRC$-term $r'_k(\uy)$.
    Finally, since $\underline{\nu}{}'(\underline{\nu}(\ux))$ and $\utau(\utau'(\ux'; \uy))$ witness that $E(\ux; \uy)$ is transformable into $\varphi(\uy) \wedge S(\ux'; \umu{}(\uy))$, this concludes the furthermore part.
\end{proof}
\end{corollary}

\subsection{Staged $C$-sequence-system} \label{sec_staged_c_ss}

We will now give a precise definition of what we previously referred to as ``pairs of parametrized $C$-sequence-systems'':

\begin{definition} \label{def_new_haram_staged_c_ss}
    A \textbf{parametrized staged $\mathbf{C}$-sequence-system} is a formula of the form
    $$
    S(\ux_\sh\ux_\va; \uy) = S_\sh(\ux_\sh; \uy_\sh) \wedge S_\va(\ux_\va; \ut(\ux_\sh) + \uy_\va)
    $$
    where $S_\sh(\ux_\sh; \uy_\sh)$ and $S_\va(\ux_\va; \uy_\va)$ are both parametrized $C$-sequence-systems, as in Definition \ref{def_param_c_sequence_system}, $\uy = \uy_\sh\uy_\va$, and $\ut(\ux_\sh)$ is a tuple of $\LKThe$-terms, such that
    $$
    \TKvsThe \models \forall \ux_\sh : S_\sh(\ux_\sh; \uzero) \rightarrow \text{``$\ut(\ux_\sh)$ is compatible with $S_\va$''}.
    $$
\end{definition}

\noindent As with (parametrized) $C$-sequence-systems, we will use the letter $S$ to denote parametrized staged $C$-sequence-systems.
Note that we will write $S(\ux_\sh\ux_\va; \uy)$ for parametrized staged $C$-sequence-systems and, as before, $S(\ux; \uy)$ for ``regular'' parametrized $C$-sequence-systems.

\begin{notationnumber} \label{not_expanded}
    We may write a formula $S(\ux_\sh\ux_\va; \uy) = S_\sh(\ux_\sh; \uy_\sh) \wedge S_\va(\ux_\va; \ut(\ux_\sh) + \uy_\va)$, where $S_\sh(\ux_\sh; \uy_\sh)$ and $S_\va(\ux_\va; \uy_\va)$ are both parametrized $C$-sequence-systems, and $\ut(\ux_\sh)$ is a tuple of $\LKThe$-terms, in \textbf{expanded form}, i.e.,
    $$
    S(\ux_\sh\ux_\va; \uy) = \underbrace{\bigwedge\nolimits_{k=1}^{n_\sh} f_{\sh, k}^{q_{\sh, k}}[\theta](x_{\sh, \ld, k}) = y_{\sh, k}}_{S_\sh(\ux_\sh; \uy_\sh)} \wedge \underbrace{\bigwedge\nolimits_{k=1}^{n_\va} f_{\va, k}^{q_{\va, k}}[\theta](x_{\va, \ld, k}) = t_k(\ux_\sh) + y_{\va, k}}_{S_\va(\ux_\va; \ut(\ux_\sh) + \uy_\va)}
    $$
    where everything is as for usual parametrized $C$-sequence-systems (see Definition \ref{def_param_c_sequence_system}), i.e., the tuples are $\ux_\sh = \ux_{\sh, \li}\ux_{\sh, \ld} = (x_{\sh, \li, 1}, \dots, x_{\sh, \li, m_{\sh}})(x_{\sh, \ld, 1}, \dots, x_{\sh, \ld, n_{\sh}})$, $\uy_\sh = (y_{\sh, 1}, \dots, y_{\sh, n_\sh})$, $\ux_\va = \ux_{\va, \li}\ux_{\va, \ld} = (x_{\va, \li, 1}, \dots, x_{\va, \li, m_{\va}})(x_{\va, \ld, 1}, \dots, x_{\va, \ld, n_{\va}})$, $\uy_\va = (y_{\va, 1}, \dots, y_{\va, n_\va})$, and:
    \begin{enumerate}[(i)]
        \item $m_\sh = m_\va = 0$ holds if $C$ is algebraic,
        \item for all $* \in \set{\sh, \va}$ and $k \in \set{1, \dots, n_*}$ the irreducible polynomial $f_{*, k}$ lies in $\Kp{0<C}$ and $q_{*, k} \in \set{q \in \NN : 0 < q \leq C(f_{*, k})}$.
    \end{enumerate}
    Furthermore, $\uy = \uy_\sh\uy_\va$ and $\ut(\ux_\sh) = (t_1(\ux_\sh), \dots, t_{n_\va}(\ux_\sh))$.
\end{notationnumber}

\noindent If we say that $S(\ux_\sh\ux_\va; \uy) = \bigwedge\nolimits_{k=1}^{n_\sh} f_{\sh, k}^{q_{\sh, k}}[\theta](x_{\sh, \ld, k}) = y_{\sh, k} \wedge \bigwedge\nolimits_{k=1}^{n_\va} f_{\va, k}^{q_{\va, k}}[\theta](x_{\va, \ld, k}) = t_k(\ux_\sh) + y_{\va, k}$ is in expanded form, we implicitly assume that all tuples of variables and terms are as outlined above and that both (i) and (ii) hold.

Our first goal will be to extend all the usual definitions, such as compatibility and boundedness, from ``regular'' $C$-sequence-systems to our new parametrized staged $C$-sequence-systems.
The $\TKvsThe \models \forall \ux_\sh : S_\sh(\ux_\sh; \uzero) \rightarrow \text{``$\ut(\ux_\sh)$ is compatible with $S_\va$''}$ condition in Definition \ref{def_new_haram_staged_c_ss} is sometimes a bit hard to work with, so we will give some alternatives for it in the following lemma:

\begin{lemma} \label{lemma_staged_c_ss_iff}
    Given a formula $S(\ux_\sh\ux_\va; \uy) \hspace{-0.26pt}=\hspace{-0.26pt} S_\sh(\ux_\sh; \uy_\sh) \wedge S_\va(\ux_\va; \ut(\ux_\sh) + \uy_\va)$, where $S_\sh(\ux_\sh; \uy_\sh)$ and $S_\va(\ux_\va; \uy_\va)$ are both parametrized $C$-sequence-systems, and $\ut(\ux_\sh)$ is a tuple of $\LKThe$-terms, the following are equivalent:
    \begin{enumerate}[(i)]
        \item $
        \TKvsThe \models \forall \ux_\sh : S_\sh(\ux_\sh; \uzero) \rightarrow \text{``$\ut(\ux_\sh)$ is compatible with $S_\va$''}
        $, i.e., the formula $S(\ux_\sh\ux_\va; \uy)$ is a parametrized staged $C$-sequence-system.
        \item $
        \TKvsThe \cup \set{\text{``$\theta$ is $C$-image-complete''}} \models \forall \ux_\sh : S_\sh(\ux_\sh; \uzero) \rightarrow \text{``$\ut(\ux_\sh)$ is compatible with $S_\va$''}.
        $
        \item Writing $S(\ux_\sh\ux_\va; \uy) = \bigwedge\nolimits_{k=1}^{n_\sh} f_{\sh, k}^{q_{\sh, k}}[\theta](x_{\sh, \ld, k}) = y_{\sh, k} \wedge \bigwedge\nolimits_{k=1}^{n_\va} f_{\va, k}^{q_{\va, k}}[\theta](x_{\va, \ld, k}) = t_k(\ux_\sh) + y_{\va, k}$ in expanded form, for each $k \in \set{1, \dots, n_\va}$ with $C(f_{\va, k}) < \infty$, there is an $\LKThe$-term $s_k(\uy)$ such that
        $$
        \TKvsThe \models \forall \ux_\sh\uy : S_\sh(\ux_\sh; \uy_\sh) \rightarrow f_{\va, k}^{C-q_{\va, k}}[\theta](t_k(\ux_\sh) + y_{\va, k}) = s_k(\uy).
        $$
    \end{enumerate}
\begin{proof}
    The implications ``(i) $\Rightarrow$ (ii)'' and ``(iii) $\Rightarrow$ (i)'' are clear.
    For ``(ii) $\Rightarrow$ (iii)'', fix some $j \in \set{1, \dots, n_\va}$ with $C(f_{\va, j}) < \infty$.
    Obviously, there is an $\LKThe$-term
    $$
    s(\ux_\sh; \uy) = \sum\nolimits_{l=1}^{m_\sh} \rho_{\li, l}[\theta](x_{\sh, \li, l}) + \sum\nolimits_{l=1}^{n_\sh} \rho_{\ld, l}[\theta](x_{\sh, \ld, l}) + r(\uy)
    $$
    where all the $\rho$'s are polynomials and $r(\uy)$ is an $\LKThe$-term, such that
    \begin{align}
        \TKvsThe \models \forall \ux_\sh\uy : \Big(\bigwedge\nolimits_{k=1}^{n_\sh} f_{\sh, k}^{q_{\sh, k}}[\theta](x_{\sh, \ld, k}) = y_{\sh, k}\Big) \rightarrow f^{C-q_{\va, j}}_{\va, j}[\theta](t_j(\ux_\sh) + y_{\va, j}) = s(\ux_\sh; \uy). \label{tag_implication_with_s}
    \end{align}
    With some Euclidean divisions, we can furthermore ensure that the inequality $\deg(\rho_{\ld, l}) < \deg(f^{q_{\sh, l}}_{\sh, l})$ holds for all $l \in \set{1, \dots, n_\sh}$.
    We need to show that all the $\rho$'s are actually $0$ in order to prove (iii).
    Let $\Sigma(\ux_\sh)$ denote the partial type that states that the sequence
    $$
    (\theta^i(x_{\sh, \li, k}) : k \in \set{1, \dots, m_\sh}, i \in \omega)^\frown (\theta^i(x_{\sh, \ld, k}) : k \in \set{1, \dots, n_\sh}, 0 \leq i < \deg(f_{\sh, k}^{q_{\sh, k}}))
    $$
    is linearly independent.
    Since for every formula in $\Sigma(\ux_\sh)$, the corresponding $\LK$-formula in $\uxvec{}_\sh$ is bounded by $S_\sh$ and implies no finite disjunction of non-trivial linear dependencies in $\uxvec{}_\sh$, we can use our characterization of existentially closed models and compactness to see that the partial type $\Sigma(\ux_\sh) \cup \set{S_\sh(\ux_\sh; \uzero)}$ has a realization $\uv_\sh$ in some $(\VV, \theta) \models \TKvs\theta^C$.
    Since existentially closed models are $C$-image-complete, we obtain
    $$
    0 \underset{\text{(ii)}}{=} f^{C-q_{\va, j}}_{\va, j}[\theta](t_j(\uv_\sh)) \underset{\text{(\ref{tag_implication_with_s})}}{=} s(\uv_\sh; \uzero) = \sum\nolimits_{l=1}^{m_\sh} \rho_{\li, l}[\theta](v_{\sh, \li, l}) + \sum\nolimits_{l=1}^{n_\sh} \rho_{\ld, l}[\theta](v_{\sh, \ld, l}).
    $$
    However, since $\uv_\sh$ realizes $\Sigma(\ux_\sh)$, and $\deg(\rho_{\ld, l}) < \deg(f_{\sh, l}^{q_{\sh, l}})$ holds for all $l \in \set{1, \dots, n_\sh}$, all $\rho$'s must indeed be $0$.
\end{proof}
\end{lemma}

\noindent We now introduce the definition of compatibility for parametrized staged $C$-sequence-systems:

\begin{definition} \label{def_new_haram_comp}
     Fix $(\VV, \theta) \models \TKvsThe$.
    \begin{enumerate}[(i)]
        \item Let a parametrized staged $C$-sequence-system $S(\ux_\sh\ux_\va; \uy) = S_\sh(\ux_\sh; \uy_\sh) \wedge S_\va(\ux_\va; \ut(\ux_\sh) + \uy_\va)$, as in Definition \ref{def_new_haram_staged_c_ss}, be given.
        We call a tuple $\uu := \uu_\sh\uu_\va \in \VV$ \textbf{compatible} with $S$ if the following two conditions hold:
        \begin{enumerate}[(a)]
            \item $\uu_\sh$ is compatible with $S_\sh$.
            \item Given any extension $(\VV', \theta') \supseteq (\VV, \theta)$ with $(\VV', \theta') \models \TKvsThe$, and any $\uv_\sh \in \VV'$ with $(\VV', \theta') \models S_\sh(\uv_\sh; \uu_\sh)$, the tuple $\ut(\uv_\sh) + \uu_\va$ is compatible with $S_\va$.
        \end{enumerate}
        \item We define a \textbf{staged $\bm{C}$-sequence-system} over $(\VV, \theta)$ to be a parametrized staged $C$-sequence-system $S(\ux_\sh\ux_\va; \uy)$ with some compatible tuple $\uu$ plugged in for $\uy$.
    \end{enumerate}
\end{definition}

\noindent Recall that the definition of a tuple being compatible with a ``regular'' $C$-sequence-system is just that certain entries of it lie in some kernels.
Hence, we can easily find a first-order formula for this.
However, (i) in Definition \ref{def_new_haram_comp} refers to extensions, so it is not as easy to find such a formula for our parametrized staged $C$-sequence-systems; nevertheless, (iii) of Lemma \ref{lemma_staged_c_ss_iff} makes it possible:

\begin{lemma} \label{corollary_comP_fml_staged}
    Given a parametrized staged $C$-sequence-system
    $$
    S(\ux_\sh\ux_\va; \uy) = \bigwedge\nolimits_{k=1}^{n_\sh} f_{\sh, k}^{q_{\sh, k}}[\theta](x_{\sh, \ld, k}) = y_{\sh, k} \wedge \bigwedge\nolimits_{k=1}^{n_\va} f_{\va, k}^{q_{\va, k}}[\theta](x_{\va, \ld, k}) = t_k(\ux_\sh) + y_{\va, k}
    $$ written in expanded form (see Notation \ref{not_expanded}), a tuple $\uu = \uu_\sh\uu_\va$ is compatible with $S(\ux_\sh\ux_\va; \uy)$, as in Definition \ref{def_new_haram_comp} above, if and only if the following two conditions hold:
    \begin{enumerate}[(a)]
        \item $u_{\sh, k} \in \Ker(f^{C-q_{\sh, k}}_{\sh, k})$ for all $k \in \set{1, \dots, n_\sh}$ with $C(f_{\sh, k}) < \infty$.
        \item $s_k(\uu) = 0$ for all $k \in \set{1, \dots, n_\va}$ with $C(f_{\va, k}) < \infty$.
        Here the $s_k(\uy)$'s are the $\LKThe$-terms from (iii) of Lemma \ref{lemma_staged_c_ss_iff}.
    \end{enumerate}
    In particular, there is a conjunction of $\LKThe$-equations $\delta_S(\uy)$ that, modulo $\TKvsThe$, expresses ``$\uy$ is compatible with $S$''.
\begin{proof}
    By the definition of compatibility for parametrized $C$-sequence-systems (see Definition \ref{def_compatible}), the tuple $\uu_\sh$ is compatible with $S_\sh(\ux_\sh; \uy_\sh) := \bigwedge\nolimits_{k=1}^{n_\sh} f_{\sh, k}^{q_{\sh, k}}[\theta](x_{\sh, \ld, k}) = y_{\sh, k}$ if and only if (a) holds.
    Similarly, given some $\uv_\sh$ that realizes $S_\sh(\ux_\sh; \uu_\sh)$, the tuple $\ut(\uv_\sh) + \uu_\va$ is compatible with the parametrized $C$-sequence-system $S_\va(\ux_\va; \uy_\va) := \bigwedge\nolimits_{k=1}^{n_\va} f_{\va, k}^{q_{\va, k}}[\theta](x_{\va, \ld, k}) = y_{\va, k}$ if and only if $f^{C-q_{\va, k}}_{\va, k}[\theta](t_k(\uv_\sh) + u_{\va, k}) = 0$ holds for all $k \in \set{1, \dots, n_\va}$ with $C(f_{\va, k}) < \infty$.
    By (iii) of Lemma \ref{lemma_staged_c_ss_iff}, we have the equality $f^{C-q_{\va, k}}_{\va, k}[\theta](t_k(\uv_\sh) + u_{\va, k}) = s_k(\uu)$ for those $k$, since $\uv_\sh$ realizes $S_\sh(\ux_\sh; \uu_\sh)$.
\end{proof}
\end{lemma}

\noindent With the formula $\delta_S(\uy)$ from Lemma \ref{corollary_comP_fml_staged} above, we can now define the analogue, for parametrized staged $C$-sequence-systems, of a compatible pair for ``regular'' parametrized $C$-sequence-systems (see Definition \ref{def_compatible_pair}):

\begin{definition} \label{def_comp_pair_staged}
Let
$
S(\ux_\sh\ux_\va; \tiluy) = S_\sh(\ux_\sh; \tiluy_\sh) \wedge S_\va(\ux_\va; \ut(\ux_\sh) + \tiluy_\va)
$
be a parametrized staged $C$-sequence-system, let $\varphi(\uy)$ be a conjunction of $\LRC$-equations, and let $\umu{}(\uy) = \umu{}_\sh(\uy)\umu{}_\va(\uy)$ be a tuple of $\LRC$-terms.
We say that the pair $(\varphi, \umu{})$ is \textbf{compatible} with $S$ if
        $$
        \TKvsThe \cup \set{\text{``$\theta$ is $C$-image-complete''}} \models \forall \uy \in \VV : \varphi(\uy) \rightarrow \text{``$\umu{}(\uy)$ is compatible with $S$''}.
        $$
\end{definition}

\begin{lemma} \label{obs_staged_comp}
    Fix a formula of the form $S(\ux_\sh\ux_\va; \tiluy) = S_\sh(\ux_\sh; \tiluy_\sh) \wedge S_\va(\ux_\va; \ut(\ux_\sh) + \tiluy_\va)$, where $S_\sh(\ux_\sh; \tiluy_\sh)$ and $S_\va(\ux_\va; \tiluy_\va)$ are parametrized $C$-sequence-systems, and $\ut(\ux_\sh)$ is a tuple of $\LKThe$-terms.
    Given a conjunction $\varphi(\uy)$ of $\LRC$-equations and a tuple $\umu{}(\uy) = \umu{}_\sh(\uy)\umu{}_\va(\uy)$ of $\LRC$-terms, the following are equivalent:
    \begin{enumerate}[(i)]
        \item The following two sentences hold in $\TKvsThe \cup \set{\text{``$\theta$ is $C$-image-complete''}}$:
        \begin{enumerate}[(a)]
            \item $\forall \uy \in \VV : \varphi(\uy) \rightarrow \text{``$\umu{}_\sh(\uy)$ is compatible with $S_\sh$''}$,
            \item $\forall \ux_\sh\uy \in \VV : \varphi(\uy) \wedge S_\sh(\ux_\sh; \umu{}_\sh(\uy)) \rightarrow \text{``$\ut(\ux_\sh) + \umu{}_\va(\uy)$ is compatible with $S_\va$''}$.
        \end{enumerate}
        \item $S(\ux_\sh\ux_\va; \tiluy)$ is a parametrized staged $C$-sequence-system, and the pair $(\varphi, \umu{})$ is compatible with $S$.
    \end{enumerate}
\begin{proof}
    The implication ``(ii) $\Rightarrow$ (i)'' is clear by the definition of a compatible tuple $\uu$ (Definition \ref{def_new_haram_comp}) and the definition of a compatible pair $(\varphi, \umu{})$ (Definition \ref{def_comp_pair_staged}).
    For the other implication, notice that, since any conjunction of $\LRC$-equations with $\uzero$ plugged in holds, (b) implies
    $$
    \TKvsThe \cup \set{\text{``$\theta$ is $C$-image-complete''}} \models \forall \ux_\sh \in \VV : S_\sh(\ux_\sh; \uzero) \rightarrow \text{``$\ut(\ux_\sh)$ is compatible with $S_\va$''}.
    $$
    With Lemma \ref{lemma_staged_c_ss_iff}, we immediately see that $S(\ux_\sh\ux_\va; \tiluy)$ is a parametrized staged $C$-sequence-system.
    Fix $(\VV, \theta) \models \TKvsThe \cup \set{\text{``$\theta$ is $C$-image-complete''}}$ and some $\uu \in \VV$ with $(\VV, \theta) \models \varphi(\uu)$.
    By the definition of compatibility for parametrized $C$-sequence-systems, it is clear that (a) implies that $\umu{}_\sh(\uu)$ satisfies point (a) in Lemma \ref{corollary_comP_fml_staged}.
    With (iii) of Lemma \ref{lemma_staged_c_ss_iff}, one can also easily verify that (b) implies that $\umu{}(\uu)$ satisfies point (b) in Lemma \ref{corollary_comP_fml_staged}.
    We conclude that $\umu{}(\uu)$ is compatible with $S$.
    This shows that $(\varphi, \umu{})$ is compatible with $S$.
\end{proof}
\end{lemma}

\noindent Unlike compatibility, the generalization of boundedness for parametrized staged $C$-sequence-systems is straightforward:

\begin{definition}
    \label{def_bound_staged_c_ss_new}
    Let $S(\ux_\sh\ux_\va; \uy) = S_\sh(\ux_\sh; \uy_\sh) \wedge S_\va(\ux_\va; \ut(\ux_\sh) + \uy_\va)$ be a parametrized staged $C$-sequence-system as in Definition \ref{def_new_haram_staged_c_ss}.
    \begin{enumerate}[(i)]
        \item If $\psi(\uxvec{}_\sh\uxvec{}_\va; \uw)$ is a formula, then we say $\psi(\uxvec{}_\sh\uxvec{}_\va; \uw)$ is \textbf{bounded} by $S$ if both the formulas $\psi_\sh(\uxvec{}_\sh;\uxvec{}_\va \uw) := \psi(\uxvec{}_\sh\uxvec{}_\va; \uw)$ and $\psi_\va(\uxvec{}_\va;\uxvec{}_\sh \uw) := \psi(\uxvec{}_\sh\uxvec{}_\va; \uw)$ are bounded by the param\-etrized $C$-sequence-systems $S_\sh$ and $S_\va$, respectively, as in Definition \ref{def_formual_bounded}.

        Writing $S(\ux_\sh\ux_\va; \uy) = \bigwedge\nolimits_{k=1}^{n_\sh} f_{\sh, k}^{q_{\sh, k}}[\theta](x_{\sh, \ld, k}) = y_{\sh, k} \wedge \bigwedge\nolimits_{k=1}^{n_\va} f_{\va, k}^{q_{\va, k}}[\theta](x_{\va, \ld, k}) = t_k(\ux_\sh) + y_{\va, k}$ in expanded form (see Notation \ref{not_expanded}), this is equivalent to the condition that the placeholder variables $x^i_{*, \ld, k}$ for $\theta^i(x_{*, \ld, k})$ may only appear in the formula $\psi(\uxvec{}_\sh\uxvec{}_\va; \uw)$ for $* \in \set{\sh, \va}$, $k \in \set{1, \dots, n_*}$, and $i \in \omega$ with $i < \deg(f^{q_{*, k}}_{*, k})$.
        This is also equivalent to the condition that $\theta^i(x_{*, \ld, k})$ may only appear in $\psi_\theta(\ux_\sh\ux_\va; \uw)$ for the respective $*$, $k$, and $i$.
        \item Given $\psi(\uxvec{}_\sh\uxvec{}_\va; \uw)$ as in (i), we say that $\psi_\theta(\ux_\sh\ux_\va; \uw)$ is \textbf{bounded} by $S$ if $\psi(\uxvec{}_\sh\uxvec{}_\va; \uw)$ is bounded by $S$.
        \item If $S'(\ux_\sh\ux_\va) := S(\ux_\sh\ux_\va; \uu)$ is a staged $C$-sequence-system over some $(\VV, \theta)$, then we say that a formula is \textbf{bounded} by $S'$ if it is bounded by the underlying parametrized staged $C$-sequence-system $S$.
    \end{enumerate}
\end{definition}

\noindent Recall our characterization of existentially closed models: A  model $(\mm, \theta) \models T^C_\theta$ is existentially closed if and only if it is $C$-image-complete and $\exists \ux \in \VV : \psi_\theta(\ux) \wedge S(\ux)$ holds for $C$-sequence-systems $S(\ux)$ over $(\VV, \theta)$ and $L(M)$-formulas $\psi(\uxvec{})$ that are bounded by $S$ and imply no finite disjunction of non-trivial linear dependencies in $\uxvec{}$ over $\VV$. When we proved the implication from left to right in Section 4.3 in \cite{Chi25}, we actually proved a stronger version that also works for so-called generalized $C$-sequence-systems, which are defined as follows:

\begin{definition} \label{def_param_gen_c_ss}
    Let $\ux := \ux_\li \ux_\ld := (x_{\lii, k} : 1 \leq k \leq m)(x_{\ldd, k} : 1 \leq k \leq n)$ and $(\VV, \theta) \models \TKvsThe$ be given.
    A \textbf{generalized $\mathbf{C}$-sequence-system} over $(\VV, \theta)$ is an $\LKThe(\VV)$-formula of the form
    \begin{align*}
        S(\ux) = & \bigwedge\nolimits_{k=1}^n \xi_k[\theta](x_{\ldd, k}) = \sum\nolimits_{l=1}^m P_{k, l}[\theta](x_{\lii, l}) + \sum\nolimits_{l=1}^{k-1} Q_{k, l}[\theta](x_{\ldd, l}) + u_{k}
    \end{align*}
    that satisfies the following conditions:
    \begin{enumerate}[(i)]
        \item If $C$ is algebraic, we require $m = 0$.
        \item Each $\xi_k$ is a monic polynomial that either satisfies $\Fac(\xi_k) \subseteq \Kp{C=\infty}$ or is of the form $f^q$ for some $f \in \Kp{0<C<\infty}$ and some $q$ with $0 < q \leq C(f)$.
        \item Given any $k$ with $\xi_k = f^q$ for some $f \in \Kp{0<C<\infty}$ and $q \in \NN$ with $0 < q \leq C(f)$, the following holds:
        $$
        \TKvsThe \cup \Diag(\VV, \theta) \models \forall \ux : S(\ux) \rightarrow f^C[\theta](x_{\ldd, k}) = 0.
        $$
        Here $\Diag(\VV, \theta)$ is the \textbf{diagram} of $(\VV, \theta)$, which is the set of all quantifier-free $\LKThe(\VV)$-sentences that hold in $(\VV, \theta)$.
    \end{enumerate}
\end{definition}

\noindent We will now use this stronger version of our characterization of existentially closed models to prove the following lemma, simply by showing that the staged $C$-sequence-systems introduced in this section are essentially a special form of generalized ones:

\begin{lemma} \label{theorem_stronger_left_to_right_new}
    Let $(\mm, \theta) \models T^C_\theta$ be existentially closed.
    Then
    $$
    (\mm, \theta) \models \exists \ux_\sh\ux_\va \in \VV : \psi_\theta(\ux_\sh\ux_\va) \wedge S(\ux_\sh\ux_\va)
    $$
    holds for every staged $C$-sequence-system $S(\ux_\sh\ux_\va)$ and every $L(M)$-formula $\psi(\uxvec{}_\sh\uxvec{}_\va)$ that is bounded by $S$ and implies no finite disjunction of non-trivial linear dependencies in $\uxvec{}_\sh\uxvec{}_\va$ over $\VV$.
\begin{proof}
    Write $S(\ux_\sh\ux_\va) = \bigwedge\nolimits_{k=1}^{n_\sh} f_{\sh, k}^{q_{\sh, k}}[\theta](x_{\sh, \ld, k}) = u_{\sh, k} \wedge \bigwedge\nolimits_{k=1}^{n_\va} f_{\va, k}^{q_{\va, k}}[\theta](x_{\va, \ld, k}) = t_k(\ux_\sh) + u_{\va, k}$ as a parametrized staged $C$-sequence-system in expanded form with some compatible parameters $\uu \in \VV$ plugged in.
    Rearranging $\ux_\sh\ux_\va$ into $\ux = \ux_\li\ux_\ld$, where $\ux_\li = \ux_{\sh,\li}\ux_{\va, \li}$ and $\ux_\ld = \ux_{\sh,\ld}\ux_{\va, \ld}$, one sees that $S(\ux_\sh\ux_\va)$ is a generalized $C$-sequence-system over $(\VV, \theta)$, as defined above in Definition \ref{def_param_gen_c_ss}.
    Points (i) and (ii) of Definition \ref{def_param_gen_c_ss} are easily verified.
    Point (iii) of Definition \ref{def_param_gen_c_ss} follows since we have
    $$
    \TKvsThe \cup \Diag(\VV, \theta) \models \forall \ux_\sh\ux_\va : S(\ux_\sh\ux_\va) \rightarrow f^C_{\sh, k}[\theta](x_{\sh, \ld, k}) = f^{C-q_{\sh, k}}_{\sh, k}[\theta](u_{\sh, k}) = 0,
    $$
    for all $k \in \set{1, \dots, n_\sh}$ with $C(f_{\sh, k}) < \infty$ (see (a) of Lemma \ref{corollary_comP_fml_staged}), and
    $$
    \TKvsThe \cup \Diag(\VV, \theta) \models \forall \ux_\sh\ux_\va : S(\ux_\sh\ux_\va) \!\rightarrow\! f^C_{\va, k}[\theta](x_{\va, \ld, k}) \!=\! f^{C-q_{\va, k}}_{\va, k}[\theta](t_k(\ux_\sh) + u_{\va, k}) \!=\! s_k(\uu) \!=\! 0,
    $$
    for all $k \in \set{1, \dots, n_\va}$ with $C(f_{\va, k}) < \infty$ (see (iii) of Lemma \ref{lemma_staged_c_ss_iff} and (b) of Lemma \ref{corollary_comP_fml_staged}).
    One can also easily verify that the definition of $\psi(\uxvec{}_\sh\uxvec{}_\va)$ being bounded by $S(\ux_\sh\ux_\va)$, as in Definition \ref{def_bound_staged_c_ss_new} above, coincides with the definition of $\psi(\uxvec{}_\sh\uxvec{}_\va)$ being bounded by $S(\ux_\sh\ux_\va)$ viewed as a generalized $C$-sequence-system, i.e., Definition 4.16 in \cite{Chi25}.
    Now, as $\psi(\uxvec{}_\sh\uxvec{}_\va)$ implies no finite disjunction of non-trivial linear dependencies in $\uxvec{}_\sh\uxvec{}_\va$ over $\VV$, we can apply Theorem 4.18 in \cite{Chi25} to conclude.
\end{proof}
\end{lemma}

\noindent The goal for the rest of this section is to state and prove a version of the \TrafoLemma{} for parametrized staged $C$-sequence-systems. Recall that our \TrafoLemma{} (Lemma \ref{lemma_trafoo}) states that a conjunction of the form $S(\ux; \umu{}_1(\uy)) \wedge E(\ux; \umu{}_2(\uy))$ is transformable into a formula of the form $\varphi'(\uy) \wedge S'(\ux'; \umu{}'(\uy))$, where $(\varphi', \umu{}')$ is compatible with $S'$, i.e., $\varphi'(\uy)$ implies that $\umu{}'(\uy)$ is compatible with $S'$.
A crucial part of the \TrafoLemma{} is that, if $E(\ux; \uy_2)$ is non-trivial in $\ux$ and bounded by $S$, then $(\rk(S'), \deg(S')) <_\Lex (\rk(S), \deg(S))$ holds.
In the case of parametrized staged $C$-sequence-systems, we use the following instead of the rank and degree:

\begin{definition} \label{def_ranks_and_degs}
    Let $S(\ux_\sh\ux_\va; \uy) = S_\sh(\ux_\sh; \uy_\sh) \wedge S_\va(\ux_\va; \ut(\ux_\sh) + \uy_\va)$ be a parametrized staged $C$-sequence-system with everything as in Definition \ref{def_new_haram_staged_c_ss}.
    We define
    \begin{multicols}{2}
    \begin{enumerate}[(i)]
        \item $\rk_\va(S) := \rk(S_\va)$,
        \item $\deg_\va(S) := \deg(S_\va)$,
        \item $\rk_\sh(S) := \rk(S_\sh)$,
        \item $\deg_\sh(S) := \deg(S_\sh)$.
    \end{enumerate}
    \end{multicols}
\end{definition}

\noindent Similarly to ``regular'' parametrized $C$-sequence-systems, we can now ``order'' parametrized staged $C$-sequence-systems using $(\rk_\va(S), \deg_\va(S), \rk_\sh(S), \deg_\sh(S))$ with the lexicographic order, where the first entry is the most significant, i.e., $(0, 0, 0, 1) <_\Lex (1, 0, 0, 0)$.
As with ``regular'' parametrized $C$-sequence-systems, we will also make use of the fact that $(\NN^4, <_\Lex)$ is a well-ordered set.

\begin{lemma}[Transformation Lemma for staged $C$-sequence-systems] \label{lemma_trafo_for_staged_c_ss}
    Let $S(\ux_\sh\ux_\va; \uy_1)$ be a parame\-trized staged $C$-sequence-system, $E(\ux_\sh\ux_\va; \uy_2)$ a conjunction of $\LKThe$-equations that is non-trivial in $\ux_\sh\ux_\va$ and bounded by $S$, $\umu{}_1(\uy)$ and $\umu{}_2(\uy)$ two tuples of $\LRC$-terms, and $\varphi(\uy)$ a conjunction of $\LRC$-equations such that $(\varphi, \umu{}_1)$ is compatible with $S$.
    Then the conjunction $\varphi(\uy) \wedge S(\ux_\sh\ux_\va; \umu{}_1(\uy)) \wedge E(\ux_\sh\ux_\va; \umu{}_2(\uy))$ is transformable into a formula of the form
    $$
    \varphi'(\uy) \wedge S'(\ux'_\sh\ux'_\va; \umu{}'(\uy))
    $$
    where $S'(\ux'_\sh\ux'_\va; \uy')$ is another parametrized staged $C$-sequence-system, and $(\varphi', \umu{}')$ is a pair compatible with $S'$.
    Furthermore:
    \begin{enumerate}[(i)]
        \item There are witnesses $\underline{\nu}{}(\ux_\sh\ux_\va)$ and $\utau(\ux'_\sh\ux'_\va; \uy)$ of this transformability such that the subtuple $\underline{\nu}{}_\sh(\ux_\sh)$ of $\underline{\nu}{}(\ux_\sh\ux_\va)$ that corresponds to the subtuple $\ux'_\sh \subseteq \ux'$ depends only on $\ux_\sh$, and the subtuple $\utau{}_\sh(\ux'_\sh\ux'_\va; \uy)$ of $\utau(\ux'_\sh\ux'_\va; \uy)$ that corresponds to $\ux_\sh \subseteq \ux$ depends only on $\ux'_\sh\uy$.
        \item If $E(\ux_\sh\ux_\va; \uy_2)$ is non-trivial in $\ux_\va$, then $(\rk_\va(S'), \deg_\va(S')) <_\Lex (\rk_\va(S), \deg_\va(S))$ holds.
        If the conjunction $E(\ux_\sh\ux_\va; \uy_2)$ is trivial in $\ux_\va$, then both $(\rk_\va(S'), \deg_\va(S')) = (\rk_\va(S), \deg_\va(S))$ and $(\rk_\sh(S'), \deg_\sh(S')) <_\Lex (\rk_\sh(S), \deg_\sh(S))$ hold.
    \end{enumerate}
\begin{proof}
    Throughout the proof, we assume that $S(\ux_\sh\ux_\va; \uy_1) = S_\sh(\ux_\sh; \uy_{1, \sh}) \wedge S_\va(\ux_\va; \ut(\ux_\sh) + \uy_{1, \va})$ and that $\umu{}_1(\uy) = \umu{}_{1, \sh}(\uy)\umu{}_{1, \va}(\uy)$.
    We may write $\ux := \ux_\sh\ux_\va$ and $\ux' := \ux'_\sh\ux'_\va$ when convenient.
    We also work in $\TKvsThe \cup \set{\text{``$\theta$ is $C$-image-complete''}}$.
    Thus, when we say that a formula implies another one, we mean that this implication holds modulo $\TKvsThe \cup \set{\text{``$\theta$ is $C$-image-complete''}}$.
    We start by showing that $\varphi(\uy) \wedge S(\ux_\sh\ux_\va; \umu{}_1(\uy)) \wedge E(\ux_\sh\ux_\va; \umu{}_2(\uy))$ is transformable into a formula $\varphi'(\uy) \wedge S'(\ux'_\sh\ux'_\va; \umu{}'(\uy))$, as described in the statement.
    For this, we distinguish between the case where $E(\ux_\sh\ux_\va; \uy_2)$ is trivial in $\ux_\va$ and the case where it is non-trivial in $\ux_\va$.

    \begin{subclaim}
        The lemma holds if $E(\ux_\sh\ux_\va; \uy_2) =: E'(\ux_\sh; \uy_2)$ is trivial in $\ux_\va$.
    \begin{innerproof}
        In this case, we use the \TrafoLemma{} to see that
        \begin{align}
            S_\sh(\ux_\sh; \umu{}_{1,\sh}(\uy)) \wedge (\varphi(\uy) \wedge E'(\ux_\sh; \umu{}_2(\uy))) \quad \text{is transformable into}\quad \varphi'(\uy) \wedge S'_\sh(\ux'_\sh; \umu{}'_\sh(\uy)) \label{tag_trafo_for_imply}
        \end{align}
        where $S'_\sh(\ux'_\sh; \uy'_\sh)$ is another parametrized $C$-sequence-system, and $(\varphi', \umu{}'_\sh)$ is compatible with $S'_\sh$.
        Let $\underline{\nu}{}_\sh(\ux_\sh)$ and $\utau{}_\sh(\ux'_\sh; \uy)$ witness this transformability.
        Recall that each entry of $\utau{}_\sh(\ux'_\sh; \uy)$ is, by the definition of transformability, the sum of an $\LKThe$-term in $\ux'_\sh$ and an $\LRC$-term in $\uy$.
        Hence, we can find a tuple of $\LKThe$-terms $\ut'(\ux'_\sh)$ and a tuple of $\LRC$-terms $\umu{}'_\va(\uy)$ such that
        $$
        \ut'(\ux'_\sh) + \umu{}'_\va(\uy) = \ut(\utau{}_\sh(\ux'_\sh; \uy)) + \umu{}_{1, \va}(\uy).
        $$
        Let $\ux'_\va$ and $\uy'_\va$ be tuples of the same lengths as $\ux_\va$ and $\uy_{1, \va}$.
        Setting $\underline{\nu}{}_\va(\ux_\va) := \ux_\va$ and $\utau{}_\va(\ux'_\va; \uy) := \ux'_\va$, (i) of Observation \ref{obser_trafo_nice_2} yields that $\underline{\nu}{}_\sh(\ux_\sh)\underline{\nu}_\va(\ux_\va)$ and $\utau{}_\sh(\ux'_\sh; \uy)\utau{}_\va(\ux'_\va; \uy)$ witness that the conjunction $\varphi(\uy) \wedge S(\ux_\sh\ux_\va; \umu{}_1(\uy)) \wedge E(\ux_\sh\ux_\va; \umu{}_2(\uy))$ is transformable into
        $$
        \varphi'(\uy) \wedge S'_\sh(\ux'_\sh; \umu{}'_\sh(\uy)) \wedge S_\va(\ux'_\va; \ut'(\ux'_\sh) + \umu{}'_\va(\uy)).
        $$
        By the definition of compatibility, $\varphi'(\uy)$ implies that $\umu{}'_\sh(\uy)$ is compatible with $S'_\sh$.
        By (ii) of the definition of transformability (see Definition \ref{def_transfo}) and (\ref{tag_trafo_for_imply}), the conjunction $\varphi'(\uy) \wedge S'_\sh(\ux'_\sh; \umu{}'_\sh(\uy))$ implies $\varphi(\uy) \wedge S_\sh(\utau{}_\sh(\ux'_\sh; \uy); \umu{}_{1,\sh}(\uy))$.
        Since $(\varphi, \umu{}_1)$ is compatible with $S$, Lemma \ref{obs_staged_comp} yields that the conjunction $\varphi'(\uy) \wedge S'_\sh(\ux'_\sh; \umu{}'_\sh(\uy))$ also implies that $\ut(\utau{}_\sh(\ux'_\sh; \uy)) + \umu{}_{1, \va}(\uy) = \ut'(\ux'_\sh) + \umu{}'_{\va}(\uy)$ is compatible with $S_\va$.
        Set $\umu{}'(\uy) := \umu{}'_\sh(\uy)\umu{}'_\va(\uy)$.
        Now, by Lemma \ref{obs_staged_comp}, $S'(\ux'; \uy') := S'_\sh(\ux'_\sh; \uy'_\sh) \wedge S_\va(\ux'_\va; \ut'(\ux'_\sh) + \uy'_{\va})$ is a parametrized staged $C$-sequence-system with which the pair $(\varphi', \umu{}')$ is compatible.

        It is clear that our tuples $\underline{\nu}(\ux_\sh\ux_\va) := \underline{\nu}{}_\sh(\ux_\sh)\underline{\nu}_\va(\ux_\va)$ and $\utau{}(\ux'_\sh\ux'_\va; \uy) := \utau{}_\sh(\ux'_\sh; \uy)\utau{}_\va(\ux'_\va; \uy)$, which witness the transformability, are as described in (i).
        Since $E(\ux; \uy_2)$ is non-trivial and bounded by $S$, $E'(\ux_\sh; \uy_2)$ is non-trivial and bounded by $S_\sh$, so our \TrafoLemma{} yields the lexicographical inequality $(\rk(S'_\sh), \deg(S'_\sh)) <_\Lex (\rk(S_\sh), \deg(S_\sh))$.
        Since $(\rk_\va(S'), \deg_\va(S')) = (\rk_\va(S), \deg_\va(S))$ also holds, (ii) follows immediately.
    \end{innerproof}
    \end{subclaim}
    \begin{subclaim}
        The lemma also holds if $E(\ux; \uy_2) =: E'(\ux_\va; \ux_\sh\uy_2)$ is non-trivial in $\ux_\va$.
    \begin{innerproof}
        In this case, our \TrafoLemma{} (with $\ux_\sh\uy$ taking the role of $\uy$) shows that the conjunction $S_\va(\ux_\va; \ut(\ux_\sh) + \umu{}_{1, \va}(\uy)) \wedge (\varphi(\uy) \wedge E'(\ux_\va; \ux_\sh\umu{}_2(\uy)))$ is transformable into a formula of the form
    $$
    \varphi_*(\ux_\sh\uy) \wedge S'_\va(\ux'_\va; \umu{}_*(\ux_\sh\uy))
    $$
    where $S'_\va(\ux'_\va; \uy'_\va)$ is a parametrized $C$-sequence-system and $(\varphi_*, \umu{}_*)$ is a pair compatible with $S'_\va$.
    Let $\underline{\nu}{}_*(\ux_\va)$ and $\utau{}_*(\ux'_\va; \ux_\sh\uy)$ witness this transformability.
    Now $\varphi_*(\ux_\sh\uy) \wedge S_\sh(\ux_\sh; \umu{}_{1,\sh}(\uy))$ is a conjunction of $\LRC$-equations and therefore, by Corollary \ref{corollary_trafo_any_lrc}, transformable into a formula of the form
    $$
    \varphi'(\uy) \wedge S'_\sh(\ux'_\sh; \umu{}'_\sh(\uy))
    $$
    where $S'_\sh(\ux'_\sh; \uy'_\sh)$ is a parametrized $C$-sequence-system and $(\varphi', \umu{}'_\sh)$ is a pair compatible with $S'_\sh$.
    Let $\underline{\nu}_{\sh}(\ux_\sh)$ and $\utau{}_\sh(\ux'_\sh; \uy)$ witness this transformability.
    By the final assertion of Corollary \ref{corollary_trafo_any_lrc}, we can choose $S'_\sh$ and $(\varphi', \umu{}'_\sh)$ in such a way that, modulo $\varphi'(\uy) \wedge S'_\sh(\ux'_\sh; \umu{}'_\sh(\uy))$, $\umu{}_*(\utau{}_\sh(\ux'_\sh; \uy)\uy) =: \ut'(\ux'_\sh) + \umu{}'_\va(\uy)$ and $\utau{}_*(\ux'_\va;\utau{}_\sh(\ux'_\sh; \uy)\uy) =: \utau{}_\va(\ux'_\sh\ux'_\va; \uy)$ can both be written as the sum of a tuple of $\LKThe$-terms in the respective $\ux'$-variables and a tuple of $\LRC$-terms in $\uy$. We furthermore define $\underline{\nu}{}_\va(\ux_\va) := \underline{\nu}{}_*(\ux_\va)$.
    With (ii) of Observation \ref{obser_trafo_nice_2}, we see that $\varphi(\uy) \wedge S_\sh(\ux_\sh; \umu{}_{1, \sh}(\uy)) \wedge S_\va(\ux_\va; \ut(\ux_\sh) + \umu{}_{1, \va}(\uy)) \wedge E(\ux; \umu{}_2(\uy))$ is transformable into
    $$
    \varphi'(\uy) \wedge S'_\sh(\ux'_\sh; \umu{}'_\sh(\uy)) \wedge S'_\va(\ux'_\va; \ut'(\ux'_\sh) + \umu{}'_\va(\uy))
    $$
    witnessed by $\underline{\nu}{}_\sh(\ux_\sh)\underline{\nu}{}_\va(\ux_\va)$ and $\utau{}_\sh(\ux'_\sh; \uy)\utau{}_\va(\ux'_\sh\ux'_\va; \uy)$.
    Since $(\varphi', \umu{}'_\sh)$ is compatible with $S'_\sh$, $\varphi'(\uy)$ implies that $\umu{}'_\sh(\uy)$ is compatible with $S'_\sh$.
    Since $\varphi'(\uy) \wedge S'_\sh(\ux'_\sh; \umu{}'_\sh(\uy))$ implies the formula $\varphi_*(\utau{}_\sh(\ux'_\sh; \uy)\uy)$ by the second implication in the definition of transformability, and $(\varphi_*, \umu{}_*)$ is compatible with $S'_\va$, we see that $\varphi'(\uy) \wedge S'_\sh(\ux'_\sh; \umu{}'_\sh(\uy))$ implies that the tuple of $\LRC$-terms $\umu{}_*(\utau{}_\sh(\ux'_\sh; \uy)\uy) = \ut'(\ux'_\sh) + \umu{}'_\va(\uy)$ is compatible with $S'_\va$.
    Set $\umu{}'(\uy) := \umu{}'_\sh(\uy)\umu{}'_\va(\uy)$.
    By Lemma \ref{obs_staged_comp}, $S'(\ux'; \uy') := S'_\sh(\ux'_\sh; \uy'_\sh) \wedge S'_\va(\ux'_\va; \ut'(\ux'_\sh) + \uy'_\va)$ is a parametrized staged $C$-sequence-system with which $(\varphi', \umu{}')$ is compatible.

    It is again clear that (i) holds.
    Since $E(\ux; \uy_2) = E'(\ux_\va; \ux_\sh\uy_2)$ is non-trivial in $\ux_\va$ and bounded by $S_\va$, the \TrafoLemma{} yields $(\rk(S'_\va), \deg(S'_\va)) <_\Lex (\rk(S_\va), \deg(S_\va))$, which means that (ii) holds as well.
    \end{innerproof}
    \end{subclaim}

    \noindent This completes the proof.
\end{proof}
\end{lemma}

\noindent As mentioned earlier in this section, if $E(\ux; \uy)$ is transformable into $E'(\ux'; \uy)$, witnessed by some $\underline{\nu}(\ux)$ and $\utau{}(\ux'; \uy)$, then
$$
\exists \ux \in \VV : \phi(\ux; \uw) \wedge E(\ux; \uy) \equiv \exists \ux' \in \VV : \phi(\utau(\ux'; \uy); \uw) \wedge E'(\ux'; \uy)
$$
holds for any formula $\phi(\ux; \uw)$.
If $E'(\ux'; \uy)$ is a parametrized staged $C$-sequence-system, we can use the following lemma to replace $\phi(\utau(\ux'; \uy); \uw)$ with a formula that is bounded by this staged system:

\begin{lemma} \label{lemma_main_trafo_for_staged}
    Let $S(\ux_\sh\ux_\va; \tiluy{})$ be a parametrized staged $C$-sequence-system, let $\umu{}(\uy)$ be a tuple of $\LRC$-terms, and let $\utau{}(\ux_\sh\ux_\va; \uy)$ be an $|\ux|$-tuple of $\LRC$-terms.
    Assume that $\utau{}(\ux_\sh\ux_\va; \uy)$ can be written entrywise as the sum of an $\LKThe$-term in $\ux_\sh\ux_\va$ and an $\LRC$-term in $\uy$.
    Given any $L$-formula $\psi(\uxvec{}; \uw)$, there are an $L$-formula $\psi'(\uxvec{}_\sh\uxvec{}_\va; \uw\tiluw)$ bounded by $S$ and a finite tuple of $\LRC$-terms $\ut(\uy)$ such that
    $$
    \psi_\theta(\utau{}(\ux_\sh\ux_\va; \uy); \uw) \quad \equiv \quad \psi'_\theta(\ux_\sh\ux_\va; \uw\ut(\uy))
    $$
    holds modulo $T_\theta \cup \set{\text{``$\theta$ is $C$-image-complete''}} \cup \set{S(\ux_\sh\ux_\va; \umu(\uy))}$.
\begin{proof}
    Write $\ux = (x_1, \dots, x_n)$.
    Using the equations in $S(\ux_\sh\ux_\va; \uy)$ and Fact \ref{lemma_bound_term_light}, we obtain, modulo $T_\theta \cup \set{\text{``$\theta$ is $C$-image-complete''}} \cup \set{S(\ux_\sh\ux_\va; \umu(\uy))}$,
    \begin{align}
        \theta^i(\tau_k(\ux_\sh\ux_\va; \uy)) &= \sum\nolimits_{l=1}^{m_\sh} \rho_{\sh, \li, i, k, l}[\theta](x_{\sh, \li, l}) + \sum\nolimits_{l=1}^{n_\sh} \rho_{\sh, \ld, i, k, l}[\theta](x_{\sh, \ld, l}) \notag \\&\quad+ \sum\nolimits_{l=1}^{m_\va} \rho_{\va, \li, i, k, l}[\theta](x_{\va, \li, l}) + \sum\nolimits_{l=1}^{n_\va} \rho_{\va, \ld, i, k, l}[\theta](x_{\va, \ld, l}) + t_{i, k}(\uy) \label{tag_stupid_eqqqq}
    \end{align}
    for some polynomials with $\deg(\rho_{\sh, \ld, i, k, l}) < \deg(f_{\sh, l}^{q_{\sh, l}})$ and $\deg(\rho_{\va, \ld, i, k, l}) < \deg(f_{\va, l}^{q_{\va, l}})$, and for some $\LRC$-term $t_{i, k}(\uy)$.
    Denote the right-hand side of (\ref{tag_stupid_eqqqq}) by $s_{k}^i(\ux_\sh\ux_\va; \uy)$.
    We define $\psi'(\uxvec{}_\sh\uxvec{}_\va; \uw\tiluw)$ and $\ut(\uy)$ such that the formula $\psi'_\theta(\ux_\sh\ux_\va; \uw\ut(\uy))$ is $\psi_\theta(\ux; \uw)$ with every occurrence of $\theta^i(x_k)$ replaced by $s_k^i(\ux_\sh\ux_\va; \uy)$.
    With this and the definition of $\psi_\theta(\ux; \uw)$, it is clear that the desired equivalence holds, and it is also easy to check that $\psi'(\uxvec{}_\sh\uxvec{}_\va; \uw\tiluw)$ is bounded by $S$. For details, see the proof of Lemma 4.6 in \cite{Chi25}.
\end{proof}
\end{lemma}

\noindent We mention one last construction that is used in the next section:

\begin{observation} \label{obs_add_to_shared}
    Let $S(\ux_\sh\ux_\va; \uy) = S_\sh(\ux_\sh; \uy_\sh) \wedge S_\va(\ux_\va; \ut(\ux_\sh) + \uy_\va)$ be a parametrized staged $C$-sequence-system, and let $S_0(\ux_0; \uy_0)$ be a parametrized $C$-sequence-system.
    Then, after rearranging the conjunction $S_0(\ux_0; \uy_0) \wedge S_\sh(\ux_\sh; \uy_\sh)$ into a ``regular'' parametrized $C$-sequence-system (see Observation 4.9 in \cite{Chi25}), the formula
    $$
    S'(\ux'_\sh\ux_\va; \uy') := \underbrace{(S_0(\ux_0; \uy_0) \wedge S_\sh(\ux_\sh; \uy_\sh))}_{=: S'_\sh(\ux'_\sh; \uy'_\sh)} \wedge S_\va(\ux_\va; \ut(\ux_\sh) + \uy_\va)
    $$
    is another parametrized staged $C$-sequence-system with $(\rk_\va(S'), \deg_\va(S')) = (\rk_\va(S), \deg_\va(S))$.
    Moreover, if $\uu$ is compatible with $S$ and $\uu_0$ is compatible with $S_0$, then $\uu_0\uu$ (or rather the corresponding rearranged tuple) is compatible with $S'$.
\end{observation}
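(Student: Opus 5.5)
The plan is to check the three assertions of Observation \ref{obs_add_to_shared} directly from the definitions, using Definition \ref{def_new_haram_comp} for compatibility.

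\textbf{Step 1: $S'_\sh$ is a parametrized $C$-sequence-system.} Write $S_0(\ux_0; \uy_0)$ and $S_\sh(\ux_\sh; \uy_\sh)$ with their splittings $\ux_0 = \ux_{0,\li}\ux_{0,\ld}$ and $\ux_\sh = \ux_{\sh,\li}\ux_{\sh,\ld}$. Set $\ux'_\sh := \ux_{0,\li}\ux_{\sh,\li}\ux_{0,\ld}\ux_{\sh,\ld}$, rearranged so that the $\li$-part comes first, and let $\uy'_\sh$ be $\uy_0\uy_\sh$ ordered accordingly. Each conjunct of $S_0 \wedge S_\sh$ keeps its form $f^q[\theta](x_{\ldd,k}) = y_k$. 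Hence conditions (i) and (ii) of Definition \ref{def_param_param_c_sequence_system_placeholder} hold, because they hold separately for $S_0$ and for $S_\sh$: in the algebraic case both $\li$-parts are empty, and each pair $(f_k, q_k)$ is unchanged. This is exactly Observation 4.9 in \cite{Chi25}.

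\textbf{Step 2: $S'$ is a parametrized staged $C$-sequence-system.} Regard $\ut(\ux_\sh)$ as a tuple of $\LKThe$-terms in $\ux'_\sh$ that does not involve $\ux_0$. By Definition \ref{def_new_haram_staged_c_ss} it suffices to verify
$$\TKvsThe \models \forall \ux'_\sh : S'_\sh(\ux'_\sh; \uzero) \rightarrow \text{``$\ut(\ux_\sh)$ is compatible with $S_\va$''}.$$
This holds because $S'_\sh(\ux'_\sh; \uzero)$ implies its subconjunction $S_\sh(\ux_\sh; \uzero)$, and $S$ is staged. Since the $\va$-component of $S'$ is literally $S_\va$, Definition \ref{def_ranks_and_degs} gives $(\rk_\va(S'), \deg_\va(S')) = (\rk_\va(S), \deg_\va(S))$.

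\textbf{Step 3: compatibility of $\uu_0\uu$.} Verify (a) and (b) of Definition \ref{def_new_haram_comp} for $\uu_0\uu_\sh\uu_\va$, suitably rearranged.

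For (a), compatibility with a regular system is the conjunction of the kernel conditions $u_k \in \Ker(f_k^{C-q_k})$, taken separately for each conjunct. The conditions for $S'_\sh$ are therefore the union of those for $S_0$ and for $S_\sh$, which hold by assumption.

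For (b), take an extension $(\VV',\theta') \models \TKvsThe$ and a tuple $\uv'_\sh$ with $S'_\sh(\uv'_\sh; \uu_0\uu_\sh)$. Its $\ux_\sh$-part $\uv_\sh$ then satisfies $S_\sh(\uv_\sh; \uu_\sh)$. Since $\uu$ is compatible with $S$, the tuple $\ut(\uv_\sh) + \uu_\va$ is compatible with $S_\va$, which is what (b) requires for $S'$.

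\textbf{Expected difficulty.} There is no real obstacle. The only care needed is the bookkeeping of the rearrangement of variables and parameters, so that the same reordering is applied to $\ux'_\sh$ and to $\uu_0\uu_\sh$. Alternatively, Step 3 could be checked via Lemma \ref{corollary_comP_fml_staged}, since the terms $s_k$ for $S'$ can be taken to be those for $S$.
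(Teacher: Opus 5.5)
Your proposal is correct and is exactly the routine verification the paper leaves implicit: the paper states this as an observation without proof, deferring only the rearrangement into a single regular system to Observation 4.9 in \cite{Chi25}. Your three steps supply that verification: conditions (i)--(ii) hold conjunct by conjunct; the staging condition and $(\rk_\va(S'),\deg_\va(S'))=(\rk_\va(S),\deg_\va(S))$ carry over because $S'_\sh(\ux'_\sh;\uy'_\sh)$ implies $S_\sh(\ux_\sh;\uy_\sh)$ while $S_\va$ and $\ut$ are untouched; and both clauses of Definition \ref{def_new_haram_comp} follow from that same implication. The only blemish is cosmetic: your Step 1 cites a nonexistent label, whereas the relevant definition is Definition \ref{def_param_c_sequence_system}.
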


\subsection{Step (I) for general kernel configurations} \label{sec_reduc_natp_to_T}

\noindent The goal of this section is to generalize Lemma \ref{lemma_atp_step_11_ez} to arbitrary kernel configurations $C$:

\begin{lemma} \label{lemma_atp_step_11}
    If the theory $T\theta^C$ has \ATP{}, then there is an $L$-formula $\psi(\ux_\sh\ux_\va; \uz; \uw)$ and a strongly $L$-indiscernible tree $(\ud_\nu : \nu \in 2^{<\omega})$ in some $\mm \models T$ such that
    \begin{enumerate}[(i)]
        \item For every antichain $X \subset 2^{<\omega}$, the type $\set{\psi(\ux_\sh\ux_\nu; \uz; \ud_\nu) : \nu \in X}$ implies no finite disjunction of non-trivial linear dependencies in $\ux_\sh(\ux_\nu : \nu \in X)$ over $\VV$.
        \item The type $\set{\psi(\ux_\sh\ux_\emptyseq; \uz; \ud_\emptyseq), \psi(\ux_\sh\ux_{\seq{0}}; \uz; \ud_{\seq{0}})}$ implies some finite disjunction of non-trivial linear dependencies in $\ux_\sh\ux_\emptyseq\ux_{\seq{0}}$ over $\VV$.
    \end{enumerate}
\end{lemma}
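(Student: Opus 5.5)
We mirror the proof of Lemma \ref{lemma_atp_step_11_ez}, replacing the pair $(|\ux_\va|, |\ux_\sh|)$ by the quadruple $(\rk_\va(S), \deg_\va(S), \rk_\sh(S), \deg_\sh(S))$ of a parametrized staged $C$-sequence-system.

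\textbf{Setting up the witness.} Suppose $T\theta^C$ has \ATP{}. By Fact \ref{theorem_big_fml_preceise}, Fact \ref{fact_atp_disj} and Lemma \ref{lemma_remove_alg_set} (which removes the algebraic pattern $Y$, since each layer $\varphi_k$ is algebraic), we obtain a formula
$$
\exists \ux \in \VV : \psi_\theta(\ux; \uz; \uw) \wedge S(\ux; \uu)
$$
witnessing \ATP{} together with a strongly indiscernible tree $(\uu_\nu\ud_\nu : \nu \in \omega^{<\omega})$. Here each $\uu_\nu$ is compatible with $S$ and $\psi$ is bounded by $S$. We view $S$ as a staged system with empty shared part, and consider all formulas of the form
$$
\ux_\sh \in \VV \wedge \exists \ux_\va \in \VV : \psi_\theta(\ux_\sh\ux_\va; \uz; \uw) \wedge S(\ux_\sh\ux_\va; \uy)
$$
witnessing \ATP{} in $\ux_\sh\uz$. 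In these, $S$ is a parametrized staged $C$-sequence-system, $\psi$ is bounded by $S$, and the tree parameters $\uu_\nu$ are compatible with $S$; the shared coordinates $\ux_\sh$ are free variables and the parameters $\uy_\sh$ must be constant along the tree. Among these we choose one with minimal quadruple, using that $(\NN^4,<_\Lex)$ is well ordered. We absorb $\uu_\nu$ into $\ud_\nu$, so that the $L$-formula becomes $\psi(\uxvec{}_\sh\uxvec{}_\va; \uz; \uw) \wedge$ ``placeholder version of $S$''.

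\textbf{Reducing the quadruple.} Suppose some finite antichain $X$ makes $\{\psi(\uxvec{}_\sh\uxvec{}_\nu;\uz;\ud_\nu):\nu\in X\}$ imply a disjunction of non-trivial linear dependencies. Exactly as in Claim \ref{lemma_remove_disj_ez}, using \Hfour{}, Fact \ref{lemma_hfour_fml} and the Ramsey property of Fact \ref{fact_is_ramsey}, we extract a single algebraic $\varphi$ and a non-trivial term $\lambda$ that hold uniformly on a realization along any antichain. Since $\psi$ is bounded, $\lambda$ is an $\LKThe$-equation $\lambda(\ux_\sh(\ux_\nu:\nu\in X)) = y$ bounded by the combined system. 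We then reindex by $\ud''_\nu := \ud_{\mu_1}\cdots\ud_{\mu_{r-1}}\ud_{\mu_r{}^\frown\nu}$ and move the coordinates $\ux_{\mu_1},\dots,\ux_{\mu_{r-1}}$ into the shared part. This is possible by Observation \ref{obs_add_to_shared}: their systems $S_\va$ with fixed parameters are appended to $S_\sh$, and $(\rk_\va,\deg_\va)$ is unchanged.

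Next we apply the Transformation Lemma for staged systems (Lemma \ref{lemma_trafo_for_staged_c_ss}) to the conjunction of this new staged system with $E := \lambda(\dots) = y$, where $y$ ranges over the algebraic set $\varphi$. Then Lemma \ref{lemma_main_trafo_for_staged} rewrites $\psi_\theta(\utau(\dots))$ as a formula bounded by the new system $S'$. Part (i) of Lemma \ref{lemma_trafo_for_staged_c_ss} ensures the shared variables stay shared. If $\lambda$ involves $\ux_{\mu_r}$, part (ii) strictly lowers $(\rk_\va,\deg_\va)$. Otherwise $\lambda$ only involves shared variables, and $(\rk_\sh,\deg_\sh)$ drops while $(\rk_\va,\deg_\va)$ is preserved. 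Adding $\ux_{\mu_1},\dots$ to the shared part increased only $(\rk_\sh,\deg_\sh)$, so in both cases the full quadruple decreases lexicographically. Two facts are still needed: that \ATP{} is preserved, with paths inconsistent by transformability and antichains consistent by the uniform realization $(\star)$; and that compatibility of the new parameters $\umu{}'$ follows from $\varphi'$. Finally, Lemma \ref{lemma_remove_alg_set} removes the existential $y$. This contradicts minimality, which gives (i).

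\textbf{Obtaining (ii).} If the two-element path $\{\emptyseq,\seq{0}\}$ implied no dependency, Lemma \ref{theorem_stronger_left_to_right_new} would realize both instances in an existentially closed model. That lemma applies to a staged system built from the two copies, with $\ux_\sh$ shared; compatibility is preserved by Observation \ref{obs_add_to_shared}. The resulting realization contradicts \ATP{}.

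\textbf{Main obstacle.} The hardest part is the bookkeeping when merging the fixed nodes $\mu_1,\dots,\mu_{r-1}$ into the shared stage while also applying the transformation. We must ensure that the transformed $\psi'$ is again of the required form, with parameters given by $\LRC$-terms of tree elements and compatible with $S'$. For this we rely on Lemma \ref{obs_staged_comp}, and on the ``furthermore'' clause of Corollary \ref{corollary_trafo_any_lrc} to keep $\utau$ in the form ($\LKThe$-term in $\ux'$ plus $\LRC$-term in parameters).
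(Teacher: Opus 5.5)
Your overall architecture matches the paper's: special witnesses, minimality of $(\rk_\va,\deg_\va,\rk_\sh,\deg_\sh)$, Ramsey extraction of one pair $\lambda,\varphi$, the staged Transformation Lemma, Lemma \ref{lemma_main_trafo_for_staged}, and Lemma \ref{theorem_stronger_left_to_right_new} for (ii). However, the reduction step fails as written.

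First, moving $\ux_{\mu_1},\dots,\ux_{\mu_{r-1}}$ into the shared stage is not an instance of Observation \ref{obs_add_to_shared}. The systems you append are $S_\va(\ux_{\mu_i};\ut(\ux_\sh)+\uu_{\va,\mu_i})$, and their right-hand sides contain shared variables. So the new shared stage is not a parametrized $C$-sequence-system, and the result is not a parametrized staged one. Lemma \ref{lemma_trafo_for_staged_c_ss}, Lemma \ref{lemma_main_trafo_for_staged} and $\delta_S$ are then unavailable unless you first flatten it (e.g.\ via Corollary \ref{corollary_trafo_any_lrc}), which changes the shared coordinates.

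Second, even granting the merge, your count is wrong when $E$ does not involve $\ux_{\mu_r}$. The staged Transformation Lemma lowers $(\rk_\sh,\deg_\sh)$ only relative to the enlarged shared stage, which in general exceeds that of $S_0$. So the quadruple need not drop below that of $S_0$. If you did not permute so that $\ux_{\mu_r}$ occurs in $\lambda$ whenever some varying coordinate does, this case even includes $\lambda$ depending on some $\ux_{\mu_i}$ with $i<r$. When $E$ does involve $\ux_{\mu_r}$, your count is fine, since $(\rk_\va,\deg_\va)$ is more significant.

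The paper avoids the merge altogether (Claim \ref{lemma_new_remove_disj}). If $\lambda$ only involves $\ux_\sh$, it keeps $S=S_0$ and just adds the equation. Otherwise it writes $\lambda=\lambda'(\ux_\sh\ux_{\mu_r})+\lambda''(\ux_{\mu_1}\dots\ux_{\mu_{r-1}})$ and replaces the $\nu$-independent value $\lambda''(\uv_{\mu_1}\dots\uv_{\mu_{r-1}})$ by fresh shared variables. If $C$ is transcendental, this is one linearly independent variable with system $\top$. If $C$ is algebraic, where linearly independent variables are forbidden, it uses $\VV=\bigoplus_f\Ker(f^C)$ and takes variables $x_f$ with $f^C[\theta](x_f)=0$ for $f\in\Kp{0<C<\infty}$. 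Observation \ref{obs_add_to_shared} genuinely applies to these.

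Path inconsistency after the transformation also does not follow from transformability together with (i) of Lemma \ref{lemma_trafo_for_staged_c_ss}. That clause only says that $\utau_\sh$ depends on $\ux'_\sh$ and on the parameters $\uy y^*$, and these vary along the tree. A common $\uv'_\sh$ for a path pulls back to a common original shared tuple only if $\utau_\sh(\uv'_\sh;\uu_\nu u^*_\nu)$ is independent of $\nu$. The paper proves this in Observation \ref{obsv_term_coonnst}. It applies (i) of Definition \ref{def_transfo} to a realization along a two-element antichain, which forces the $\LRC$-part of $\utau_\sh$ to agree at two incomparable nodes, and then uses Observation \ref{observation_terms_all_equal}.

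Finally, the $L$-formula you output must be $\psi$ alone. Conjoining a placeholder version of $S$ adds the non-trivial linear equations $\sum_i (f_k^{q_k})_i x^i_{\ld,k}=u_k$ over $\VV$, so (i) would already fail for one-element antichains.
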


\noindent Together, Lemma \ref{lemma_atp_step_11} and Lemma \ref{lemma_has_li_atp_impl_atp} immediately yield a proof of Theorem \ref{theorem_preserve_natp}, the main theorem of this paper.

\begin{proof}[Proof of Lemma \ref{lemma_atp_step_11}]
    Throughout this proof, we work in a sufficiently large monster model $(\MM, \theta) \models T\theta^C$ (or more precisely, a monster model of a completion of $T\theta^C$ that has \ATP{}). We start with a definition:

\begin{subdefinition} \label{def_special_witness_3}
    We say that a pair $(\psi, S)$, where $S(\ux_\sh\ux_\va;\!\uy)$ is a parametrized staged $C$-sequence-system and $\psi(\uxvec{}_\sh\uxvec{}_\va;\! \uz;\! \uw)$ is an $L$-formula bounded by $S$, is a \textbf{special witness} if
    $$
    \ux_\sh \in \VV \wedge \exists \ux_\va \in \VV : \delta_S(\uy) \wedge \psi_\theta(\ux_\sh\ux_\va; \uz; \uw) \wedge S(\ux_\sh\ux_\va; \uy)
    $$
    witnesses \ATP{} in $\ux_\sh\uz$, where $\delta_S(\uy) := \text{``$\uy$ is compatible with $S$''}$ is the formula from Lemma \ref{corollary_comP_fml_staged}.
    We say that a pair $(\psi, S)$ is a special witness \textbf{together} with a tree $(\uu_\nu\ud_\nu : \nu \in \kappa^{<\omega})$ if the formula above witnesses \ATP{} with that tree.
\end{subdefinition}

\begin{subobservation} \label{remark_str_ic_witness}
    By Lemma \ref{lemma_weak_modeling}, every special witness $(\psi, S)$ witnesses \ATP{} together with a strongly $L_\theta$-indiscernible tree $(\uu_\nu\ud_\nu : \nu \in \omega^{<\omega})$.
\end{subobservation}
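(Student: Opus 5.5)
The observation claims that any special witness $(\psi, S)$ also witnesses \ATP{} together with a strongly $L_\theta$-indiscernible tree indexed by $\omega^{<\omega}$. I would argue directly from the definition of a special witness combined with Lemma \ref{lemma_weak_modeling}, applied in the monster model $(\MM, \theta)$ of the relevant completion of $T\theta^C$, with $L_\theta$ as the ambient language.

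First, fix a tree $(\uu_\nu\ud_\nu : \nu \in 2^{<\omega})$ with which the formula
$$
\Phi(\ux_\sh\uz; \uy\uw) := \ux_\sh \in \VV \wedge \exists \ux_\va \in \VV : \delta_S(\uy) \wedge \psi_\theta(\ux_\sh\ux_\va; \uz; \uw) \wedge S(\ux_\sh\ux_\va; \uy)
$$
witnesses \ATP{}. By Definition \ref{def_kATP}, the partial type $\set{\Phi(\ux_\sh\uz; \uu_\nu\ud_\nu) : \nu \in X}$ is consistent for every antichain $X$ and $2$-inconsistent on paths. By compactness, consistency for antichains reduces to finite antichains. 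For a fixed finite shape, I would encode it as the $L_\theta$-formula $\exists \ux_\sh\uz\bigwedge_{i} \Phi(\ux_\sh\uz; \uy_i\uw_i)$, which holds of every tuple $\underline{\nu}$ whose quantifier-free $\set{\triangleleft, <_\Lex}$-type contains the partial type $\Sigma_0$ saying that the entries are pairwise incomparable. Inconsistency along $2$-paths is expressed by $\neg\exists \ux_\sh\uz : \Phi(\ux_\sh\uz;\uy_1\uw_1)\wedge\Phi(\ux_\sh\uz;\uy_2\uw_2)$ for the type $\Sigma_0 = \set{\nu_1 \triangleleft \nu_2}$.

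Next, I apply Lemma \ref{lemma_weak_modeling} with $\kappa = \omega$ to the tree $(\uu_\nu\ud_\nu : \nu \in 2^{<\omega})$ in the $L_\theta$-structure $(\MM,\theta)$. This yields a strongly $L_\theta$-indiscernible tree $(\uu'_\nu\ud'_\nu : \nu \in \omega^{<\omega})$ in an elementary extension, which may be taken inside $\MM$ by saturation. The preservation clause of that lemma transfers each of the formulas above to every $\underline{\nu} \in \omega^{<\omega}$ with the corresponding $\Sigma_0$. Hence antichains remain consistent and $2$-paths remain inconsistent, so $\Phi$ witnesses \ATP{} with the new tree. The compatibility conjunct $\delta_S(\uy)$ is part of $\Phi$, so nothing further is needed to keep the $\uu'_\nu$ compatible with $S$.

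No step here is a real obstacle. The only point needing care is that Lemma \ref{lemma_weak_modeling} is formulated for an arbitrary language $L$, so it applies verbatim to $L_\theta$. One also has to phrase consistency of antichains as a first-order property of finitely many parameter tuples, which the existential formula above does.
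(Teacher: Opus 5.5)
Your proposal is correct and follows the same route as the paper, which justifies the observation only by citing Lemma \ref{lemma_weak_modeling} (applied in $L_\theta$ with $\kappa=\omega$). You also correctly spell out why its preservation clause carries over consistency on antichains (via the pairwise-incomparable $\Sigma_0$) and $2$-inconsistency on paths (via $\Sigma_0=\set{\nu_1\triangleleft\nu_2}$); the implicit first step of restricting a $\kappa^{<\omega}$-indexed witnessing tree to $2^{<\omega}$ is harmless.
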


\begin{subclaim} \label{lemma_ATP_if_special_witness_ATP}
    $T\theta^C$ has \ATP{} if and only if it has a special witness.
\begin{innerproof}
    If $T\theta^C$ has a special witness, then it has \ATP{} by definition.
    Conversely, assume that $T\theta^C$ has \ATP{} witnessed by $\phi(\uz; \uw)$.
    By Fact \ref{theorem_big_fml_preceise} and Fact \ref{fact_atp_disj}, we may assume that
    $$
    \phi(\uz; \uw) = \exists \uy \in Y_{\uw} : \exists \ux \in \VV : \psi_{\theta}(\ux; \uz; \uy\uw) \wedge S(\ux; \pi(\uy))
    $$
    where $S(\ux; \uy)$ is a parametrized $C$-sequence-system, $\pi$ is a coordinate projection, $Y$ is an algebraic pattern such that $(Y, \pi)$ is compatible with $S$ (see Definition \ref{def_alg_pat_comp}), and $\psi(\uxvec; \uz; \uy\uw)$ is an $L$-formula bounded by $S$.
    Let $(\ud_\nu : \nu \in 2^{<\omega})$ be a tree with which $\phi(\uz; \uw)$ witnesses \ATP{}.
    Note that $Y_{\ud_{\nu}}$ is a finite set whose cardinality is bounded by some $q \in \NN$.
    By Lemma \ref{lemma_remove_alg_set}, the formula
    $$
    \exists \ux \in \VV : \psi_{\theta}(\ux; \uz; \uy\uw) \wedge S(\ux; \pi(\uy))
    $$
    must witness \ATP{} together with a strongly $L_\theta$-indiscernible tree $(\uu'_{\nu}\ud'_{\nu} : \nu \in 2^{<\omega})$ such that $\uu'_{\nu} \in Y_{\ud'_{\nu}}$ holds for all $\nu \in 2^{<\omega}$.
    Since $(Y, \pi)$ is compatible with $S$, this implies that $\pi(\uu'_{\nu})$ is compatible with $S$ for every $\nu \in 2^{<\omega}$.

    Now set $\uw_* := \uy\uw$, $\uy_* := \pi(\uy)$, $\psi_*(\uxvec; \uz; \uw_*) := \psi(\uxvec; \uz; \uy\uw)$, $\ud_{*,\nu} := \uu'_{\nu}\ud'_{\nu}$, and $\uu_{*, \nu} := \pi(\uu'_{\nu})$.
    Clearly, $(\uu_{*, \nu}\ud_{*, \nu} : \nu \in 2^{<\omega})$ is strongly $L_\theta$-indiscernible, every $\uu_{*, \nu}$ is compatible with $S$, and $\psi_*(\uxvec; \uz; \uw_*)$ is bounded by $S$.
    Setting $\ux_\va := \ux$ and defining $\ux_\sh$ as the empty tuple, we see that
    $$
    \ux_\sh \in \VV \wedge \exists \ux_\va \in \VV : \delta_{S}(\uy_*) \wedge \psi_{*,\theta}(\ux_\sh\ux_\va; \uz; \uw_*) \wedge S(\ux_\sh\ux_\va; \uy_*)
    $$
    witnesses \ATP{} together with $(\uu_{*, \nu}\ud_{*, \nu} : \nu \in 2^{<\omega})$.
    Since $\ux_\sh$ is empty and $S(\ux; \uy)$ is a parametrized $C$-sequence-system that bounds $\psi(\uxvec; \uz; \uy\uw)$, it is easy to check that $S(\ux_\sh\ux_\va; \uy)$ is a parametrized staged $C$-sequence-system (Definition \ref{def_new_haram_staged_c_ss}) that bounds $\psi_*(\uxvec{}_\sh\uxvec{}_\va; \uz; \uw_*)$ (Definition \ref{def_bound_staged_c_ss_new}) and that every $\uu_{*, \nu}$ is still compatible with it (Definition \ref{def_new_haram_comp}).
    We conclude that $(\psi_*, S)$ is a special witness.
\end{innerproof}
\end{subclaim}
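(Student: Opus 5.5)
The backward direction is immediate from Definition \ref{def_special_witness_3}. If $(\psi, S)$ is a special witness, then by definition the $L_\theta$-formula
$$
\ux_\sh \in \VV \wedge \exists \ux_\va \in \VV : \delta_S(\uy) \wedge \psi_\theta(\ux_\sh\ux_\va; \uz; \uw) \wedge S(\ux_\sh\ux_\va; \uy)
$$
witnesses \ATP{} in $T\theta^C$. So all the work is in the forward direction. There I would start from an arbitrary formula $\phi(\uz; \uw)$ witnessing \ATP{} and massage it into the special form.

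\textbf{Step 1 (normal form).} By model completeness of $T\theta^C$, every formula is equivalent to an existential one, so Fact \ref{theorem_big_fml_preceise} applies. It writes $\phi$, modulo the $C$-image-complete theory (which holds in all models of $T\theta^C$ by Fact \ref{fact_c_image_comple}), as a finite disjunction of formulas of the form
$$
\exists \uy \in Y_\uw : \exists \ux \in \VV : \psi_\theta(\ux; \uz; \uy\uw) \wedge S(\ux; \pi(\uy)).
$$
Here $S$ is a parametrized $C$-sequence-system, $\psi$ is bounded by $S$, and $(Y, \pi)$ is compatible with $S$. By Fact \ref{fact_atp_disj}, one disjunct already witnesses \ATP{}, so I fix that one.

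\textbf{Step 2 (remove the algebraic pattern).} An algebraic pattern is built from formulas algebraic in each new coordinate, so $|Y_\ud|$ is uniformly bounded by some $q$. This is the hypothesis of Lemma \ref{lemma_remove_alg_set}, applied in the theory $T\theta^C$. It yields a strongly $L_\theta$-indiscernible tree $(\uu'_\nu\ud'_\nu : \nu \in 2^{<\omega})$ with $\uu'_\nu \in Y_{\ud'_\nu}$, along which the formula $\exists \ux \in \VV : \psi_\theta(\ux; \uz; \uy\uw) \wedge S(\ux; \pi(\uy))$ still witnesses \ATP{}. Compatibility of $(Y, \pi)$ with $S$ then gives that each $\pi(\uu'_\nu)$ is compatible with $S$.

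\textbf{Step 3 (view $S$ as staged).} Set $\ux_\va := \ux$, let $\ux_\sh$ be empty, take $\ut$ to be the empty tuple of terms, and regard $\uy\uw$ as the new $\uw$ with parameters $\ud_{*,\nu} := \uu'_\nu\ud'_\nu$ and $\uu_{*,\nu} := \pi(\uu'_\nu)$. With $S_\sh$ trivial, the compatibility condition in Definition \ref{def_new_haram_staged_c_ss} reduces to compatibility of the empty tuple, so it holds vacuously. Boundedness in Definition \ref{def_bound_staged_c_ss_new} reduces to ordinary boundedness by $S = S_\va$. Compatibility of $\uu_{*,\nu}$ in Definition \ref{def_new_haram_comp} reduces to ordinary compatibility with $S_\va$.

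Since every $\uu_{*,\nu}$ satisfies $\delta_S$, adding the conjunct $\delta_S(\uy)$ and the vacuous $\ux_\sh \in \VV$ changes nothing along the tree. The resulting formula therefore still witnesses \ATP{}, and $(\psi_*, S)$ is a special witness.

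The main obstacle I expect is Step 2. One must check that Lemma \ref{lemma_remove_alg_set}, stated for a general theory $T$, applies with $T\theta^C$ and an algebraic pattern in place of $\varphi$. The key points are that the finiteness bound on $Y_\ud$ is uniform, and that the extracted tree keeps the witnesses $\uu'_\nu$ inside $Y_{\ud'_\nu}$, so that compatibility transfers to the new tree.
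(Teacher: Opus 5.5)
Your proposal is correct and follows the paper's proof essentially step for step. It reduces to a single disjunct via Fact~\ref{theorem_big_fml_preceise} and Fact~\ref{fact_atp_disj}, strips the algebraic pattern with Lemma~\ref{lemma_remove_alg_set} (transferring compatibility through $(Y,\pi)$), and then views $S$ as a staged system with empty $\ux_\sh$. One small slip: with $\ux_\sh$ empty, $\ut$ must be the zero tuple $\uzero$ of length $|\uy|$, not the empty tuple, so the condition in Definition~\ref{def_new_haram_staged_c_ss} holds because $\uzero$ is trivially compatible with $S$, not vacuously.
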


\begin{subobservation} \label{observ_delta}
    Let $(\psi, S)$ be a special witness together with a tree $(\uu_\nu\ud_\nu : \nu \in \kappa^{<\omega})$.
    Then $(\MM, \theta) \models \delta_S(\uu_\nu)$ holds for every $\nu \in \kappa^{<\omega}$.
\end{subobservation}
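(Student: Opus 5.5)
The plan is to read the claim off directly from the consistency half of the definition of \ATP{} (Definition \ref{def_kATP}), applied to singleton antichains. Since $(\psi, S)$ is a special witness together with $(\uu_\nu\ud_\nu : \nu \in \kappa^{<\omega})$ (Definition \ref{def_special_witness_3}), the formula
$$
\phi(\ux_\sh\uz; \uy\uw) := \ux_\sh \in \VV \wedge \exists \ux_\va \in \VV : \delta_S(\uy) \wedge \psi_\theta(\ux_\sh\ux_\va; \uz; \uw) \wedge S(\ux_\sh\ux_\va; \uy)
$$
witnesses \ATP{} in $\ux_\sh\uz$ with this tree. Here the tree entry at $\nu$ is plugged in for $\uy\uw$, with $\uu_\nu$ going into $\uy$ and $\ud_\nu$ into $\uw$.

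First, fix $\nu \in \kappa^{<\omega}$ and note that the singleton $\set{\nu}$ is trivially an antichain, since it contains no two distinct comparable elements. Condition (i) of Definition \ref{def_kATP} then gives that $\phi(\ux_\sh\uz; \uu_\nu\ud_\nu)$ is consistent. Because $(\MM, \theta)$ is a monster model containing the tree, this formula has a realization $\uv_\sh\ua$ in $(\MM, \theta)$. Unwinding the existential quantifier, there is some $\uv_\va \in \VV$ such that
$$
(\MM, \theta) \models \delta_S(\uu_\nu) \wedge \psi_\theta(\uv_\sh\uv_\va; \ua; \ud_\nu) \wedge S(\uv_\sh\uv_\va; \uu_\nu).
$$

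Second, I take the first conjunct. The formula $\delta_S(\uy)$ mentions neither $\ux_\sh$, $\ux_\va$ nor $\uz$, so this conjunct says exactly $(\MM, \theta) \models \delta_S(\uu_\nu)$, as required.

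There is essentially no obstacle. The only point to check is that $\delta_S$ sits inside the formula only as a conjunct in the parameter variables $\uy$, so that it is independent of the witness chosen for $\ux_\va$. This follows from how special witnesses are defined, with $\delta_S$ taken from Lemma \ref{corollary_comP_fml_staged}.
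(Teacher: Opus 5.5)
Your proof is correct and is the same argument the paper leaves implicit, since it states this observation without proof. A singleton $\set{\nu}$ is an antichain, so the witnessing formula with parameters $\uu_\nu\ud_\nu$ is consistent; because $\delta_S(\uy)$ is a conjunct in the parameter variables only, it must already hold of $\uu_\nu$ in $(\MM, \theta)$.
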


\begin{subclaim} \label{lemma_implies_finite_disj}
    Assume that $(\psi, S)$ is a special witness together with a strongly $L_\theta$-indis\-cernible tree $(\uu_\nu\ud_\nu : \nu \in 2^{<\omega})$.
    Then the type $\set{\psi(\uxvec{}_\sh\uxvec{}_\emptyseq; \uz; \ud_\emptyseq), \psi(\uxvec{}_\sh\uxvec{}_{\seq{0}}; \uz; \ud_{\seq{0}})}$ implies some finite disjunction of non-trivial linear dependencies in $\uxvec{}_\sh\uxvec{}_\emptyseq\uxvec{}_{\seq{0}}$ over $\VV$.
\begin{innerproof}
    Define
    $
    S'(\ux_\sh\ux_\emptyseq\ux_{\seq{0}}) := S_\sh(\ux_\sh; \uu_{\sh, \emptyseq}) \wedge S_\va(\ux_\emptyseq; \ut(\ux_\sh) + \uu_{\va, \emptyseq}) \wedge S_\va(\ux_{\seq{0}}; \ut(\ux_\sh) + \uu_{\va, \seq{0}})
    $.
    Given an antichain $X \subset 2^{<\omega}$, the type $\set{S_\sh(\ux_{\sh}; \uu_{\sh, \nu}) : \nu \in X}$ is consistent.
    Hence $\uu_{\sh, \nu} = \uu_{\sh, \mu}$ for any $\nu, \mu \in 2^{<\omega}$ with $\nu \perp \mu$.
    Now Observation \ref{observation_terms_all_equal} tells us that $\uu_{\sh, \nu} = \uu_{\sh, \mu}$ for any $\nu, \mu \in 2^{<\omega}$.
    With this, we can easily verify that
    \begin{align}
        S'(\ux_\sh\ux_\emptyseq\ux_{\seq{0}}) \equiv S(\ux_\sh\ux_\emptyseq; \uu_\emptyseq) \wedge S(\ux_\sh\ux_{\seq{0}}; \uu_{\seq{0}}). \label{tag_equiv_of_stupid_c_sss}
    \end{align}
    It follows that $S'(\ux_\sh\ux_\emptyseq\ux_{\seq{0}})$ is a staged $C$-sequence-system with $\ux_\va := \ux_\emptyseq\ux_{\seq{0}}$.
    If the conjunction $\psi(\uxvec{}_\sh\uxvec{}_\emptyseq; \uz; \ud_\emptyseq) \wedge \psi(\uxvec{}_\sh\uxvec{}_{\seq{0}}; \uz; \ud_{\seq{0}})$ implies no finite disjunction of non-trivial linear dependencies in $\uxvec{}_\sh\uxvec{}_\emptyseq\uxvec{}_{\seq{0}}$ over $\VV$, we can use Lemma \ref{theorem_stronger_left_to_right_new} to find $\uv_\sh\uv_\emptyseq\uv_{\seq{0}}\ua$ such that
    $$
    (\MM, \theta) \models \psi_\theta(\uv_\sh\uv_\emptyseq; \ua; \ud_\emptyseq) \wedge \psi_\theta(\uv_\sh\uv_{\seq{0}}; \ua; \ud_{\seq{0}}) \wedge S'(\uv_\sh\uv_\emptyseq\uv_{\seq{0}})
    $$
    holds.
    With Observation \ref{observ_delta} above and the equivalence in (\ref{tag_equiv_of_stupid_c_sss}), this contradicts the assumption that the formula $\ux_\sh \in \VV \wedge \exists \ux_\va \in \VV : \delta_S(\uy) \wedge \psi_\theta(\ux_\sh\ux_\va; \uz; \uw) \wedge S(\ux_\sh\ux_\va; \uy)$ witnesses \ATP{} together with the tree $(\uu_\nu\ud_\nu : \nu \in 2^{<\omega})$.
\end{innerproof}
\end{subclaim}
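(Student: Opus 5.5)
The claim to prove is Claim \ref{lemma_implies_finite_disj}. The plan is to argue by contradiction, combining the uniformity of the shared parameters along the tree with the strengthened existential-closedness result, Lemma \ref{theorem_stronger_left_to_right_new}.

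First, I show that the shared part of the parameters is constant across the tree. Write $\uu_\nu = \uu_{\sh,\nu}\uu_{\va,\nu}$. Since the formula witnesses \ATP{}, the type indexed by any $2$-element antichain $\set{\nu,\mu}$ is consistent. A common realization then gives a single $\uv_\sh$ with $S_\sh(\uv_\sh;\uu_{\sh,\nu})$ and $S_\sh(\uv_\sh;\uu_{\sh,\mu})$. Because the right-hand sides of $S_\sh$ are literally $\uu_{\sh,\nu}$ and $\uu_{\sh,\mu}$, this forces $\uu_{\sh,\nu} = \uu_{\sh,\mu}$ whenever $\nu \perp \mu$. Applying Observation \ref{observation_terms_all_equal} to the coordinate-projection terms, the tuple $\uu_{\sh,\nu}$ does not depend on $\nu$.

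Next, I form the combined system
$$S'(\ux_\sh\ux_\emptyseq\ux_{\seq{0}}) := S_\sh(\ux_\sh;\uu_{\sh,\emptyseq}) \wedge S_\va(\ux_\emptyseq;\ut(\ux_\sh)+\uu_{\va,\emptyseq}) \wedge S_\va(\ux_{\seq{0}};\ut(\ux_\sh)+\uu_{\va,\seq{0}}).$$
By the previous step, $S'$ is equivalent to $S(\ux_\sh\ux_\emptyseq;\uu_\emptyseq)\wedge S(\ux_\sh\ux_{\seq{0}};\uu_{\seq{0}})$. I view $S'$ as a staged $C$-sequence-system with shared part $S_\sh$ and varying part consisting of the two copies of $S_\va$, so that $\ux_\va$ becomes $\ux_\emptyseq\ux_{\seq{0}}$.

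I then check that $S'$ is a staged $C$-sequence-system and that $\psi(\uxvec_\sh\uxvec_\emptyseq;\uz;\ud_\emptyseq)\wedge\psi(\uxvec_\sh\uxvec_{\seq{0}};\uz;\ud_{\seq{0}})$ is bounded by it.
\begin{enumerate}[(a)]
\item Compatibility of the parameters follows from Observation \ref{observ_delta}: $\delta_S(\uu_\emptyseq)$ and $\delta_S(\uu_{\seq{0}})$ both hold.
\item Via Lemma \ref{corollary_comP_fml_staged}, each $S_\va$-copy receives compatible parameters once $S_\sh$ holds. The terms $s_k$ for the duplicated copy are the same as before, evaluated at the tuple $\uu_{\sh}\uu_{\va,\seq{0}}$.
\item Boundedness is immediate, since each conjunct of the $\psi$-conjunction was bounded by $S$ in its own variables. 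I regard $\uz$ as part of the $\psi$-variables and treat the parameters $\ud_\emptyseq, \ud_{\seq{0}}$ as given.
\end{enumerate}

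Finally, suppose toward a contradiction that the two-element type implies no finite disjunction of non-trivial linear dependencies in $\uxvec_\sh\uxvec_\emptyseq\uxvec_{\seq{0}}$ over $\VV$. I treat $\uz$ as extra existentially quantified variables, so the relevant formula is $\exists \uz$ of the $\psi$-conjunction, which still implies no such dependency. Lemma \ref{theorem_stronger_left_to_right_new}, applied in the existentially closed monster $(\MM,\theta)$, then yields $\uv_\sh\uv_\emptyseq\uv_{\seq{0}}\ua$ satisfying both $\psi_\theta$-instances together with $S'$. By the equivalence above, $\uv_\sh\ua$ realizes the formula at both $\emptyseq$ and $\seq{0}$, which form a $2$-path. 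This contradicts $2$-inconsistency along paths.

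I expect the main delicate point to be the staged-system verification in the third step: confirming that duplicating $S_\va$ under the common $S_\sh$ still satisfies the staged compatibility condition of Definition \ref{def_new_haram_comp}. The cleanest route is through Lemma \ref{obs_staged_comp}, by checking its conditions (a) and (b) for the combined system.
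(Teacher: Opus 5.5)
Your proposal is correct and follows the paper's proof essentially step for step. The steps match: constancy of $\uu_{\sh,\nu}$ via antichain consistency and Observation \ref{observation_terms_all_equal}, the combined system $S'$ and its equivalence with the two copies of $S$, then Lemma \ref{theorem_stronger_left_to_right_new} together with Observation \ref{observ_delta} to contradict inconsistency along the path $\set{\bigemptyseq, \bigseq{0}}$. You are additionally explicit about existentially quantifying $\uz$ and about checking that $S'$ is staged with compatible parameters, both of which the paper leaves implicit.
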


\noindent For the rest of this section, suppose that $(\psi_0, S_0)$ is a $(\rk{}_\va, \deg{}_\va, \rk{}_\sh, \deg{}_\sh)$-minimal special witness (see Definition \ref{def_ranks_and_degs}).
Since $(\NN^4, <_\Lex)$ is well-ordered, such a minimal special witness exists because, by Claim \ref{lemma_ATP_if_special_witness_ATP}, there is at least one special witness.
We claim that $\psi_0(\uxvec{}_{0,\sh}\uxvec{}_{0,\va}; \uz; \uw_0)$ is as described in Lemma \ref{lemma_atp_step_11}.
Let $(\uu_{0,\nu}\ud_{0,\nu} : \nu \in \omega^{<\omega})$ be an $L_\theta$-strongly indiscernible tree such that
$$
\ux_{0,\sh} \in \VV \wedge \exists \ux_{0,\va} \in \VV : \delta_{S_0}(\uy_0) \wedge \psi_{0,\theta}(\ux_{0,\sh}\ux_{0,\va}; \uz; \uw_0) \wedge S_0(\ux_{0,\sh}\ux_{0,\va}; \uy_0)
$$
witnesses \ATP{} together with that tree (such a tree exists by Observation \ref{remark_str_ic_witness}).
Since $2^{<\omega} \subset \omega^{<\omega}$, the tree $(\uu_{0,\nu}\ud_{0,\nu} : \nu \in 2^{<\omega})$ is still strongly indiscernible.
Suppose, toward a contradiction, that there is an antichain $X \subset 2^{<\omega}$ such that
$$
\set{ \psi_0(\uxvec{}_{0,\sh}\uxvec{}_{0,\nu}; \uz; \ud_{0,\nu}) : \nu \in X }
$$
implies some finite disjunction of non-trivial linear dependencies in $\uxvec{}_{0,\sh}(\uxvec{}_{0,\nu} : \nu \in X)$ over $\VV$.
Using compactness we can assume that $X$ is finite.
Our goal is to construct a smaller special witness $(\psi', S')$, contradicting the minimality of $(\psi_0, S_0)$ with respect to the well-order induced by $(\rk{}_\va, \deg{}_\va, \rk{}_\sh, \deg{}_\sh)$.

\begin{subclaim} \label{lemma_remove_disj_1}
    Let $(\psi, S)$ be a special witness together with a strongly indiscernible tree $(\uu_\nu\ud_\nu : \nu \in \omega^{<\omega})$.
    Assume that there is a finite antichain $X \subset 2^{<\omega}$ such that $\set{\psi(\uxvec{}_\sh\uxvec{}_\nu; \uz; \ud_\nu) : \nu \!\in\! X}$ implies a finite disjunction of non-trivial linear dependencies in $\uxvec{}_\sh(\uxvec{}_\nu : \nu \in X)$ over $\VV$.
    Then there are:
    \begin{enumerate}[(i)]
        \item a non-trivial $\LK$-term $\lambda(\uxvec{}_\sh(\uxvec{}_\nu : \nu \in X))$ in which each placeholder $x_{*, k}^i$ (with $* \in \set{\sh} \cup X$ and $1 \leq k \leq |\ux_*|$) can appear only if it appears in $\psi(\uxvec{}_\sh\uxvec{}_\nu; \uz; \uw)$ for some $\nu \in X$,
        \item a formula $\varphi(y; (\uw_\nu : \nu \in X))$ algebraic in $y$,
    \end{enumerate}
    such that, for any antichain $Y \subset 2^{<\omega}$, there is a realization
    $$
    \uv_\sh(\uv_\nu : \nu \in Y)\ua \models \set{\delta_S(\uu_\nu) \wedge \psi_\theta(\ux_\sh\ux_\nu; \uz; \ud_\nu) \wedge S(\ux_\sh\ux_\nu; \uu_\nu) : \nu \in Y}
    $$
    with $\uv_\sh(\uv_\nu : \nu \in Y) \in \VV$ that satisfies $(\MM, \theta) \models \varphi(\lambda_\theta(\uv_\sh(\uv_\nu : \nu \in X')); (\ud_\nu : \nu \in X'))$ for any $X' \subseteq Y$ strongly isomorphic to the antichain $X$.
    Recall our \placeholderNotation{} (Definition \ref{def_placeholder_notation}): $\lambda_{\theta}(\ux_{\sh}(\ux_{\nu} : \nu \in X'))$ is $\lambda(\uxvec{}_{\sh}(\uxvec{}_{\nu} : \nu \in X'))$ with every placeholder variable $x^i_{*,k}$ replaced by $\theta^i(x_{*, k})$.
\begin{innerproof}
    We proceed as in Claim \ref{lemma_remove_disj_ez}, with two exceptions.
    In the second paragraph, set $\lambda(\uxvec{}_\sh(\uxvec{}_\nu : \nu \in X)) := \lambda_{i_0}(\uxvec{}_\sh(\uxvec{}_\nu : \nu \in X))$. Note that by restricting to the variables that actually appear, we can apply Fact \ref{lemma_hfour_fml} so that our placeholders appear in each $\lambda_k(\uxvec{}_{\sh}(\uxvec{}_{\nu} : \nu \in X))$ as described in (i).
    At the beginning of the last paragraph, we choose $\uv'_\sh(\uv'_{\nu} : \nu \in Z)\ua$ with $\uv'_\sh(\uv'_{\nu} : \nu \in Z) \in \VV$ to be a realization of
    $$
    \set{ \psi_{\theta}(\ux_{\sh}\ux_{\nu}; \uz; \ud_{\nu}) \wedge S(\ux_\sh\ux_\nu; \uu_\nu) : \nu \in Z}
    $$
    instead of just $\set{\psi_{\theta}(\ux_{\sh}\ux_{\nu}; \uz; \ud_{\nu}) : \nu \in Z}$, and we do the same later for $\uv_\sh(\uv_{\nu} : \nu \in Y)\ua$.
\end{innerproof}
\end{subclaim}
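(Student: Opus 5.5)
The plan is to rerun the proof of Claim \ref{lemma_remove_disj_ez} with three modifications. The realizations are taken to satisfy the staged systems $S(\ux_\sh\ux_\nu;\uu_\nu)$ as well as $\psi$. We track which placeholders occur in the term $\lambda$. Strong $L_\theta$-indiscernibility of $(\uu_\nu\ud_\nu : \nu\in\omega^{<\omega})$ replaces strong $L$-indiscernibility.

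First, by \Hfour{} and Fact \ref{lemma_hfour_fml}, applied to the $L$-formula $\bigwedge_{\nu\in X}\psi(\uxvec{}_\sh\uxvec{}_\nu;\uz;\ud_\nu)$, I obtain an implied disjunction $\bigvee_{i=1}^q\varphi_i(\lambda_i(\uxvec{}_\sh(\uxvec{}_\nu:\nu\in X));(\ud_\nu:\nu\in X))$, with each $\lambda_i$ a non-zero $\LK$-term and each $\varphi_i$ algebraic in $y$. For the restriction in (i), I apply Fact \ref{lemma_hfour_fml} only to the finite tuple of placeholder variables that actually occur in some $\psi(\uxvec{}_\sh\uxvec{}_\nu;\uz;\uw)$, treating $\uz$ and the other variables as existentially quantified. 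The assumed linear dependency only involves these variables: if the formula implies no disjunction of dependencies in the occurring variables, then adding fresh variables that are unconstrained cannot create one. Hence the resulting $\lambda_i$ mention only occurring placeholders. Consistency of the type gives $q>0$. By strong indiscernibility, the same disjunction holds, with the obvious substitution, for every $X'$ strongly isomorphic to $X$.

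Second, I reduce to a fixed finite antichain $Y\subset\omega^{<\omega}$ by compactness: any two finite antichains embed strongly into a third. Using Fact \ref{fact_is_ramsey} and the identification of $\simeq_\delta$ with strong isomorphism (Fact \ref{L_zero_type_eq_L_delta_type}), I pick a finite $Z\subset\omega^{<\omega}$ with the Ramsey property $Z\to(Y)^X_q$. Since $Z$ is an antichain and $(\psi,S)$ is a special witness with this tree, the type $\set{\delta_S(\uu_\nu)\wedge\psi_\theta(\ux_\sh\ux_\nu;\uz;\ud_\nu)\wedge S(\ux_\sh\ux_\nu;\uu_\nu):\nu\in Z}$ has a realization $\uv'_\sh(\uv'_\nu:\nu\in Z)\ua'$ in $\VV$. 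Here $\delta_S(\uu_\nu)$ holds automatically by Observation \ref{observ_delta}. I color each copy $X'$ by the least $i$ such that $\varphi_i(\lambda_{i,\theta}(\uv'_\sh(\uv'_\nu:\nu\in X'));(\ud_\nu:\nu\in X'))$ holds. This is well defined because the placeholder disjunction, after substituting $\theta^j$, is implied by the $\psi_\theta$-conjuncts.

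Ramsey then yields a monochromatic $Y'\simeq_\delta Y$ of color $i_0$. Strong $L_\theta$-indiscernibility transports the realization along $Y'\to Y$, giving the required $\uv_\sh(\uv_\nu:\nu\in Y)\ua$. I set $\lambda:=\lambda_{i_0}$ and $\varphi:=\varphi_{i_0}$.

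The main obstacle is the compactness step: the index $i_0$ must be chosen uniformly over all finite $Y$, and the witnesses must pass to infinite antichains. Uniformity holds because there are only $q$ colors and every finite antichain embeds into a larger one, so some single $i_0$ works for cofinally many, hence all, finite antichains. An infinite $Y$ then follows by compactness applied to the corresponding partial type.
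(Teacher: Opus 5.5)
Your proposal is correct and follows the paper's route. Like the paper, it reruns the Ramsey-class coloring argument of Claim \ref{lemma_remove_disj_ez}, realizing $S(\ux_\sh\ux_\nu;\uu_\nu)$ together with $\psi_\theta$, and it restricts the application of Fact \ref{lemma_hfour_fml} to the placeholders that actually occur in order to get (i). Your additions only make explicit what the paper leaves implicit: $\delta_S(\uu_\nu)$ holds by Observation \ref{observ_delta}, the transport along $Y'\to Y$ uses strong $L_\theta$-indiscernibility, and $i_0$ can be chosen uniformly by a downward-closure/directedness argument.
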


\noindent Note that, as in Claim \ref{lemma_remove_disj_ez}, we need $(\uu_\nu\ud_\nu : \nu \in \omega^{<\omega})$ to be indexed by $\omega^{<\omega}$ in the Claim above in order to apply some Ramsey theory.

In the following Claim, the special witness $(\psi_0, S_0)$, the tree $(\uu_{0,\nu}\ud_{0,\nu} : \nu \in \omega^{<\omega})$, and the antichain $X \subset 2^{<\omega}$ are the ones fixed above, before Claim \ref{lemma_remove_disj_1}.

\begin{subclaim} \label{lemma_new_remove_disj}
    We can find an $L$-formula $\psi(\uxvec{}_\sh\uxvec{}_\va; \uz; \uw)$, a parametrized staged $C$-sequence-system $S(\ux_\sh\ux_\va; \uy)$, an $\LKThe$-equation $E(\ux_\sh\ux_\va; y^*)$, and another strongly $L_\theta$-indiscernible tree $(\uu_\nu u^*_{\nu}\ud_\nu : \nu \in 2^{<\omega})$ such that:
    \begin{enumerate}[(i)]
        \item $E(\ux_\sh\ux_\va; y^*)$ is non-trivial and bounded by the parametrized staged $C$-sequence-system $S$.
        \item The formula $\ux_\sh \in \VV \wedge \exists \ux_\va \in \VV : \delta_{S}(\uy) \wedge \psi_\theta(\ux_\sh\ux_\va; \uz; \uw) \wedge S(\ux_\sh\ux_\va; \uy) \wedge E(\ux_\sh\ux_\va; y^*)$ witnesses \ATP{} together with the tree.
        \item We have $(\rk_\va(S), \deg_\va(S)) = (\rk_\va(S_0), \deg_\va(S_0))$.
        Furthermore, if $E(\ux_\sh\ux_\va; y^*)$ is trivial in $\ux_\va$ (and therefore non-trivial in $\ux_\sh$), then also $(\rk_\sh(S), \deg_\sh(S)) = (\rk_\sh(S_0), \deg_\sh(S_0))$.
    \end{enumerate}
\begin{innerproof}
    Apply Claim \ref{lemma_remove_disj_1} to the special witness $(\psi_0, S_0)$ and the tree $(\uu_{0, \nu} \ud_{0, \nu} : \nu \in \omega^{<\omega})$ to obtain a non-trivial $\LK$-term $\lambda(\uxvec{}_{0, \sh}(\uxvec{}_{0, \nu} : \nu \in X))$ and a formula $\varphi(y; (\uw_{0, \nu} : \nu \in X))$, as described in that Claim.
    Write $X = \set{\mu_1, \dots, \mu_{r}, \mu}$ with all $\mu$'s distinct.
    Without loss, we can distinguish between the following three cases:
    \begin{enumerate}[(a)]
        \item The $\LK$-term $\lambda(\uxvec{}_{0,\sh}(\uxvec{}_{0,\nu} \!: \nu \in X))$ only depends on the tuple $\uxvec{}_{0,\sh}$, i.e., we can write $\lambda_\theta(\ux_{0,\sh}(\ux_{0,\nu} : \nu \in X)) =: \lambda^*(\ux_{0,\sh})$.
        In this case, we define
        $$
        \psi(\uxvec{}_\sh\uxvec{}_\va; \uz; \uw) := \psi_0(\uxvec{}_{0,\sh}\uxvec{}_{0,\va}; \uz; \uw_0)\quad \text{and}\quad S(\ux_\sh\ux_\va; \uy) := S_0(\ux_{0,\sh}\ux_{0,\va}; \uy_0).$$
        This also means that we implicitly set $\ux_\sh := \ux_{0,\sh}$, and so on.
        Let $Y \subset 2^{<\omega}$ be any finite antichain.
        Since $\set{\mu_1, \dots, \mu_{r}} \cup \mu^\frown Y$ is an antichain and $\set{\mu_1, \dots, \mu_{r}, \mu ^\frown \nu}$ is strongly isomorphic to $X$ for any $\nu \in 2^{<\omega}$, Claim \ref{lemma_remove_disj_1} tells us that
        $$
        \Set{ \psi_\theta(\ux_\sh\ux_\nu; \uz; \ud_{0,\nu}) \wedge S(\ux_\sh\ux_\nu; \uu_{0,\nu}) \wedge \varphi(\lambda^*(\ux_\sh); \ud_{0,\mu_1} \dots \ud_{0,\mu_r} \ud_{0,\nu}) : \nu \in \mu^\frown Y}
        $$
        is consistent.
        Since $\psi_\theta(\ux_\sh\ux_\nu; \uz; \ud_{0,\nu}) \wedge S(\ux_\sh\ux_\nu; \uu_{0,\nu}) \wedge \psi_\theta(\ux_\sh\ux_\eta; \uz; \ud_{0,\eta}) \wedge S(\ux_\sh\ux_\eta; \uu_{0,\eta})$ is inconsistent for any $\nu \triangleleft \eta$, we can now use Lemma \ref{lemma_remove_alg_set} and Observation \ref{observ_delta} to see that the formula
        $$
        \ux_\sh \in \VV \wedge \exists \ux_\va \in \VV : \delta_S(\uy) \wedge \psi_\theta(\ux_\sh\ux_\va; \uz; \uw) \wedge S(\ux_\sh\ux_\va; \uy) \wedge \lambda^*(\ux_\sh) = y^*
        $$
        witnesses \ATP{} together with some strongly indiscernible tree $(\uu_\nu u^*_{\nu}\ud_\nu : \nu \in 2^{<\omega})$.
        Note that the equation $\lambda^*(\ux_\sh) = y^*$ must be bounded by $S$, since $\lambda(\uxvec{}_\sh(\uxvec{}_\nu : \nu \in X))$ can only use the variables that appear in $\bigwedge\nolimits_{\nu \in X} \psi(\uxvec{}_\sh\uxvec{}_\nu; \uz; \uw_\nu)$ (see (i) of Claim \ref{lemma_remove_disj_1}) and $\psi(\uxvec{}_\sh\uxvec{}_\va; \uz; \uw)$ is bounded by $S$.
        \item The $\LKThe$-term $\lambda_\theta(\ux_{0,\sh}(\ux_{0,\nu} : \nu \in X))$ depends on $\ux_{\mu}$ and $C$ is transcendental.
        In this case, write $\lambda'_{\theta}(\ux_{0,\sh}\ux_{0,\mu}) + \lambda''_{\theta}(\ux_{0,\mu_1} \dots \ux_{0,\mu_r}) := \lambda_\theta(\ux_{0,\sh}(\ux_{0,\nu} : \nu \in X))$.
        Let $Y \subset 2^{<\omega}$ be any antichain.
        As in (a), we see that the type
        $$
        \Set{\parbox{9.5cm}{$\psi_{0,\theta}(\ux_{0,\sh}\ux_{0,\nu}; \uz; \ud_{0,\nu}) \wedge S_0(\ux_{0,\sh}\ux_{0,\nu}; \uu_{0,\nu})\\{}\hspace{30pt} \wedge \ \varphi(\lambda'_{\theta}(\ux_{0,\sh}\ux_{0,\nu}) + \lambda''_{\theta}(\ux_{0,\mu_1} \dots \ux_{0,\mu_r}); \ud_{0,\mu_1} \dots \ud_{0,\mu_r} \ud_{0,\nu})$} :\ \nu \in \mu^\frown Y}
        $$
        has a realization $\uv_{0,\sh} \uv_{\mu_1} \dots \uv_{\mu_r}(\uv_{\nu} : \nu \in \mu^\frown Y)\ua$.
        Let $x_{\sh, \li, 0}$ be a variable that does not appear in $\ux_{0, \sh}\ux_{0, \va}$.
        We set $\ux_{\sh} := x_{\sh, \li, 0}\ux_{0,\sh}$,
        $$
        \ux_\va := \ux_{0, \va}, \quad\psi(\uxvec{}_\sh\uxvec{}_\va; \uz; \uw) := \psi_0(\uxvec{}_{0,\sh}\uxvec{}_{0,\va}; \uz; \uw_0), \quad S(\ux_\sh\ux_\va; \uy) := S_0(\ux_{0,\sh}\ux_{0,\va}; \uy_0),
        $$
        and $\lambda^*(\ux_\sh\ux_\va) = \lambda'_{\theta}(\ux_{0,\sh}\ux_{0,\va}) + x_{\sh, \li, 0}$.
        As in (a), this also means that we set $\uw := \uw_0$ and $\uy := \uy_0$.
        Note that $S(\ux_\sh\ux_\va; \uy)$ is still a parametrized staged $C$-sequence-system, by Observation \ref{obs_add_to_shared} applied with the added ``regular'' system $S_*(x_{\sh, \li, 0}) := \top$.
        Set $\uv_\sh := \lambda''_{\theta}(\uv_{\mu_1} \dots \uv_{\mu_r})\uv_{0,\sh}$.
        One can now check that $\uv_\sh(\uv_{\nu} : \nu \in \mu^\frown Y)\ua$ is a realization of
        $$
        \Set{ \psi_\theta(\ux_\sh\ux_\nu; \uz; \ud_{0,\nu}) \wedge S(\ux_\sh\ux_\nu; \uu_{0,\nu}) \wedge \varphi(\lambda^*(\ux_\sh\ux_\nu); \ud_{0,\mu_1} \dots \ud_{0,\mu_r} \ud_{0,\nu}) : \nu \in \mu^\frown Y}.
        $$
        As in (a), conclude that the formula
        $$
        \ux_\sh \in \VV \wedge \exists \ux_\va \in \VV : \delta_{S}(\uy) \wedge \psi_\theta(\ux_\sh\ux_\va; \uz; \uw) \wedge S(\ux_\sh\ux_\va; \uy) \wedge \lambda^*(\ux_\sh\ux_\va) = y^*
        $$
        witnesses \ATP{} together with some strongly indiscernible tree $(\uu_\nu u^*_{\nu}\ud_\nu : \nu \in 2^{<\omega})$.
        With similar arguments as in (a), one verifies that the equation $\lambda^*(\ux_\sh\ux_\va) = y^*$ is bounded by the parametrized staged $C$-sequence-system $S$.
        \item The $\LK$-term $\lambda(\uxvec{}_{0,\sh}(\uxvec{}_{0,\nu} : \nu \in X))$ depends on $\ux_{\mu}$ and $C$ is an algebraic kernel configuration.
        In this case, we write $\lambda'_{\theta}(\ux_{0,\sh}\ux_{0,\mu}) + \lambda''_{\theta}(\ux_{0,\mu_1} \dots \ux_{0,\mu_r}) := \lambda_\theta(\ux_{0,\sh}(\ux_{0,\nu} : \nu \in X))$ as in (b).
        Let \hbox{$F := \Kp{0<C<\infty}$} and define $\uv_{0,\sh}\uv_{\mu_1} \dots \uv_{\mu_r}(\uv_{\nu} : \nu \in \mu^\frown Y)\ua$ as in (b).
        For each $f \in F$, let $x_f$ and $y_f$ be new variables.
        Set $\ux_\sh := (x_f : f \in F)\ux_{0,\sh}$, $\ux_\va := \ux_{0, \va}$, and $\uy := (y_f : f \in F)\uy_0$.
        We also define
        $$
        \psi(\uxvec{}_\sh\uxvec{}_\va; \uz; \uw) := \psi_0(\uxvec{}_{0,\sh}\uxvec{}_{0,\va}; \uz; \uw_0),\quad\!\!\!\! S(\ux_\sh\ux_\va; \uy) := \bigwedge\nolimits_{f\in F} f^C[\theta](x_f) = y_f \wedge S_0(\ux_{0,\sh}\ux_\va; \uy_0)
        $$
        and $\lambda^*(\ux_\sh\ux_\va) := \lambda'_{\theta}(\ux_{0,\sh}\ux_{0,\va}) + \sum\nolimits_{f\in F} x_f$.
        One can again use Observation \ref{obs_add_to_shared} to verify that $S(\ux_\sh\ux_\va; \uy)$ is a parametrized staged $C$-sequence-system and that $\uzero\uu_{0,\nu}$ is compatible with it for every $\nu \in 2^{<\omega}$.
        Define $\uv_\sh := (\pi_{\Ker(f^C)}(\lambda''_{\theta}(\uv_{\mu_1} \dots \uv_{\mu_r})) : f \in F)\uv_{0,\sh}$.
        Now the tuple $\uv_\sh(\uv_{\nu} : \nu \in \mu^\frown Y)\ua$ is a realization of
        $$
        \Set{\psi_\theta(\ux_\sh\ux_\nu; \uz; \ud_{0,\nu}) \wedge S(\ux_\sh\ux_\nu; \uzero\uu_{0,\nu}) \wedge \varphi(\lambda^*(\ux_\sh\ux_\nu); \ud_{0,\mu_1} \dots \ud_{0,\mu_r} \ud_{0,\nu}) : \nu \in \mu^\frown Y}
        $$
        (recall that $\VV = \bigoplus\nolimits_{f\in F} \Ker(f^C)$ for $F = \Kp{0<C<\infty}$ if $C$ is algebraic).
        Now proceed as in (b).
        For the boundedness of the equation $\lambda^*(\ux_\sh\ux_\va) = y^*$ by $S$, also note that, for each $f \in F$, the subterm $\theta^i(x_f)$ appears in $\lambda^*(\ux_\sh\ux_\va)$ only with $i = 0 < \deg(f)$.
    \end{enumerate}
    With $E(\ux_\sh\ux_\va; y^*) := \lambda^*(\ux_\sh\ux_\va) = y^*$, points (i) and (ii) were proved in all cases.
    Point (iii) can also be verified because, in cases (b) and (c), we obtained $S$ by applying Observation \ref{obs_add_to_shared}, which preserves $\rk_\va$ and $\deg_\va$, and because we set $S = S_0$ in case (a).
\end{innerproof}
\end{subclaim}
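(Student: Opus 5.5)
The plan is to extract the equation $E$ from the linear dependency produced by Claim \ref{lemma_remove_disj_1}, and then absorb the parts of that dependency coming from the fixed nodes of $X$ into the shared tuple. First I would apply Claim \ref{lemma_remove_disj_1} to the minimal special witness $(\psi_0, S_0)$, its tree $(\uu_{0,\nu}\ud_{0,\nu} : \nu \in \omega^{<\omega})$ and the antichain $X$. This gives a non-trivial $\LK$-term $\lambda$ and a formula $\varphi(y; (\uw_\nu : \nu \in X))$ algebraic in $y$. Using the freedom to permute the elements of $X$, I write $X = \set{\mu_1, \dots, \mu_r, \mu}$, where $\mu$ is the node whose variables occur in $\lambda$ if any such node exists. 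I then split $\lambda_\theta = \lambda'_\theta(\ux_{0,\sh}\ux_{0,\mu}) + \lambda''_\theta(\ux_{0,\mu_1}\dots\ux_{0,\mu_r})$.

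The key tree manipulation is the one from Claim \ref{claim_remove_eq_lower_va}. For any antichain $Y$, the set $\set{\mu_1,\dots,\mu_r} \cup \mu^\frown Y$ is again an antichain. Moreover, for every $\nu \in 2^{<\omega}$, the set $\set{\mu_1,\dots,\mu_r,\mu^\frown\nu}$ is strongly isomorphic to $X$. Claim \ref{lemma_remove_disj_1} therefore yields one realization in which, for every $\nu \in \mu^\frown Y$ simultaneously, the value $\lambda_\theta$ lies in the finite set $\varphi(\MM; \ud_{0,\mu_1}\dots\ud_{0,\mu_r}\ud_{0,\nu})$. The fixed contribution $\lambda''_\theta(\uv_{\mu_1}\dots\uv_{\mu_r})$ is a single element of $\VV$, independent of $\nu$, so I intend to move it into the shared part. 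I would do this by case distinction.
\begin{enumerate}[(a)]
\item If $\lambda$ depends only on $\ux_{0,\sh}$, I keep $(\psi,S) = (\psi_0,S_0)$ and take $E := \lambda^*(\ux_\sh) = y^*$.
\item If $C$ is transcendental, I adjoin a new linearly independent shared variable $x_{\sh,\li,0}$, using Observation \ref{obs_add_to_shared} with the trivial system $\top$, and interpret it as $\lambda''_\theta(\uv_{\mu_1}\dots\uv_{\mu_r})$.
\item If $C$ is algebraic, linearly independent variables are not allowed. Instead I use the decomposition $\VV = \bigoplus_{f\in F}\Ker(f^C)$ with $F = \Kp{0<C<\infty}$ (Fact \ref{lemma_decomposition}). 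I adjoin shared variables $x_f$ subject to $f^C[\theta](x_f)=0$, again via Observation \ref{obs_add_to_shared} with parameters $\uzero$, and interpret $x_f$ as $\pi_{\Ker(f^C)}$ applied to the fixed contribution.
\end{enumerate}
In both (b) and (c), the equation $E$ is $\lambda'_\theta + (\text{new shared variables}) = y^*$.

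In each case the tree over $\mu^\frown Y$ gives consistency along antichains of the formula conjoined with $\varphi(\lambda^*(\cdots);\ud_{0,\mu_1}\dots\ud_{0,\mu_r}\uw)$. Inconsistency along paths is inherited from $(\psi_0,S_0)$, since we only strengthened the formula. I would then apply Lemma \ref{lemma_remove_alg_set}, with $q$ bounding the size of the algebraic set, to pass to a strongly indiscernible tree $(\uu_\nu u^*_\nu\ud_\nu)$ with $u^*_\nu$ in that finite set. Observation \ref{observ_delta} keeps the $\delta_S$ conjunct valid; this gives (ii). Point (iii) follows because Observation \ref{obs_add_to_shared} preserves $(\rk_\va,\deg_\va)$, and in case (a) nothing changes.

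The step I expect to be most delicate is (i), namely non-triviality and boundedness of $E$. Non-triviality is clear because the new shared variables appear with coefficient $1$ in $E$. Boundedness is the point of the restriction in (i) of Claim \ref{lemma_remove_disj_1}: the placeholders occurring in $\lambda$ already occur in $\psi_0$, which is bounded by $S_0$. The new variables occur only as $\theta^0$, and $0 < \deg(f)$, so they respect the bounds as well. The algebraic case (c) additionally requires checking that $\uzero\uu_{0,\nu}$ is compatible with the enlarged staged system; Observation \ref{obs_add_to_shared} provides this.
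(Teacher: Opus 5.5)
Your proposal is correct and follows essentially the same route as the paper's proof. You use the same three-case split on whether $\lambda$ involves a varying node and whether $C$ is transcendental or algebraic. You absorb $\lambda''_\theta(\uv_{\mu_1}\dots\uv_{\mu_r})$ into new shared variables (one linearly independent variable, or the components $\pi_{\Ker(f^C)}$ with parameters $\uzero$), use the antichains $\set{\mu_1,\dots,\mu_r}\cup\mu^\frown Y$, and finish with Lemma~\ref{lemma_remove_alg_set}, Observation~\ref{observ_delta} and Observation~\ref{obs_add_to_shared}.
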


\noindent Since we no longer need the antichain $X$ fixed before Claim \ref{lemma_remove_disj_1}, we may use the letter $X$ to denote other antichains from now on.
Let $\psi(\uxvec{}_\sh\uxvec{}_\va; \uz; \uw)$, $S(\ux_\sh\ux_\va; \uy)$, $E(\ux_\sh\ux_\va; y^*)$, and $(\uu_\nu u^*_{\nu}\ud_\nu : \nu \in 2^{<\omega})$ be as in the statement of Claim \ref{lemma_new_remove_disj}.
This means that for any finite antichain $X \subset 2^{<\omega}$, we have a tuple $\uv_\sh(\uv_\nu : \nu \in X)\ua$ with $\uv_\sh(\uv_\nu : \nu \in X) \in \VV$ such that
    \begin{align}
        (\MM, \theta) \models \bigwedge\nolimits_{\nu \in X} \delta_S(\uu_\nu) \wedge \psi_\theta(\uv_\sh \uv_\nu; \ua; \ud_\nu) \wedge S(\uv_\sh\uv_\nu; \uu_\nu) \wedge E(\uv_\sh\uv_\nu; u^*_{\nu}). \label{tag_realizatiion_of_X}
    \end{align}
Using our Transformation Lemma for staged $C$-sequence-systems (Lemma \ref{lemma_trafo_for_staged_c_ss}), we see that the conjunction $\delta_{S}(\uy) \wedge S(\ux_\sh\ux_\va; \uy) \wedge E(\ux_\sh\ux_\va; y^*)$ is transformable into a formula of the form
$$
    \varphi'(\uy y^*) \wedge S'(\ux'_\sh\ux'_\va; \umu{}'(\uy y^*))
$$
where $S'(\ux'_\sh\ux'_\va; \uy')$ is another parametrized staged $C$-sequence-system, and $(\varphi', \umu{}')$ is a pair compatible with $S'$.
This means that $\umu{}'(\uy y^*)$ is a tuple of $\LRC$-terms, and $\varphi'(\uy y^*)$ is a conjunction of $\LRC$-equations that implies that $\umu{}'(\uy y^*)$ is compatible with $S'$.
By (i) of Lemma \ref{lemma_trafo_for_staged_c_ss}, this transformability is witnessed by tuples of the form
\begin{align}
    \underline{\nu}(\ux_\sh\ux_\va) = \underline{\nu}{}_\sh(\ux_\sh)\underline{\nu}{}_\va(\ux_\sh\ux_\va) \quad \text{and} \quad \utau{}(\ux'_\sh\ux'_\va; \uy y^*) := \utau_\sh(\ux'_\sh; \uy y^*)\utau_\va(\ux'_\sh\ux'_\va; \uy y^*). \label{tag_special_witnesses_of_trafo}
\end{align}
Here the partition $\underline{\nu}(\ux_\sh\ux_\va) = \underline{\nu}{}_\sh(\ux_\sh)\underline{\nu}{}_\va(\ux_\sh\ux_\va)$ corresponds to the partition $\ux' = \ux'_\sh\ux'_\va$, and so on.
To avoid confusion, we note that non-underlined $\nu$'s will still be used to denote elements of trees.

\begin{subobservation} \label{obsv_term_coonnst}
    The tuple of $\LKThe(\VV)$-terms $\utau_\sh(\ux'_\sh; \uu_\nu u^*_\nu)$ does not depend on $\nu \in 2^{<\omega}$.
\begin{innerproof}
    By the definition of transformability, we can write $\utau_\sh(\ux'_\sh; \uy y^*) = \ulambda(\ux'_\sh) + \ur(\uy y^*)$, where $\ulambda(\ux'_\sh)$ is a tuple of $\LKThe$-terms and $\ur(\uy y^*)$ is a tuple of $\LRC$-terms.
    Take some two-element antichain $X = \set{\nu_1, \nu_2}$ and let $\uv_\sh(\uv_\nu : \nu \in X)\ua$ be given as in (\ref{tag_realizatiion_of_X}).
    Recall that the conjunction $\delta_S(\uy) \wedge S(\ux_\sh\ux_\va; \uy) \wedge E(\ux_\sh\ux_\va; y^*)$ is transformable into $\varphi'(\uy y^*) \wedge S'(\ux'_\sh\ux'_\va; \umu{}'(\uy y^*))$, witnessed by the tuples in (\ref{tag_special_witnesses_of_trafo}).
    By the first implication in the definition of transformability (see (i) of Definition \ref{def_transfo}) and the special form of these witnesses, we obtain
    $$
    \ulambda(\underline{\nu}{}_\sh(\uv_\sh)) + \ur(\uu_{\nu_1} u^*_{\nu_1}) = \utau_\sh(\underline{\nu}{}_\sh(\uv_\sh); \uu_{\nu_1} u^*_{\nu_1}) = \uv_\sh = \utau_\sh(\underline{\nu}{}_\sh(\uv_\sh); \uu_{\nu_2}u^*_{\nu_2}) = \ulambda(\underline{\nu}{}_\sh(\uv_\sh)) + \ur(\uu_{\nu_2} u^*_{\nu_2}),
    $$
    and hence $\ur(\uu_{\nu_1} u^*_{\nu_1}) = \ur(\uu_{\nu_2} u^*_{\nu_2})$.
    By Observation \ref{observation_terms_all_equal}, it follows that $\ur(\uu_{\nu} u^*_{\nu})$, and therefore the tuple $\utau_\sh(\ux'_\sh; \uu_\nu u^*_\nu)$, does not depend on $\nu \in 2^{<\omega}$.
\end{innerproof}
\end{subobservation}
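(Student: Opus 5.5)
The statement to prove is Observation \ref{obsv_term_coonnst}: the tuple $\utau_\sh(\ux'_\sh; \uu_\nu u^*_\nu)$ does not depend on $\nu$. The idea is to show that the parameter-dependent part of $\utau_\sh$ takes the same value at two incomparable nodes, and then invoke Observation \ref{observation_terms_all_equal}.

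First, by Definition \ref{def_transfo}, each entry of $\utau(\ux'_\sh\ux'_\va; \uy y^*)$ is the sum of an $\LKThe$-term in $\ux'$ and an $\LRC$-term in the parameters. By (i) of Lemma \ref{lemma_trafo_for_staged_c_ss}, the shared part $\utau_\sh$ depends only on $\ux'_\sh$ and the parameters. So I write
\[
\utau_\sh(\ux'_\sh; \uy y^*) = \ulambda(\ux'_\sh) + \ur(\uy y^*),
\]
where $\ulambda$ is a tuple of $\LKThe$-terms and $\ur$ is a tuple of $\LRC$-terms. It therefore suffices to show that $\ur(\uu_\nu u^*_\nu)$ is independent of $\nu$.

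Next, take a two-element antichain $X = \set{\nu_1, \nu_2}$ and a realization $\uv_\sh\uv_{\nu_1}\uv_{\nu_2}\ua$ as in (\ref{tag_realizatiion_of_X}). The key point is that both nodes share the same $\uv_\sh$. For $j = 1, 2$, the tuple $\uv_\sh\uv_{\nu_j}$ satisfies $\delta_S(\uu_{\nu_j}) \wedge S(\uv_\sh\uv_{\nu_j}; \uu_{\nu_j}) \wedge E(\uv_\sh\uv_{\nu_j}; u^*_{\nu_j})$. Applying (i) of Definition \ref{def_transfo} gives
\[
\uv_\sh\uv_{\nu_j} = \utau\big(\underline{\nu}(\uv_\sh\uv_{\nu_j}); \uu_{\nu_j}u^*_{\nu_j}\big).
\]
We need this in $(\MM,\theta)$, which is fine because $(\MM,\theta)$ is $C$-image-complete by Fact \ref{fact_c_image_comple}.

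I then restrict to the $\sh$-coordinates, using the special form (\ref{tag_special_witnesses_of_trafo}) of the witnesses. The component $\underline{\nu}_\sh$ depends only on $\ux_\sh$, and $\utau_\sh$ depends only on $\ux'_\sh$ and the parameters. Hence for both $j$,
\[
\uv_\sh = \ulambda(\underline{\nu}_\sh(\uv_\sh)) + \ur(\uu_{\nu_j}u^*_{\nu_j}),
\]
and in both equations the first summand is literally the same element. Subtracting the two equations yields $\ur(\uu_{\nu_1}u^*_{\nu_1}) = \ur(\uu_{\nu_2}u^*_{\nu_2})$.

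Finally, $\ur$ is a tuple of $L_\theta$-definable functions, and the tree $(\uu_\nu u^*_\nu\ud_\nu)$ is strongly $L_\theta$-indiscernible. Observation \ref{observation_terms_all_equal} is stated for $L$-terms, but its proof uses only strong indiscernibility, so it applies verbatim to definable functions in $L_\theta$; I would apply it entrywise. This gives that $\ur(\uu_\nu u^*_\nu)$ is constant in $\nu$, and therefore so is $\utau_\sh(\ux'_\sh;\uu_\nu u^*_\nu)$.

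The only delicate step is making sure the $\sh$-block of the transformation witnesses really separates as claimed. This is exactly what (i) of Lemma \ref{lemma_trafo_for_staged_c_ss} guarantees, so I expect no genuine obstacle.
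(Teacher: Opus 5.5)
Your proposal is correct and follows the paper's proof essentially step for step. You split $\utau_\sh$ into an $\LKThe$-part in $\ux'_\sh$ and an $\LRC$-part in the parameters, use the shared $\uv_\sh$ of a two-element antichain with (i) of Definition \ref{def_transfo} to equate the parameter parts, and conclude with Observation \ref{observation_terms_all_equal}. Your extra remarks, that $(\MM,\theta)$ is $C$-image-complete and that Observation \ref{observation_terms_all_equal} applies to $L_\theta$-definable functions on a strongly $L_\theta$-indiscernible tree, only make explicit what the paper uses implicitly.
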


\noindent By Lemma \ref{lemma_main_trafo_for_staged}, we can find an $L$-formula $\psi'(\uxvec{}'_\sh\uxvec{}'_\va; \uz; \uw\tiluw)$ bounded by $S'$ and a tuple of $\LRC$-terms $\ut(\uy y^*)$ such that
    \begin{align}
        \psi'_\theta(\ux'_\sh\ux'_\va; \uz; \uw\ut(\uy y^*)) \equiv \psi_\theta(\utau(\ux'_\sh\ux'_\va; \uy y^*); \uz; \uw) \label{tag_equality_with_better_fml}
    \end{align}
holds modulo $T_\theta \cup \set{\text{``$\theta$ is $C$-image-complete''}} \cup \set{S'(\ux'_\sh\ux'_\va; \umu{}'(\uy y^*))}$.
For all $\nu \in 2^{<\omega}$, define the tuples $\ud'_\nu := \ud_\nu\ut(\uu_\nu u^*_{\nu})$ and $\uu'_\nu := \umu{}'(\uu_\nu u^*_{\nu})$.
The tree $(\uu'_\nu\ud'_\nu : \nu \in 2^{<\omega})$ is clearly strongly indiscernible.

\begin{subclaim} \label{lemma_special_witness_smaller}
    The pair $(\psi', S')$ is a special witness together with the tree $(\uu'_\nu\ud'_\nu : \nu \in 2^{<\omega})$.
\begin{innerproof}
    By the definition of a special witness (Definition \ref{def_special_witness_3}), we need to show that the formula $\ux'_\sh \in \VV \wedge \exists \ux'_\va \in \VV : \delta_{S'}(\uy') \wedge \psi'_\theta(\ux'_\sh\ux'_\va; \uz; \uw') \wedge S'(\ux'_\sh\ux'_\va; \uy')$, with $\uw' := \uw\tiluw$, witnesses \ATP{} in $\ux'_\sh\uz$ together with the tree $(\uu'_\nu\ud'_\nu : \nu \in 2^{<\omega})$.
    First, we show that
    $$
    \set{\ux'_\sh \in \VV \wedge \exists \ux'_\nu \in \VV : \delta_{S'}(\uu'_\nu) \wedge \psi'_\theta(\ux'_\sh\ux'_\nu; \uz; \ud'_\nu) \wedge S'(\ux'_\sh\ux'_\nu; \uu'_\nu) : \nu \in X}
    $$
    is consistent for every finite antichain $X \subset 2^{<\omega}$.
    Let $\uv_\sh(\uv_\nu : \nu \in X)\ua$ be as in (\ref{tag_realizatiion_of_X}) for this finite antichain $X$.
    Since $\delta_S(\uy) \wedge S(\ux_\sh\ux_\va; \uy) \wedge E(\ux_\sh\ux_\va; y^*)$ is transformable into the conjunction $\varphi'(\uy y^*) \wedge S'(\ux'_\sh\ux'_\va; \umu{}'(\uy y^*))$, we obtain
    $$
    (\MM, \theta) \models \bigwedge\nolimits_{\nu \in X} \psi_\theta(\utau(\underline{\nu}{}(\uv_\sh\uv_\nu); \uu_\nu u^*_{\nu}); \ua; \ud_\nu) \wedge \varphi'(\uu_\nu u^*_{\nu}) \wedge S'(\underline{\nu}{}(\uv_\sh\uv_\nu); \uu'_\nu)
    $$
    by (i) of Definition \ref{def_transfo}.
    We now use the special form of $\underline{\nu}(\ux_\sh\ux_\va)$ in (\ref{tag_special_witnesses_of_trafo}), together with the equivalence in (\ref{tag_equality_with_better_fml}), the definition of our tree $(\uu'_\nu\ud'_\nu : \nu \in 2^{<\omega})$, and the fact that $\varphi'(\uy y^*)$ implies $\delta_{S'}(\umu{}'(\uy y^*))$ to see that
    \begin{align}
        (\MM, \theta) \models \bigwedge\nolimits_{\nu \in X} \delta_{S'}(\uu'_{\nu}) \wedge \psi'_\theta(\uv'_\sh\uv'_\nu; \ua; \ud'_\nu) \wedge S'(\uv'_\sh\uv'_\nu; \uu'_\nu). \label{tag_realizatiion_of_X_2}
    \end{align}
    holds for the tuples $\uv'_\sh := \underline{\nu}{}_\sh(\uv_\sh)$ and $\uv'_\nu := \underline{\nu}{}_\va(\uv_\sh\uv_\nu)$.
    This shows that the partial type $$\set{\ux'_\sh \in \VV \wedge \exists \ux'_\va \in \VV : \delta_{S'}(\uu'_\nu) \wedge \psi'_\theta(\ux'_\sh\ux'_\va; \uz; \ud'_\nu) \wedge S'(\ux'_\sh\ux'_\va; \uu'_\nu) : \nu \in X}$$ is consistent.

    Now, we show that any finite $X \subset 2^{<\omega}$ must be an antichain if the partial type
    $$
    \set{ \ux'_\sh \in \VV \wedge \exists \ux'_\va \in \VV : \delta_{S'}(\uu'_\nu) \wedge \psi'_\theta(\ux'_\sh\ux'_\va; \uz; \ud'_\nu) \wedge S'(\ux'_\sh\ux'_\va; \uu'_\nu) : \nu \in X}
    $$
    is consistent.
    Assuming that this partial type is consistent, there is $\uv'_\sh(\uv'_\nu : \nu \in X)\ua$ such that (\ref{tag_realizatiion_of_X_2}) holds for this $X$.
    First, note that the paragraph above, applied to the antichain $\set{\nu}$, yields $(\MM, \theta) \models \varphi'(\uu_\nu u^*_\nu)$ for any $\nu \in 2^{<\omega}$.
    Hence, with the definitions of the $\ud'_\nu$'s and the equivalence in (\ref{tag_equality_with_better_fml}), we obtain
    $$
    (\MM, \theta) \models \bigwedge\nolimits_{\nu \in X} \psi_\theta(\utau(\uv'_\sh\uv'_\nu; \uu_\nu u^*_{\nu}); \ua; \ud_\nu) \wedge \varphi'(\uu_\nu u^*_{\nu}) \wedge S'(\uv'_\sh\uv'_\nu; \uu'_\nu).
    $$
    Since $\delta_S(\uy) \wedge S(\ux_\sh\ux_\va; \uy) \wedge E(\ux_\sh\ux_\va; y^*)$ is transformable into $\varphi'(\uy y^*) \wedge S'(\ux'_\sh\ux'_\va; \umu{}'(\uy y^*))$, this implies
    \begin{align}
    (\MM, \theta) \models \bigwedge\nolimits_{\nu \in X} \delta_S(\uu_\nu) &\wedge \psi_\theta(\utau_\sh(\uv'_\sh; \uu_\nu u^*_\nu)\utau_\va(\uv'_\sh\uv'_\nu; \uu_\nu u^*_\nu); \ua; \ud_\nu) \notag  \\
    & \hspace{-20pt} \wedge S(\utau_\sh(\uv'_\sh; \uu_\nu u^*_\nu)\utau_\va(\uv'_\sh\uv'_\nu; \uu_\nu u^*_\nu); \uu_\nu) \wedge E(\utau_\sh(\uv'_\sh; \uu_\nu u^*_\nu)\utau_\va(\uv'_\sh\uv'_\nu; \uu_\nu u^*_\nu); u^*_\nu).
    \end{align}
    To see this, use (ii) of Definition \ref{def_transfo} in combination with $\uu'_\nu := \umu{}'(\uu_\nu u^*_\nu)$ and the special form of $\utau(\ux'_\sh\ux'_\va; \uy y^*)$ in (\ref{tag_special_witnesses_of_trafo}).
    Recall that the formula
    $$
    \ux_\sh \in \VV \wedge \exists \ux_\va \in \VV : \delta_{S}(\uy) \wedge \psi_\theta(\ux_\sh\ux_\va; \uz; \uw) \wedge S(\ux_\sh\ux_\va; \uy) \wedge E(\ux_\sh\ux_\va; y^*)
    $$
    witnesses \ATP{} together with the tree $(\uu_\nu u^*_{\nu}\ud_\nu : \nu \in 2^{<\omega})$ by definition (see (ii) of Claim \ref{lemma_new_remove_disj}).
    Since $\utau_\sh(\uv'_\sh; \uu_\nu u^*_\nu)$ does not depend on $\nu \in 2^{<\omega}$ (see Observation \ref{obsv_term_coonnst}), $X$ must be an antichain.
\end{innerproof}
\end{subclaim}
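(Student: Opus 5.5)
To prove the claim, I would check the two halves of the definition of \ATP{} separately for the formula $\ux'_\sh \in \VV \wedge \exists \ux'_\va \in \VV : \delta_{S'}(\uy') \wedge \psi'_\theta(\ux'_\sh\ux'_\va; \uz; \uw') \wedge S'(\ux'_\sh\ux'_\va; \uy')$ with $\uw' := \uw\tiluw$ and the tree $(\uu'_\nu\ud'_\nu : \nu \in 2^{<\omega})$. The first half is consistency along finite antichains (compactness then handles infinite ones). The second half is that a consistent finite $X$ must be an antichain, which gives $2$-inconsistency along paths. Strong indiscernibility of the new tree is clear, since $\ud'_\nu$ and $\uu'_\nu$ are obtained from $\uu_\nu u^*_\nu\ud_\nu$ by fixed $\LRC$-terms.

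For consistency, fix a finite antichain $X$ and take $\uv_\sh(\uv_\nu : \nu \in X)\ua$ as in (\ref{tag_realizatiion_of_X}). Then apply implication (i) of Definition \ref{def_transfo} to each conjunct $\delta_S(\uu_\nu) \wedge S(\uv_\sh\uv_\nu;\uu_\nu) \wedge E(\uv_\sh\uv_\nu;u^*_\nu)$. This yields $\varphi'(\uu_\nu u^*_\nu) \wedge S'(\underline{\nu}(\uv_\sh\uv_\nu); \uu'_\nu)$ together with $\uv_\sh\uv_\nu = \utau(\underline{\nu}(\uv_\sh\uv_\nu); \uu_\nu u^*_\nu)$. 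The key point is the special shape (\ref{tag_special_witnesses_of_trafo}): $\underline{\nu}{}_\sh$ depends only on $\ux_\sh$, so $\uv'_\sh := \underline{\nu}{}_\sh(\uv_\sh)$ is the \emph{same} for every $\nu \in X$, while $\uv'_\nu := \underline{\nu}{}_\va(\uv_\sh\uv_\nu)$ varies. Substituting into $\psi_\theta$ and using the equivalence (\ref{tag_equality_with_better_fml}), which is valid because $S'$ holds, turns $\psi_\theta(\uv_\sh\uv_\nu;\ua;\ud_\nu)$ into $\psi'_\theta(\uv'_\sh\uv'_\nu;\ua;\ud'_\nu)$. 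Since $(\varphi',\umu{}')$ is compatible with $S'$, $\varphi'$ gives $\delta_{S'}(\uu'_\nu)$. This produces (\ref{tag_realizatiion_of_X_2}), a common realization.

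For the converse, suppose $\uv'_\sh(\uv'_\nu:\nu\in X)\ua$ realizes (\ref{tag_realizatiion_of_X_2}) for some finite $X$. Applying the first step to singletons $\set{\nu}$ gives $\varphi'(\uu_\nu u^*_\nu)$ for every $\nu$. Then (\ref{tag_equality_with_better_fml}) returns us to $\psi_\theta(\utau(\uv'_\sh\uv'_\nu;\uu_\nu u^*_\nu);\ua;\ud_\nu)$, and implication (ii) of Definition \ref{def_transfo} yields $\delta_S(\uu_\nu) \wedge S(\cdots;\uu_\nu) \wedge E(\cdots;u^*_\nu)$ at the tuple $\utau_\sh(\uv'_\sh;\uu_\nu u^*_\nu)\utau_\va(\uv'_\sh\uv'_\nu;\uu_\nu u^*_\nu)$.

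The main obstacle is that the shared part must really be shared across all $\nu \in X$. Otherwise we would not obtain a realization of the original \ATP{}-witnessing type in which $\ux_\sh$ is uniform in $\nu$. This is exactly what Observation \ref{obsv_term_coonnst} supplies: $\utau_\sh(\ux'_\sh;\uu_\nu u^*_\nu)$ is independent of $\nu$. Hence $\utau_\sh(\uv'_\sh;\uu_\nu u^*_\nu)$ is a single tuple in $\VV$, and it witnesses consistency of the $X$-instance of the formula from (ii) of Claim \ref{lemma_new_remove_disj}. Since that formula witnesses \ATP{} with $(\uu_\nu u^*_\nu\ud_\nu)$, the set $X$ must be an antichain, which completes the claim.
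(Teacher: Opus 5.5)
Your proposal is correct and follows the paper's proof essentially step by step. For consistency on antichains you use implication (i) of transformability, the fact that $\underline{\nu}{}_\sh(\uv_\sh)$ is common to all $\nu \in X$, the equivalence (\ref{tag_equality_with_better_fml}), and compatibility of $(\varphi', \umu{}')$; for the converse you use the singleton trick to obtain $\varphi'(\uu_\nu u^*_\nu)$, then implication (ii), and Observation \ref{obsv_term_coonnst} to make $\utau_\sh(\uv'_\sh; \uu_\nu u^*_\nu)$ uniform in $\nu$.
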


\begin{subclaim} \label{lemma_smaller_witness}
    The following lexicographic inequality holds:
    $$
    (\rk_\va(S'), \deg_\va(S'), \rk_\sh(S'), \deg_\sh(S')) <_\Lex (\rk_\va(S_0), \deg_\va(S_0), \rk_\sh(S_0), \deg_\sh(S_0)).
    $$
\begin{innerproof}
    By (i) of Claim \ref{lemma_new_remove_disj}, the equation $E(\ux_\sh\ux_\va; y^*)$ is non-trivial and bounded by $S$.
    By (iii) of Claim \ref{lemma_new_remove_disj}, we have $(\rk_\va(S), \deg_\va(S)) = (\rk_\va(S_0), \deg_\va(S_0))$.
    If $E(\ux_\sh\ux_\va; y^*)$ is non-trivial in $\ux_\va$, then our Transformation Lemma for staged $C$-sequence-systems immediately yields
    $$
    (\rk_\va(S'), \deg_\va(S')) <_\Lex (\rk_\va(S), \deg_\va(S)) = (\rk_\va(S_0), \deg_\va(S_0)).
    $$
    If $E(\ux_\sh\ux_\va; y^*)$ is trivial in $\ux_\va$, then (iii) of Claim \ref{lemma_new_remove_disj} yields $(\rk_\sh(S), \deg_\sh(S)) = (\rk_\sh(S_0), \deg_\sh(S_0))$.
    In this case, our Transformation Lemma for staged $C$-sequence-systems ensures
    $$
    (\rk_\va(S'), \deg_\va(S')) = (\rk_\va(S), \deg_\va(S)) \quad \text{and}\quad (\rk_\sh(S'), \deg_\sh(S')) <_\Lex (\rk_\sh(S), \deg_\sh(S)),
    $$
    so the desired inequality follows as well.
\end{innerproof}
\end{subclaim}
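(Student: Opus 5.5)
The inequality comes from combining the bookkeeping of Claim \ref{lemma_new_remove_disj} with clause (ii) of the Transformation Lemma for staged $C$-sequence-systems (Lemma \ref{lemma_trafo_for_staged_c_ss}). Recall that $S'$ arose by transforming $\delta_S(\uy) \wedge S(\ux_\sh\ux_\va; \uy) \wedge E(\ux_\sh\ux_\va; y^*)$. To apply Lemma \ref{lemma_trafo_for_staged_c_ss} I must first check its hypotheses. Take $\varphi := \delta_S$, which by Lemma \ref{corollary_comP_fml_staged} is a conjunction of $\LKThe$-equations. Take $\umu{}_1$ to be the identity and $\umu{}_2$ to be the projection onto $y^*$; then $(\delta_S, \mathrm{id})$ is compatible with $S$ by definition. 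Moreover, $E$ is non-trivial and bounded by $S$ by (i) of Claim \ref{lemma_new_remove_disj}. So the conclusions of (ii) of Lemma \ref{lemma_trafo_for_staged_c_ss} are available for $S'$ relative to $S$.

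The argument then splits according to whether $E(\ux_\sh\ux_\va; y^*)$ involves $\ux_\va$.

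\emph{Case 1: $E$ is non-trivial in $\ux_\va$.} Lemma \ref{lemma_trafo_for_staged_c_ss}(ii) gives $(\rk_\va(S'), \deg_\va(S')) <_\Lex (\rk_\va(S), \deg_\va(S))$. By (iii) of Claim \ref{lemma_new_remove_disj}, the right-hand side equals $(\rk_\va(S_0), \deg_\va(S_0))$. Since the first two coordinates of the $4$-tuple are the most significant, the strict inequality there already yields the claimed $4$-tuple inequality, whatever the $\sh$-coordinates are.

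\emph{Case 2: $E$ is trivial in $\ux_\va$.} Non-triviality forces $E$ to be non-trivial in $\ux_\sh$. Lemma \ref{lemma_trafo_for_staged_c_ss}(ii) gives equality of the $\va$-pair together with $(\rk_\sh(S'), \deg_\sh(S')) <_\Lex (\rk_\sh(S), \deg_\sh(S))$. The \emph{furthermore} clause of (iii) in Claim \ref{lemma_new_remove_disj} says that in exactly this situation $S$ and $S_0$ agree on both pairs. Chaining these equalities and the strict inequality in the $\sh$-coordinates gives the lexicographic inequality.

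The only delicate point is the hypothesis check above. Concretely, one needs $\delta_S$ to be an admissible $\varphi$, a conjunction of $\LRC$-equations, and Lemma \ref{corollary_comP_fml_staged} supplies this. One also needs to recall that the case distinction of Claim \ref{lemma_new_remove_disj} was set up precisely so that adding $\sh$-variables there (cases (b) and (c), via Observation \ref{obs_add_to_shared}) never touches the $\va$-invariants, and touches the $\sh$-invariants only when $E$ involves $\ux_\va$. Once this is in place, the rest is immediate comparison of tuples in $(\NN^4, <_\Lex)$.
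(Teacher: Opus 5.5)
Your proposal is correct and is essentially the paper's proof: you make the same case split on whether $E$ is non-trivial in $\ux_\va$, and you combine (ii) of Lemma \ref{lemma_trafo_for_staged_c_ss} with (iii) of Claim \ref{lemma_new_remove_disj} in exactly the same way. Your explicit hypothesis check ($\varphi := \delta_S$, $\umu{}_2$ the projection onto $y^*$, and $\umu{}_1$ the projection of $\uy y^*$ onto $\uy$ rather than literally the identity) is precisely what the paper does implicitly when it first invokes that lemma to produce $S'$.
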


\noindent Combining Claim \ref{lemma_special_witness_smaller} and Claim \ref{lemma_smaller_witness}, we obtain a contradiction to the minimality of $(\psi_0, S_0)$.
Hence, our assumption that there is an antichain $X \subset 2^{<\omega}$ such that
$$
\set{ \psi_0(\uxvec{}_{0,\sh}\uxvec{}_{0,\nu}; \uz; \ud_{0,\nu}) : \nu \in X }
$$
implies some finite disjunction of non-trivial linear dependencies in $\uxvec{}_{0,\sh}(\uxvec{}_{0,\nu} : \nu \in X)$ over $\VV$ must be wrong.
This, in combination with Claim \ref{lemma_implies_finite_disj}, completes the proof of Lemma \ref{lemma_atp_step_11}.
\end{proof}

\bibliographystyle{alphaurl}
\bibliography{sample}

\Addresses

\end{document}